\documentclass[oneside,english,reqno]{amsart}
\usepackage{amsmath,amsthm,amssymb,bbm}
\usepackage{mathrsfs}


\usepackage{tikz}
\usepackage{graphicx}
\usepackage{calculator}

\newtheorem{theorem}{Theorem}[section]
\newtheorem{corollary}[theorem]{Corollary}

\newtheorem{proposition}[theorem]{Proposition}
\newtheorem{remark}[theorem]{Remark}
\newtheorem{lemma}[theorem]{Lemma}
\newtheorem{definition}[theorem]{Definition}
\newtheorem{example}[theorem]{Example}
\newtheorem{notation}[theorem]{Notation}

\newenvironment{myproof}[2] {\paragraph{{\it Proof of} {#1} {#2}:}}{\hfill$\square$}

\newcommand{\munu}{\mu}

\newcommand{\lcolor}[1]{\textcolor{magenta}{#1}}
 
\newcommand{\qmq}[1]{\quad \mbox{#1} \quad}
\newcommand{\equaldist}{\stackrel{\lower0.5pt\hbox{$\scriptstyle\mathrm d$}}=} 
\newcommand{\bflat}{\flat}

\newcommand{\zcirc}{Free-}

\numberwithin{equation}{section}

\newcommand{\ignore}[1]{}
\usepackage{todonotes}
\usepackage{enumitem}
\usepackage{wasysym}

\usepackage[linktocpage,colorlinks,linkcolor=blue,anchorcolor=blue,citecolor=blue,urlcolor=black,pagebackref]{hyperref}

\renewcommand*{\backref}[1]{\ifx#1\relax \else Page #1 \fi}
\renewcommand*{\backrefalt}[4]{
  \ifcase #1 \footnotesize{(Not cited.)}
  \or        \footnotesize{(Cited on page~#2.)}
  \else      \footnotesize{(Cited on pages~#2.)}
  \fi
}

\begin{document}

\title{Bias and Division in the Free World}
\author{Larry Goldstein\textsuperscript{1}}
\address{\textsuperscript{1}Department of Mathematics, University of Southern California, Los Angeles}
\email{larry@usc.edu}
\author{Todd Kemp\textsuperscript{2}}\thanks{\textsuperscript{2} supported in part of NSF Grants DMS-2055340 and DMS-2400246}
\address{\textsuperscript{2}Department of Mathematics, University of California, San Diego}
\email{tkemp@ucsd.edu}

\begin{abstract}
Sampling bias is a foundational concept in statistics; associated bias transforms, such as size bias, have come to play important roles in probability theory of late.  The first author and G.\ Reinert introduced {\em zero bias}, a transform whose unique fixed point is the normal distribution; it has become a standard tool in Stein's method and Gaussian approximation.  Very recently, connections between zero bias and the class of infinitely divisible distributions have been found.

In this paper, we develop a free probabilistic analog of the zero bias transform, proving its existence and regularity.  The {\em free zero bias} has the semicircle law (free probability's central limit distribution) as its unique fixed point.  We offer a construction of the free zero bias that mirrors a classical one incorporating {\em square bias} with a mollifier, and in the process develop a surprisingly new class of distributional operations through their Cauchy transforms.

We then explore connections between the free zero bias, and size bias, with the class of {\em freely} infinitely divisible distributions.  We develop a new self-contained treatment of the subject, together with a new characterization of free infinite divisibility using bias transforms.  We also develop a parallel treatment of {\em positively} freely infinitely divisible distributions, which can also be characterized by a new kind of L\'evy--Khintchine formula that has no known classical analogue, and we use this to both give several new descriptions of such distributions and furnish new examples using these bias methods.

\end{abstract}

\maketitle

\tableofcontents

\section{Introduction} 
Distributional `bias' transformations and their fixed points have been important tools in multiple contexts throughout probability, statistics, and beyond. Well known is the one that samples quantities proportional to their size, the {\bf size bias} \eqref{def:Xs.0}. As common sampling methods may suffer inadvertently from its effects, it was likely first regarded only as a statistical nuisance. Indeed, schemes for determining the sizes of new oil reserves where fields are explored uniformly at random, or surveys which hope to determine family sizes by asking participants how many siblings they have, will result in biased estimates of the desired quantities. Later work revealed that size bias can play more positive roles: it is responsible for waiting-time paradoxes, and has applications to tightness, analysis of the lognormal distribution, Skorohod embedding, number theory, concentration inequalities, Stein's method, and infinite divisibility; see \cite{arratia2019size}. Presently, we focus on the final topic, which we explore using size bias and related transforms like square bias, zero bias, and new bias transforms (introduced below) possessing notable unique distributional fixed points such as the normal or the semi-circle law.

The size bias transformation $X \mapsto X^s$ with domain $\mathcal{D}_m^+$ (the set of Borel probability measures on $[0,\infty)$ with mean $E[X]=m>0$) is characterized by its distribution function 
\begin{equation} \label{def:Xs.0}
P(X^s \le x) = \frac{E[X{\bf 1}(X\le x)]}{E[X]}.
\end{equation}
Equivalently, $X^s$ can be characterized by 
expectations over a function class by
\begin{align}\label{eq:def.mean.m.szb}
E[Xf(X)]=mE[f(X^s)] \quad \mbox{for all $f \in C_b^\infty(\mathbb{R})$,}
\end{align}
where $C_b^\infty(\mathbb{R})$ is the class of all bounded infinitely differentiable functions.
Size biasing has proved especially useful in connection with the Poisson distribution, see \cite{chen1975poisson}, \cite{barbour1992poisson} and \cite{arratia1990poisson}. 
In particular, the mapping $X \mapsto X^s-1$ on $\mathcal{D}_m^+$ has the Poisson distribution with mean $m$ as its unique fixed point.
Similarly, on the domain $\mathcal{D}_{m,\sigma^2}$ of Borel probability measures on $\mathbb{R}$ with mean $m\in\mathbb{R}$ and variance $\sigma^2>0$, we define the {\bf square bias} transformation $X \mapsto X^\Box$ via 
\begin{equation}
\label{def:Xbox}
P(X^\Box \le x) = \frac{E[X^2{\bf 1}(X\le x)]}{E[X^2]},
\end{equation}
which is likewise equivalent to 
\begin{align}\label{eq:def.secmom.sqb}
E[X^2f(X)]=E[X^2]E[f(X^\Box)] \quad \mbox{for all $f \in C_b^\infty(\mathbb{R})$.}
\end{align}

The {\bf zero bias} transform was introduced in \cite{goldstein1997stein} as a component of Stein's method \cite{chen2010normal,stein1972bound} for normal approximation.   Given a random variable $X$ with law $\mathcal{L}(X)=\mu \in \mathcal{D}_{0,\sigma^2}$, there exists a unique distribution $\mu^\ast$ defined by the property that, for a random variable $X^\ast$ with law $\mathcal{L}(X^*)=\mu^\ast$,
\begin{align}
\label{char.zb}
E[Xf(X)]=\sigma^2 E[f'(X^\ast)] \quad \mbox{for all $f \in C_b^\infty(\mathbb{R})$.}
\end{align}
The associated mapping $\mu\mapsto\mu^\ast$ with domain $\mathcal{D}_{0,\sigma^2}$ has the Gaussian $\mathcal{N}(0,\sigma^2)$ as its unique fixed point (i.e.\ reproducing
the usual Gaussian integration by parts formula that motivates the Stein equation). Despite a characterizing equation less intuitive than \eqref{eq:def.mean.m.szb} for size biasing, there is a simple probabilistic construction of $X^\ast$: first take its square bias $X^\Box$, and then multiply it by an independent $\mathrm{Unif}[0,1]$ random variable $U$; that is,
\begin{equation}\label{eq:blah}
X^\ast \equaldist UX^\Box. 
\end{equation}
The proof can be found in \cite[Theorem 2.1]{goldstein2005distributional}.   

This and related transformations have found uses in normal and other distributional approximations \cite{chen2010normal} and e.g. \cite{chen1975poisson}, concentration inequalities \cite{chatterjee2006stein}, \cite{chatterjee2010applications}, \cite{ghosh2011concentration}, \cite{johnson2021concentration}, and  
data science \cite{erdogdu2016scaled}, \cite{liu2016kernelized}, \cite{liu2016stein}, 
\cite{goldstein2019non},
\cite{fathi2020relaxing}.

In this paper we provide free probability analogs of these results and the existence of the associated transformations, starting with the zero bias, which we call the {\bf free zero bias} $\mu\mapsto\mu^\circ$ (or correspondingly for random variables $X\mapsto X^\circ$), also defined on the set $\mathcal{D}_{0,\sigma^2}$. We can extend the transformation to variables $X$ with non-zero mean $m$, as is in Theorem \ref{thm:G.R.corresp}, by
\begin{align}\label{eq:def.mean.m.fzb}
X^\circ=(X-m)^\circ+m,
\end{align}
that is, by
applying the transformation to the centered variable $X-m$, and then adding $m$ back in. 

To define the putative free zero bias $X^\circ$ of a random variable $X$, we rely on a core plank of ``free harmonic analysis'': replacing the derivative $f' = f^{(1)}$ by the difference quotient $f^{[1]}$:
\[ f^{[1]}(x,y)=\frac{f(x)-f(y)}{x-y}. \]
The difference quotient is a function of two variables, and  we will evaluate it on 
{\em two i.i.d.\ copies} 
$X^\circ \equaldist Y^\circ$, giving the following characterizing equation, analogous to \eqref{char.zb}:
\begin{align}\label{char.freezb}
E[Xf(X)]=\sigma^2 E[f^{[1]}(X^\circ,Y^\circ)] \quad \mbox{for all $f \in C_b^\infty(\mathbb{R})$}.
\end{align}
It is easy to verify that $X^\circ$ in \eqref{eq:def.mean.m.fzb} satisfies \eqref{char.freezb} when $X$ on the left hand side is replaced by $X-m$, with similar remarks applying to $X^*$ and \eqref{char.zb}.

The difference quotient $f^{[1]}$ appears in this form often in literature on perturbation of eigenvalues, cf.\ \cite{Kato95}, and noncommutative function theory, cf.\ \cite[Proposition 4.3.1]{Vaki-NCk}, \cite[Example 6.5]{JKN}, owing to its relationship to ordinary (Fr\'echet) derivatives of functional calculus.  In free probability, a more commonly used object is the {\em cyclic derivative} $\mathscr{D}f$, which is a noncommutative version of the difference quotient.  For a polynomial $f$ in one variable, $\mathscr{D}f$ is a noncommutative polynomial in two variables; $f\mapsto \mathscr{D}f$ is linear, and for the monomial $p_n(X) = X^n$
\[ (\mathscr{D}p_n)(X,Y) = \sum_{k=1}^n X^{k-1}Y^{n-k}. \]
If $X$ and $Y$ commute, this coincides with the difference quotient $p_n^{[1]}(X,Y)$.

The characterizing equation \eqref{char.freezb} could be formulated using the cyclic derivative instead: we again ask for a distribution commonly shared by $X^\circ$ and $Y^\circ$, now assumed to be {\em freely} independent (see Definition \ref{def:freeness}), satisfying
\begin{equation} \label{char.freezb.free} E[X f(X)] = \sigma^2 E[\mathscr{D}f(X^\circ,Y^\circ)] \quad \mbox{for all polynomials $f$}. \end{equation}
Because the cyclic derivative introduces no terms that alternate more than once between $X$ and $Y$, \ref{char.freezb} and \ref{char.freezb.free} yield exactly the same moment recursion relations when applied to polynomial functions $f$, and thus the free zero bias $X^\circ$ defined by \eqref{char.freezb} also satisfies \eqref{char.freezb.free}; the reverse is also true, at least for distributions determined by their moments.  We work with the formulation in \eqref{char.freezb} as the classical independence there affords a larger robust toolset to study the free zero bias.

\ignore{

\lcolor{The substitution of the derivative with the finite difference} forms a core plank of ``free harmonic analysis''. It ultimately arises from perturbation theory (of eigenvalues), or more simply from the relationship between derivatives and functional calculus.  If $p\colon\mathbb{C}\to\mathbb{C}$ is a single-variable polynomial, it can act as a function $p_{M_N}$ on $N\times N$ matrices, or more generally as a function $p_{\mathcal{A}}$ on any Banach algebra $\mathcal{A}$ by functional calculus: if $p(z) = \sum_{i=0}^d \alpha_i z^i$ then $p_{\mathcal{A}}(a) = \sum_{i=0}^d \alpha_i a^i$ for any $a\in\mathcal{A}$.  Then the function $p_{\mathcal{A}}$ is smooth, and its ordinary Fr\'echet derivative $[Dp_{\mathcal{A}}(a)]$ (a linear map acting on elements $h\in\mathcal{A}$) is actually given by the two-variable polynomial $p^{[1]}(y,x)$ acting on $h$ (on both sides), evaluated at $y=x=a$; see \cite[Proposition 4.3.1]{Vaki-NCk} and \cite[Example 6.5]{JKN}.  To be more precise: $f^{[1]}$ is a function of two variables; the quantity on the right-hand-side of Equation \eqref{char.freezb} should be interpreted as $E[f^{[1]}(Y^\circ,X^\circ)]$ for two (classically) independent random variables both having law $\mathcal{L}(X^\circ)$.  In other words,
$$
E[f^{[1]}(X^\circ,Y^\circ)]= \lcolor{\sigma^2}E\left[\frac{f(Y^\circ)-f(X^\circ)}{Y^\circ-X^\circ}\right].
$$
This non-linear definition appears quite complicated, but as summarized in Theorem \ref{main theorem 1} below, the equation does uniquely determine a nice probability measure.  }

Our first main theorem, Theorem \ref{main theorem 1}, asserts that the free zero bias distribution putatively defined by \eqref{char.freezb} or \eqref{char.freezb.free} does indeed exist and is unique; the theorem also collects all of its main regularity properties.  Going forward, let $\rightharpoonup$ denote weak convergence of a sequence of probability measures. 

\begin{theorem} \label{main theorem 1} Given any $\sigma^2>0$ and probability measure $\mu$ in $\mathcal{D}_{0,\sigma^2}$, there is a unique probability measure $\mu^\circ$ on $\mathbb{R}$ that satisfies Equation \eqref{char.freezb} with $X\equaldist \mu$ and $X^\circ\equaldist \mu^\circ$.  This measure is called the {\bf free zero bias}; we also refer to any random variable $X^\circ$ with distribution $\mu^\circ$ as a {\bf free zero bias} of $X$.

The free zero bias has the following properties:
\begin{enumerate}[wide = 0pt,label=\alph*)] 
\item \label{thm1.a} The unique fixed point of $\mathcal{D}_{0,\sigma^2}\ni \mu\mapsto\mu^\circ$ is the semicircle law of mean $0$ and variance $\sigma^2$, with density $\frac{1}{2\pi\sigma^2}\sqrt{(4\sigma^2-x^2)_+}$.
\item \label{thm1.ac} For any mean zero $\mu$ with finite, non-zero variance, the free zero bias $\mu^\circ$ is absolutely continuous with respect to Lebesgue measure, and for all $a \le b$
$$
\mu^\circ([a,b])^2 \le \left(\frac{b-a}{\sigma^2}\right)E[|X|].
$$
\item \label{thm1.supp} The support of $\mu^\circ$ is contained in the convex hull of the support of $\mu$.
\item \label{thm1.d} If $\mu_n\rightharpoonup\mu$ and $\int t^2\mu_n(dt) \to \int t^2\mu(dt)$, then $\mu_n^\circ\rightharpoonup\mu^\circ$.
\end{enumerate}
\end{theorem}
The existence claim of Theorem 1.1 and part \ref{thm1.a} follow from a more constructive definition of the free zero bias in Definition \ref{def:fzb} which mirrors the construction of the zero bias in \eqref{eq:blah}, and is shown in Lemma \ref{lem:Xcirc.fixed.pt} to be equivalent to \eqref{char.freezb}; parts \ref{thm1.ac} and \ref{thm1.supp} are shown in Theorem \ref{thm:ac}, and part \ref{thm1.d} is Lemma \ref{lem:Gcirc.props}(\ref{lem.Gcirc.3}.

It is convenient to write the difference quotient in the form
\begin{equation} \label{diffquo.int}
f^{[1]}(x,y) = \frac{f(x)-f(y)}{x-y} = E[f'(Ux+(1-U)y)]
\end{equation}
where $U$ is a Uniform random variable on $[0,1]$; if, for a smooth $f$, we choose to define $f^{[1]}$ on the diagonal as $f^{[1]}(x,x) = f'(x)$, this identity holds for all $x,y$ including $x=y$.  A priori, Equation \eqref{char.freezb} requires the two variable function $f^{[1]}$ to be defined on the diagonal, and so we may canonically make the above choice when $f\in C^1$.
That being said: due to Theorem \ref{main theorem 1}(\ref{thm1.ac},
$X^\circ$ always has an absolutely continuous distribution, and as such, the diagonal values of $f^{[1]}$ do not affect the value of $E[f^{[1]}(X^\circ,Y^\circ)]$.

\begin{remark} Theorem \ref{main theorem 1}(\ref{thm1.supp} indicates that the free zero bias cannot ``expand'' the support of the measure, which is also true for the (classical) zero bias.  However, the two differ on this point: the support of the classical zero bias $\mu^\ast $ is always {\em equal} to the convex hull of the support of $\mu$, cf.\ \cite{goldstein1997stein}, while strict containment is possible for the free zero bias $\mu^\circ$ whose support need not be connected (see Example \ref{ex:three.pt.zb}, Theorem \ref{theorem disjoint support} and \eqref{eq:sup.disc.zb}).
\end{remark}

\begin{remark} Taking test functions $f\in C_b^\infty(\mathbb{R})$ and comparing \eqref{char.zb} and \eqref{char.freezb} in light of \eqref{diffquo.int}, we see the following relationship between the (classical) zero bias and the free zero bias:
\begin{align}\label{Xstar.intermsof.Xcirc}
X^*\equaldist UX^\circ + (1-U)Y^\circ
\end{align}
where $U$ is a uniform random variable on $[0,1]$ (classically) independent from $X^\circ$ and $Y^\circ$.  This equivalence is proved carefully as \eqref{eq:X*.unif.interpolation} in Lemma \ref{lem:Xcirc.fixed.pt} and will be useful below in Sections \ref{sec:exist} and \ref{sec:abscont}.
\end{remark}

A number of benefits reaped by the classical zero and size bias transformations are consequences of a key `replace one property' that allows one to construct the zero bias, or size bias, of a sum of independent variables by replacing a single summand. We derive the free analogs in Theorems \ref{eq:Todd's.formula} and \ref{eq:size.bias.change.1.formula}. Though these forms are somewhat more complicated than their classical counterparts, they still expresses the (free) zero bias and size bias of a sum in terms of the (free) zero bias and size bias of a single summand.

\medskip

As a main application of our results on and around the free zero bias, we give a new approach to, and provide new results about, {\bf free infinite divisibility}.  Recall that a random variable $X$ is (classically) infinitely divisible if, for each $n \ge 1$, $X$ can be represented as an independent sum
\begin{align}\label{def:X.inf.div}
X \equaldist X_{1,n}+\cdots+X_{n,n} \quad \mbox{where $\{X_{j,n}\}_{1\le j\le n}$ are i.i.d.} 
\end{align}
In the 1930s Kolmogorov, and then independently L\'evy and Khintchine, proved the following: an $L^2$ random variable $X$ with mean $m$ and variance $\sigma^2$ is infinitely divisible if and only if there exist a probability measure $\nu$ on $\mathbb{R}$, known as the {\bf L\'evy--Khintchine measure}, such that the characteristic function $\psi_X(\xi) = E[\exp(i\xi X)]$ of $X$ satisfies
 \begin{align} \label{eq:logphi.zero}
\log \psi_X(\xi) = im\xi + \sigma^2\int_{\mathbb{R}}\frac{\exp(i\xi t)-1-i\xi t}{t^2}\,\nu(dx), \qquad t \in \mathbb{R}.
\end{align}
(Here the integrand is taken to equal its limit value $-\frac12\xi^2$ at $t=0$; this is relevant if $\nu$ has mass at $0$, which happens, for example, when $X$ is normally distributed.)  The measure $\nu$ is unique when $\sigma^2>0$.  These results were later extended beyond the $L^2$ setting to all infinitely divisible distributions; see Section \ref{sect.background.inf.div}.

The (classical) zero bias and size bias transformations have deep connections to infinitely divisible distributions. \cite[Proposition 3.8]{arras2019stein}, \cite[Lemma 4.2(b), eq.\ (4.3)]{liu2022geometric}, \cite{uwe}, \cite{steutel1973some}, and \cite{arratia2019size} give, via the `replace one property', insightful characterizations of infinitely divisible probability distributions on the real line that shed light on the meaning of the L\'evy--Khintchine measure.   Notably, in \cite{uwe} it is shown that \eqref{eq:logphi.zero} holds if and only if on a possibly enlarged probability space there exists a uniform $U \equaldist \mathrm{Unif}[0,1]$ random variable and $Y_0$ such that
\begin{equation}\label{eq:ind.inc.zero}
X^* \equaldist X + UY_0\qquad\text{with $U$, $X$, $Y_0$ independent,}
\end{equation}
and in this case, the distribution of $Y_0$ in \eqref{eq:ind.inc.zero} is precisely the L\'evy--Khintchine measure $\nu$ in \eqref{eq:logphi.zero}. The work \cite{steutel1973some} yields an analogous result in the positive $L^1$ category, showing that ${\mathcal L}(X) \in \mathcal{D}_m^+$ is infinitely divisible if and only if its size bias $X^s$ satisfies
\begin{equation}\label{eq:ind.inc.sb}
X^s \equaldist X+Y_+ \qquad\text{with $X$ and $Y_+$ independent.}
\end{equation}

In this paper, we prove free probability analogs of \eqref{eq:ind.inc.zero} and 
\eqref{eq:ind.inc.sb}, giving new characterizations of the free probability version of infinite divisibility and, in fact, shedding new light on both the classical and free L\'evy--Khintchine formulas.

To properly state these results, we briefly introduce the distributional transforms needed.  The characteristic function $\psi_X$ encodes the distribution of $X$, and also encodes independence particularly well in terms of the cumulant-generating function $\log\psi_X$: $X,Y$ are independent iff $\log\psi_{X+Y} = \log\psi_X + \log\psi_Y$ (i.e.\ the cumulant-generating function linearizes independence).  In free probability, instead of $\psi_X$ it is more convenient to use the {\bf Cauchy transform}\label{page.CauchyTransform}
$$
G_X(z) = E\left[ \frac{1}{z-X} \right], \quad \mbox{$z \in \mathbb{C}_+$}
$$
where $\mathbb{C}_+$ denotes the upper half-plane $\{z\in\mathbb{C}\colon \mathrm{Im}\, z>0\}$.  A Cauchy transform $G_X$ takes values in the lower half-plane $\mathbb{C}_- = -\mathbb{C}_+$; as such, since $0$ is not in its range, the {\bf reciprocal Cauchy transform} $F_X = 1/G_X$ is well-defined, and will also be used throughout this paper.

The `free' in free probability refers to an alternative notion of independence, modeled on free groups, which makes sense for a wider class of random variables that do not necessarily commute under multiplication (for example high-dimensional random matrices); see Definition \ref{def:freeness}.  The analog of the cumulant-generating function which linearizes {\bf free independence} is called the {\bf $R$-transform} $R_X$, defined in an open domain in $\mathbb{C}_+$; its relation \eqref{Grr:relations} to $G_X$ is not as simple as the exponential relation between $\psi_X$ and $\log\psi_X$, but the two functions determine each other in an often computationally feasible way, and $R_{X+Y} = R_X+R_Y$ precisely when $X,Y$ are freely independent.  Corresponding to the reciprocal Cauchy transform, it is often useful to use a  ``reflected'' version of $R_X$, the {\bf Voiculescu transform} $\varphi_X(z) = R_X(1/z)$. \label{page.voiculescu} Section \ref{sect:R.Voic.trans} below describes all these transforms and their domains in precise detail.

{\bf Free infinite divisibility} is defined as in \eqref{def:X.inf.div} with (classical) independence of the summands $X_{j,n}$ replaced by free independence; see Section \ref{sect.background.inf.div}. In \cite{bercovici1993free}, Bercovici and Voiculescu proved that an $L^2$ random variable $X$ with mean $m$ and variance $\sigma^2$ is freely infinitely divisible if and only if its $R$-transform is of the form
\begin{align} \label{LK.via.nica}
R_X(z)= m + \sigma^2 \int_{\mathbb{R}} \frac{z}{1-tz}\, \nu(dt)
\end{align}
for some probability measure $\nu$ on $\mathbb{R}$.  Equation \eqref{LK.via.nica} is a free analog of the L\'evy--Khintchine formula \eqref{eq:logphi.zero}.  It is no accident that we use $\nu$ to denote the measure here as we did in \eqref{LK.via.nica}.  The triple $(m,\sigma^2,\nu)$ characterizes all $L^2$ classically infinitely divisible distributions and likewise characterizes all $L^2$ freely infinitely divisible distributions.  This sets up an important bijection, known as the {\em Bercovici--Pata bijection} cf.\ \cite{BercoviciPata1999}, between the classical and free cases, which we describe in Section \ref{sect.background.inf.div}.  (Moreover: as in the classical case, the setup can be modified to capture all freely infinitely divisible distributions, not only $L^2$ ones; we explain this in Section \ref{sect.background.inf.div}.)

The equivalence between the free L\'evy--Khintchine formula \eqref{LK.via.nica} (or more generally \eqref{BV2.LK-formula}) and free infinite divisibility was a landmark result of Bercovici--Voiculescu in \cite{bercovici1993free,bercovici1992levy}.  Theorem \ref{main theorem 2} below gives a different approach to this equivalence along with a new characterization --- the free analog \eqref{eq:free.Schmock} of \eqref{eq:ind.inc.zero} --- using the free zero bias.  What's more, in Theorem \ref{main theorem 3}, we provide a completely new parallel development and new equivalent characterization of {\em positively} freely infinitely divisible distributions (see Definition \ref{def:pfid}) in terms of a different L\'evy--Khintchine type formula, using the size bias.

\ignore{
a third equivalent characterization of free infinite divisibility for $L^2$ random variables, stated in terms of the free zero bias, yielding the free analog . Moreover, our theorem gives a probabilistic interpretation for what the L\'evy--Khintchine measure of $X$ actually {\em is}: namely, it is the limit of the square biases of the ``$n$th root'' distributions of $X$.
}

In order to state these theorems, we need the following result which introduces (for our purposes) a free analog of multiplying a random variable by an independent uniform random variable.

\begin{lemma} \label{lem:elGordo} Let $\mu$ be any probability measure on $\mathbb{R}$.  There is a unique probability measure $\mu^\bflat$ whose Cauchy transform satisfies $G_{\mu^\bflat}(z) = -\sqrt{G_\mu(z)/z}$. \end{lemma}

If $W$ has distribution $\mathcal{L}(W) = \mu$, we abuse notation as usual and let $W^\bflat$ denote a random variable with distribution $\mathcal{L}(W^\bflat) = \mu^\bflat$. Lemma \ref{lem:elGordo} is restated and proved as Lemma \ref{lem:bflat}, which follows from the curious fact (Lemma \ref{lem:root.Geometric.Mean}) that the geometric mean of two Cauchy transforms of probability measures is again the Cauchy transform of some probability measure; this result may be of independent interest.

\begin{remark} We view the operation $W\mapsto W^\bflat$ as a free version of the operation $Y\mapsto UY$ of multiplying by an independent $\mathrm{Unif}[0,1]$ random variable because of the form of Theorem \ref{main theorem 2}(\ref{thm2.c} as it relates to \eqref{eq:ind.inc.zero}; see also our construction \eqref{eq:circ.equiv.box} of the free zero bias compared to its classical counterpart \eqref{eq:blah}.  To date, the authors have not been able to realize $W\mapsto W^\bflat$ in more probabilistic terms (i.e.\ as a direct operation on random variables, or on von Neumann algebras) rather than on distributional transforms; this is a good question for future thought.
\end{remark}

The following main theorem introduces a new take, and a totally new equivalent condition, on infinite divisibility in the free world.

\begin{theorem} \label{main theorem 2}  Let $X$ be a random variable with mean $m\in\mathbb{R}$ and variance $\sigma^2\in(0,\infty)$.  The following are equivalent.
\begin{enumerate}[wide = 0pt,label=\alph*)] 
    \item \label{thm2.a} $X$ is freely infinitely divisible.
    \item \label{thm2.b} There is a random variable $V_0$ so that the Voiculescu transform $\varphi_X$ has the form
    \begin{align} \label{eq:LK0}
    \varphi_X(z) = m + \sigma^2 G_{V_0}(z). \end{align}
    \item \label{thm2.c} The free zero bias of $X$ satisfies
    \begin{equation} \label{eq:free.Schmock} F_{X^\circ}(z) = F_{V_0^\bflat}(F_X(z)),\end{equation}
    for some random variable $V_0$.
\end{enumerate}
In the case that these equivalent statements hold true, the random variables $V_0$ in \ref{thm2.b} and \ref{thm2.c} have the same law, which is the weak limit as $n\to\infty$ of the square biases $X_{1,n}^\Box$ (cf.\ \eqref{def:Xbox}) of the summand distributions where $X=X_{1,n}+\cdots+X_{n,n}$ for freely independent identically distributed $\{X_{j,n}\}_{1\le j\le n}$.
\end{theorem}

The equivalence of \ref{thm2.a} and \ref{thm2.b} was proved in \cite{bercovici1993free}, and the characterization of the free L\'evy measure as a limit of square biases (without using that language) was proved in \cite[Theorem 3.3(2)]{BercoviciPata1999}.  Our contribution, other than providing probabilistic arguments rather than purely complex analytic ones, is the third leg \ref{thm2.c} of the equivalence, which crucially gives the free L\'evy--Khintchine measure $\mathcal{L}(V_0)$ new meaning, relating to the free zero bias.  Our proof, which is the subject of Section \ref{sec:infdiv}, proceeds by first treating compound free Poisson distributions (cf.\ Section \ref{sect:FP}), and then extending results broadly by showing all freely infinitely divisible distributions are well-controlled limits of compound free Poissons.  (Indeed, we prove that compound free Poisson distributions are precisely those for which the random variable $V_0$ in Theorem \ref{main theorem 2} satisfies $E[V_0^{-2}]<\infty$; see Theorem \ref{thm:Compound.Poisson}.)  Our proof is fully self-contained in the case of compact support (Section \ref{sect:compact}), relying weakly on the results of \cite{bercovici1992levy,bercovici1993free} for requisite tightness in the general case.

We also move beyond the $L^2$ category and treat $L^1$ freely infinitely divisible distributions that are positive in a strong sense.

\begin{definition} \label{def:pfid}
We say $X$ is {\bf positively freely infinitely divisible}, or free regular as in \cite{arizmendi2011law}, when for all $n \ge 1$
\begin{align} \label{eq:X.is.sum.X_{i,n}}
X \equaldist X_{1,n}+ \cdots + X_{n,n}
\end{align}
where $\{X_{j,n}\}_{1 \le j \le n}$ are freely independent, identically distributed, and $X_{j,n}\ge 0$.
\end{definition}

The key defining feature here is that the summands $X_{j,n}$ are all non-negative.  For classically infinitely divisible random variables that are non-negative, this holds automatically (owing to the linearity of classical convolution); in the free world, it is a strong constraint: for example shifted semicircular distributions can be positive and are freely infinitely divisible, but are never positive{\em ly} freely infinitely divisible; see Section \ref{sect.background.inf.div}.

\begin{theorem} \label{main theorem 3}
 Let $X$ be a non-negative random variable with mean $m>0$.  The following are equivalent.
\begin{enumerate}[wide = 0pt,label=\alph*)] 
    \item \label{thm2.a.sb} $X$ is positively freely infinitely divisible.
    \item \label{thm2.b.sb} There is a random variable $V_+$ so that the Voiculescu transform $\varphi_X$ has the form
    \begin{align} \label{eq:LK+}
    \varphi_X(z) = mzG_{V_+}(z). \end{align}
    \item \label{thm2.c.sb} The size bias of $X$ satisfies
    \begin{equation} \label{eq:free.Schmock.sb} 
    F_{X^s}(z) = F_{V_+}(F_X(z)). 
    \end{equation}
\end{enumerate}
In the case that these equivalent statements hold true, the random variables $V_+$ in \ref{thm2.b.sb} and \ref{thm2.c.sb} have the same law, which is the weak limit as $n\to\infty$ of the size biases $X_{1,n}^s$ (cf.\ \eqref{def:Xs.0}) of the summand distributions where $X=X_{1,n}+\cdots+X_{n,n}$ for freely independent identically distributed $\{X_{j,n}\}_{1\le j\le n}$.
\end{theorem}

Both conditions \ref{thm2.b.sb} and \ref{thm2.c.sb} are entirely new to the theory of free infinite divisibility; moreover, the `positive L\'evy--Khintchine formula' of \eqref{eq:LK+} has no classical equivalent that the authors are aware of.

One more advantage of our approach: it yields the previously unknown result that that {\em every} probability measure is the free L\'evy--Khintchine measure for a two-parameter family of $L^2$ freely infinitely divisible distributions, indexed by their mean and variance; also, every positively-supported probability measure yields a one-parameter family of positively freely infinitely divisible distributions, indexed by their mean.

\begin{proposition} \label{prop.any.V.will.do} Let $\nu$ be a probability measure on $\mathbb{R}$, and let $m\in\mathbb{R}$ and $\sigma^2\ge 0$.
\begin{enumerate}[wide = 0pt]
    \item \label{prop.any.V.1} There is a unique $L^2$ freely infinitely divisible random variable $X$ with mean $m$ and variance $\sigma^2$ such that $\varphi_X(z) = m+\sigma^2 G_\nu(z)$; i.e.\ such that Theorem \ref{main theorem 2}(\ref{thm2.b} holds with $V_0\equaldist \nu$.
    \item \label{prop.any.V.2} If $\mathrm{supp}\,\nu\subseteq[0,\infty)$ and $m>0$, there is a unique $L^1$ positively freely infinitely divisible random variable $X$ with mean $m$ such that $\varphi_X(z) = mzG_\nu(z)$; i.e.\ such that Theorem \ref{main theorem 3}(\ref{thm2.b.sb} holds with $V_+\equaldist \nu$.
\end{enumerate}
\end{proposition}
The proof can be found in Propositions \ref{prop:ForAnyV} and \ref{prop:ForAnyV.sb}.

In the special case that a positively freely infinitely divisible random variable is not just $L^1$ but $L^2$, it possess both types of free L\'evy--Khintchine measures, i.e.\ both $V_0$ and $V_+$.  They are nicely related, which also gives a new elegant characterization of $L^2$ positively infinitely divisible distributions (and their translates).

\begin{theorem} \label{prop:L2.pos.fee.inf.div} Let $X$ be a freely infinitely divisible $L^2$ random variable. Assume $X$ is bounded below.  The following are equivalent:

\begin{enumerate}
    \item \label{L2.pos.(1)} There is some $\alpha>0$ so that $X+\alpha$ is  a positively freely infinitely divisible random variable.
    \item \label{L2.pos.(2)} The random variable $V_0$ corresponding to $X$ in Theorem \ref{main theorem 2}(\ref{thm2.b} is non-negative and has a finite negative moment: $V_0\ge 0$ and $E[V_0^{-1}]<\infty$.
\end{enumerate}
When these equivalent conditions hold, if $X'$ is any positively freely infinitely divisible translate of $X$, and $V_+$ is the random variable corresponding to $X'$ in Theorem \ref{main theorem 3}(\ref{thm2.b.sb}, then
\[ (V_+)^s = V_0 \quad \text{ and }\quad E[V_+] = \frac{\mathrm{Var}[X']}{E[X']}. \] 
\end{theorem}
The proof of this final main theorem appears towards the end of Section \ref{sec:inf.div}. 

\begin{remark}\label{remark.translations.1}
For $X$ and $Y$ two freely independent, positively freely infinitely divisible $L^1$ random variables with means $m,m'$ respectively, by 
the additivity of the Voiculescu transformation
and Theorem \ref{main theorem 3}(\ref{thm2.b},
\begin{align*}
\varphi_{X+Y}(z) = mzG_{V_+(X)}(z)+m'zG_{V_+(Y)}(z)= (m+m')zG_{V_+(X+Y)}(z)
\end{align*}
where 
\begin{align} \label{eq:V_+(X+Y)}
V_+(X + Y) = \frac{m}{m + m'} V_+(X) + \frac{m'}{m + m'} V_+(Y).
\end{align}
That is, the free Lévy--Khintchine measure $V_+$ of the sum is the mixture of the measures of the summands, taken proportional to the means.

Similarly when $X$ and $Y$ are freely independent, freely infinitely divisible $L^2$ random variables we have via Theorem \ref{main theorem 2}(\ref{thm2.b} that 
$$
\varphi_{X+Y}(z) = m+m' + (\sigma^2+\tau^2)G_{V_0(X+Y)}(z)
$$
where $\sigma^2,\tau^2$ are the respective variances of $X$ and $Y$, and 
\begin{align}\label{eq:V_0(X+Y)}
V_0(X+Y)=\frac{\sigma^2}{\sigma^2+\tau^2}V_0(X)+\frac{\tau^2}{\sigma^2+\tau^2}V_0(Y). 
\end{align}
That is, in the $L^2$ case the free Lévy--Khintchine measure $V_0$ of the sum is the mixture taken proportional to the variances. 

By the relation $\varphi_{\alpha X}(z)=\alpha \varphi (z/\alpha)$ one can verify that non-zero scalings in the $L^2$ case, and positive scalings in the $L^1$ case, both yield the same scaling of $V_0$ and $V_+$ respectively. However, the two cases differ widely for translations, as can be seen by specializing the relations above to the case where $Y$ is a point mass. 

In particular, for a given infinitely divisible $L^2$ random variable $X$, its corresponding $V_0$ in Theorem \ref{main theorem 2}(\ref{thm2.b} is invariant under translations of $X$; the free L\'evy--Khintchine formula \eqref{eq:LK0} only changes by translating the isolated mean term. 

On the other hand, as Theorem \ref{prop:L2.pos.fee.inf.div} involves translations which leave $V_0$ invariant but do change $V_+$, one may wonder about the validity of the final claim that $(V_+)^s = V_0$.  This apparent problem is resolved by specializing \eqref{eq:V_+(X+Y)} to the case $\mathcal{L}(Y)=\delta_{m'}$, yielding
\begin{equation} \label{eq.V+.translate}  V_+(X+m') =  \frac{m}{m+m'}V_+(X) + \frac{m'}{m+m'}\delta_0, \end{equation}
which shows that translations affect $V_+$ only by mixing with a point mass at $0$, and such mixtures do not change the size bias of $V_+$, cf.\ Lemma \ref{lem:X.Xneg0.same.sb}(\ref{lem:sqbiaa-part2.sb}.

More generally, we may move directly from \eqref{eq:V_+(X+Y)} to \eqref{eq:V_0(X+Y)} using 
\eqref{eq:WBox.as.mixture.sb}, and in the reverse direction taking the inverse size bias of both its sides, and also using the final claim of Theorem \ref{prop:L2.pos.fee.inf.div} which provides the factor $E[V_+]=\sigma^2/m$ that converts a mixture by the mean to one taken by the variance.
\end{remark}

This work is organized as follows.  The remainder of this Introduction, Section \ref{sect:Stein.kernels}, discusses the connection between the zero bias transform and Stein kernels in both the classical and free probability contexts.  Section \ref{sec:free prob} is a fairly self-contained primer on free probability, in particular as it relates to analytic distributional transforms, conditioning, and infinite divisibility.  In Section \ref{sec:exist} we prove the existence of the free zero bias and related distributional transforms, and develop their basic properties and relations to each other. In particular, Theorems \ref{eq:Todd's.formula} and \ref{eq:size.bias.change.1.formula} provide free analogs of the classical key `replace one properties' for the free zero and size bias transformations.  In Section \ref{sec:abscont} we show that, like its classical cousin, the free version zero bias has an absolutely continuous distribution, completing the proof of Theorem \ref{main theorem 1}. and explore how disconnected its support can be with a full treatment of free zero bias transforms of finitely-supported distributions. We then turn, in Section \ref{sec:infdiv}, to infinitely divisible random variables, and prove Theorems \ref{main theorem 2}, \ref{main theorem 3}, and their consequences discussed above.  Finally, Section \ref{sect:examples} uses the results of Theorems \ref{main theorem 2} and \ref{main theorem 3} to compute (quite effectively) the distributions of several new freely infinitely divisible distributions, including a new two-parameter family of positively freely infinitely divisible distributions.

\subsection{(Free) Zero Bias and (Free) Stein Kernels}\label{sect:Stein.kernels}

Our work is related to that of \cite{cebron2020note} and  \cite{fathi2017free}, where Stein kernels, familiar in classical probability \cite{cacoullos1992lower,chatterjee2009fluctuations}, are shown to also exist in free probability. Classically, we say that a random variable $A_\ast(X)$ is a Stein kernel for $X$ if
\begin{align} \label{eq:classical.kernel}
E[Xf(X)]=E[A_\ast(X)f'(X)] \qmq{for all $f \in C_b^\infty(\mathbb{R})$.}
\end{align}
We note that \eqref{eq:classical.kernel} is similar to \eqref{char.zb} in that it modifies Stein's original identity for the normal by changing some feature on the right hand side, for \eqref{char.zb} the distribution of $X$, and for \eqref{eq:classical.kernel} the multiplier of the derivative of $f$.  If the law $\mathcal{L}(X^*)$ is absolutely continuous with respect to $\mathcal{L}(X)$ with Radon-Nikodym derivative $D_*$, then from \eqref{char.zb} and the change-of-variables theorem,
\begin{align*}
E[Xf(X)]=\sigma^2 E[f'(X^*)] =E\left[\sigma^2 D_*(X)f'(X)\right].
\end{align*}
In particular, under the stated absolute continuity requirement, $A_\ast(X) = \sigma^2 D_*(X)$ provides a Stein kernel.

The notion of a Stein kernel in free probability was first introduced in \cite{fathi2017free}, and used to obtain a free counterpart of the HSI inequality, connecting entropy, Stein discepancy and free Fisher information, as well as a rate of convergence in the entropic free central limit theorem. The work \cite{cebron2020note} showed that free Stein kernels, unlike in the classical case, always exist; moreover, it showed that free Stein kernels are never unique by providing two constructions that lead to distinct results, one inspired by an unpublished note of Giovanni Peccati and Roland Speicher in the case where the potential is $V(x)=n^{-1}\sum_{i=1}^n x_i^2$ and where $n=1$, and a second one constructed via the Reisz representation theorem.

For a single random variable $X$, we say that $A_0(X,Y)$ is a Free Stein kernel for $X$ when (consistent with our choice above to replace derivative with difference quotient \eqref{diffquo.int})
\begin{align} \label{eq:free.kernel}
E[Xf(X)]=E[A_0(X,Y)f^{[1]}(X,Y)] \qmq{for all $f \in C_b^\infty(\mathbb{R})$,}
\end{align}
where $Y$ is a (classically) independent copy of $X$. A relation similar to the one in the classical case holds between free Stein kernels and the free zero bias. Indeed, when the distribution $\mathcal{L}(X^\circ)$ of $X^\circ$ is absolutely continuous with respect to $\mathcal{L}(X)$ with Radon--Nikodym derivative $D_0$, from \eqref{char.freezb} and the change-of-variables theorem we have
\begin{align} \label{eq:zb.kernel.is.product}
E[Xf(X)]=\sigma^2 E[f^{[1]}(X^\circ,Y^\circ)]=\sigma^2 E[D_0(X)D_0(Y)f^{[1]}(X,Y)].
\end{align}
Comparing with \eqref{eq:free.kernel}, we see, as in the classical case under a similar absolute continuity assumption, that free zero bias provides a free Stein kernel, here given by $A_0(X,Y)=D_0(X)D_0(Y)$, which has the form of a product of some function of $X$ multiplied by the same function of $Y$.

In the setting of \eqref{eq:free.kernel}, the first of the two distinct Stein kernels discussed in \cite{cebron2020note} corresponds to $A_0(x,y)=(x-y)^2/2$, for which \eqref{eq:free.kernel} is nearly immediately verified to hold upon applying the definition of the difference quotient $f^{[1]}$ and the fact that $E[(X-Y)(f(X)-f(Y))]=2E[Xf(X)]$. This kernel must be distinct from the one in \eqref{eq:zb.kernel.is.product} generated by the free zero bias, as $(x-y)^2$ cannot be written in the necessary product form. The construction of the second kernel in \cite{cebron2020note} requires that $X$ satisfies the free Poincaré inequality. However, no such requirement is needed for the kernel for $X$ in \eqref{eq:zb.kernel.is.product}.  In other words: the free zerio bias affords yet a third distinct construction of a free Stein kernel.

\section{A Free Probability Primer}\label{sec:free prob}

Free probability is a noncommutative probability theory introduced by Voiculescu in the late 1980s and early 1990s, initially with the intent to import core ideas from probability, statistics, and information theory to solve structure and isomorphism problems in operator algebras.  Through its intimate connection with random matrix theory, it has grown over the last three decades into a flourishing field of study on its own.  The reader may wish to consult \cite{nica2006lectures,mingo2017book} for detailed treatises on free probability.

\subsection{Noncommutative Probability Spaces}

The arena is a {\em noncommutative probability space} $(\mathcal{A},E)$, which replaces the algebra of bounded random variables and the expectation functional on them in the classical case.  $\mathcal{A}$ is a unital $\ast$-algebra, like the algebra of $n\times n$ complex matrices for example, where the adjoint $\ast$ operation is conjugate transpose.  (Note: to avoid notational conflict with the (classical) zero bias of \eqref{char.zb}, for the rest of this section we use $\dagger$ to refer to the adjoint operation rather than $\ast$.) $E\colon\mathcal{A}\to\mathbb{C}$ is a {\em state}: a linear functional satisfying $E(1)=1$ that is {\em positive} in the sense that, for any $X\in\mathcal{A}$ other than $0$, $E(X^\dagger X) >0$.  As the notation suggests, $E$ should be thought of as {\em expectation}; indeed, if $\mathcal{A}$ is the algebra of bounded random variables on some probability space, then {\em the} expectation is the standard choice for $E$.  Some authors prefer to use a letter like $\tau$ or $\varphi$ for the state functional in free probability; we will use $E$ throughout.

For some results (which we will need), more assumptions are required.  A {\em $W^\ast$-probability space} is a noncommutative probability space $(\mathcal{A},E)$ where $\mathcal{A}$ is a von Neumann algebra (a weakly closed subalgebra of the algebra of bounded operators on a Hilbert space), and $E$ is {\em tracial}: $E(X_1X_2) = E(X_2X_1)$ for all $X_1,X_2\in\mathcal{A}$, and also continuous in the weak$^\ast$ topology (identifying the algebra with its predual).  This latter continuity condition, sometimes called {\em normal continuity}, is designed to make a version of the Dominated Convergence Theorem hold for $E$; in particular, an equivalent statement is that, for any bounded increasing net of positive operators $\{X_\alpha\}$, $\sup_\alpha E[X_\alpha] = E[\sup_\alpha X_\alpha]$.  We will make these assumptions going forward, for convenience.  Typical examples are $n\times n$ matrices equipped with the normalized trace; any classical probability space; and various standard type $\mathrm{II}_1$ von Neumann algebras.  As with classical probability, one doesn't typically work in some particular explicit (noncommutative) probability space; the random variables (i.e.\ elements of $\mathcal{A}$) are assumed to ``live'' on a space with enough structure to make the usual results work, and that typically means a $W^\ast$ probability space.

\subsection{Free Independence}

The {\em free} in free probability refers to a noncommutative ``independence'' notion for random variables. Algebraically, statistical independence of random variables (corresponding to joint laws being product measures) corresponds to tensor product; in free probability, the independence rule is instead modeled on {\em free} product from group theory.  Ultimately, it is best to express it as a moment-factoring condition. For classical bounded random variables, independence can be stated combinatorially as the exepctation factoring property $E(X_1^nX_2^m) = E(X_1^n)E(X_2^m)$ for all $n,m\in\mathbb{N}$.  This equality will also hold for freely independent random variables, but it is not sufficient: when random variable multiplication is noncommutative, this rule doesn't handle expressions like $E(X_1X_2X_1X_2)$.  To handle such expressions, freeness is most easily stated in terms of subalgebras.

\begin{definition}\label{def:freeness} Let $(\mathcal{A},E)$ be a noncommutative probability space. 
 A collection $\mathcal{A}_1,\ldots,\mathcal{A}_d$ of unital $\ast$-subalgebras of $\mathcal{A}$ is called {\bf freely independent} if the following holds true: given indices $i_1,i_2,\ldots,i_n$ that are consecutively distinct (i.e.\ $i_1\ne i_2$, $i_2\ne i_3$, etc.) and random variables $X_k\in \mathcal{A}_{i_k}$ that are centered (i.e.\ $E(X_k)=0$), the product $X_1X_2\cdots X_k$ is also centered, that is, $E[X_1X_2\cdots X_k]=0$.

We say random variables $X_1,\ldots,X_d$ are freely independent if the $\ast$-subalgebras they generate are freely independent.
 \end{definition}

For our purposes, it is only necessary to deal with {\em selfadjoint} variables $X=X^\dagger$. For a selfadjoint $X$, the $\ast$-algebra it generates is just the set of all polynomial functions of it, $\{p(X)\colon p\text{ is a polynomial of one variable}\}$.  For such variables, then, free independence can be expressed a bit more directly: selfadjoint $X_1,\ldots,X_d$ are freely independent if, given any consecutively distinct indices $i_1\ne i_2\ne i_3\ne\cdots\ne i_n$, and polynomials $p_1,\ldots,p_n$ such that $E[p_1(X_{i_1})]=\cdots=E[p_n(X_{i_n})]=0$, it follows that $E[p_1(X_{i_1})\cdots p_n(X_{i_n})]=0$.  By using the usual centering trick, applying this property to variables $X_i^m-E[X_i^m]$, one learns that free random variables satisfy the usual factoring property $E[X_1^nX_2^m]=E[X_1^n]E[X_2^m]$, as well as a litany of more complex noncommutative factoring properties, for example $E[X_1X_2X_1X_2] = E[X_1^2]E[X_2]^2 + E[X_1]^2E[X_2^2]-E[X_1]^2E[X_2]^2$.  In general, if random variables are freely independent, any mixed moment can be expressed (as a polynomial function) in terms of the variables' individual moments.

Another benefit of selfadjoint variables is that, like classical random variables, they have probability distributions.  Associated to each selfadjoint $X$ in a $W^\ast$ probability space is a unique probability measure $\mu_X$ on $\mathbb{R}$ sharing the same moments:
\[ \int_{\mathbb{R}} x^n\,\mu_X(dx) = E[X^n], \qquad n\in\mathbb{N}. \]
Mirroring the classical setting, we denote $\mu_X = \mathcal{L}(X)$ as the {\bf law} of $X$.  Existence follows from the spectral theorem for selfadjoint operators: associated to $X$ is a projection-valued Borel measure $\Pi^X$ on $\mathbb{R}$ such that $X = \int_{\mathbb{R}} x\,\Pi^X(dx)$, and then functional calculus of $X$ is given by $f(X) = \int_{\mathbb{R}} f(x)\,\Pi^X(dx)$ (which is consistent with analytic functional calculus when $f$ is analytic, e.g.\ a polynomial).  Composing with the state $E$ on both sides yields $E[f(X)] = \int_{\mathbb{R}} f(x)\,E\circ\Pi^X(dx)$, showing that $\mu_X = E\circ\Pi^X$ is the desired law.

For now, we restrict to the $W^\ast$ probability space context, meaning that the random variables are bounded operators on a Hilbert space.  This implies that their distributions are compactly-supported (we will discuss removing this requirement shortly), and hence uniquely determined by their moments, which establishes uniqueness of the law.  Moreover, any compactly-supported probability distribution $\mu$ arises this way (just take any classical random variable with law $\mu$).  A deeper result is that any collection of compactly-supported probability measures can be coupled together as the laws of freely independent random variables on a common space (see Definition \ref{def:freeness}).  This leads to a new operation on such measures, {\bf free convolution} $\boxplus$.  The measure  $\mu\boxplus\nu$ is defined to be the law of $X_1+X_2$ where $X_1$ and $X_2$ are freely independent with laws $\mu$ and $\nu$ respectively.  To see this is well defined, we look at moments.  The moments of $X_1+X_2$ are all linear combinations of mixed moments in $X_1$ and $X_2$, which in turn factor in terms of moments of $X_1$ and $X_2$ separately.  Ergo, the moments of $X_1+X_2$ are completely determined by the moments of $X_1$ and $X_2$ separately.
Unlike the classical case, there is no simple integral formula for $\mu\boxplus\nu$ in terms of $\mu$ and $\nu$; but there is an approach using analytic transforms which we now describe.

\subsection{Cauchy Transform}

As mentioned on page \pageref{page.CauchyTransform}, for a probability measure $\mu$ on $\mathbb{R}$, its {\bf Cauchy transform} $G_\mu$ is the analytic function defined on the complement of the support of $\mu$ by
\begin{equation} \label{eq:def.G.int} G_\mu(z) = \int_{\mathbb{R}} \frac{1}{z-t}\,\mu(dt). \end{equation}
If $X$ is a selfajoint random variable, then from the definition (above) of its law, we have
\[ G_{\mu_X}(z) = E[(z-X)^{-1}] \]
where the function $f_z(x) = (z-x)^{-1}$ is real analytic on $\mathbb{R}$ for any $z\in\mathbb{C}_+$.  This reflects the change-of-variables formula in classical probability.  When convenient, we may write this as $G_X = G_{\mu_X}$.  

Any Cauchy transform is an analytic map from the upper half plane $\mathbb{C}_+ = \{x+iy\colon x\in\mathbb{R},y>0\}$ into the lower half plane $\mathbb{C}_- = -\mathbb{C}_+$.  (Since $\mu$ is a real measure, the domain of $G_\mu$ can really be taken as $\mathbb{C}\setminus\mathbb{R}$, and $G_\mu(\bar{z})= \overline{G_\mu(z)}$, hence it also maps $\mathbb{C}_-$ into $\mathbb{C}_+$.)  The Cauchy transform also satisfies $iy\,G_\mu(iy)\to 1$ as $y\uparrow\infty$, and these conditions characterize Cauchy transforms.

\begin{proposition} Let $G\colon \mathbb{C}_+\to\mathbb{C}_-$ be a holomorphic function with the property that $\lim_{y\to\infty} iy\,G(iy) = 1$.  Then there is a unique probability measure $\mu$ on $\mathbb{R}$ such that $G = G_\mu$.
\end{proposition}
\noindent More generally, the asymptotic $\frac1z$ behavior of Cauchy transforms holds along any path with $|z|\to\infty$ that stays in a cone $y>m|x|$, $m>0$ in the upper half plane.

In the case that a random variable $X$ has an exponential moment $E[\exp(\alpha|X|)]<\infty$ for some $\alpha>0$, the Cauchy transform $G_X$ has a Laurent series expansion
\begin{equation} \label{eq.mgf} G_X(z) = E\left[\frac{1}{z-X}\right] = \frac{1}{z}E\left[\frac{1}{1-X/z}\right] = \sum_{n\ge 0} E[X^n] z^{-(n+1)} \end{equation}
which converges locally uniformly for $|z|$ sufficiently large.  (Indeed, the ordinary moment generating function $M_X(z) = \sum_n E[X^n]z^n$ is given by $M_X(z) = \frac{1}{z}G_X(\frac{1}{z})$ in this case, and has positive radius of convergence.)

The Cauchy transform is well-defined even if $X$ has no moments at all; \eqref{eq.mgf} suggests that information about moments can be derived from certain limits or derivatives of the Cauchy transform.  We provide one simple such result here which will be useful in Section \ref{sec:inf.div}.

\begin{lemma} \label{lem.Cauchy.moment} Let $X\ge 0$ be a non-negative random variable.  Suppose that the limit
\[ \lim_{y\to\infty} iy\,(iy\,G_X(iy)-1) = r \]
exists and is in $\mathbb{R}$.  Then $X\in L^1$ and $E[X]=r$.   
\end{lemma}

\begin{proof} For any $z\in\mathbb{C}_+$, note that
\[ zG_X(z)-1 = zE\left[\frac{1}{z-X}\right]-1 = E\left[\frac{z}{z-X}\right]-E\left[\frac{z-X}{z-X}\right] = E\left[\frac{X}{z-X}\right]. \]
Hence
\[ z(zG_X(z)-1) = E\left[\frac{zX}{z-X}\right] = E\left[\frac{X}{1-X/z}\right]. \]
Taking $z=iy$ we have
\[ iy(iy\,G_X(iy)-1) = E\left[\frac{X}{1-X/iy}\right] = E\left[\frac{X(1+X/iy)}{1+X^2/y^2}\right]. \]
Since both $\frac{X}{1+X^2/y^2}$ and $\frac{X^2}{y(1+X^2/y^2)}$ are bounded hence $L^1$, this decomposes into its real and imaginary parts as
\[ iy(iy\,G_X(iy)-1) = E\left[\frac{X}{1+X^2/y^2}\right] - iE\left[\frac{X^2}{y(1+X^2/y^2)}\right]. \]
By assumption, the limit as $y\to\infty$ is equal to $r\in\mathbb{R}$, and so it follows that
\[ \lim_{y\to\infty} E\left[\frac{X}{1+X^2/y^2}\right] = r \quad\text{and}\quad \lim_{y\to\infty}E\left[\frac{X^2}{y(1+X^2/y^2)}\right]=0. \]
Since $X\ge 0$, $\frac{X}{1+X^2/t^2}$ increases to $X$ as $t\uparrow\infty$; thus, the result follows from the Monotone Convergence Theorem.
\end{proof}

The Cauchy transform of a probability measure $\mu$ completely characterizes $\mu$, and moreover this relationship is robust under weak limits.  The following is a complex analysis exercise; a detailed proof can be found in \cite[Section 8]{Kemp-RMT}.

\begin{proposition} \label{prop.Cauchy.robust} Let $\mu$ be a probability measure on $\mathbb{R}$, with Cauchy transform $G_\mu$.  The {\bf Stieltjes inversion formula} recovers $\mu$ from $G_\mu$ as the weak limit $\mu_\epsilon \rightharpoonup \mu$ as $\epsilon\downarrow 0$, where $\mu_\epsilon$ has real analytic density $\varrho_\epsilon$
\[ \varrho_\epsilon(x) =  -\frac1\pi\mathrm{Im}\, G_{\mu}(x+i\epsilon). \]

Let $\{\mu_n\}_{n=1}^\infty$ be a sequence of probability measures on $\mathbb{R}$.
\begin{enumerate}
\item \label{robust.1} If $\mu_n\rightharpoonup \mu$ for some probability measure $\mu$, then $G_{\mu_n}$ converges to $G_\mu$ uniformly on compact subsets of $\mathbb{C}_+$.
\item \label{robust.2} Conversely, if $G_{\mu_n}$ converges pointwise to a function $G$ that is analytic on $\mathbb{C}_+$, then $G=G_\mu$ for some finite measure $\mu$ with $\mu(\mathbb{R})\le 1$, and $\{\mu_n\}_{n=1}^\infty$ converges vaguely to $\mu$.
\end{enumerate}
\end{proposition}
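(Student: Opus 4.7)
My plan is to treat the three claims in sequence, with the Stieltjes inversion formula coming first since it provides the uniqueness mechanism needed in both subsequent parts. For the inversion formula, I would compute directly
\[ -\tfrac{1}{\pi}\mathrm{Im}\,G_\mu(x+i\epsilon) = \frac{1}{\pi}\int_{\mathbb{R}}\frac{\epsilon}{(x-t)^2+\epsilon^2}\,\mu(dt) = (P_\epsilon \ast \mu)(x), \]
where $P_\epsilon(u) = \frac{1}{\pi}\cdot\frac{\epsilon}{u^2+\epsilon^2}$ is the Poisson kernel on the upper half plane. Since $P_\epsilon$ is a probability density on $\mathbb{R}$ and $\{P_\epsilon\}_{\epsilon>0}$ is a standard approximate identity as $\epsilon\downarrow 0$, the measure $\mu_\epsilon$ with density $\varrho_\epsilon$ is a probability measure, and Fubini's theorem plus dominated convergence yield $\int f\,d\mu_\epsilon \to \int f\,d\mu$ for every $f\in C_b(\mathbb{R})$. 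A byproduct is that $G_\mu$ uniquely determines $\mu$.

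For part (\ref{robust.1}), pointwise convergence $G_{\mu_n}(z)\to G_\mu(z)$ at each fixed $z\in\mathbb{C}_+$ follows from the definition of weak convergence since $t\mapsto (z-t)^{-1}$ lies in $C_b(\mathbb{R})$. To upgrade to locally uniform convergence on $\mathbb{C}_+$, I would invoke Montel's theorem: the pointwise bound $|G_{\mu_n}(z)|\le 1/\mathrm{Im}\,z$ makes $\{G_{\mu_n}\}$ uniformly bounded on compact subsets, hence a normal family on $\mathbb{C}_+$; any subsequential limit is forced by the pointwise identification to equal $G_\mu$, so the full sequence converges locally uniformly.

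For part (\ref{robust.2}), Vitali's convergence theorem (applied to the normal family $\{G_{\mu_n}\}$ with pointwise limit the given analytic function $G$) again upgrades the hypothesis to locally uniform convergence. To produce the limiting measure, I would apply Helly's selection theorem to the bounded sequence $\{\mu_n\}$, extracting a subsequence $\mu_{n_k}$ converging vaguely to some positive Borel measure $\tilde\mu$ with $\tilde\mu(\mathbb{R})\le 1$. Since for each fixed $z\in\mathbb{C}_+$ the function $t\mapsto (z-t)^{-1}$ lies in $C_0(\mathbb{R})$, vague convergence gives $G_{\tilde\mu}(z) = \lim_k G_{\mu_{n_k}}(z) = G(z)$. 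The Stieltjes inversion computation extends verbatim to finite positive measures, so $G$ uniquely determines $\tilde\mu$; therefore every vaguely convergent subsequence of $\{\mu_n\}$ has the same limit $\mu$, and the full sequence converges vaguely to this $\mu$. The main subtlety lies precisely here: one must resist upgrading to \emph{weak} convergence, because mass can escape to infinity (e.g.\ $\mu_n=\delta_n$ yields $G_{\mu_n}\to 0$ with $\mu_n$ converging only vaguely to the zero measure), and the most delicate technical point is justifying $G_{\mu_{n_k}}\to G_{\tilde\mu}$ under mere vague convergence, which hinges on the vanishing of $(z-t)^{-1}$ at infinity for $z\in\mathbb{C}_+$.
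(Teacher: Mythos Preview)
Your proposal is correct and follows the standard route (Poisson kernel approximate identity for Stieltjes inversion; Montel/Vitali normality arguments for the two convergence statements; Helly selection plus $C_0$-vanishing of the resolvent kernel for the vague limit in part~(\ref{robust.2})). The paper itself does not supply a proof of this proposition: it declares it ``a complex analysis exercise'' and cites \cite[Section~8]{Kemp-RMT} for details, so there is no in-paper argument to compare against. Your sketch is exactly the kind of argument one would expect to find at that reference, and your closing remarks about the $\delta_n$ example and the role of $(z-t)^{-1}\in C_0(\mathbb{R})$ correctly identify the only genuinely delicate point.
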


\begin{example} \label{example.Cauchy.dist} Let $X$ be a Cauchy distributed random variable, with density $\varrho_X(x) = \frac{1}{\pi}\frac{1}{1+x^2}$.  Then its Cauchy transform is $G_X(z) = \frac{1}{z+i}$.  This can be computed from the definition using contour integrals; it also follows directly from Proposition \ref{prop.Cauchy.robust}.  Indeed, it is straightforward to check that the function $G(z) = \frac{1}{z+i}$ is a Cauchy transform, since $zG(z)\to 1$ as $|z|\to\infty$ and $G(\mathbb{C}_+) \subseteq \mathbb{C}_-$.  From the Stieljtes inversion formula, the measure whose Cauchy transform is $G$ has density
\[ -\lim_{\epsilon\downarrow 0} \frac{1}{\pi}\mathrm{Im} G(x+i\epsilon) = -\lim_{\epsilon\downarrow 0} \frac{1}{\pi}\mathrm{Im}\,\left(\frac{1}{x+i\epsilon +i}\right) = \frac{1}{\pi}\frac{1}{1+x^2}. \]
Thus, the unique measure whose Cauchy transform is $G(z) = \frac{1}{z+i}$ is the Cauchy distribution.

This gives the odd association that the Cauchy distribution behaves like a point mass at $-i$.  Indeed, it points out that the injectivity $\mu\mapsto G_\mu$ only holds for probability measures on $\mathbb{R}$, not generally for probability measures on $\mathbb{C}$.
\end{example}

\begin{remark} Example \ref{example.Cauchy.dist} shows that the limit in Lemma \ref{lem.Cauchy.moment} can exist (as a complex number) even if $X\notin L^1$.  Indeed, if $X$ has the Cauchy distribution,
\[ z(zG_X(z)-1) = z\left(z\cdot\frac{1}{z+i}-1\right) = \frac{-i}{1+z/i} \]
and so the limit as $|z|\to\infty$ exists and is equal to $-i$.  The Cauchy distribution does not have a finite moment, however.
\end{remark}

\subsection{The Voiculescu Transform and the $R$-transform}\label{sect:R.Voic.trans}

The domain of invertibility of a Cauchy transform will be important in our analysis.  For $\alpha,\beta>0$, the {\bf truncated cone} $\Gamma_{\alpha,\beta}$ (also known as a {\bf Stolz angle}) is
\[ \Gamma_{\alpha,\beta} = \{z=x+iy\in\mathbb{C}_+\colon \alpha y>|x|, |z|>\beta\}. \]
Then $zG_\mu(z)\to 1$ for any path $z\in\Gamma_{\alpha,\beta}$ with $|z|\to\infty$.  It is often convenient to work with the {\bf reciprocal Cauchy transform} mentioned above: $F_\mu = 1/G_{\mu}$; which is an analytic self-map of the upper half plane $\mathbb{C}_+$ satisfying $F_\mu(z)\sim z$ for $z$ large in some $\Gamma_{\alpha,\beta}$.  This asymptotic behavior suggests that $F_\mu$ (and hence $G_\mu$) is an invertible function for sufficiently large $\alpha,\beta$, and this is indeed true.

\begin{proposition}[Proposition 5.4 in \cite{bercovici1993free}] \label{prop.Stolzify} If $\mu$ is any probability measure on $\mathbb{R}$, and $\alpha>0$, then for any $\epsilon\in(0,\alpha)$, there exists some $\beta>0$ such that $F_\mu$ is univalent in $\Gamma_{\alpha,\beta}$ (i.e.\ is one-to-one and has an analytic inverse), with range $F_\mu(\Gamma_{\alpha,\beta})\supseteq \Gamma_{\alpha-\epsilon,\beta(1+\epsilon)}$.
\end{proposition}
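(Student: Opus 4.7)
The central observation is that $F_\mu(z)/z \to 1$ non-tangentially, uniformly in any Stolz angle; this is equivalent to the defining asymptotic $zG_\mu(z) \to 1$ from the characterization of Cauchy transforms.  My plan is to first promote this estimate to a companion bound on the derivative: applying Cauchy's integral formula to $F_\mu(z) - z$ on a disk of radius comparable to $\mathrm{Im}(z)$ that remains inside a slightly wider Stolz angle gives $F_\mu'(z) \to 1$ non-tangentially as well.  Thus, for any $\alpha > 0$ and any prescribed $\delta > 0$, one can choose $\beta$ large enough that both $|F_\mu(z) - z| \le \delta |z|$ and $|F_\mu'(z) - 1| \le \delta$ hold uniformly for $z \in \Gamma_{\alpha,\beta}$.

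Univalence on $\Gamma_{\alpha,\beta}$ follows once $\delta < 1/2$: the cone $\{\alpha y > |x|\}$ is convex, so any two points $z_1, z_2 \in \Gamma_{\alpha,\beta}$ are joined by a line segment that stays in the cone and in $\mathbb{C}_+$, and the line-integral identity
\[ F_\mu(z_2) - F_\mu(z_1) = (z_2 - z_1)\int_0^1 F_\mu'(z_1 + t(z_2 - z_1))\,dt \]
yields $|F_\mu(z_2) - F_\mu(z_1)| \ge (1 - \delta)|z_2 - z_1|$, forcing $z_1 = z_2$.  Taking $\beta$ slightly larger at the start absorbs the loss in $|z|$ that occurs when the segment dips below the sphere $|z| = \beta$, so that the derivative estimate remains valid along the whole path.

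For the range inclusion, given $w \in \Gamma_{\alpha-\epsilon,\beta(1+\epsilon)}$, I would use a contraction mapping argument.  The operator $\Psi_w(z) = z - F_\mu(z) + w$ has derivative $\Psi_w'(z) = 1 - F_\mu'(z)$ of modulus at most $\delta < 1$, so it is a contraction, and any fixed point $z_w$ solves $F_\mu(z_w) = w$.  Starting the iteration from $z_0 = w$ and using $|F_\mu(z) - z| \le \delta|z|$, a short computation bounds $|z_w - w| \le \delta |z_w|/(1-\delta)$; the magnitude condition $|z_w| > \beta$ and the cone opening $\alpha\,\mathrm{Im}(z_w) > |\mathrm{Re}(z_w)|$ then follow from the corresponding conditions on $w$ together with the gap $\epsilon$ between the cones, provided $\delta$ is chosen small enough as a function of $\alpha$ and $\epsilon$.

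The main obstacle is the careful bookkeeping needed to transfer estimates between nested cones: the non-tangential bound on $F_\mu - z$ naturally lives on one Stolz angle, the derivative estimate needed for Cauchy's formula lives on a slightly wider one, and both the univalence and surjectivity arguments consume further margin in the $\alpha$ and $\beta$ parameters.  Choosing $\delta$ small enough in terms of $\alpha$ and $\epsilon$, and then $\beta$ large enough in terms of $\alpha$, $\epsilon$, $\delta$, and $\mu$, closes the loop.
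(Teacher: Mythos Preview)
The paper does not prove this proposition; it is quoted verbatim from \cite{bercovici1993free} and stated without proof as background.  So there is no ``paper's own proof'' to compare against.

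That said, your outline is essentially the standard argument (and matches the spirit of the proof in Bercovici--Voiculescu).  The ingredients --- non-tangential $F_\mu(z)/z\to 1$, Cauchy's estimate on disks of radius $\sim\mathrm{Im}\,z$ to get $F_\mu'(z)\to 1$, convexity of the untruncated cone for the line-integral injectivity bound, and a contraction/fixed-point argument for surjectivity onto the smaller cone --- are exactly the right ones, and your discussion of the nested-cone bookkeeping identifies the only real subtlety.  Two small points worth tightening: (i) for the contraction step you should explicitly exhibit an invariant closed ball (or region) on which $\Psi_w$ is a self-map, since the derivative bound is only known on the cone and the iterates must be kept there; (ii) the segment between two points of $\Gamma_{\alpha,\beta}$ can dip to $|z|\ge\beta/\sqrt{1+\alpha^2}$, so ``slightly larger $\beta$'' should be quantified as a fixed multiplicative factor depending on $\alpha$.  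Both are routine once noticed.
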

\noindent That is: on a sufficiently truncated cone, $F_\mu$ has an analytic inverse whose domain includes a truncated cone arbitrarily close to the domain of $F_\mu$.

On an appropriate truncated cone, $F_{\mu}^{-1}$ is an analytic map, and given that $F_{\mu}(z)\sim z$, the same holds true for $F_{\mu}^{-1}$.  The {\bf Voiculescu transform} $\varphi_\mu$, introduced on page \pageref{page.voiculescu}, is defined, a priori on some truncated cone $\Gamma_{\alpha,\beta}$, by 
\begin{align} \label{def:Voiculescu.trans}\varphi_\mu(z) = F_{\mu}^{-1}(z)-z \quad \mbox{for $z \in \Gamma_{\alpha,\beta}$.}
\end{align}
From Proposition \ref{prop.Stolzify}, the domain of $\varphi_\mu$ can be taken as a truncated cone $\Gamma_{\alpha,\beta_\alpha}$ for any $\alpha>0$ (given large enough $\beta_\alpha$); and the conjugate symmetry implies the vertical reflection in the lower half plane can be included as well.  So $\varphi_\mu$ is defined on the union over $\alpha>0$ of such truncated cones.

The Voiculescu transform satisfies $\varphi_{\mu}(z)/z \to 0$ as $z\to\infty$ in its domain. Moreover, this convergence (and any truncated cone in the domain) is robust under weak convergence of measures.

\begin{proposition}[Proposition 5.7 in \cite{bercovici1993free}] \label{prop.uniform.Stolz} A sequence $\{\mu_n\}_{n=1}^\infty$ of probability measures on $\mathbb{R}$ converges weakly $\mu_n\rightharpoonup\mu$ to some probability measure $\mu$ if and only if there are $\alpha,\beta>0$ such that $\varphi_{\mu_n}$ all converge uniformly on compact subsets of $\Gamma_{\alpha,\beta}$ to a function $\varphi$ and $\varphi_{\mu_n}(z) = o(z)$ uniformly in $n$ as $|z|\to\infty$, $z\in\Gamma_{\alpha,\beta}$.  In this case, $\varphi = \varphi_{\mu}$ in $\Gamma_{\alpha,\beta}$. \end{proposition}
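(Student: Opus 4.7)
The plan is to deduce the statement from Propositions 2.1 and 2.2, together with a standard normal-families/Hurwitz argument relating uniform convergence of analytic functions to convergence of their inverses. Throughout, the key dictionary is: weak convergence of $\mu_n$ corresponds (via Proposition 2.1) to locally uniform convergence of $G_{\mu_n}$ on $\mathbb{C}_+$, hence of $F_{\mu_n} = 1/G_{\mu_n}$, and Proposition 2.2 upgrades this to convergence of the inverses $F_{\mu_n}^{-1}$, hence of $\varphi_{\mu_n} = F_{\mu_n}^{-1}(z) - z$, on a suitable truncated cone.

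For the forward direction, assume $\mu_n \rightharpoonup \mu$. Proposition 2.1(1) gives $G_{\mu_n} \to G_{\mu}$ uniformly on compact subsets of $\mathbb{C}_+$, and since Cauchy transforms are non-vanishing on $\mathbb{C}_+$, reciprocation yields $F_{\mu_n} \to F_{\mu}$ uniformly on compacts. Given $\alpha > 0$ and $\epsilon \in (0, \alpha)$, choose $\beta$ via Proposition 2.2 so that $F_{\mu}$ is univalent on $\Gamma_{\alpha, \beta}$ with range containing $\Gamma_{\alpha - \epsilon, \beta(1+\epsilon)}$. A Hurwitz/argument-principle argument applied to $F_{\mu_n}(z) - w$ for $w$ in the range then shows that for all sufficiently large $n$, each $F_{\mu_n}$ is also univalent on a slightly shrunk cone with range still containing $\Gamma_{\alpha - \epsilon, \beta(1+\epsilon)}$. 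Standard compactness then passes uniform convergence to the inverses: $F_{\mu_n}^{-1} \to F_{\mu}^{-1}$ uniformly on compacts of $\Gamma_{\alpha - \epsilon, \beta(1+\epsilon)}$, whence $\varphi_{\mu_n} \to \varphi_{\mu}$. The uniform $o(z)$ statement comes from tightness of $\{\mu_n\}$ (Prokhorov), combined with the representation of $F_{\nu}(z) - z$ as an integral against $\nu$ that decays uniformly over tight families.

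For the reverse direction, assume the right-hand conditions. Uniform convergence $\varphi_{\mu_n} \to \varphi$ on compacts of $\Gamma_{\alpha, \beta}$ yields $F_{\mu_n}^{-1}(z) = z + \varphi_{\mu_n}(z) \to H(z) := z + \varphi(z)$ uniformly on compacts, and the uniform $o(z)$ hypothesis gives $H(z) \sim z$ as $|z| \to \infty$ within the cone, so $H$ is univalent on a smaller truncated cone. Inverting, $F_{\mu_n} \to H^{-1} =: F$, hence $G_{\mu_n} \to 1/F$, uniformly on compacts of some $\Gamma_{\alpha', \beta'}$. But $\{G_{\mu_n}\}$ is a normal family on all of $\mathbb{C}_+$ (each $|G_{\mu_n}(z)| \le 1/\operatorname{Im} z$), so any subsequence has a further subsequence converging locally uniformly on $\mathbb{C}_+$ to some analytic function; by the identity principle, this limit must agree with $1/F$ on $\Gamma_{\alpha', \beta'}$, hence is uniquely determined on $\mathbb{C}_+$. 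Consequently the full sequence $G_{\mu_n}$ converges locally uniformly on $\mathbb{C}_+$ to some $G$. Proposition 2.1(2) identifies $G = G_{\mu}$ for a finite measure $\mu$ with $\mu(\mathbb{R}) \le 1$, and the hypothesis $\varphi_{\mu_n}(z) = o(z)$ uniformly forces $zG(z) \to 1$ along cones, upgrading $\mu$ to a probability measure. Stieltjes inversion then gives $\mu_n \rightharpoonup \mu$, with $\varphi = \varphi_{\mu}$ on $\Gamma_{\alpha, \beta}$.

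The main obstacle is the reverse implication: pointwise (or even locally uniform) convergence of $\varphi_{\mu_n}$ on a truncated cone is a much weaker input than convergence on all of $\mathbb{C}_+$, so one must use normality plus the identity principle to lift the convergence, and crucially must leverage the uniform $o(z)$ hypothesis to exclude escape of mass at infinity in the limit. The forward direction, by contrast, is mostly bookkeeping once Proposition 2.2 and Hurwitz are in hand, with the subtle point being the uniformity of $o(z)$ across $n$ — exactly the statement that weak convergence plus tightness rule out uniform loss of control at infinity.
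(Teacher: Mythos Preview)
The paper does not give its own proof of this proposition: it is quoted verbatim as Proposition~5.7 of Bercovici--Voiculescu \cite{bercovici1993free} and used as a black box, with no argument supplied. So there is nothing in the paper to compare your proposal against.

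That said, your sketch follows essentially the strategy of the original Bercovici--Voiculescu proof (normal families, Hurwitz, passing to inverses on a truncated cone, and using the uniform $o(z)$ hypothesis to rule out loss of mass). One small inaccuracy: in the forward direction you write ``the representation of $F_\nu(z)-z$ as an integral against $\nu$,'' but the object you need to control is $\varphi_\nu(z)=F_\nu^{-1}(z)-z$, not $F_\nu(z)-z$. The uniform $o(z)$ estimate in \cite{bercovici1993free} is obtained by a more explicit argument bounding $|\varphi_\nu(z)|$ in terms of the tail $\nu(\{t:|t|>\epsilon|z|\})$ and then invoking tightness; your appeal to ``tightness plus an integral representation'' gestures at this but would need to be made precise at the level of $F_\nu^{-1}$ rather than $F_\nu$.
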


The {\bf $R$-transform} $R_\mu$ is defined in term of the Voiculescu transform as $R_\mu(z) = \varphi_\mu(1/z)$; from \eqref{def:Voiculescu.trans}, the domain of $R_\mu$ contains the reciprocal image of the above-described region, which is an open disk centered at the origin with narrow regions along the real axis possibly removed.  Equation \eqref{def:Voiculescu.trans} defining the Voiculescu transform $\varphi_\mu$ on a precise domain can be rephrased as an implicit equation for $\varphi_\mu$ in terms of the reciprocal Cauchy transform $G_\mu$; correspondingly, $R_\mu$ satisfies an implicit equation in terms of the Cauchy transform $G_\mu$:

\begin{align}\label{Grr:relations}
F_\mu(\varphi_\mu(z)+z) = z, \qquad 
G_X\left(R_X(z)+1/z\right)=z.
\end{align}

The key property of the Voiculescu transform and the $R$-transform is that they linearize free independence:
\[ \varphi_{\mu\boxplus\nu}(z) = \varphi_{\mu}(z) + \varphi_{\nu}(z), \qquad R_{\mu\boxplus\nu}(z) = R_\mu(z)+R_\nu(z) \]
for all $z$ in appropriate domains where the transforms make sense.  This fact was proved for compactly-supported measures, the only kind for which the combinatorial definition (Definition \ref{def:freeness}) of freeness makes sense a priori, in \cite{Voiculescu86}.  Later it was shown \cite{bercovici1993free,Maassen} that for {\em any} probability measures $\mu,\nu$ on $\mathbb{R}$, there is a unique probability measure $\upsilon$ such that $\varphi_{\mu} + \varphi_{\nu} = \varphi_{\upsilon}$ in a truncated cone; this allows the extension of free convolution to all probability measures, defining $\mu\boxplus\nu = \upsilon$.  To be clear: $\mu\boxplus\nu$ is defined by computing $\varphi_\mu$ and $\varphi_\nu$ (each determined completely by its measure, through $F_\mu$ and $F_\nu$), summing, and then computing $F_{\mu\boxplus\nu}^{-1}(z) = \varphi_{\mu}(z) +\varphi_{\nu}(z)+z$ on an appropriate truncated cone.  By Proposition \ref{prop.Stolzify} and the above-mentioned papers, the inverse of this function coincides with reciprocal Cauchy transform $F_{\upsilon}$ of a unique probability measure $\upsilon$ on a closely related truncated cone. This function has an analytic continuation to all of $\mathbb{C}_+$, and then (taking reciprocal) the measure $\upsilon = \mu\boxplus\nu$ can be recovered via the Stieltjes inversion formula.

If $\mu$ is compactly-supported, the domain of analyticity of the $R$-transform $R_\mu$ always includes an open disk centered at the origin.  Its Taylor coefficients at the origin are polynomial functions of the moments of $\mu$, called the {\bf free cumulants}; indeed, the first three are the classical cumulants (mean, variance, skewness), while they disagree from the fourth on.  The $R$-transform is the free analog of the cumulant-generating function (i.e.\ the log-characteristic function). The relationship between $R_\mu$ and $G_\mu$ \eqref{Grr:relations} is more complicated than the simple exponential relationship between the characteristic function $\psi_\mu$ and the cumulant-generating function $\log\psi_\mu$, but it is still possible in many cases to use this relationship for exact computation.

\begin{remark} As with the Cauchy transform and its reciprocal, we abuse notation with the Voiculescu transform and $R$-transform and write $\varphi_X = \varphi_{\mu_X}$ and $R_X = R_{\mu_X}$, for a (potentially unbounded) selfadjoint random variable $X$.  This gives the concise statement that two random variables $X_1,X_2$ are freely independent if and only if $R_{X_1+X_2} = R_{X_1}+R_{X_2}$.
\end{remark}

Like the Cauchy transform (Proposition \ref{prop.Cauchy.robust}), analytic regularity of the $R$-transform shows that free convolution is robust under weak limits.

\begin{proposition}[Proposition 5.7 in \cite{bercovici1993free}] \label{prop.boxplus.robust} 
Let $\mu_n,\mu,\nu_n,\nu$ be probability measures on $\mathbb{R}$, and suppose $\mu_n\rightharpoonup\mu$ and $\nu_n\rightharpoonup\nu$.  Then $\mu_n\boxplus\nu_n \rightharpoonup \mu\boxplus\nu$.
\end{proposition}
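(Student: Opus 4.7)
The plan is to reduce the statement to the characterization of weak convergence given in Proposition \ref{prop.uniform.Stolz}, using the fact that free convolution is linearized by the Voiculescu transform on an appropriate truncated cone. The workflow is: convert weak convergence on each side into uniform Voiculescu-transform convergence on a common cone; add; convert back.

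First, I would apply Proposition \ref{prop.uniform.Stolz} separately to the hypotheses $\mu_n\rightharpoonup\mu$ and $\nu_n\rightharpoonup\nu$. This produces constants $\alpha_i,\beta_i>0$ (for $i=1,2$) such that on the truncated cones $\Gamma_{\alpha_1,\beta_1}$ and $\Gamma_{\alpha_2,\beta_2}$, the sequences $\varphi_{\mu_n}$ and $\varphi_{\nu_n}$ converge uniformly on compacts to $\varphi_\mu$ and $\varphi_\nu$ respectively, with the auxiliary $o(z)$-control uniform in $n$. Taking $\alpha=\min(\alpha_1,\alpha_2)$ and $\beta=\max(\beta_1,\beta_2)$, we have $\Gamma_{\alpha,\beta}\subseteq\Gamma_{\alpha_1,\beta_1}\cap\Gamma_{\alpha_2,\beta_2}$, so both families are defined and well-behaved on a single truncated cone $\Gamma_{\alpha,\beta}$.

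Next, I would add the two convergences. Since the sum of two functions that are $o(z)$ uniformly in $n$ as $|z|\to\infty$ in $\Gamma_{\alpha,\beta}$ is again $o(z)$ uniformly in $n$, the sums $\varphi_{\mu_n}+\varphi_{\nu_n}$ converge uniformly on compact subsets of $\Gamma_{\alpha,\beta}$ to $\varphi_\mu+\varphi_\nu$ with the required uniform $o(z)$-control. By the linearization property of the Voiculescu transform for free convolution, one has $\varphi_{\mu_n}+\varphi_{\nu_n}=\varphi_{\mu_n\boxplus\nu_n}$ and $\varphi_\mu+\varphi_\nu=\varphi_{\mu\boxplus\nu}$, after possibly shrinking the cone to a common sub-cone $\Gamma_{\alpha',\beta'}$ on which all of these identities are valid as established in the construction of free convolution via Proposition \ref{prop.Stolzify}. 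Applying the converse direction of Proposition \ref{prop.uniform.Stolz} to the sequence $\{\mu_n\boxplus\nu_n\}$ then yields $\mu_n\boxplus\nu_n\rightharpoonup\mu\boxplus\nu$.

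The main obstacle is bookkeeping of the truncated cones: a priori, each measure $\mu_n$ carries its own cone of definition for $\varphi_{\mu_n}$, and naively these cones could shrink as $n$ varies. The content of Proposition \ref{prop.uniform.Stolz} is precisely that weak convergence pins the cone down uniformly in $n$, which is what makes the proof work; once this uniformity is in hand, the remainder reduces to linearity of convergence and a reapplication of the same proposition in the opposite direction.
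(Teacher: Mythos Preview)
The paper does not supply its own proof of this proposition; it is quoted directly from \cite{bercovici1993free} (note that both Proposition~\ref{prop.uniform.Stolz} and Proposition~\ref{prop.boxplus.robust} are attributed to the same Proposition~5.7 there). Your argument is correct and is precisely the intended route: the characterization of weak convergence via uniform convergence of Voiculescu transforms on a common truncated cone (Proposition~\ref{prop.uniform.Stolz}) together with additivity $\varphi_{\mu\boxplus\nu}=\varphi_\mu+\varphi_\nu$ immediately yields continuity of $\boxplus$.

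The one place worth tightening is your ``possibly shrinking the cone'' step. To apply the converse direction of Proposition~\ref{prop.uniform.Stolz} to the sequence $\{\mu_n\boxplus\nu_n\}$, you need the functions $\varphi_{\mu_n\boxplus\nu_n}$ themselves (not merely the sums $\varphi_{\mu_n}+\varphi_{\nu_n}$) to be defined on a cone independent of $n$. This follows because the uniform $o(z)$ bound on $\varphi_{\mu_n}+\varphi_{\nu_n}$ forces $z\mapsto z+\varphi_{\mu_n}(z)+\varphi_{\nu_n}(z)$ to be uniformly close to the identity on $\Gamma_{\alpha,\beta}$, so Proposition~\ref{prop.Stolzify} produces a single sub-cone on which each $F_{\mu_n\boxplus\nu_n}$ is invertible and the identity $\varphi_{\mu_n\boxplus\nu_n}=\varphi_{\mu_n}+\varphi_{\nu_n}$ holds. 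You gesture at this, and it is exactly the bookkeeping you flag as the main obstacle; making it explicit would complete the argument.
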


\subsection{Free Poisson Distributions} \label{sect:FP}

One of the core laws in classical probability is the Poisson distribution: $\mathrm{Poiss}(\lambda) = e^{-\lambda}\sum_{n=0}^\infty \frac{\lambda^n}{n!}\delta_n$.  It arises in many contexts parallel to the way the normal distribution does, since it is a universal scaling limit (in a different scaling regime from the normal).  In simplest terms, the Poisson limit theorem states that $\mathrm{Poiss}(\lambda)$ approximates an i.i.d.\ sum of $n$ Bernoulli trials with success rate $\lambda/n$; it is the weak limit
\begin{equation} \label{e.Poiss.lim} \mathrm{Poiss}(\lambda) = \lim_{n\to\infty} \left(\left(1-\frac{\lambda}{n}\right)\delta_0 + \frac{\lambda}{n}\delta_1\right)^{\ast n} \end{equation}
(where, in \eqref{e.Poiss.lim}, the $\ast$ in the exponent refers to convolution).  More generally, {\em compound Poisson} distributions are achieved by replacing $\delta_1$ in \eqref{e.Poiss.lim} with any probability measure $\upsilon$, thence denoted the {\em jump distribution}.

{\em Free Poisson} distributions are defined using the same limit procedure as \eqref{e.Poiss.lim} but with free convolution.

\begin{definition} \label{Def.FP} Let $\lambda>0$ and let $\upsilon$ be a probability distribution on $\mathbb{R}$.  The {\bf compound free Poisson} distribution $\mathrm{FP}(\lambda,\upsilon)$ is defined to be the following weak limit of free convolution powers of mixtures of $\delta_0$ and $\upsilon$:
\[ \mathrm{FP}(\lambda,\upsilon):=\lim_{n\to\infty} \left(\left(1-\frac{\lambda}{n}\right)\delta_0 + \frac{\lambda}{n}\upsilon\right)^{\boxplus n}. \]
The parameters $\lambda$ and $\upsilon$ are called the {\em rate} and {\em jump distribution} as in the classical case (although there are no jumps in the free case).
\end{definition}

The $R$-transform can be used to characterize $\mathrm{FP}(\lambda,\upsilon)$; as shown in \cite[Proposition 12.15]{Nica-Speicher-96},
\begin{equation} \label{eq:R.FP} R_{\mathrm{FP}(\lambda,\upsilon)}(z) = \lambda \int_{\mathbb{R}}\frac{t}{1-tz}\,\upsilon(dt). \end{equation}
This shows that $\mathrm{FP}(\lambda,\nu)$ has as many finite moments as $\upsilon$ does.  Compound free Poisson distributions will play an important role in Section \ref{sec:infdiv}.

If $M_j \equaldist \mathrm{FP}(\lambda_j,\upsilon_j)$ are freely independent for $1\le j\le n$, it follows easily from Definition \ref{Def.FP} and $R$-transform calculations that the sum $M_1+\cdots+M_n$ is distributed as $\mathrm{FP}(\lambda,\upsilon)$ where $\lambda = \sum_{j=1}^n \lambda_j$ and $\nu$ is the mixture $\nu = \sum_{j=1}^n \frac{\lambda_j}{\lambda} \upsilon_j$. 
(Nearly the same calculation, using characteristic functions, shows that sums of independent compound free Poisson random variables are compound free Poisson with the sum of the rates and the appropriate mixture of the jump distributions).  Since every measure is approximated by a mixture of finitely-supported measures, in principle any compound free Poisson can be built as a linear combination of free Poisson distributions are given by the two parameter family
\[ \mathrm{FP}(\lambda,\alpha) := \mathrm{FP}(\lambda,\delta_\alpha) = \lim_{n\to\infty} \left(\left(1-\frac{\lambda}{n}\right)\delta_0 + \frac{\lambda}{n}\delta_\alpha\right)^{\boxplus n}. \]
(One could similarly define a two parameter family of classical Poisson distributions, but a simple calculation shows that $\mathrm{Poiss}(\lambda,\delta_\alpha)$ is just the dilation of $\mathrm{Poiss}(\lambda)$ by $\alpha$.  The same does not hold true for free Poissons.)  Equation \ref{eq:R.FP} shows that
\begin{equation} \label{eq:R.FP.a} R_{\mathrm{FP}(\lambda,\alpha)}(z) = \frac{\lambda\alpha}{1-\alpha z}. \end{equation}
From here it follows that the mean of $\mathrm{FP}(\lambda,\alpha)$ is $\lambda\alpha$, and the variance is $\lambda\alpha^2$.  (In general, the free cumulants of $\mathrm{FP}(\lambda,\nu)$ are $\lambda$ times the moments of $\nu$ -- this is the power series interpretation of \eqref{eq:R.FP}.)

From the $R$-transform \eqref{eq:R.FP.a}, we can explicit computation of the Cauchy transform of $\mathrm{FP}(\lambda,\alpha)$.  Indeed, from \eqref{Grr:relations}, we have that $G_{\mathrm{FP}(\lambda,\alpha)}$ is the function inverse of $\frac{\lambda\alpha}{1-\alpha z} + \frac{1}{z}$; in other words
\begin{equation} \label{eq:FP.intermediate} \frac{\lambda\alpha}{1-\alpha G_{\mathrm{FP}(\lambda,\alpha)}(z)}+\frac{1}{G_{\mathrm{FP}(\lambda,\alpha)}(z)} = z. \end{equation}
This is a quadratic equation, whose explicit solution (with the correct asymptotic and mapping behavior for a Cauchy transform) is
\begin{equation} \label{e.FP.Cauchy} G_{\mathrm{FP}(\lambda,\alpha)}(z) = \frac{z+(1-\lambda)\alpha-\sqrt{(z-(1+\lambda)\alpha)^2-4\alpha^2\lambda}}{2\alpha z}. \end{equation}
Here the square root has a branch cut along $[0,\infty)$, i.e.\ $\sqrt{re^{i\theta}} = r^{1/2}e^{i\theta/2}$ with $\theta\in[0,2\pi)$.  Using the Stiletjes inversion formula, this yields a law with a continuous compactly supported density $\varrho_{\lambda,\alpha}$ in the case that $\lambda\ge 1$:
\[ \varrho_{\lambda,\alpha}(x) = \frac{\sqrt{4\lambda\alpha^2-(x-\alpha(1+\lambda))^2}}{2\pi\alpha x}, \qquad   \alpha(1-\sqrt{\lambda})^2 \le x \le \alpha(1+\sqrt{\lambda})^2. \]
When $\lambda<1$, this density has total mass $\lambda$, and here $\mathrm{FP}(\lambda,\alpha)$ is a mixture of this density and a point mass at $0$ of weight $1-\lambda$.  (This follows from the elementary calculation $G_{\mathrm{FP}(\lambda,\alpha)}(z) = G_{\mathrm{FP}(1/\lambda,\alpha)}( z/\lambda)+(1-\lambda)\frac{1}{z}$.)

It is also worth noting that, in the special case $\alpha=1$, this law coincides with the {\bf Marchenko--Pastur} distribution of aspect ratio $\lambda$; i.e.\ it is the large-$n$ limit of the density of singular values of an $m(n)\times n$ matrix all of whose entries are i.i.d.\ with mean $0$ and variance $1/n$, where $m(n)/n\to \lambda$ as $n\to\infty$.

\subsection{Subordination and Free Conditional Expectation}

The $R$-transform and Voiculescu transform of $\mu\boxplus\nu$ are easily described in terms of those of $\mu$ and $\nu$ separately, but the implicit relationship between these and the Cauchy transform makes it harder to compute directly with $G_{\mu\boxplus\nu}$.  There is a function relating them, known as the {\bf subordinator}.

\begin{proposition}[Proposition 4.4 in \cite{Voiculescu-analogues-I}; Theorem 3.1 in \cite{beyonce}; Proposition 7.3 in \cite{BV3}]\label{prop:subordinator.def} Let $\mu,\nu$ be probability measures on $\mathbb{R}$.  There is a unique analytic function $\omega_{\mu,\nu}\colon\mathbb{C}_+\to\mathbb{C}_+$ that satisfies
\[ G_{\mu\boxplus\nu}(z) = G_\mu(\omega_{\mu,\nu}(z))\qmq{or equivalently} F_{\mu\boxplus\nu}(z) = F_\mu(\omega_{\mu,\nu}(z)), \quad z\in\mathbb{C}_+. \]
Moreover
\[ F_{\mu\boxplus\nu}(z) = F_\mu(\omega_{\mu,\nu}(z)) = F_\nu(\omega_{\nu,\mu}(z)) = \omega_{\mu,\nu}(z) + \omega_{\nu,\mu}(z) - z. \]
If $V\equaldist \mu$, $W\equaldist \nu$ are freely independent random variables, we denote $\omega_{V,W} = \omega_{\mu,\nu}$.
\end{proposition}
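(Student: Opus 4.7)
The plan is to construct $\omega_{\mu,\nu}$ first on a sufficiently truncated Stolz cone using the Voiculescu linearization, then to extend it analytically to all of $\mathbb{C}_+$ via a fixed point argument, and to derive the stated four-way equality as a byproduct. By Proposition \ref{prop.Stolzify}, I would first fix a truncated cone $\Gamma_{\alpha,\beta}$ on which each of $F_\mu$, $F_\nu$, and $F_{\mu\boxplus\nu}$ is univalent with analytic inverse defined on a nearly identical cone. Rewriting the Voiculescu linearization $\varphi_{\mu\boxplus\nu}=\varphi_\mu+\varphi_\nu$ using $\varphi_\rho(z)=F_\rho^{-1}(z)-z$ gives
\[ F_{\mu\boxplus\nu}^{-1}(z) = F_\mu^{-1}(z) + F_\nu^{-1}(z) - z \]
on a common cone. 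Evaluating at $z = F_{\mu\boxplus\nu}(w)$ for $w$ in a smaller cone and setting
\[ \omega_{\mu,\nu}(w) := F_\mu^{-1}(F_{\mu\boxplus\nu}(w)), \qquad \omega_{\nu,\mu}(w) := F_\nu^{-1}(F_{\mu\boxplus\nu}(w)), \]
immediately yields the pair of identities $F_\mu(\omega_{\mu,\nu}(w)) = F_{\mu\boxplus\nu}(w) = F_\nu(\omega_{\nu,\mu}(w))$ together with the sum identity $\omega_{\mu,\nu}(w) + \omega_{\nu,\mu}(w) - w = F_{\mu\boxplus\nu}(w)$. This establishes the full four-way equality on $\Gamma_{\alpha,\beta}$.

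To extend $\omega_{\mu,\nu}$ from the truncated cone to all of $\mathbb{C}_+$, I would follow the dynamical argument of Biane. Setting $H_\rho(z) := F_\rho(z) - z$, for each fixed $z\in\mathbb{C}_+$ one defines
\[ T_z\colon \mathbb{C}_+ \to \mathbb{C}_+, \qquad T_z(w) := H_\nu(H_\mu(w)+z)+z. \]
The Nevanlinna representation of a reciprocal Cauchy transform guarantees $\mathrm{Im}\,H_\rho(z)\ge 0$ on $\mathbb{C}_+$, so $T_z$ is a holomorphic self-map of $\mathbb{C}_+$, and the asymptotic $F_\rho(w) = w+o(w)$ in Stolz angles prevents $T_z$ from being an elliptic M\"obius automorphism. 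By the Denjoy--Wolff theorem, iterating $T_z$ from any starting point converges to a unique fixed point $\omega(z)\in\mathbb{C}_+$, with the boundary case ruled out by the same growth estimates. Holomorphic dependence of $\omega(z)$ on the parameter $z$ follows from the Earle--Hamilton theorem (or a direct implicit function argument on the fixed point equation). A short computation shows this fixed point agrees with the cone-defined $\omega_{\mu,\nu}$ on $\Gamma_{\alpha,\beta}$, so $\omega$ is the desired analytic extension. Uniqueness is then immediate: any other candidate must agree with $F_\mu^{-1}\circ F_{\mu\boxplus\nu}$ on $\Gamma_{\alpha,\beta}$ (where $F_\mu$ is univalent), and hence with $\omega_{\mu,\nu}$ on all of $\mathbb{C}_+$ by analytic continuation.

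The main obstacle is this global extension: one must verify that the Denjoy--Wolff fixed point genuinely lies in $\mathbb{C}_+$ rather than on $\mathbb{R}\cup\{\infty\}$, and that $T_z$ is not a parabolic automorphism, for which careful growth estimates on $F_\mu$ and $F_\nu$ near the real line are required. The compactly-supported case treated in \cite{Voiculescu-analogues-I,beyonce} is substantially easier because $H_\rho$ extends analytically across $\mathbb{R}\setminus\mathrm{supp}(\rho)$; the general (possibly unbounded) setting of \cite{BV3} demands a more delicate Schwarz--Pick style argument to ensure the contraction is uniform in $z$ on compact subsets.
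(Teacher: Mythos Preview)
Your sketch is correct and follows the standard approach in the cited references, particularly Biane's Denjoy--Wolff fixed-point argument. However, the paper does not actually prove this proposition: it is stated as a citation from \cite{Voiculescu-analogues-I,beyonce,BV3}, followed only by the informal remark that ``these properties follow simply from the defining relation of the subordinator, which can informally be written as $\omega_{\mu,\nu} = F_\mu^{-1}\circ F_{\mu\boxplus\nu}$,'' together with the observation that this definition only works on a truncated cone and that the full analyticity on $\mathbb{C}_+$ is the essential content. So there is no paper-proof to compare against; your proposal is a faithful reconstruction of what the cited sources do.
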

In principal, these properties follow simply from the defining relation of the subordinator, which can informally be written as $\omega_{\mu,\nu} = F_\mu^{-1}\circ F_{\mu\boxplus\nu}$.  Using Proposition \ref{prop.Stolzify}, this equality rigorously defines $\omega_{\mu,\nu}$ on an appropriate truncated cone, but the key to the use of subordinators is their analyticity on the whole upper half plane.

Like Cauchy transforms (and other analytic transforms defined above), subordinators are robust under weak convergence.  This fact is apparently folklore in the literature; we provide a simple proof outline below.

\begin{proposition}  \label{prop:subordinator}
If $\{\mu_n\}_{n=1}^\infty$ and $\{\nu_n\}_{n=1}^\infty$ are sequences of probability measures with weak limits $\mu_n \rightharpoonup \mu$, $\nu_n\rightharpoonup \nu$, then $\omega_{\mu_n,\nu_n}\to\omega_{\mu,\nu}$ uniformly on compact subsets of $\mathbb{C}_+$.
\end{proposition}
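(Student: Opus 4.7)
The plan is to first establish convergence on an appropriate truncated cone using the analytic-transform machinery, and then bootstrap to all of $\mathbb{C}_+$ via a normal families argument. I would begin by invoking Proposition \ref{prop.boxplus.robust} to get $\mu_n \boxplus \nu_n \rightharpoonup \mu \boxplus \nu$, and then Proposition \ref{prop.Cauchy.robust} to obtain that the Cauchy transforms $G_{\mu_n}$ and $G_{\mu_n \boxplus \nu_n}$ converge to $G_\mu$ and $G_{\mu \boxplus \nu}$ uniformly on compact subsets of $\mathbb{C}_+$. Since Cauchy transforms are nonvanishing on $\mathbb{C}_+$, the same convergence holds for the reciprocal transforms $F_{\mu_n}$ and $F_{\mu_n \boxplus \nu_n}$.

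Next, by Proposition \ref{prop.uniform.Stolz} (applied via the identity $F_{\mu_n}^{-1}(z) = \varphi_{\mu_n}(z)+z$), there exist $\alpha,\beta > 0$ such that every $F_{\mu_n}^{-1}$ is defined and univalent on $\Gamma_{\alpha,\beta}$, converges uniformly on compact subsets of $\Gamma_{\alpha,\beta}$ to $F_\mu^{-1}$, and satisfies $F_{\mu_n}^{-1}(z) = z + o(z)$ uniformly in $n$ as $|z|\to\infty$ in $\Gamma_{\alpha,\beta}$. Applying the same proposition to $\{\mu_n \boxplus \nu_n\}_{n=1}^\infty$ (whose weak convergence is already guaranteed by Step 1), one can shrink to a truncated cone $\Gamma_{\alpha',\beta'}$ on which $F_{\mu_n \boxplus \nu_n}(\Gamma_{\alpha',\beta'}) \subseteq \Gamma_{\alpha,\beta}$ for all sufficiently large $n$ (since $F_{\mu_n \boxplus \nu_n}(z) \sim z$ uniformly). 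On this cone the subordinator admits the explicit expression
\[
\omega_{\mu_n, \nu_n}(z) = F_{\mu_n}^{-1}\bigl(F_{\mu_n \boxplus \nu_n}(z)\bigr),
\]
which forces uniform convergence on compact subsets of $\Gamma_{\alpha',\beta'}$ to $F_\mu^{-1}(F_{\mu\boxplus\nu}(z)) = \omega_{\mu,\nu}(z)$.

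To extend the convergence to all of $\mathbb{C}_+$, I would use that $\{\omega_{\mu_n,\nu_n}\}_{n=1}^\infty$ is a family of analytic self-maps of $\mathbb{C}_+$, hence normal (by conjugation with the Cayley transform and Montel's theorem for self-maps of the unit disk). Any subsequence therefore admits a further subsequence converging locally uniformly on $\mathbb{C}_+$ to an analytic function $\widetilde{\omega}$ that is either an analytic self-map of $\mathbb{C}_+$ or a constant in $\mathbb{R}\cup\{\infty\}$. The previous step shows that any such subsequential limit must coincide with the non-constant function $\omega_{\mu,\nu}$ on the truncated cone $\Gamma_{\alpha',\beta'}$, ruling out the constant case and forcing $\widetilde{\omega} = \omega_{\mu,\nu}$ on all of $\mathbb{C}_+$ by the identity principle. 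Since every subsequence has a further subsequence with the same limit, the full sequence $\omega_{\mu_n,\nu_n}$ converges to $\omega_{\mu,\nu}$ uniformly on compact subsets of $\mathbb{C}_+$.

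The only delicate point is ruling out subsequential limits that escape to $\infty$ or collapse to a real constant in the normal-families step; this is exactly what the prior identification on $\Gamma_{\alpha',\beta'}$ resolves. The rest is bookkeeping: making sure the various truncated cones $\Gamma_{\alpha,\beta}$ and $\Gamma_{\alpha',\beta'}$ can be chosen compatibly and uniformly in $n$, which follows cleanly from the uniform asymptotic statements in Propositions \ref{prop.Stolzify} and \ref{prop.uniform.Stolz}.
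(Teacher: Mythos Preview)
Your proposal is correct and follows essentially the same route as the paper: establish convergence of $\omega_{\mu_n,\nu_n}=F_{\mu_n}^{-1}\circ F_{\mu_n\boxplus\nu_n}$ on a fixed open region of $\mathbb{C}_+$ using Propositions \ref{prop.boxplus.robust}, \ref{prop.Cauchy.robust}, and \ref{prop.uniform.Stolz}, then extend to all of $\mathbb{C}_+$ by a normal-families/Montel argument. The paper constructs the common domain slightly differently (it finds an open set $W$ via the preimage $F_{\mu_n\boxplus\nu_n}^{-1}(V)$ of a precompact set $V$ in the intersection of two cones, rather than a cone $\Gamma_{\alpha',\beta'}$ mapped forward into $\Gamma_{\alpha,\beta}$), and is terser in invoking Montel's theorem, whereas you spell out the subsequence argument and the exclusion of degenerate limits; but these are bookkeeping differences, not substantive ones.
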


\begin{proof}
By Proposition \ref{prop.uniform.Stolz} there is a fixed truncated cone $\Gamma_{\alpha,\beta}$ on which $\varphi_{\mu_n}$ converges uniformly on compact subsets to $\varphi_\mu$.  By Propositions \ref{prop.Cauchy.robust} and \ref{prop.boxplus.robust}, $F_{\mu_n\boxplus\nu_n}$ converges to $F_{\mu\boxplus\nu}$ uniformly on compact subsets of $\mathbb{C}_+$.  What's more, $\varphi_{\mu_n\boxplus\nu_n}$ converges uniformly on compact subsets of another fixed truncated cone $\Gamma_{\alpha',\beta'}$.  Note that the intersection $\Gamma_{\alpha,\beta}\cap\Gamma_{\alpha',\beta'}$ is another truncated cone. Let $V$ be an open subset with compact closure $\overline{V}\subset\Gamma_{\alpha,\beta}\cap\Gamma_{\alpha',\beta'}$; thus $\varphi_{\mu_n\boxplus\nu_n}$ converges uniformly on $V$.

Appealing to the definition \eqref{def:Voiculescu.trans},
$\varphi_{\mu_n\boxplus\nu_n}(z) = F_{\mu_n\boxplus\nu_n}^{-1}(z)-z$, and thus the preimage of $V$ under $F_{\mu_n\boxplus\nu_n}$ is $F_{\mu_n\boxplus\nu_n}^{-1}(V) = \{\varphi_{\mu_n\boxplus\nu_n}(v)+v\colon v\in V\}$.
As $\varphi_{\mu_n\boxplus\nu_n}$ converges uniformly, there is thus a fixed non-empty open set $W\subseteq\mathbb{C}_+$ with $F_{\mu_n\boxplus\nu_n}^{-1}(V)\supseteq W$ for all $n$. Hence, $F_{\mu_n\boxplus\nu_n}(W)\subseteq V\subseteq\Gamma_{\alpha,\beta}$.

Applying \eqref{def:Voiculescu.trans} once more,
\[ \omega_{\mu_n,\nu_n} = F_{\mu_n}^{-1}\circ F_{\mu_n\boxplus\nu_n} = \varphi_{\mu_n}\circ F_{\mu_n\boxplus\nu_n} + F_{\mu_n\boxplus\nu_n} \]
is defined and analytic on $W$ since $F_{\mu_n\boxplus\nu_n}$ maps $W$ into $\Gamma_{\alpha,\beta}$.  The composition thus converges to $\varphi_\mu\circ F_{\mu\boxplus\nu} + F_{\mu\boxplus\nu} = \omega_{\mu,\nu}$ uniformly on compact subsets of the open subset $W$ of the upper half plane.  Since $\omega_{\mu_n,\nu_n}$ and $\omega_{\mu,\nu}$ are all analytic in the full upper half plane, the proposition now follows from Montel's theorem.
\end{proof}

Our main use of subordinators is for computations of {\bf conditional expectation} in a free probabilistic context.  If $(\mathcal{A},E)$ is a $W^\ast$-probability space, and $\mathcal{B}\subseteq\mathcal{A}$ is a von Neumann subalgebra, then there is a ``conditioning" map from $\mathcal{A}$ to $\mathcal{B}$. As in the classical case, it can be constructed as an $L^2$-projection: since the starting point is the algebra $\mathcal{A}$ (analogous to $L^\infty$) which is contained in $L^2(\mathcal{A},E)$, the orthogonal projection maps $\mathcal{A}$ into $L^2(\mathcal{B},E)$; in point of fact, the image of the projection of elements in the algebra $\mathcal{A}$ is contained in the algebra $\mathcal{B}\subset L^2(\mathcal{B},E)$.  (This requires $E$ to be a faithful normal trace.)  The projection therefore defines a linear map $E[\,\cdot\,|\mathcal{B}]\colon\mathcal{A}\to\mathcal{B}$ which extends to a contraction $L^p(\mathcal{A},E)\to L^p(\mathcal{B},E)$ for all $p\ge 1$, which satisfies the following two key properties:
\begin{enumerate}
    \item If $X_1,X_2\in\mathcal{B}$ and $Y\in\mathcal{A}$, then $E[X_1YX_2|\mathcal{B}] = X_1\,E[Y|\mathcal{B}]\,X_2$.

    \item If $\mathcal{B}_1\subseteq\mathcal{B}_2\subseteq{A}$, then for any $X\in\mathcal{A}$, $E[E[X|\mathcal{B}_2]|\mathcal{B}_1] = E[X|\mathcal{B}_1]$.

\end{enumerate}
If $X$ is freely independent from $\mathcal{B}$, then $E[X|\mathcal{B}] = E[X]$ is constant; this mirrors the same result which holds classically if $X$ is independent from $\mathcal{B}$.  However, things diverge significantly between the classical and free worlds, as we'll see shortly.

If $X$ is a random variable in $(\mathcal{A},E)$, and if $\mathcal{B} = W^\ast(X)$ is the von Neumann subalgebra generated by $X$ (in the selfadjoint case this can be interpreted as the set of all measurable bounded functions $f$ of $X$, $f(X)$), then we typically denote $E[\,\cdot\,|\mathcal{B}] = E[\,\cdot\,|X]$.  This is relevant for the present work in the case of computing $E[f(X+Y)|X]$ where $f$ is a smooth bounded test function and $X,Y$ are freely independent.  If $X,Y$ were {\em classically} independent, this is simply equal to $g(X)$ where for $t\in\mathbb{R}$ $g(t) = E[f(t+Y)]$; but that is not true in the free world. Instead, we have the following important result.

\begin{proposition}[Theorem 3.1 in \cite{beyonce}] Let $X$ and $Y$ be freely independent selfadjoint random variables with laws $\mu$ and $\nu$, and let $\omega_{\mu,\nu}$ denote the subordinator given by Proposition \ref{prop:subordinator.def}.  Let $k_{\mu,\nu}$ be the Feller--Markov kernel whose Cauchy transform is given by
\begin{equation} \label{eq:conditional.resolvant} \int_{\mathbb{R}} \frac{k_{\mu,\nu}(t,du)}{z-u} = \frac{1}{\omega_{\mu,\nu}(z)-t}, \quad t\in\mathbb{R}. \end{equation}
Then for any bounded Borel measurable function $f$,
\begin{align}\label{eq:cond.fmunu}
E[f(X+Y)|X]=f_{\mu,\nu}(X)
\qmq{
where
}
f_{\mu,\nu}(t)=\int f(u)\,k_{\mu,\nu}(t,du).
\end{align}
\end{proposition}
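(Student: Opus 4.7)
The plan is to reduce \eqref{eq:cond.fmunu} to a purely analytic identity via a density argument, and then to verify that identity by exploiting freeness. Before addressing the main identity, I must check that $k_{\mu,\nu}$ is a well-defined Feller--Markov kernel. For each fixed $t\in\mathbb{R}$, the map $z\mapsto 1/(\omega_{\mu,\nu}(z)-t)$ is analytic from $\mathbb{C}_+$ to $\mathbb{C}_-$ (since $\omega_{\mu,\nu}(\mathbb{C}_+)\subseteq\mathbb{C}_+$ by Proposition \ref{prop:subordinator.def}) and satisfies $z/(\omega_{\mu,\nu}(z)-t)\to 1$ as $z\to\infty$ non-tangentially, which follows from $F_\mu\circ\omega_{\mu,\nu}=F_{\mu\boxplus\nu}\sim z$. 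The characterization of Cauchy transforms recalled after Proposition \ref{prop.Cauchy.robust} then produces a unique probability measure $k_{\mu,\nu}(t,\cdot)$; Stieltjes inversion combined with the (continuous) dependence of the transform on $t$ delivers the Feller--Markov property.

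By linearity of conditional expectation and Stone--Weierstrass density of the linear span of resolvents $\{t\mapsto 1/(z-t):z\in\mathbb{C}_+\}$ in $C_0(\mathbb{R})$, followed by bounded convergence (via Stieltjes inversion) to pass to bounded Borel $f$, it suffices to verify \eqref{eq:cond.fmunu} for $f=f_z$ with $f_z(x)=1/(z-x)$. Then $f^{\mu,\nu}(X)=1/(\omega_{\mu,\nu}(z)-X)$ by the defining property of $k_{\mu,\nu}$, and the goal becomes
\[ E\bigl[(z-X-Y)^{-1}\bigm|X\bigr]=\bigl(\omega_{\mu,\nu}(z)-X\bigr)^{-1}. \]
Invoking the $L^2$-projection characterization of $E[\,\cdot\,|X]$ onto $W^\ast(X)$, together with density of the linear span of $\{(w-X)^{-1}:w\in\mathbb{C}_+\}$ in $L^2(\mathcal{L}(X))$, this reduces further to checking that, for all $w,z\in\mathbb{C}_+$,
\[ E\bigl[(w-X)^{-1}(z-X-Y)^{-1}\bigr]=E\bigl[(w-X)^{-1}(\omega_{\mu,\nu}(z)-X)^{-1}\bigr]. \]
The right-hand side is elementary: by partial fractions and the subordination identity $G_{\mu\boxplus\nu}=G_\mu\circ\omega_{\mu,\nu}$, it equals $\bigl(G_\mu(w)-G_{\mu\boxplus\nu}(z)\bigr)/\bigl(w-\omega_{\mu,\nu}(z)\bigr)$.

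The main obstacle is evaluating the left-hand side $H(w,z):=E[(w-X)^{-1}(z-X-Y)^{-1}]$. The plan is to iterate the resolvent identity $(z-X-Y)^{-1}=(z-X)^{-1}+(z-X)^{-1}Y(z-X-Y)^{-1}$, take expectation against $(w-X)^{-1}$, and use the vanishing of alternating-centered free products from Definition \ref{def:freeness} to collapse the resulting series into a closed-form functional equation for $H(w,z)$ involving $G_\mu$, $G_\nu$, and $G_{\mu\boxplus\nu}$; matching this equation against the analytic characterization of $\omega_{\mu,\nu}$ in Proposition \ref{prop:subordinator.def} then identifies $H(w,z)$ with the desired right-hand side. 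A cleaner and more conceptual alternative, closer to Biane's original argument, is to realize the free pair $(X,Y)$ on the free-product full Fock space: there the conditional expectation onto $W^\ast(X)$ admits an explicit orthogonal-decomposition formula that produces the kernel representation and the subordinator $\omega_{\mu,\nu}$ simultaneously from a fixed-point equation at the operator level.
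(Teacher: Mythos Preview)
The paper does not supply its own proof of this proposition: it is quoted verbatim as Theorem~3.1 of \cite{beyonce} (Biane) and used as a black box, so there is no in-paper argument to compare your proposal against.

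Your outline is sound as far as it goes. The reductions --- checking that $z\mapsto 1/(\omega_{\mu,\nu}(z)-t)$ is a Cauchy transform, passing to resolvent test functions by density, and testing the conditional expectation against $(w-X)^{-1}$ --- are all correct and standard. The partial-fractions evaluation of the right-hand side is fine. Where the proposal stops short is exactly where you flag the ``main obstacle'': you describe two possible routes (iterated resolvent expansion with freeness, or the Fock-space realization) but carry out neither. The resolvent-series route does work, but the bookkeeping needed to collapse the alternating free moments into the subordination relation is genuinely the heart of the matter and is not trivial; the Fock-space route is indeed Biane's original proof. So your proposal is an accurate roadmap to a proof rather than a proof, and the road it maps is the one Biane took.
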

In particular: fixing $z\in\mathbb{C}_+$, the function $f_z(t) = 1/(z-t)$ is bounded and continuous; \eqref{eq:conditional.resolvant} and \eqref{eq:cond.fmunu} together then say
\begin{align}\label{eq:cond.resolve}
E[(z-X-Y)^{-1}|X] = (\omega_{\mu,\nu}(z)-X)^{-1}.
\end{align}

\subsection{Free Infinite Divisibility}\label{sect.background.inf.div}

As introduced already in the Introduction, a probability measure $\mu$ on $\mathbb{R}$ is called {\bf freely infinitely divisible} if, for each $n\in\mathbb{N}$, there is a probability measure $\mu_n$ such that $\mu = \mu_n\boxplus\cdots\boxplus\mu_n = \mu_n^{\boxplus n}$.  In terms of random variables: (the law of) $X$ is infinitely divisible if, for each $n$, there are freely independent identically distributed random variables $\{X_{j,n}\}_{1\le j\le n}$ so that $X = X_{1,n}+\cdots+X_{n,n}$.  A standard triangular arrays argument, analogous to the classical approach, shows that it is equivalent to ask only that such $X_{j,n}$ exist where the sum $X_{1,n}+\cdots+X_{n,n}$ {\em converges weakly} to $X$.  Prominent examples of freely independent laws include semicircular distributions (since the sum of two free semicricular random variables is semicircular), and the compound free Poisson distributions $\mathrm{FP}(\lambda,\nu)$ of Definition \ref{Def.FP} (this follows directly from their definition: the $X_{j,n}$ can be taken to have distribution $(1-\frac{\lambda}{n})\delta_0 + \frac{\lambda}{n}\nu$).

Characterizing laws in terms of their $R$-transform, $\mu$ is freely infinitely divisible if, for each $n\in\mathbb{N}$, $\frac{1}{n}R_\mu$ is the $R$-transform of a probability measure.  By taking convolution powers, this means that $tR_\mu$ has the form $R_{\mu_t}$ for some probability measure $\mu_t$ for any rational $t\ge 0$; a continuity result \cite[Lemma 5.9]{bercovici1993free} then shows that this holds for all real $t\ge 0$ as well.  Thus, freely infinitely divisible measures correspond to {\bf free convolution semigroups} $\mu_t$ satisfying $\mu_{t+s} = \mu_t\boxplus\mu_s$ for $s,t\ge0$, and $\mu_0=\delta_0$ (the original measure is then identified as $\mu = \mu_1$).  A major result identifying such measures/semigroups is \cite[Theorem 5.10]{bercovici1993free}, which states that a measure $\mu$ is freely infinitely divisible if and only if $\varphi_\mu$ (and hence $R_\mu$) has an analytic continuation to all of $\mathbb{C}_+$ (which may take values in $\mathbb{C}_-\cup\mathbb{R}$).  That same theorem also proves the free L\'evy--Khintchine formulas \eqref{LK.via.nica} and \eqref{BV2.LK-formula}.

\ignore{
\begin{remark}\label{remark:free-conv-semigroup} Curiously, in the free world, {\em every} probability measure $\mu$ is in a {\em partial} free convolution semigroup $\{\mu_t\colon t\ge 1\}$ as $\mu=\mu_1$.  In other words, for $t\ge 1$, $tR_\mu$ is always the $R$-transform of a probability measure $\mu_t$.  This was fully proved in \cite[Theorem 2.5(1)]{BelBer} using subordinator methods, generalizing earlier results \cite{Bercovici-Voiculescu-95-PTRF,Nica-Speicher-96} in the compactly supported case.  This is one of many places where free probability has intriguingly different properties from classical probability.
\end{remark}
}

One of the major theorems in free probability, connecting it to classical probability, is the {\em Bercovici--Pata bijection} $\Lambda$ introduced in \cite{BercoviciPata1999}.  Denote by $\mathrm{ID}_\ast(\mathbb{R})$ the (classically) infinitely divisible distributions on $\mathbb{R}$, and by $\mathrm{ID}_\boxplus(\mathbb{R})$ the freely infinitely divisible distributions on $\mathbb{R}$.  If $\mu\in \mathrm{ID}_\ast$ and has $2$ finite moments, the L\'evy--Khintchine formula \eqref{eq:logphi.zero} yields a probability measure $\nu$ which, together with the mean $m$ and variance $\sigma^2$, characterizes $\mu$.  The triple $(m,\sigma^2,\nu$) is {\em also} associated to a unique probability measure $\Lambda(\mu)$ with $2$ finite moments, via \eqref{LK.via.nica}.  The mapping $\mu\mapsto\Lambda(\mu)$ sets up a bijection between $\mathrm{ID}_\ast(\mathbb{R})\cap L^2$ and $\mathrm{ID}_\boxplus(\mathbb{R})\cap L^2$.  In fact, it extends all the way to a bijection $\Lambda\colon\mathrm{ID}_\ast(\mathbb{R})\to\mathrm{ID}_\boxplus(\mathbb{R})$.

\begin{theorem} \label{thm.BP.Bijection} Let $\mu$ be a probability measure on $\mathbb{R}$.
\begin{enumerate}[wide = 0pt,label=\alph*)]
    \item \label{thm:BP.a} $\mu \in \mathrm{ID}_\ast(\mathbb{R})$ if and only if there is a constant $\gamma\in\mathbb{R}$ and a finite positive Borel measure $\varsigma$ on $\mathbb{R}$ so that the characteristic function $\psi_\mu(\xi) = \int e^{i\xi t}\mu(dt)$ satisfies
    \begin{equation} \label{eq:LK.nomoments.*}
    \log\psi_\mu(\xi) = i\gamma \xi + \int_{\mathbb{R}} \frac{\exp(i\xi t)-1-i\xi t}{t^2}\,(t^2+1)\,\varsigma(dt).
    \end{equation}
    \item \label{thm:BP.b} $\mu \in \mathrm{ID}_\boxplus(\mathbb{R})$ if and only if there is a constant $\gamma\in\mathbb{R}$ and a finite positive Borel measure $\varsigma$ on $\mathbb{R}$ so that the $R$-transform $R_\mu$ satisfies
    \begin{equation} \label{BV2.LK-formula}    
    R_\mu(z) = \gamma + \int_{\mathbb{R}} \frac{z+t}{1-tz}\,\varsigma(dt).
    \end{equation}
    \item \label{thm:BP.c} Given $\mu\in\mathrm{ID}_\ast(\mathbb{R})$, let $(\gamma,\varsigma)$ be the constant and finite measure associated to $\mu$ by \eqref{eq:LK.nomoments.*}.  There is a unique probability measure $\Lambda(\mu)\in\mathrm{ID}_{\boxplus}(\mathbb{R})$ with this same pair $(\gamma,\varsigma)$ characterizing $R_{\Lambda(\mu)}$ in \eqref{BV2.LK-formula}.  The map $\Lambda\colon\mathrm{ID}_\ast(\mathbb{R})\to\mathrm{ID}_\boxplus(\mathbb{R})$ is a bijection, and satisfies $\Lambda(\mu_1\ast\mu_2) = \Lambda(\mu_1)\boxplus\Lambda(\mu_2)$.
\end{enumerate}
\end{theorem}
Item \ref{thm:BP.a} is classical, proved for example in the textbook \cite[Section 18]{GnedKolm}.  The free version in item \ref{thm:BP.b} was proved in \cite{bercovici1993free}.
\begin{remark} \label{remark.nusigma} Noting that $\frac{z}{1-tz}\cdot \frac{t}{t^2+1} = \frac{z+t}{1-tz}+\frac{t}{t^2+1}$, in the case that $\varsigma$ has a finte moment, \eqref{BV2.LK-formula} can be rewritten as
\begin{equation} \label{BV2.LK-formula.2finite.moments}
R_\mu(z) = \gamma' + \int_{\mathbb{R}} \frac{z}{1-tz}(t^2+1)\,\varsigma(dt)
\end{equation}
where $\gamma' = \gamma + \int t\,\varsigma(dt)$.  Further, if $\varsigma$ has two finite moments, then $(t^2+1)\,\varsigma(dt)$ is a finite measure, and expanding the right-hand side of \eqref{BV2.LK-formula.2finite.moments} as a power series in $z$ shows that its mass is $\sigma^2 = \mathrm{Var}(\mu)$.  Thus, letting $\nu(dt) = \frac{1}{\sigma^2}(t^2+1)\,\varsigma(dt)$, we see that \eqref{BV2.LK-formula.2finite.moments} recovers \eqref{LK.via.nica}, where $\gamma'=m$.  An analogous analysis comparing \eqref{eq:LK.nomoments.*} and \eqref{eq:logphi.zero} shows that $\Lambda$ defined in Theorem \ref{thm.BP.Bijection}(\ref{thm:BP.c} indeed maps $\mathrm{ID}_\ast(\mathbb{R})\cap L^2$ onto $\mathrm{ID}_\boxplus(\mathbb{R})\cap L^2$ bijectively as described above.
\end{remark}

\begin{remark} From the convolution interchange property of Theorem \ref{thm.BP.Bijection}(\ref{thm:BP.c} and from direct power-series expansions, one sees that $\Lambda(\mu)$ always has the same number of finite moments as $\mu$, and up to that order, the classical cumulants of $\mu$ are equal to the free cumulants of $\Lambda(\mu)$. \end{remark}

For example: the normal distribution $\mathcal{N}(m,\sigma^2)$ has L\'evy measure $\nu = \delta_0$; the image $\Lambda(\mathcal{N}(m,\sigma^2))$ under the Bercovici--Pata bijection is then the law whose $R$-transform satisfies $R(z) = m+\sigma^2 z$, which is the semicircular law $\mathcal{S}(m,\sigma^2)$ with density
\begin{equation} \label{eq:semicircle}
\frac{1}{2\pi\sigma} \sqrt{4\sigma^2 - (x-m)^2}\,\mathbf{1}_{|x-m|\le 2\sigma}.
\end{equation}
Similarly, if $\mu = \mathrm{Poiss}(\lambda,\upsilon)$ is a compound Poisson distribution, then $\Lambda(\mu) = \mathrm{FP}(\lambda,\upsilon)$ is the compound free Poisson with the same rate $\lambda$ and the same jump distribution $\upsilon$.

\subsection{Positive Free Infinite Divisibility}

Classically, infinite divisibility and positivity intertwine well.  If $\mu\in\mathrm{ID}_\ast(\mathbb{R}_+)$,i.e.\ $\mu$ is infinitely divisible and is supported on $[0,\infty)$, then all convolution roots of $\mu$ are also supported on $[0,\infty)$.  Indeed, if $X = X_{1,n}+\cdots+X_{n,n}$ are i.i.d.\ and $P(X_{j,n}<0)=p>0$ then $P(X<0)\ge P(X_{j,n}<0\text{ for }1\le j\le n) = p^n>0$.  This elementary argument fails in the free world, however.  Take, for example, any semicircular distribution $\mathcal{S}(m,\sigma^2)$ with $m\ge 2\sigma$; from \eqref{eq:semicircle} this is supported in $[0,\infty)$.  However, the $n$th $\boxplus$-root is $\mathcal{S}(m/n,\sigma^2/n)$, and no matter how large $m-2\sigma>0$ is, $\frac{m}{n}-2\frac{\sigma}{\sqrt{n}}$ is negative for all large $n$.  That is, referring to Definition \ref{def:pfid}, this example shows that there are measures that are positive and freely infinitely divisible but not positive{\em ly} freely infinitely divisible.

We denote the class of positively freely infinitely divisible measures by $\mathrm{ID}^+_\boxplus(\mathbb{R}_+)$.
\ignore{
\begin{definition} A measure $\mu\in \mathrm{ID}_\boxplus(\mathbb{R}_+)$ is called {\bf positively freely infinitely divisible} if, for all $t>0$, $\mu^{\boxplus t}$ is supported on $[0,\infty)$.  We refer to this property of a measure as {\bf positive free infinite divisibility}.  The class of such measures is denoted $\mathrm{ID}^+_\boxplus(\mathbb{R}_+)$.
\end{definition}
}
This notion was introduced first in \cite{perez2012free} and explored further in \cite{arizmendi2011law}, where such measures were called {\em free regular} and the class of them was denoted $I^{\boxplus}_{r+}$ rather than our $\mathrm{ID}^+_\boxplus(\mathbb{R}_+)$.  The authors of \cite{arizmendi2011law} were interested in these measures because (as they show) they are precisely the laws of all free L\'evy processes with positive increments.

The following theorem is proved as part of \cite[Theorem 4.2]{arizmendi2011law}.

\begin{theorem} Let $\mu$ be a probability measure on $\mathbb{R}$.  Then $\mu$ is positively freely infinitely divisible if and only if there is a measure $\nu\in \mathrm{ID}_\ast(\mathbb{R}_+)$ with $\Lambda(\eta)=\mu$.  I.e.\ $\mathrm{ID}_\boxplus^+(\mathbb{R}_+) = \Lambda(\mathrm{ID}_\ast(\mathbb{R}_+))$.  Moreover, in this case, the L\'evy--Khintchine measure $\varsigma$ for $\mu$, cf.\ \eqref{BV2.LK-formula}, is supported on $[0,\infty)$.
\end{theorem}
It is straightforward to see, using the homomorphism property of the Bercovici--Pata bijection $\Lambda$ in Theorem \ref{thm.BP.Bijection}(\ref{thm:BP.c}, and the fact the classically infinitely divisible distributions supported on $[0,\infty)$ have all convolution roots supported on $[0,\infty)$, that $\Lambda(\mathrm{ID}_\ast(\mathbb{R}_+)) \subseteq \mathrm{ID}_\boxplus^+(\mathbb{R}_+)$; the reverse containment is one of the main theorems of \cite{arizmendi2011law}.

In the $L^2$ setting most relevant to this paper, the free L\'evy--Khintchine measure $\nu$ of \eqref{LK.via.nica} and Theorem \ref{main theorem 2}(\ref{thm2.b} is related to $\varsigma$ by $\nu(dt) = \frac{1}{\sigma^2}(t^2+1)\varsigma(dt)$ where $\sigma^2$ is the variance of $\mu$, cf.\ Remark \ref{remark.nusigma}.  Hence, we have the following important corollary.

\begin{corollary} \label{cor.V0>0} Let $X$ be an $L^2$ positively freely infinitely divisible random variable.  Then its Voiculescu transform $\varphi_X$ has the form
\[ \varphi_X(z) = m+\sigma^2 G_{V_0}(z) \]
where $m = E[X]$, $\sigma^2 = \mathrm{Var}[X]$, and $V_0\ge 0$; i.e.\ the free L\'evy--Khintchine measure is supported on $[0,\infty)$.

\end{corollary}

\section{Existence and properties of the Free Zero Bias and other Transformations}\label{sec:exist}
In Section \ref{subsec:exist} we first prove the existence of a distributional transformation based on the fact that the ``geometric mean" of two Cauchy transforms of probability measures is again the Cauchy transform of a probability measure, cf.\ Lemma \ref{lem:root.Geometric.Mean}. In Section \ref{sec:ElGordo} we use a special case of Lemma \ref{lem:root.Geometric.Mean} to produce a new transformation we call the {\em El Gordo transform}; we also review the square bias transformation and provide some of its properties.  Combining these transformations in Section \ref{sect:freezerobias.construction} then yields the construction of the free zero bias putatively defined by \eqref{char.freezb}, and the proof that its unique fixed point is the semi-circle distributions.
Finally, in Section \ref{subsec:prop.examples} we consider a few examples. 

\subsection{Geometric Mean Transformation}\label{subsec:exist}

Complex square roots will come up frequently in this section and beyond, so we begin by setting notation.

\begin{notation} \label{notat:sqrt} For $\zeta\in\mathbb{C}$, the (principal) {\bf square root} $\sqrt{\zeta}$ is defined uniquely in polar coordinates: writing $\zeta = re^{i\theta}$ for $r\ge 0$ and $\theta\in[0,2\pi)$, $\sqrt{\zeta} = r^{1/2} e^{i\theta/2}$.  The function $\zeta\mapsto\sqrt{\zeta}$ is holomorphic on $\mathbb{C}\setminus[0,\infty)$, with image contained in $\mathbb{C}_+$.

Note that if $\zeta\in\mathbb{C}$ and $y>0$ then $\sqrt{y\zeta} = \sqrt{y}\sqrt{\zeta}$; but if $\zeta,\xi\in\mathbb{C}_-$ then $\sqrt{\zeta\xi} = -\sqrt{\zeta}\sqrt{\xi}$.  Similarly, if $z\in\mathbb{C}\setminus[0,\infty)$ then $\sqrt{1/z} = -1/\sqrt{z}$.  Finally: note that if $\zeta,\xi\in\mathbb{C}_-$, then $\zeta\xi\in\mathbb{C}\setminus[0,\infty)$.

\end{notation}

With the square root in hand, the following elementary lemma yields a (surprisingly) new operation on probability distributions.

\begin{lemma}\label{lem:root.Geometric.Mean}
Given any two random variables $X$ and $Y$, there is a unique probability measure (which we denote as the law of a new random variable $X\bflat Y$) whose Cauchy transform satisfies
\begin{align} \label{def:GXsharpY}
G_{X \bflat Y}(z)= -\sqrt{G_X(z)G_Y(z)} = \sqrt{G_X(z)}\sqrt{G_Y(z)}.
\end{align}
I.e.\ $G_{X\bflat Y}$ is the ``geometric mean'' of the Cauchy transforms of $X$ and $Y$.  The map $(\mathcal{L}(X),\mathcal{L}(Y)) \rightarrow \mathcal{L}(X \bflat Y)$ is bi-continuous in the sense that $X_n \bflat Y_n \rightharpoonup X \bflat Y$ whenever $X_n \rightharpoonup X$ and $Y_n \rightharpoonup Y$, as $n \rightarrow \infty$.
\end{lemma}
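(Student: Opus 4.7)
The plan is to verify that the candidate function $G(z) := -\sqrt{G_X(z)G_Y(z)}$ satisfies the three defining properties of a Cauchy transform of a probability measure, and then deduce bi-continuity from the robustness of the Cauchy transform under weak convergence (Proposition \ref{prop.Cauchy.robust}).

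First I would check analyticity and the range. For any $z \in \mathbb{C}_+$, both $G_X(z)$ and $G_Y(z)$ lie in $\mathbb{C}_-$, so by the last observation of Notation \ref{notat:sqrt} the product $G_X(z)G_Y(z)$ lies in the slit plane $\mathbb{C}\setminus[0,\infty)$, where the principal square root is holomorphic with values in $\mathbb{C}_+$. Therefore $G$ is analytic on $\mathbb{C}_+$ and takes values in $\mathbb{C}_-$, and the two expressions in \eqref{def:GXsharpY} agree because of the identity $\sqrt{\zeta\xi} = -\sqrt{\zeta}\sqrt{\xi}$ for $\zeta,\xi \in \mathbb{C}_-$.

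Next I would verify the asymptotic $iyG(iy) \to 1$ as $y \uparrow \infty$. Since $iyG_X(iy) \to 1$ and $iyG_Y(iy) \to 1$, we have $(iy)^2 G_X(iy)G_Y(iy) \to 1$, i.e.\ $G_X(iy)G_Y(iy) = -y^{-2}(1+o(1))$. The principal square root of a point near $-y^{-2}$ is close to $iy^{-1}$, giving $-\sqrt{G_X(iy)G_Y(iy)} = -iy^{-1}(1+o(1)) = (iy)^{-1}(1+o(1))$, and multiplication by $iy$ yields the limit $1$. Having confirmed that $G$ is analytic from $\mathbb{C}_+$ into $\mathbb{C}_-$ with $iyG(iy) \to 1$, the characterization stated just before Proposition \ref{prop.Cauchy.robust} produces a unique probability measure with $G$ as its Cauchy transform; this is $\mathcal{L}(X \bflat Y)$.

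For bi-continuity, suppose $X_n \rightharpoonup X$ and $Y_n \rightharpoonup Y$. By Proposition \ref{prop.Cauchy.robust}\eqref{robust.1}, $G_{X_n} \to G_X$ and $G_{Y_n} \to G_Y$ uniformly on compact subsets of $\mathbb{C}_+$. Since the square root is continuous (in fact holomorphic) on the slit plane, $G_{X_n \bflat Y_n} = -\sqrt{G_{X_n}G_{Y_n}}$ converges pointwise (indeed, uniformly on compacts) to $-\sqrt{G_XG_Y} = G_{X\bflat Y}$ on $\mathbb{C}_+$. By Proposition \ref{prop.Cauchy.robust}\eqref{robust.2}, the measures $\mathcal{L}(X_n\bflat Y_n)$ converge vaguely to some sub-probability measure $\mu$; but we already know its Cauchy transform equals $G_{X\bflat Y}$, so $\mu = \mathcal{L}(X\bflat Y)$ is a full probability measure. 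Vague convergence to a probability measure promotes to weak convergence, completing the proof.

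The main subtlety is simply keeping track of branches of the square root and verifying that the negative sign in $-\sqrt{G_XG_Y}$ is required both to land in $\mathbb{C}_-$ and to produce the correct asymptotic $iyG(iy)\to +1$ rather than $-1$; once Notation \ref{notat:sqrt} is applied carefully this is straightforward, and the rest of the argument is a direct application of the standard Cauchy transform characterization and continuity machinery.
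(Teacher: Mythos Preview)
Your proof is correct and follows essentially the same approach as the paper: verify analyticity and range via Notation \ref{notat:sqrt}, check the asymptotic $iyG(iy)\to 1$, and deduce bi-continuity from Proposition \ref{prop.Cauchy.robust}. The only cosmetic difference is that the paper computes the asymptotic via the factored form $\sqrt{G_X}\sqrt{G_Y}$ (tracking each factor to $e^{3\pi i/4}$) rather than taking the square root of the product's asymptotic as you do; both are valid.
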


\begin{proof} $G_X$ and $G_Y$ are analytic maps from $\mathbb{C}_+$ to $\mathbb{C}_-$, hence for any $z\in\mathbb{C}_+$ the product $z\mapsto G_X(z)G_Y(z)$ defines an analytic function taking values in $\mathbb{C}\setminus \mathbb{R}$. Hence $G_{X\bflat Y}(z) = -\sqrt{G_X(z)G_Y(z)}$ is analytic, and takes values in the lower half plane $\mathbb{C}_-$.  That it is equal to $\sqrt{G_X(z)}\sqrt{G_Y(z)}$ follows from the second paragraph in Notation \ref{notat:sqrt}.

Hence, to show that $G_{X\bflat Y}$ is the Cauchy transform of a probability measure, it suffices to verify that $iy G_{X\bflat Y}(iy)\to 1$ as $y\uparrow\infty$.  For this, we appeal to the second representation in \eqref{def:GXsharpY}.  Since $G_X$ and $G_Y$ are Cauchy transforms of probability measures, $iyG_X(iy)\to 1$ and $iyG_Y(iy)\to 1$, and therefore $yG_X(iy)\to -i$ and $yG_Y(iy)\to -i$ as $y\uparrow\infty$.  Since $y\mapsto yG_X(iy)$ is continuous and takes values in $\mathbb{C}\setminus[0,\infty)$, the function $y\mapsto\sqrt{yG_X(iy)}$ is also continuous, and hence $\sqrt{yG_X(iy)}\to \sqrt{-i} = e^{\frac{3 \pi}{4}i}$; similarly $\sqrt{yG_Y(iy)}\to e^{\frac{3 \pi}{4}i}$.  Hence
\[ yG_{X\bflat Y}(iy) = \sqrt{yG_X(iy)}\sqrt{yG_Y(iy)} \to (e^{\frac{3 \pi}{4}i})^2 = -i \quad\text{as }y\uparrow\infty \]
and we conclude that $iyG_{X\bflat Y}(iy)\to i(-i) = 1$, showing that $G_{X\bflat Y}$ is the Cauchy transform of a probability measure as claimed, and uniqueness follows from Stieltjes inversion (cf.\ Proposition \ref{prop.Cauchy.robust}.

For the continuity claim: if $X_n\rightharpoonup X$ and $Y_n\rightharpoonup Y$ then $G_{X_n}\to G_X$ and $G_{Y_n}\to G_Y$ uniformly on compact subsets of $\mathbb{C}_+$ by Proposition \ref{prop.Cauchy.robust}(\ref{robust.1}).  Since $G_{X_n}$ and $G_{Y_n}$ take values in $\mathbb{C}_-$ where the square root is continuous, it then follows that $G_{X_n\bflat Y_n} = \sqrt{G_{X_n}}\sqrt{G_{Y_n}}$ converges uniformly on compact subsets to $\sqrt{G_X}\sqrt{G_Y} = G_{X\bflat Y}$.  The convergence $X_n\bflat Y_n\rightharpoonup X\bflat Y$ now follows from Proposition \ref{prop.Cauchy.robust}(\ref{robust.2}).
\end{proof}

\ignore{

\begin{remark}
Lemma \ref{lem:root.Geometric.Mean} has a (surprising) consequence regarding moment generating functions of the form
$$
M_X(z)=\sum_{k=1}^\infty z^k m_k
$$
for random variables $X$ for which $m=E[X^k]$ exists for all $k \ge 0$. Indeed, recognizing the relations 
$$
M_X(z)=\frac{1}{z}G_X
\left(\frac{1}{z}\right) \qmq{and} G_X(z)=\frac{1}{z}M_X
\left(\frac{1}{z}\right)
$$
we see that we may rephrase \eqref{lem:root.Geometric.Mean} to obtain
$$
M_{X \bflat Y}(z)=\frac{1}{z}G_{X \bflat Y}
\left(\frac{1}{z}\right)= \sqrt{\frac{1}{z} G_X \left( \frac{1}{z}\right)} \sqrt{\frac{1}{z} G_Y \left( \frac{1}{z}\right)} = \sqrt{M_X(z)} \sqrt{M_Y(z)}.
$$
That is, that the geometric mean of any two moment generating functions is again a moment generating function. In the case where $Y=\delta_0$, as $G_Y(z)=1/z$ we have $M_Y(z)=G_Y(1/z)/z=1$, implying
$$
M_{X \bflat Y}(z) = \sqrt{M_X(z)}.
$$
That is, the square root of any moment generating function is again a moment generating function. 
\end{remark}
}

\begin{example} \label{ex:XbflatY.arcsine}
\begin{enumerate}[wide = 0pt,label=\alph*)]

\item For any random variable $X$,
\[ G_{X\bflat X}(z) = \sqrt{G_X(z)}\sqrt{G_X(z)} = G_X(z) \]
i.e.\ $X\flat X \displaystyle{\mathop{=}^d} X$.

\item \label{ex:XbflatY.arcsine.b} Consider the case that $X$ and $Y$ are constants; e.g.\ $X=1$ and $Y=-1$.  Then \begin{align}\label{eq:double.flat}
G_{X \bflat Y}(z) = -\sqrt{G_X(z)G_Y(z)} = -\sqrt{\frac{1}{z-1}\frac{1}{z+1}}=\frac{1}{\sqrt{z^2-1}}. 
\end{align}
(The last equality comes from Notation \ref{notat:sqrt}, since $z^2-1\in\mathbb{C}\setminus[0,\infty)$ when $z\in\mathbb{C}_+$.)  Now employing the Stieltjes inversion formula (Proposition \ref{prop.Cauchy.robust}), the law $X\bflat Y$ can be recovered, and possesses a density:
\begin{align} \label{eq:Cinversion}
\varrho_{X\bflat Y}(x)=-\frac{1}{\pi} \lim_{y \downarrow 0}{\rm Im}\,G_\nu(x+iy) = \frac{1}{\pi\sqrt{1-x^2}}{\bf 1}_{[-1,1]}(x).
\end{align}
That is to say: the law of $X\bflat Y$ in \eqref{eq:Cinversion} is the arcsine distribution 
on $[-1,1]$.  More generally, letting $X$ and $Y$ be point masses at $a<b$ respectively, scaling and translating appropriately we obtain
\begin{align*}
G_{a\flat b}(z) = \frac{1}{\sqrt{(z-a)(z-b)}} \qmq{and} \varrho_{a\flat b}(x)=\frac{1}{\pi \sqrt{(x-a)(b-x)}}{\bf 1}_{[a,b]}(x)
\end{align*}
which is the arcsine distribution on $[a,b]$.

\item We can combine any random variable $X$ with a point mass $a$ this way, producing a new distribution $\mathcal{L}(a\bflat X)$ whose Cauchy transform is $G_{a\bflat X}(z) = -\sqrt{G_X(z)/(z-a)}$.  This construction, with $a=0$, will be very important through the remainder of the paper, so we address it in the next result.

\end{enumerate}

\end{example}

\subsection{The Size Bias $s$, Square Bias $\Box$ and El Gordo $\bflat$ Transformations}\label{sec:ElGordo}

In this section, we introduce two old (size bias $X\mapsto X^s$, square bias $X\mapsto X^\Box$) and one new (El Gordo $X\mapsto X^\bflat$) transformation on probability measures, focusing on their actions on Cauchy transforms.  We begin with El Gordo.

\begin{lemma}\label{lem:bflat}
For any probability measure $\mathcal{L}(X)$, 
\begin{align} \label{G.sharp}
G_{X^\bflat}(z) = -\sqrt{\frac{1}{z}G_X(z)}
\end{align}
is the Cauchy transform of a probability distribution.  We call $\mathcal{L}(X^\flat)$ the {\bf El Gordo transform} of $\mathcal{L}(X)$.  This transformation on the set of all probability measures is continuous in the topology of convergence in distribution, has point mass at zero as its unique fixed point, satisfies $(\alpha X)^\bflat\equaldist \alpha X^\bflat$ for all $\alpha \in \mathbb{R}$, and is injective but not surjective.
\end{lemma}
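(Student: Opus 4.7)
The existence and continuity claims fall out as a special case of Lemma \ref{lem:root.Geometric.Mean}. Taking the second argument to be the point mass $\delta_0$, whose Cauchy transform is $G_{\delta_0}(z) = 1/z$, the defining formula \eqref{def:GXsharpY} specializes to $G_{X \bflat 0}(z) = -\sqrt{G_X(z)/z}$, matching \eqref{G.sharp}. So $X^\bflat \equaldist X \bflat 0$ exists and is the law of a probability measure, and the continuity of $\mathcal{L}(X) \mapsto \mathcal{L}(X^\bflat)$ is immediate from the bi-continuity part of Lemma \ref{lem:root.Geometric.Mean} with the second argument fixed at $\delta_0$. Injectivity is essentially free: squaring $G_{X^\bflat}$ recovers $G_X(z)/z$ and hence $G_X$, so $\mathcal{L}(X)$ is determined by $\mathcal{L}(X^\bflat)$.

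For the fixed point, suppose $\mu = \mu^\bflat$, so that $G_\mu(z) = -\sqrt{G_\mu(z)/z}$ on $\mathbb{C}_+$. Squaring yields $G_\mu(z)\bigl[z G_\mu(z) - 1\bigr] = 0$. Since $G_\mu$ is analytic and satisfies $iy G_\mu(iy) \to 1$, it cannot be identically zero, so by the identity principle $z G_\mu(z) \equiv 1$; thus $\mu = \delta_0$. That $\delta_0$ is indeed a fixed point is a short branch check using Notation \ref{notat:sqrt}: $-\sqrt{(1/z)/z} = -\sqrt{1/z^2}$ equals $1/z$ on $\mathbb{C}_+$, as one verifies on the imaginary axis and extends by analyticity.

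The scaling identity $(\alpha X)^\bflat \equaldist \alpha X^\bflat$ I verify by direct computation with Cauchy transforms. For $\alpha > 0$, the identity $G_{\alpha X}(z) = \alpha^{-1} G_X(z/\alpha)$ combined with the homogeneity $\sqrt{\alpha \zeta} = \sqrt{\alpha}\sqrt{\zeta}$ from Notation \ref{notat:sqrt} reduces both sides to $-\sqrt{G_X(z/\alpha)/(\alpha z)}$. The sign-flip case $\alpha = -1$ requires more care: I use the reflection formula $G_{-Y}(z) = -G_Y(-z)$, the conjugate symmetry $G_\nu(\bar z) = \overline{G_\nu(z)}$ to evaluate $G_{X^\bflat}$ at points of $\mathbb{C}_-$, and the branch identity $\overline{\sqrt{\zeta}} = -\sqrt{\bar{\zeta}}$ on $\mathbb{C} \setminus [0,\infty)$ to reconcile the resulting square roots. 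This branch bookkeeping is the main technical subtlety of the proof; once it is in place, the general $\alpha \in \mathbb{R}$ case follows by composing the positive and negative cases (with $\alpha = 0$ trivial).

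Finally, for non-surjectivity it suffices to exhibit one measure outside the image. Any $\nu = \mathcal{L}(X^\bflat)$ forces $z \mapsto z G_\nu(z)^2$ to be the Cauchy transform of a probability measure, and in particular to map $\mathbb{C}_+$ strictly into $\mathbb{C}_-$. Taking $\nu = \delta_1$ and evaluating at $z = i$ gives $i/(i-1)^2 = -1/2 \in \mathbb{R}$, so $z/(z-1)^2$ has zero imaginary part at a point of $\mathbb{C}_+$ and is therefore not the Cauchy transform of any probability measure. Hence $\delta_1$ is not in the image of $\bflat$.
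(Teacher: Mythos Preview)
Your proof is correct and follows essentially the same route as the paper: reduce existence and continuity to Lemma \ref{lem:root.Geometric.Mean} via $X^\bflat \equaldist X\bflat\delta_0$, square to get injectivity and the fixed point, and show a nonzero point mass fails to be in the image. The one notable difference is the scaling identity: the paper squares both sides to obtain $G_{(\alpha X)^\bflat}^2 = G_{\alpha X^\bflat}^2$ directly (sidestepping all branch bookkeeping, since two Cauchy transforms with equal squares must agree), whereas you do a case split on the sign of $\alpha$ and track the branch of the square root by hand; both are valid, but the squaring trick is shorter. Your non-surjectivity check at the single point $z=i$ is a slightly slicker variant of the paper's computation of $\mathrm{Im}\,z(\bar z - p)^2$ over a half-disk.
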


\begin{proof}
As $G_{\delta_0}(z)=1/z$, we find that
$X^\bflat\equaldist X\bflat \delta_0$, and the existence and continuity properties follow by Lemma \ref{lem:root.Geometric.Mean}. 
That $\delta_0$ is the map's unique fixed point follows immediately by solving the equality $G_X(z)=-\sqrt{G_X(z)/z}$, implying either $G_X(z) = 1/z$ or $G_X\equiv 0$ which is not the Cauchy transform of a probability measure.

Next, if $\alpha=0$ we have that $(\alpha X)^\bflat\equaldist \alpha X^\bflat$ as the left hand side is zero with probability $1$ since the El Gordo transform takes $\delta_0$ to $\delta_0$. When $\alpha \not =0$ then 
\begin{multline*}
G_{(\alpha X)^\bflat}^2(z)=\frac{1}{z}E\left[ 
\frac{1}{z-\alpha X}
\right]= \frac{1}{\alpha z}E\left[ 
\frac{1}{z/\alpha-X}\right]=\frac{1}{\alpha z}G_X\left(\frac{z}{\alpha}\right)\\
=\frac{1}{\alpha^2}\left(\frac{\alpha}{z}G_X\left(\frac{z}{\alpha}\right)\right)=\frac{1}{\alpha^2}G_{X^\bflat}^2\left( \frac{z}{\alpha}\right)=G_{\alpha X^\bflat}^2(z).
\end{multline*}
Taking square roots gives $G_{(\alpha X)^\bflat} = G_{\alpha X^\bflat}$, and this implies $(\alpha X)^\bflat\equaldist \alpha X^\bflat$ by the Stieltjes inversion formula, cf.\ Proposition \ref{prop.Cauchy.robust}.

To show that the El Gordo transform is injective, when the distributions of $X^\bflat$ and $Y^\bflat$ are equal, then 
\[ -\sqrt{\frac{1}{z}G_X(z)}=-\sqrt{\frac{1}{z}G_Y(z)} \quad \mbox{for all $z \in \mathbb{C}_+$,} \]
and squaring  both sized and multiplying by $z$ implies $G_X=G_Y$, and yielding $X\equaldist Y$ by the Stieltjes inversion formula.

To show that the map is not surjective, we show that no point mass, other than zero, is in the range of the El Gordo transform.  Indeed, if $\mathcal{L}(Y^\bflat) = \delta_p$, meaning that $-\sqrt{G_Y(z)/z} = G_{Y^\bflat}(z) = G_{\delta_p}(z) = (z-p)^{-1}$, it follows that
$$
G_Y(z)=zG_{Y^\bflat}^2(z) = \frac{z}{(z-p)^2}= \frac{z(\overline{z}-p)^2}{|z-p|^4}.
$$
Note, however, that $\mathrm{Im}\,G_Y(z)$ is then a positive multiple of
\begin{align*} \mathrm{Im}\left(z(\bar{z}-p)^2\right)
= \mathrm{Im}\left(z\bar{z}^2 -2pz\bar{z}+p^2z\right)
&= |z|^2\,\mathrm{Im}\,\bar{z}+p^2\mathrm{Im}\,z \\
&= (p^2-|z|^2)\,\mathrm{Im}\,z.
\end{align*}
Hence, unless $p=0$, the above function maps the half-disk of radius $p$ in the upper half plane back into the upper half plane, and so the putative $G_Y$ is not the Cauchy transform of a probability measure.
\end{proof}

\medskip

We now recall the square bias transform given in \eqref{def:Xbox}, and make the convention that if $E[X^2]=0$ then $\mathcal{L}(X^\Box)=\delta_0$, a point mass at zero. In Theorem \ref{thm:Compound.Poisson} and Proposition \ref{prop:ForAnyV} we will also have need to consider the following pseudo-inverse of the square bias transforms. 

\begin{definition} \label{def:X.Box.inverse}
If $\mathcal{L}(X)=\delta_0$, let $\mathcal{L}(X^{-\Box})=\delta_0$. If $E[X^{-2}]<\infty$ then let the distribution $\mathcal{L}(X^{-\Box})$ have Radon-Nikodym derivative with respect to that of $X$ given by
\begin{align*}
\frac{dP^{-\Box}}{dP} = \frac{X^{-2}}{E[X^{-2}]}.
\end{align*}
\end{definition}

The next lemma collects many useful properties of the square bias.

\begin{lemma}\label{lem:X.Xneg0.same}
Let $X$ and $Y$ be $L^2$ random variables, and let $Z$ be any random variable.
\begin{enumerate}[wide = 0pt,label=\alph*)]    
\item Let $\alpha \in [0,1]$ and 
\begin{align*}
\mathcal{L}(W)=\alpha \mathcal{L}(X)+(1-\alpha) \mathcal{L}(Y).
\end{align*}
Then $E[W^2]=\alpha E[X^2]+(1-\alpha)E[Y^2]$, and when $E[W^2]>0$ then 
\begin{align} \label{eq:WBox.as.mixture}
\mathcal{L}(W^\Box)=\frac{\alpha E[X^2]}{E[W^2]} \mathcal{L}(X^\Box)+\frac{(1-\alpha)E[Y^2]}{E[W^2]} \mathcal{L}(Y^\Box).
\end{align}
\item \label{lem:sqbiaa-part2} When $E[X^2]>0$, $X$ and $Y$ 
satisfy $E[X^2]\ge E[Y^2]$ and
$\alpha=E[Y^2]/E[X^2]$, 
then
$$
\mathcal{L}(X^\Box)=\mathcal{L}(Y^\Box) \,\,
\mbox{if and only if}\,\,
\mathcal{L}(Y)=(1-\alpha)\delta_0 + \alpha\mathcal{L}(X).
$$

\item \label{lem:sqbiaa-part3} If $X_n$ is a sequence of square integrable random variables such that $X_n \rightharpoonup X$ and $E[X_n^2] \rightarrow E[X^2] \in (0,\infty)$, then
$
X_n^\Box \rightharpoonup X^\Box.
$
The restriction that the second moment of the limit be positive is necessary. 

\item \label{lem.Box.G} When $E[X^2]>0$ the Cauchy transform of $X^\Box$ is given by 
$$
G_{X^\Box}(z)= \frac{1}{E[X^2]}\left( z^2G_X(z)-E[X]-z\right).
$$

\item \label{eq:box.lin.alpha} For all $\alpha \in \mathbb{R}$ we have $(\alpha X^\Box)\equaldist \alpha X^\Box$.

\item \label{Box.inverse.properties} If $E[Z^{-2}]<\infty$ then
$$
E[(Z^{-\Box})^2]=\frac{1}{E[Z^{-2}]} \qmq{and} \mathcal{L}((Z^{-\Box})^\Box) = \mathcal{L}(Z).
$$

\end{enumerate}
\end{lemma}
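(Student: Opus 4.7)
The plan is to derive all six items from the basic Radon--Nikodym definition $dP^\Box/dP = X^2/E[X^2]$ together with the Cauchy transform formula in (d), which I would prove first: starting from $G_{X^\Box}(z) = E[X^2/(z-X)]/E[X^2]$, the algebraic identity $\frac{x^2}{z-x} = \frac{z^2}{z-x} - z - x$ and a direct integration yield the claimed formula.

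For (a), testing against an arbitrary bounded continuous $f$ and using linearity of expectation on the mixture law of $W$ gives $E[f(W)W^2] = \alpha E[f(X)X^2] + (1-\alpha)E[f(Y)Y^2]$ and similarly $E[W^2] = \alpha E[X^2] + (1-\alpha)E[Y^2]$; dividing yields the stated convex combination. The reverse direction of (b) is then immediate from (a) applied to $W = Y$ with the decomposition $\alpha\mathcal{L}(X) + (1-\alpha)\delta_0$, where the $\delta_0$-piece contributes weight zero. For the forward direction, equality of the biased distributions translates to the finite measures $y^2\,dP_Y(y)$ and $\alpha x^2\,dP_X(x)$ agreeing on $\mathbb{R}$; since $x^2 > 0$ off the origin, this forces $P_Y|_{\mathbb{R}\setminus\{0\}} = \alpha P_X|_{\mathbb{R}\setminus\{0\}}$, and comparing total mass $1$ pins down $P_Y(\{0\}) = 1-\alpha+\alpha P_X(\{0\})$, which is precisely the point mass at zero in $(1-\alpha)\delta_0 + \alpha\mathcal{L}(X)$. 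The hypothesis $E[Y^2]\le E[X^2]$ ensures $\alpha\le 1$, so the resulting object is a genuine probability measure.

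Part (e) follows from (d) applied to $\alpha X$ together with the scaling $G_{\alpha X}(z) = \alpha^{-1}G_X(z/\alpha)$ and Stieltjes inversion (or directly by testing $f$ and changing variables $x\mapsto\alpha x$ in the density). Part (f) is a one-line computation from the Radon--Nikodym derivatives: the hypothesis $E[X^{-2}]<\infty$ forces $P(X=0)=0$, so $E[(X^{-\Box})^2] = \int x^2\bigl(x^{-2}/E[X^{-2}]\bigr)\,dP_X$ collapses via the cancellation $x^2\cdot x^{-2}=1$; composing the two Radon--Nikodym derivatives for the double transform then produces density identically $1$ with respect to $dP_X$, yielding $\mathcal{L}((X^{-\Box})^\Box) = \mathcal{L}(X)$.

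Part (c) is the main obstacle. For the positive direction, the cleanest route is through (d): one has $G_{X_n^\Box}(z) = (z^2 G_{X_n}(z) - z - E[X_n])/E[X_n^2]$, and each ingredient converges to its analogue for $X$. Indeed, $G_{X_n}(z)\to G_X(z)$ by Proposition \ref{prop.Cauchy.robust}(\ref{robust.1}), $E[X_n^2]\to E[X^2]>0$ by hypothesis, and $E[X_n]\to E[X]$ follows from $X_n\rightharpoonup X$ together with uniform integrability of $\{X_n\}$ guaranteed by $\sup_n E[X_n^2]<\infty$. Pointwise convergence $G_{X_n^\Box}\to G_{X^\Box}$ on $\mathbb{C}_+$ then gives $X_n^\Box\rightharpoonup X^\Box$ by Proposition \ref{prop.Cauchy.robust}(\ref{robust.2}). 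To show that positivity of $E[X^2]$ cannot be dropped, I would exhibit $X_n = n\mathbf{1}_{A_n}$ with $P(A_n)=1/n^4$: then $X_n\rightharpoonup\delta_0$ and $E[X_n^2]=1/n^2\to 0$, yet $P(X_n^\Box=n)=1$ for every $n$, so $X_n^\Box$ escapes to infinity and fails to converge weakly to $\delta_0 = \mathcal{L}(X^\Box)$.
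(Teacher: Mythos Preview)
Your proposal is correct. The arguments for (a), (b), (d), (e), (f) are essentially the same as the paper's (the paper also proceeds by testing against bounded functions and by the same algebraic identity for (d)); your ordering (proving (d) first and using it elsewhere) is a minor organizational difference.

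The one genuinely different route is (c). The paper argues directly with test functions: for bounded continuous $f$, the hypotheses $X_n\rightharpoonup X$ and $E[X_n^2]\to E[X^2]$ imply $\{X_n^2\}$ is uniformly integrable, hence so is $\{X_n^2 f(X_n)\}$, and therefore $E[X_n^2 f(X_n)]\to E[X^2 f(X)]$. You instead route through the Cauchy transform formula (d), using uniform integrability only of $\{X_n\}$ to get $E[X_n]\to E[X]$, and then invoke Proposition~\ref{prop.Cauchy.robust}. Both are valid; your approach has the pleasant feature of reusing (d) and the Cauchy-transform machinery already central to the paper, while the paper's argument is more self-contained (it does not need the Stieltjes inversion apparatus and works for general bounded continuous $f$ directly). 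Your counterexample for the necessity of $E[X^2]>0$ differs from the paper's ($X_n=n^{1/4}$ with probability $1/n$) but illustrates the same escape-to-infinity phenomenon.
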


\begin{proof}
\begin{enumerate}[wide=0pt,label=\alph*)]
\item The expression given for $E[W^2]$ is clear from the representation of $W$ as a mixture. Under the assumption $E[W^2]>0$, for any bounded continuous $f$
identity \eqref{def:Xbox} yields 
\begin{align} \label{eq:WBox.gen}
E[f(W^\Box)]=\frac{E[W^2f(W)]}{E[W^2]}=\frac{\alpha E[X^2f(X)]+(1-\alpha)E[Y^2f(Y)]}{E[W^2]}.
\end{align}
As at least one of $\alpha E[X^2],(1-\alpha)E[Y^2]$ must be positive, without loss generality, by relabelling $X$ and $Y$ if necessary, we may assume $E[X^2]$ and $\alpha$ are both positive. If $E[Y^2]=0$ then 
$E[W^2]=\alpha E[X^2]$ and the second term in the numerator of 
\eqref{eq:WBox.gen} is zero, producing $\mathcal{L}(W^\Box)=\mathcal{L}(X^\Box)$, in agreement with \eqref{eq:WBox.as.mixture}, whose second term likewise vanishes.

When $E[Y^2]>0$, then continuing from \eqref{eq:WBox.gen} we  obtain the claim by
\begin{multline}
E[f(W^\Box)]
= \frac{1}{E[W^2]}\left( \alpha E[X^2]\frac{E[X^2f(X^2)]}{E[X^2]}+(1-\alpha)E[Y^2]\frac{E[Y^2f(Y^2)]}{E[Y^2]}\right)\\
= \frac{1}{E[W^2]}\left( \alpha E[X^2]E[f(X^\Box)]+(1-\alpha)E[Y^2]E[f(Y^\Box)]\right).
\end{multline}

\item First suppose that $\mathcal{L}(X^\Box)=\mathcal{L}(Y^\Box)$. Note that this common distribution cannot be point mass at zero, being excluded by the condition $E[X^2]>0$.

Letting $A$ be a measurable subset of $\mathbb{R}$ not containing zero and $\mathcal{L}(Z)=\delta_0$, we obtain
\begin{multline}\label{mix.A.not.0}
P(Y \in A)=E[\mathbbm{1}(Y \in A)]=E[Y^{-2}Y^2\mathbbm{1}(Y \in A)]\\
=E[Y^2]E[(Y^{\Box})^{-2}\mathbbm{1}(Y^\Box \in A)]=\alpha E[X^2]E[(X^{\Box})^{-2}\mathbbm{1}(X^\Box \in A)]\\=\alpha E[X^{-2}X^2\mathbbm{1}(X\in A)]=\alpha P(X \in A)\\
= (1-\alpha)P(Z \in A) + \alpha P(X \in A). 
\end{multline}
Specializing to $A=\mathbb{R} \setminus \{0\}$ we obtain 
\begin{multline*}
P(Y \not = 0) = (1-\alpha)P(Z\not = 0)+\alpha P(X \not = 0) = \alpha P(X \not = 0)\\
\mbox{or} \quad P(Y=0)= (1-\alpha)P(Z=0)+\alpha P(X=0), 
\end{multline*}
thus showing the left and right hand sides of \eqref{mix.A.not.0} are equal for all measurable $A \subseteq \mathbb{R}$. The other direction of the claim follows from part a).

\item As $E[X_n]$ converges to a positive limit, for all $n$ greater than some $n_0$ the second moments of $X_n$ are strictly positive. In particular, 
for any bounded continuous function $f$, 
\begin{align*}
\lim_{n \rightarrow \infty} E[f(X_n^\Box)]=\lim_{n \rightarrow \infty} \frac{E[X_n^2f(X_n)]}{E[X_n^2]}=\frac{E[X^2f(X)]}{E[X^2]}=E[f(X^\Box)],
\end{align*}
where for taking the limit in the numerator we use that $\{X_n^2, n\ge n_0\}$ is uniformly integrable and hence so is $\{X_n^2f(X_n), n\ge n_0\}$, by the boundedness of $f$.

If $X_n$ takes on the value $n^{1/4}$ with probability $1/n$, and is otherwise zero, then $X_n \rightharpoonup X$ where $\mathcal{L}(X)=\delta_0$ and  $E[X_n^2]=\sqrt{n}/n \rightarrow 0=E[X^2]$, but, by part a) $X_n^\Box$ takes the value $n^{1/4}$ with probability one, and hence does not converge in distribution to zero, showing the necessity of the positivity condition. 

\item Applying the definition of the Cauchy transform and of $\mathcal{L}(X^\Box)$, and then adding and subtracting $z^2$ yields
\begin{multline*}
G_{X^\Box}(z)  = E\left[ 
\frac{1}{z-X^\Box} \right]= \frac{1}{E[X^2]}E\left[\frac{X^2}{z-X} \right]\\
= \frac{1}{E[X^2]}E\left[\frac{z^2+X^2-z^2}{z-X} \right] = \frac{1}{E[X^2]}E\left[\frac{z^2+(X-z)(X+z)}{z-X} \right]\\
= \frac{1}{E[X^2]} E\left[
\frac{z^2}{z-X}-X-z
\right],
\end{multline*}
yielding the claim. 

\item When $\alpha E[X^2]=0$, then either $\alpha=0$ or $X\equaldist \delta_0$, and in both cases both sides of the claimed identity are equal to point mass at zero. Otherwise, for a given bounded measurable function $f(\cdot)$, letting $g(\cdot)=f(\alpha \cdot)$, we have
\begin{multline*}
E[f((\alpha X)^\Box)]= \frac{E[(\alpha X)^2f(\alpha X)]}{E[(\alpha X)^2]}= \frac{E[X^2f(\alpha X)]}{E[X^2]}\\
= \frac{E[X^2 g(X)]}{E[X^2]}= E[g(X^\Box)] = E[f(\alpha X^\Box)].
\end{multline*}

\item Applying Definition \ref{def:X.Box.inverse} for $f(z)=xz2$ yields
$$
E[(Z^{-\Box})^2]=\frac{1}{E[Z^{-2}]},
$$
and now with this identity, and also the help of Definition \ref{def:X.Box.inverse}, for any bounded continuous $f$ we obtain 
\begin{multline*}
E[f(Z^{-\Box})^{\Box})] 
= \frac{E[(Z^{-\Box})^2f(Z^{-\Box})]}{E[(Z^{-\Box})^2]} \\
= E[Z^{-2}] E[(Z^{-\Box})^2 f(X^{-\Box})]=E[Z^{-2}]\frac{E[Z^{-2}Z^2f(Z)]}{E[Z^{-2}]}=E[f(X)].
\end{multline*}
\end{enumerate}
\end{proof}

Finally, we summarize the definition and a few important properties of the well-known size bias transformation.

\begin{definition}\label{def:Xs} For $\mathcal{L}(X)$ the law of a non-negative random variable $X$ with positive finite mean, let $\mathcal{L}(X^s)$, the size bias transform of $\mathcal{L}(X)$, have Radon-Nikodym derivative with respect to that of $X$ given by
\begin{align}\label{def:X.sb}
\frac{dP^s}{dP} = \frac{X}{E[X]}.
\end{align}
\end{definition}

Similar to Definition \ref{def:X.Box.inverse}, we have 
\begin{definition} \label{def:X.s.inverse}
For $X$ non-negative and $E[X^{-1}]<\infty$, let the distribution $\mathcal{L}(X^{-s})$ have Radon-Nikodym derivative with respect to that of $X$ given by
\begin{align}
\frac{dP^{-s}}{dP} = \frac{X^{-1}}{E[X^{-1}]}.
\end{align}
\end{definition}

The following facts regarding the size bias transformation mirror those about the square bias transform in Lemma \ref{lem:X.Xneg0.same}. 
\begin{lemma}\label{lem:X.Xneg0.same.sb}
Let $X$ and $Y$ be non-negative $L^1$ random variables, and let $Z$ be a non-negative random variable.
\begin{enumerate}[wide = 0pt,label=\alph*)]
\item \label{size.bias.properties.part.1} Let $\alpha \in [0,1]$ and 
\begin{align*}
\mathcal{L}(W)=\alpha \mathcal{L}(X)+(1-\alpha) \mathcal{L}(Y).
\end{align*}
Then $E[W]=\alpha E[X]+(1-\alpha)E[Y]$, and when $E[W]>0$,
\begin{align} \label{eq:WBox.as.mixture.sb}
\mathcal{L}(W^s)=\frac{\alpha E[X]}{E[W]} \mathcal{L}(X^s)+\frac{(1-\alpha)E[Y]}{E[W]} \mathcal{L}(Y^s).
\end{align}
\item \label{lem:sqbiaa-part2.sb} When $E[X]>0$, $X$ and $Y$ 
satisfy $E[X]\ge E[Y]$ and
$\alpha=E[Y]/E[X]$, 
then
$$
\mathcal{L}(X^s)=\mathcal{L}(Y^s) \,\,
\mbox{if and only if}\,\,
\mathcal{L}(Y)=(1-\alpha)\delta_0 + \alpha\mathcal{L}(X).
$$
\item \label{lem:sqbiaa-part3.sb} If $X_n$ is a sequence of square integrable random variables such that $X_n \rightharpoonup X$ and $E[X_n] \rightarrow E[X] \in (0,\infty)$, then
$
X_n^s \rightharpoonup X^s.
$
The restriction that the first moment of the limit be positive is necessary. 
\item \label{lem.Box.G.sb} When $E[X]>0$ the Cauchy transform of $X^s$ is given by 
$$
G_{X^s}(z)= \frac{1}{E[X]}\left( zG_X(z)-1\right).
$$
\item \label{eq:box.lin.alpha.sb} For all $\alpha >0$ we have $(\alpha X^s)\equaldist \alpha X^s$.
\item \label{Box.inverse.properties.sb} If $E[Z^{-1}]<\infty$ then
$$
E[Z^{-s}]=\frac{1}{E[Z^{-1}]} \qmq{and} \mathcal{L}((Z^{-s})^s) = \mathcal{L}(Z).
$$
\item \label{size.bias.inverse.pair} If $Z = Y^s$, then $E[Z^{-1}]=1/E[Y]<\infty$.
\end{enumerate}
\end{lemma}

\begin{proof} Properties \ref{size.bias.properties.part.1} through \ref{Box.inverse.properties.sb} are exact analogs of the properties of square bias, and the proofs of those properties in Lemma \ref{lem:X.Xneg0.same} work mutatis mutandis here.  We therefore provide a proof only of \ref{size.bias.inverse.pair}. 

First note that by Lemma \ref{lem:X.Xneg0.same.sb} 
\ref{lem:sqbiaa-part2.sb} we may assume without loss of generality that $Y$ has no mass at zero, and in particular that $P(Yf(Y)=1)=1$ for $f(y)=1/y$.
Now noting that by standard convergence arguments \eqref{eq:def.mean.m.szb} can be extended to all functions $f$ for which the left hand side exists we obtain
$$
1=E[Yf(Y)]=E[Y]E[f(Z)] \qmq{yielding} E[Z^{-1}] = \frac{1}{E[Y]} < \infty.
$$

\end{proof}

\subsection{Construction of the Free Zero Bias $\circ$\label{sect:freezerobias.construction}}

We now arrive at the main construction: the free zero bias. To motivate it, we return for a moment to the classical zero bias $X^\ast$ which is defined by the functional equation \eqref{char.zb}, and has the  probabilistic construction \eqref{eq:blah} whose
proof can be found in \cite[Theorem 2.1]{goldstein2005distributional}.  This construction appears in (12) of that work when specializing to the case $P(x)=x$ and $m=0$, upon noting that the random variable $Y$ whose distribution is given in (11) has the $X^\Box$ distribution by virtue of the fact that $Q(x)$ defined in (10) is equal to $P(x)$ in this case.

One may wonder if some version of this statement, with independence replaced by free independence and uniform replaced by some other distribution perhaps, might characterize the free zero bias of \eqref{char.freezb}.  While nothing of this nature has presented itself, the following definition does have the same flavor, and (as we see shortly) works.

\begin{definition} 
\label{def:fzb}
If $X$ is an $L^2$ random variable with $E[X]=0$, the {\em free zero bias} of $X$, denoted $X^\circ$, is defined (by its law) as follows:
\begin{equation}\label{eq:circ.equiv.box}
X^\circ \equaldist (X^\Box)^\bflat.
\end{equation}
That is: the law $\mathcal{L}(X^\circ)$ is defined by its Cauchy transform, which satisfies
\[ G_{X^\circ}(z) = G_{(X^\Box)^\bflat}(z), \qquad z\in\mathbb{C}_+ \]
following Lemmas \ref{lem:bflat} and \ref{lem:X.Xneg0.same}(\ref{lem.Box.G} defining the $\flat$ and $\Box$ transforms in terms of Cauchy transforms.
\end{definition}

Note: the free zero bias, like the square bias and El Gordo transforms it is composed of, acts only on probability measures; throughout this paper, we have it act on random variables by acting on their laws, and abuse notation writing (as above) $X^\circ$ rather than $\mathcal{L}(X)^\circ$.

\begin{remark}\label{rem:U=Gordo}
We see now that to compute either the classical or free zero bias of a mean zero $X$ the first step is to compute $X^\Box$, followed by multiplying by an independent $U \equaldist \mathrm{Unif}[0,1]$ in the classical case. That multiplication in the free case is replaced by taking the El Gordo transformation in \eqref{eq:circ.equiv.box}. 
\end{remark}

Here are several important properties of the free zero bias, including a concrete formula for its Cauchy transform.
\begin{lemma} \label{lem:Gcirc.props}
The free zero bias defined above satisfies the following properties.
\begin{enumerate}[wide = 0pt,label=\alph*)]

\item \label{lem.Gcirc.1} $X^\circ\equaldist Y^\circ$ if and only if $X^\Box\equaldist Y^\Box$.

\item \label{lem.Gcirc.2} For all $\alpha \in \mathbb{R}$ we have $(\alpha X)^\circ\equaldist \alpha X^\circ$. 

\item \label{lem.Gcirc.3} If 
$X_n \rightharpoonup X$ and $E[X_n^2] \rightarrow E[X^2]>0$,  then $X_n^\circ \rightharpoonup X^\circ$.

\item \label{lem.Gcirc.4} The Cauchy transform of $X^\circ$ satisfies
\begin{align} \label{eq:GX.circ.EX=0}
G_{X^\circ}(z)=-\sqrt{\frac{zG_X(z)-1}{\mathrm{Var}(X)}}.
\end{align}
\end{enumerate}
\end{lemma}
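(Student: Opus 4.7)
The plan is to leverage the compositional definition $X^\circ \equaldist (X^\Box)^\bflat$ in Definition \ref{def:fzb} and inherit each listed property from the corresponding property of one or both constituent transforms: the square bias $\Box$ (Lemma \ref{lem:X.Xneg0.same}) and the El Gordo transform $\bflat$ (Lemma \ref{lem:bflat}). No step will require a genuinely new argument.

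For part \ref{lem.Gcirc.1}, I would use the injectivity of $\bflat$ established in Lemma \ref{lem:bflat}: $X^\circ \equaldist Y^\circ$ is by definition $(X^\Box)^\bflat \equaldist (Y^\Box)^\bflat$, and injectivity converts this directly to $X^\Box \equaldist Y^\Box$. For part \ref{lem.Gcirc.2}, I would chain the two individual scaling identities:
\[
(\alpha X)^\circ \equaldist \bigl((\alpha X)^\Box\bigr)^\bflat \equaldist (\alpha X^\Box)^\bflat \equaldist \alpha (X^\Box)^\bflat \equaldist \alpha X^\circ,
\]
applying Lemma \ref{lem:X.Xneg0.same}, part \ref{eq:box.lin.alpha}, at the second equality and the scaling identity of Lemma \ref{lem:bflat} at the third. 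For part \ref{lem.Gcirc.3}, the hypotheses $X_n \rightharpoonup X$ and $E[X_n^2] \to E[X^2] > 0$ are precisely what Lemma \ref{lem:X.Xneg0.same}, part \ref{lem:sqbiaa-part3}, requires to conclude $X_n^\Box \rightharpoonup X^\Box$; then the continuity clause of Lemma \ref{lem:bflat} upgrades this convergence to $X_n^\circ = (X_n^\Box)^\bflat \rightharpoonup (X^\Box)^\bflat = X^\circ$.

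For part \ref{lem.Gcirc.4}, the only computation of substance, I would unpack Definition \ref{def:fzb} at the level of Cauchy transforms. Because the standing hypothesis is $E[X]=0$, the formula in Lemma \ref{lem:X.Xneg0.same}, part \ref{lem.Box.G}, reduces to
\[
G_{X^\Box}(z) \;=\; \frac{z^2 G_X(z) - z}{E[X^2]} \;=\; \frac{z\bigl(zG_X(z) - 1\bigr)}{\mathrm{Var}(X)}.
\]
Substituting this into the El Gordo identity \eqref{G.sharp} then yields
\[
G_{X^\circ}(z) = G_{(X^\Box)^\bflat}(z) = -\sqrt{\frac{G_{X^\Box}(z)}{z}} = -\sqrt{\frac{zG_X(z) - 1}{\mathrm{Var}(X)}},
\]
which is exactly \eqref{eq:GX.circ.EX=0}.

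The main obstacle, to the extent there is one, is keeping track of the square-root branch in (d); but this is automatic. Lemma \ref{lem:bflat} has already certified that $G_{X^\Box}(z)/z$ lies in the slit plane $\mathbb{C}\setminus[0,\infty)$, where Notation \ref{notat:sqrt} specifies the principal branch, and extracting the positive scalar $\mathrm{Var}(X)$ from inside the radical is harmless by the first remark in Notation \ref{notat:sqrt}. Every other step is a direct appeal to a previously established lemma.
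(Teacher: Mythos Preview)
Your proposal is correct and matches the paper's proof essentially step for step: each part is derived by composing the corresponding properties of $\Box$ from Lemma \ref{lem:X.Xneg0.same} with those of $\bflat$ from Lemma \ref{lem:bflat}, and part \ref{lem.Gcirc.4} is the same Cauchy-transform computation. The only cosmetic difference is that the paper states both directions of \ref{lem.Gcirc.1} explicitly, whereas you only spell out the injectivity direction; the other direction is immediate from the definition, so nothing is missing.
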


\begin{proof}
For a), if $X^\Box, Y^\Box$ have the same law then the same holds for $X^\circ,Y^\circ$ via Definition \ref{def:fzb}. Conversely, as the El Gordo transform is injective by Lemma \ref{lem:root.Geometric.Mean}, whenever $X^\circ,Y^\circ$ are equally distributed the same is true for $X^\Box, Y^\Box$. Next, for b), note that by Lemma \ref{lem:bflat} and e) of Lemma \ref{lem:X.Xneg0.same}, for all $\alpha \in \mathbb{R}$,
$$(\alpha X)^\circ=((\alpha X)^\Box)^\bflat =(\alpha X^\Box)^\bflat= \alpha (X^\Box)^\bflat = \alpha X^\circ.
$$
For the next claim, part c) of Lemma \ref{lem:X.Xneg0.same} shows that the assumed convergence conditions on $X$ imply that $X_n^\Box \rightharpoonup X^\Box$, so the claim follows by the distributional continuity of the El Gordo transform provided by Lemma \ref{lem:root.Geometric.Mean}.

Writing out the composition in \eqref{eq:circ.equiv.box} using the definition \eqref{G.sharp} and then applying d) of Lemma \ref{lem:X.Xneg0.same}(\ref{lem.Box.G} in the case $E[X]=0$ yields
$$
G_{(X^\Box)^\bflat}(z)=-\sqrt{\frac{1}{z}G_{X^\Box}(z)}=-\sqrt{\frac{1}{zE[X^2]}\left(
z^2G_X(z)-z
\right)}
$$
which immediately simplifies to \eqref{eq:GX.circ.EX=0}. \end{proof}

\begin{remark} From Lemma \ref{lem:Gcirc.props}(\ref{lem.Gcirc.2}, if $X$ is symmetric (i.e.\ has the same distribution as $-X$) then so is $X^\circ$: $-X^\circ \equaldist (-X)^\circ \equaldist X^\circ$.  (The same argument shows this result holds for the classical zero bias.)  We will see this observation witnessed in several examples below.
\end{remark}

\begin{remark} Neither the square bias or El Gordo transforms require centered random variables, and indeed Definition \ref{def:fzb} makes perfect sense for all $L^2$ random variables, centered or not.  All the properties in Lemma \ref{lem:Gcirc.props} parts \ref{lem.Gcirc.1}, \ref{lem.Gcirc.2}, and \ref{lem.Gcirc.3} hold true if $E[X]\ne0$, while the reader can readily verify that \eqref{eq:GX.circ.EX=0} is replaced more generally with
\begin{align}\label{eq:Gcirc:-mean/z}
G_{(X^\Box)^\bflat}(z)=-\sqrt{\frac{1}{E[X^2]}\left(zG_X(z)-\frac{E[X]}{z}-1\right)}.
\end{align}

Nevertheless, as mentioned in \eqref{eq:def.mean.m.fzb}, where $X$ has mean $m \in \mathbb{R}$, we define
\begin{align}\label{eq:Xcirc.def.mean.m}
X^\circ \equaldist (X-m)^\circ+m, 
\end{align}
noting that this definition is in agreement with the previous one in the case $m=0$. Extending the definition in this way preserves the fixed point property of the transformation, as explored in Example \ref{ex:rad}(\ref{ex:two-point-centered}.

However, the mean $0$ condition is required for the defining functional equation to hold, in both the classical and free cases. In comparison to the classical case: if $U\equaldist \mathrm{Unif}[0,1]$ is independent from $X^\Box$ then $UX^\Box$ is a random variable, but it only coincides with $X^\ast$when $E[X]=0$.  
\end{remark}

We now show the main reason why the free zero bias transformation is of interest. We will find it convenient to work with resolvent functions $f_z\colon\mathbb{R}\to\mathbb{C}$
\begin{align}\label{def:res.fun}
f_z(u)=\frac{1}{z-u},\; z \in \mathbb{C}_+
\qmq{which satisfy} f^{[1]}_z(u,v)=\frac{1}{(z-u)(z-v)}.
\end{align}
These functions are bounded and smooth on $\mathbb{R}$.

We recall that the semi-circle law $S_{\sigma^2}$ with mean zero and variance $\sigma^2$ has Cauchy transform
\begin{align}\label{eq:C.trans.S.sigma^2}
G_{S_{\sigma^2}}(z) = \frac{z - \sqrt{z^2-4\sigma^2}}{2\sigma^2}.
\end{align}

\begin{lemma} \label{lem:Xcirc.fixed.pt}
For any $\sigma^2>0$ the mapping $\mathcal{L}(X) \rightarrow \mathcal{L}(X^\circ)$ with domain $\mathcal{D}_{0,\sigma^2}$ has the semi-circle law with variance $\sigma^2$ as its unique fixed point, and for all $X \in \mathcal{D}_{0,\sigma^2}$
\begin{align}\label{free.Stein.identity}
E[Xf(X)]=\sigma^2E[f^{[1]}(X^\circ,Y^\circ)]
\end{align}
for all Lipshitz functions $f$, and 
\begin{align}\label{eq:X*.unif.interpolation}
X^* \equaldist UY^\circ+(1-U)X^\circ
\end{align}
where $Y^\circ,X^\circ$ and $U \equaldist \mathcal{U}[0,1]$ are independent, and $Y^\circ \equaldist X^\circ$.
\end{lemma}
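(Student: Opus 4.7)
The plan is to establish \eqref{free.Stein.identity} first for the resolvent family $\{f_z:z\in\mathbb{C}_+\}$ of \eqref{def:res.fun}, then extend to all bounded Lipschitz functions by a density argument, and finally deduce the fixed point statement and \eqref{eq:X*.unif.interpolation} as consequences. The reason to start with resolvents is that both sides of \eqref{free.Stein.identity} applied to $f_z$ reduce to explicit algebraic expressions in the Cauchy transforms already computed in Lemma \ref{lem:Gcirc.props}.

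For the resolvent identity, note that $E[Xf_z(X)] = E\!\left[\frac{X}{z-X}\right] = zG_X(z)-1$, while since $W^\circ, X^\circ$ are (classically) independent copies,
\[ E[\partial f_z(W^\circ,X^\circ)] = E\!\left[\frac{1}{(z-W^\circ)(z-X^\circ)}\right] = G_{X^\circ}(z)^2. \]
Squaring the formula \eqref{eq:GX.circ.EX=0} gives $\sigma^2 G_{X^\circ}(z)^2 = zG_X(z)-1$, which is exactly \eqref{free.Stein.identity} evaluated at $f=f_z$. Taking real and imaginary parts produces the identity for the Poisson kernel and conjugate Poisson kernel as well.

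To extend to Lipschitz functions, I would first pass to $C_c^\infty$: linear combinations of resolvents $f_z$ (and their complex conjugates) separate points and vanish at infinity, so by Stone--Weierstrass they are dense in $C_0(\mathbb{R})$; since both sides of \eqref{free.Stein.identity} are bounded linear functionals of $f$ on $C_0(\mathbb{R})$ (using $E|X|<\infty$ on the left, and the $L^\infty$ bound on $\partial f$ on the right), the identity extends to $f\in C_0(\mathbb{R})$, hence to $C_c^\infty$. For a Lipschitz $f$, write $f_n = (f\wedge n)\vee(-n)$ truncated and mollified, so $f_n\in C_c^\infty$ with a uniform Lipschitz constant and $f_n\to f$ pointwise; dominated convergence using $|X f_n(X)|\le L X^2 + |f(0)||X|$ on the left and $|\partial f_n|\le L$ on the right then passes the identity to Lipschitz $f$.

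The fixed point claim is then essentially algebraic: if $\mathcal{L}(X)\in\mathcal D_{0,\sigma^2}$ with $X^\circ\equaldist X$, then $G_X=G_{X^\circ}$, and squaring \eqref{eq:GX.circ.EX=0} gives the quadratic $\sigma^2 G_X(z)^2 - zG_X(z) + 1 = 0$; the unique root with the correct $1/z$ asymptotic at infinity is \eqref{eq:C.trans.S.sigma^2}, so $\mathcal{L}(X)=S_{0,\sigma^2}$. Conversely one verifies directly from \eqref{eq:C.trans.S.sigma^2} and \eqref{eq:GX.circ.EX=0} that the semicircle is a fixed point. Finally, for \eqref{eq:X*.unif.interpolation}, substitute \eqref{diffquo.int} into the right side of \eqref{free.Stein.identity} (valid for $f\in C_c^\infty$ whose derivative is Lipschitz) to obtain $E[Xf(X)] = \sigma^2 E[f'(UW^\circ+(1-U)X^\circ)]$ with $U,W^\circ,X^\circ$ independent and $W^\circ\equaldist X^\circ$; comparing with the defining equation \eqref{char.zb} of the classical zero bias and invoking its uniqueness yields $X^\ast\equaldist UW^\circ+(1-U)X^\circ$.

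The main obstacle I anticipate is the density extension from resolvents to Lipschitz functions, because Lipschitz $f$ may grow linearly at infinity and the defining class in \eqref{char.freezb} is only $C_c^\infty$. The truncation/mollification scheme above handles this, but some care is needed to track that both $E|X|<\infty$ (giving integrability of $Xf_n(X)$ uniformly in $n$) and the uniform Lipschitz bound on $f_n$ (giving a uniform $L^\infty$ bound on $\partial f_n$) are in place before invoking dominated convergence.
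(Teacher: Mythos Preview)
Your resolvent computation and the fixed-point argument are fine and match the paper exactly. The gap is in your density extension from resolvents to $C_0$ and thence to Lipschitz functions. You claim that $f\mapsto E[\partial f(W^\circ,X^\circ)]$ is a bounded linear functional on $C_0(\mathbb{R})$ via ``the $L^\infty$ bound on $\partial f$'', but for a general $f\in C_0$ there is no such bound: $|\partial f(w,x)|\le 2\|f\|_\infty/|w-x|$ is the best you get, and controlling the right side under sup-norm approximation would require $E[|W^\circ-X^\circ|^{-1}]<\infty$, which you have no reason to expect (e.g.\ for $X$ Rademacher, $X^\circ$ is arcsine and this expectation diverges). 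Stone--Weierstrass gives resolvent approximants $f_n\to f$ only in sup norm, with no control on their Lipschitz constants, so $\partial f_n$ need not converge in any useful sense even when the target $f$ is Lipschitz.

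The paper circumvents this by reversing your order: from the resolvent identity it first proves the distributional statement \eqref{eq:X*.unif.interpolation}. The key observation is that $f_z'(u)=-\frac{d}{dz}f_z(u)$, so $E[f_z'(X^\ast)]=-\frac{d}{dz}G_{X^\ast}(z)$ and similarly for $\Upsilon:=UW^\circ+(1-U)X^\circ$; the resolvent identity then gives $G_{X^\ast}'=G_\Upsilon'$ on $\mathbb{C}_+$, and the $O(1/z)$ decay forces $G_{X^\ast}=G_\Upsilon$, hence $X^\ast\equaldist\Upsilon$. Once \eqref{eq:X*.unif.interpolation} is in hand, the Lipschitz extension is free: the classical identity $E[Xf(X)]=\sigma^2 E[f'(X^\ast)]$ is already known for Lipschitz $f$, and substituting $X^\ast\equaldist\Upsilon$ together with \eqref{diffquo.int} yields \eqref{free.Stein.identity} directly. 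This route needs no approximation of $\partial f$ at all.
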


\begin{proof}
Setting $X^\circ\equaldist X$ in \eqref{eq:GX.circ.EX=0} of Lemma \ref{lem:Gcirc.props} yields
$$
\sigma^2 G_X^2(z)-zG_X(z)+1=0,
$$
implying that $G_X(z)=G_{S_{\sigma^2}}(z)$ via \eqref{eq:C.trans.S.sigma^2}.  For the second claim, letting $f_z$ be a resolvent function for a given $z \in \mathbb{C}_+$, and using \eqref{eq:GX.circ.EX=0}, we have
\begin{multline}\label{eq:Xf_z(Z)}
E[Xf_z(X)] = E \left[\frac{X}{z-X}\right] = E \left[\frac{z+X-z}{z-X}\right]
= zG_X(z)-1= \sigma^2 G_{X^\circ}^2(z).
\end{multline}
Now, from \eqref{def:res.fun}, 
\[ E[f^{[1]}_z(X^\circ,Y^\circ)] = E\left[\frac{1}{(z-X^\circ)(z-Y^\circ)}\right] = 
E\left[\frac{1}{z-X^\circ}\right]E\left[\frac{1}{z-Y^\circ}\right] = G_{X^\circ}^2(z). \]
This identity, along with \eqref{eq:Xf_z(Z)}, shows that \eqref{free.Stein.identity} holds for all $f_z, z \in \mathbb{C}_+$.

Now, for all such $f_z$, rewriting $E[f^{[1]}_z(X^\circ,Y^\circ)]$ in terms of the derivative of $f_z$ and an independent $U \equaldist {\mathcal U}[0,1]$, we obtain
\begin{multline*}
\sigma^2 E[f_z'(X^*)]=\sigma^2 E[Xf_z(X)]=\sigma^2 E[f^{[1]}_z(X^\circ,Y^\circ)] \\= \sigma^2 E\left[
\frac{f_z(Y^\circ)-f_z(X^\circ)}{Y^\circ-X^\circ}
\right]
=\sigma^2 E[f_z'(UY^\circ+(1-U)X^\circ)].
\end{multline*}
Letting $\Upsilon = UY^\circ +(1-U)X^\circ$, we have therefore shown that $E[f_z'(X^\ast)] = E[f_z'(\Upsilon)]$ for all $z\in\mathbb{C}_+$.  Since $f_z'(u) = 1/(z-u)^2 = -\frac{d}{dz}f_z(u)$, we have
\[ E[f_z'(\Upsilon)] = -E[\frac{d}{dz}f_z(\Upsilon)] = -\frac{d}{dz}E[f_z(\Upsilon)] = -\frac{d}{dz}G_\Upsilon(z) \]
where the second equality (differentiating under the integral) follows because $(u,z)\mapsto f_z(u)$ and $(u,z)\mapsto f_z'(u)$ are continuous on a neighborhood of each point $(u,z)\in\mathbb{R}\times\mathbb{C}_+$.  Similarly, $E[f_z'(X^\ast)] = -\frac{d}{dz}G_{X^\ast}(z)$.  Hence, we've established that $\frac{d}{dz}G_{\Upsilon}(z) = \frac{d}{dz}G_{X^\ast}(z)$ for all $z\in\mathbb{C}_+$, which means $G_{\Upsilon}$ and $G_{X^\ast}$ differ by a constant, which must be $0$ since both Cauchy transforms are $O(1/z)$ near $\infty$.  Hence $G_{\Upsilon} = G_{X^\ast}$, and by the Stieltjes inversion formula, it follows that $\Upsilon\equaldist X^\ast$, establishing \eqref{eq:X*.unif.interpolation}.

Now, via \eqref{eq:X*.unif.interpolation}, for all Lipschitz $f$ we have
\begin{multline*}
E[Xf(X)] = \sigma^2E[f'(X^*)] = \sigma^2E[f'(UY^\circ+(1-U)X^\circ)]\\=\sigma^2 E\left[ \frac{f(Y^\circ)-f(X^\circ)}{Y^\circ-X^\circ} \right]
= \sigma^2 E[f^{[1]}(X^\circ,Y^\circ)],
\end{multline*}
as desired.

\end{proof}

\begin{remark} As stated, the final equality in the proof of Lemma \ref{lem:Xcirc.fixed.pt} a priori requires $f^{[1]}$ to be defined as $f'$ on the diagonal.  We will see in Theorem \ref{thm:ac} below that the law of $X^\circ$ is always absolutely continuous, and so it is not actually necessary to define $f^{[1]}$ on the diagonal when taking expectations of $f^{[1]}(X^\circ,Y^\circ)$.
\end{remark}

\subsection{The `Replace One' Property}
In the classical world, much of the success that the zero bias transformation enjoys is due to what can be called `the replace one property'. 
The following lemma gives its free analog; we include the classical one for the sake of comparison. For a clearer parallel, we extend the scope of the classical zero bias to general mean random variables, as in \eqref{eq:Xcirc.def.mean.m}.

We begin with the following fact which is useful in both the size bias and free zero bias cases below:
\begin{lemma} \label{eq:sub.Xf_z(X)}
When $S_n=X_1+\ldots+X_n$, a sum of freely independent variables, and $f_z$ is a resolvant function, then
$$
E[S_n f_z(S_n)] = \sum_{i=1}^n E[X_i f_{\omega_{\mu_i,\nu_{n \neg i}}(z)}(X_i)],
$$
where $\omega_{\mu_i,\nu_{n \neg i}}$ is a subordinator as in Proposition \ref{prop:subordinator.def}.
\end{lemma}

\begin{proof}
Applying (free) conditioning as in Section \ref{sec:free prob}), \eqref{eq:cond.resolve}
for the third equality, and the
definition of $f_\omega$ in the last, we obtain
\begin{multline*}
E[S_nf_z(S_n)]=\sum_{i=1}^n E[X_i f_z(S_n)] = \sum_{i=1}^nE[X_iE[ f_z(S_n)|X_i]] \\= \sum_{i=1}^nE\left[ \frac{X_i}{\omega_{\mu_i,\nu_{n \neg i}}(z)-X_i}\right] = \sum_{i=1}^nE[X_i f_{\omega_{\mu_i,\nu_{n \neg i}}(z)}(X_i)].
\end{multline*}
\end{proof}

In what follows, we adopt the definition of the classical $X^*$ for non-mean zero random variables just as was done in \eqref{eq:def.mean.m.fzb} for the free case.

\begin{theorem} \label{eq:Todd's.formula}
Consider variables $X_1,\ldots,X_n$ with means $m_1,\ldots,m_n$ and finite, non-zero variances $\sigma_1^2,\ldots,\sigma_n^2$, and sum $S_n=X_1+\cdots+X_n$ where $\sigma^2=\mathrm{Var}(S_n)$. For $i=1,\ldots,n$, let $I$ be a random index with distribution $P(I=i)=\sigma_i^2/\sigma^2$.

When the summand variables are classically independent, and $I$ is independent of $\{X_i,X_i^*,i=1,\ldots,n\}$ where $X_i^*$ is independent of $X_j, j \not =i$ and has the $X_i$ zero-bias distribution, then $S_n^*$ given by $S_n-X_I+X_I^*$ has the $S_n$-zero bias distribution, and the characteristic function of $S_n^*$ satisfies
\begin{align}\label{eq:sigh.Sn}
\psi_{S_n^*}(t)=E[\psi_{X_I^*}(t)\psi_{S_n - X_I}(t)].
\end{align}

When the summand variables are freely independent, then the Cauchy transform of $S_n^\circ$ satisfies

\begin{equation} \label{eq:free.replace.one}
G_{S_n^\circ}^2(z)=E[G_{X_I^\circ}^2(\omega_{X_I,S_n-X_I}(z))].
\end{equation}
When the summands are identically distributed as $X$, these relations respectively specialize to 
\begin{align} \label{eq:free.replace.first.one}
\psi_{S_n^*}(t)=\psi_{X^*}(t)\psi_{S_{n-1}}(t) \qmq{and} G_{S_n^\circ}(z)=G_{X^\circ}(\omega_{X_n,S_n-X_n}(z)).
\end{align}
\end{theorem}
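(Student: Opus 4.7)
I plan to handle both halves of the theorem by the same pattern: test the defining zero-bias equation (classical \eqref{char.zb} or free \eqref{char.freezb}) against suitable test functions applied to $S_n$, decompose $E[S_n\,(\cdot)] = \sum_i E[X_i\,(\cdot)]$, and use the appropriate (classical or free) independence between $X_i$ and $S_n - X_i$ to reduce each summand to a single-variable expression. The essential free-probabilistic tool is the subordination identity \eqref{eq:cond.resolve}, which yields $E[f_z(S_n)\mid X_i] = f_{\omega_i(z)}(X_i)$ with $\omega_i(z) := \omega_{X_i, S_n - X_i}(z)$, combined with the Cauchy-transform form of the single-variable identity
\begin{equation*}
E[X f_z(X)] = \sigma^2\, G_{X^\circ}^2(z) + m\, G_X(z),
\end{equation*}
which follows from the resolvent computation in Lemma \ref{lem:Xcirc.fixed.pt} in the centered case, and extends to non-centered $X$ via the definition \eqref{eq:def.mean.m.fzb}.

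\textbf{Classical case.} Applying the mean-$m$ form $E[(X-m)g(X)] = \sigma^2 E[g'(X^*)]$ to $g(S_n)$, I decompose the left side as $\sum_i E[(X_i - m_i)\, g(X_i + (S_n - X_i))]$. Conditioning on $S_n - X_i$, which is classically independent of $X_i$ (and hence of $X_i^*$), and applying the single-variable identity to the translated function $y \mapsto g(y + (S_n - X_i))$ reduces each summand to $\sigma_i^2\, E[g'(X_i^* + S_n - X_i)]$. Summing yields $\sigma^2 E[g'(S_n^*)] = \sigma^2\, E[g'(X_I^* + S_n - X_I)]$ for all $g \in C_c^\infty$, which identifies $S_n^* \equaldist X_I^* + (S_n - X_I)$. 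Conditioning on $I$ and using classical independence of $X_I^*$ from $S_n - X_I$ then factors the characteristic function and gives \eqref{eq:sigh.Sn}; the iid specialization follows because $P(I=i) = 1/n$, $\psi_{X_I^*} = \psi_{X^*}$, and $S_n - X_I \equaldist S_{n-1}$.

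\textbf{Free case and obstacle.} For the free half I test against resolvents $f_z$ and compute
\begin{equation*}
E[S_n f_z(S_n)] = \sum_{i=1}^n E[X_i f_z(S_n)] = \sum_{i=1}^n E[X_i f_{\omega_i(z)}(X_i)],
\end{equation*}
where the second equality combines the tower property $E[X_i Y] = E[X_i\, E[Y\mid X_i]]$ (valid since $X_i$ lies in its own conditioning subalgebra) with the subordination identity \eqref{eq:cond.resolve}. Applying the single-variable identity at $\omega_i(z)$ produces $\sigma_i^2\, G_{X_i^\circ}^2(\omega_i(z)) + m_i\, G_{X_i}(\omega_i(z))$ per term; the defining subordination relation $G_{X_i}(\omega_i(z)) = G_{S_n}(z)$ then collapses the mean contributions across $i$ to $m_{S_n}\, G_{S_n}(z)$, which cancels the corresponding term arising from applying the single-variable identity directly to $S_n$. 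What remains is $\sigma^2\, G_{S_n^\circ}^2(z) = \sum_i \sigma_i^2\, G_{X_i^\circ}^2(\omega_i(z))$, which after normalization by $\sigma^2$ is \eqref{eq:free.replace.one}. In the iid case all $\omega_i$ and all $G_{X_i^\circ}^2$ coincide, producing $G_{S_n^\circ}^2(z) = G_{X^\circ}^2(\omega_{X_n, S_n - X_n}(z))$, and the square-root branch is fixed by the Cauchy-transform normalization $zG(z) \to 1$ at infinity. The main anticipated obstacle is the non-centered bookkeeping, namely verifying that the mean contributions telescope exactly through the subordination identity $G_{X_i}(\omega_i(z)) = G_{S_n}(z)$; no genuinely new difficulty is expected beyond this.
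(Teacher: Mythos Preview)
Your proposal is correct and follows essentially the same approach as the paper: test against resolvents $f_z$, decompose $E[S_n f_z(S_n)]=\sum_i E[X_i f_z(S_n)]$, apply the subordination identity \eqref{eq:cond.resolve} to reduce each summand to $E[X_i f_{\omega_i(z)}(X_i)]$, and invoke the single-variable free zero bias characterization. The only minor difference is that the paper reduces to the mean-zero case first via the translation identity $\omega_{\mu,\nu}(z-(a+b))=\omega_{\mu+a,\nu+b}(z)+a$, whereas you carry the mean terms $m_i G_{X_i}(\omega_i(z))$ through and cancel them using the defining subordination relation $G_{X_i}(\omega_i(z))=G_{S_n}(z)$; both routes are valid and equally short.
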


\begin{remark} As a reminder: $\omega_{V,W}$ is the subordinator function introduced in Proposition \ref{prop:subordinator.def} by the relation $G_{V+W}(z) = G_V(\omega_{V,W}(z))$ for $z\in\mathbb{C}_+$ when $V,W$ are freely independent.  Here $V = X_n$ and $W = S_n-X_n = X_1+\cdots+X_{n-1}$ are indeed freely independent, since all the $X_k$ are free from each other.  Subordinators connect Cauchy transforms of free sums, in an analogous (but more complicated) manner to the simple multiplicative relationship between characteristic functions of independent sums.  It is therefore natural for them to appear in the free analog of ``the replace one property''. \end{remark}

\begin{remark} \label{rk.circ.replace.one.fails} The classical ``replace one property'' for, say, i.i.d.\ variables does {\em not} hold for freely independent random variables; that is to say, in general
\[ S_n^\circ\; \mathop{\ne}^{\mathrm{d}}\; S_n-X_n + X_n^\circ \]
where $X_n^\circ$ is freely independent from $X_1,\ldots,X_{n-1}$.  Indeed: from Proposition \ref{prop:subordinator.def},
\[ G_{S_n-X_n+X_n^\circ}(z) = G_{X_n^\circ}(\omega_{X_n^\circ,S_n-X_n}(z)). \]
From Theorem \ref{eq:Todd's.formula}, this would equal $G_{S_n^\circ}(z)$ if and only if $G_{X_n^\circ}(\omega_{X_n^\circ,S_n-X_n}(z)) = G_{X_n^\circ}(\omega_{X_n,S_n-X_n}(z))$, and (from the univalence of $G_{X_n^\circ}$ and subordinators near $\infty$) this would imply that $X_n^\circ \equaldist X_n$.  Thus, from Lemma \ref{lem:Xcirc.fixed.pt}, the {\em only} case in which $S_n^\circ \equaldist S_n-X_n+X_n^\circ$ is when the summands $X_n$ are all semicircular --- in which case the equation reduces to the tautology $S_n \equaldist S_n$.
\end{remark}

\begin{myproof}{\it Theorem}{\ref{eq:Todd's.formula}}
For a proof of \eqref{eq:sigh.Sn} in the mean zero case, see \cite{goldstein1997stein}. Using that result in the third equality, and with $m=E[S_n]$, in the general mean case we have 
\begin{align*}
\psi_{S^*}(t)=\psi_{(S-m)^*+m}(t)&=e^{itm}\psi_{(S-m)^*}(t)
\\
&=e^{itm} 
E\left[E[\psi_{(X_I-m_I)^*}(t)\psi_{(S_n -m) - (X_I-m_I)}(t)|I]\right]
\\
&=E\left[E[\psi_{(X_I-m_I)^*+m_I}(t)\psi_{(S_n -m)+m - [(X_I-m_I)+m_I]}(t)|I]\right] \\
&=E[\psi_{X_I^*}(t)\psi_{S_n -X_I}(t)].
\end{align*}

For the free case, this same type of reasoning, and the simply proved fact that
$$
\omega_{\mu,\nu}(z-(b+a))= \omega_{\mu+a,\nu+b}(z)-a
$$
allows us to reduce to the mean zero case. Next, using \eqref{def:res.fun}, for any random variable $V$ with independent copy $W$, and any $z\in\mathbb{C}_+$ we have
\begin{align} \nonumber E[f^{[1]}_z(V,W)] &= E[(z-V)^{-1}(z-W)^{-1}] \\
&= E[(z-V)^{-1}]E[(z-W)^{-1}] = G_V^2(z). \label{eq.E.del.fz} \end{align}
Now, let $\mu_i$ and $\nu_{n \neg i}$ be the probability distributions of $X_i$ and $S_n-X_i$ respectively.  Using \eqref{eq.E.del.fz} with $V = S_n^\circ$, $T_n^\circ$ an independent copy of $S_n^\circ$, \eqref{free.Stein.identity} of Lemma \ref{lem:Xcirc.fixed.pt}, and the definition of $S_n$, we have
\begin{align*} \sigma^2 G_{S_n^\circ}^2(z)= \sigma^2 E [f^{[1]}_z(S_n^\circ,T_n^\circ)] = E[S_n f_z(S_n)]=\sum_{i=1}^n E[X_i f_{\omega_{\mu_i,\nu_{n \neg i}}(z)}(X_i)],
\end{align*}
invoking Lemma \ref{eq:sub.Xf_z(X)} for the final equality.
 
Applying again the functional equation \eqref{free.Stein.identity}, the summands in the last expression are equal to
\[ \sigma_i^2 E[f^{[1]}_{\omega_{\mu_i,\nu_{n \neg i}}(z)}(X_i^\circ,Y_i^\circ)] = \sigma_i^2G_{X_i^\circ}^2(\omega_{\mu_i,\nu_{n \neg i}}(z)) \]
with the equality again via \eqref{eq.E.del.fz}.  Summing up, we have therefore shown that
\[ \sigma^2 G_{S_n^\circ}^2(z) = \sum_{i=1}^n \sigma_i^2G_{X_i^\circ}^2(\omega_{\mu_i,\nu_{n \neg i}}(z)) \]
and dividing through by $\sigma^2$ yields \eqref{eq:free.replace.one}.  The final statement of the theorem, in the identical distribution case, follows immediately as in this case $\sigma^2=n\sigma_i^2$ and $\omega_{\mu_i,\nu_{n \neg i}}=\omega_{\mu_n,\nu_{n \neg n}}$ for all $i =1,\ldots, n$. 
\end{myproof}

We now give the parallel result for the size bias. 
\begin{theorem} \label{eq:size.bias.change.1.formula}
Consider non-negative variables $X_1,\ldots,X_n$ with finite non-zero means $m_1,\ldots,m_n$ and sum $S_n=X_1+\cdots+X_n$ with $m=E[S_n]$. For $i=1,\ldots,n$, let $I$ be a random index with distribution $P(I=i)=m_i/m$. 
When the summand variables are classically independent, and $I$ is independent of $\{X_i,X_i^s,i=1,\ldots,n\}$ where $X_i^s$ is independent of $X_j, j \not =i$ and has the $X_i$ size-bias distribution, then $S_n^s$ given by $S_n-X_I+X_I^s$ has the $S_n$-zero bias distribution, and the characteristic function of $S_n^s$ satisfies
\begin{align}\label{eq:sigh.Sn.sb}
\psi_{S_n^s}(t)=E[\psi_{X_I^s}(t)\psi_{S_n - X_I}(t)].
\end{align}
When the summand variables are freely independent, then the Cauchy transform of $S_n^s$ satisfies
\begin{equation*} 
G_{S_n^s}(z)=E[G_{X_I^s}(\omega_{X_I,S_n-X_I}(z))].
\end{equation*}
When the summands are identically distributed as $X$, these relations respectively specialize to 
\begin{align} \label{eq:free.replace.first.one.sb}
\psi_{S_n^s}(t)=\psi_{X^s}(t)\psi_{S_{n-1}}(t) \qmq{and} G_{S_n^s}(z)=G_{X^s}(\omega_{X_n,S_n-X_n}(z)).
\end{align}
\end{theorem}

\begin{proof}
For the classical result \eqref{eq:sigh.Sn.sb} see, for example, \cite{goldstein1996multivariate} or \cite{arratia2019size}. Next, 
\begin{multline*}
mG_{S_n^s}(z)=m E[f_z(S_n^s)]=E[S_nf_z(S_n)]=\sum_{i=1}^n E[X_i f_{\omega_{\mu_i,\nu_{n \neg i}}(z)}(X_i)]\\
= \sum_{i=1}^n m_i E[f_{\omega_{\mu_i,\nu_{n \neg i}}(z)}(X_i^s)]= m \sum_{i=1}^n \frac{m_i}{m} G_{X_i^s}(\omega_{\mu_i,\nu_{n \neg i}}(z)),
\end{multline*}
where we have used Lemma \ref{eq:sub.Xf_z(X)} for the third equality, and the size bias property \eqref{eq:def.secmom.sqb} for the fourth; the proof now follows as for Theorem \ref{eq:Todd's.formula}.
\end{proof}

\begin{remark} Mirroring Remark \ref{rk.circ.replace.one.fails}, it follows now that freely independent sums do {\em not} satisfy the classical ``replace one'' property for size bias.  Indeed: if it were true that $S_n^s \equaldist S_n-X_n + X_n^s$, it would then follow that $G_{S_n^s}(z) = G_{X_n^s}(\omega_{X_n^s,S_n-X_n}(z))$; comparing to \eqref{eq:free.replace.first.one.sb}, we see this is only possible if $X_n^s \equaldist X_n$, meaning that all the summands are constant.
\end{remark}

\subsection{A Transform that Fixes the Free Poisson Distribution}

The (classical) zero bias arises as a tool for Gaussian approximation in Stein's method.  As noted just following its definition \eqref{char.zb}, the normal distribution $X\equaldist\mathcal{N}(0,\sigma^2)$ is the unique fixed point of the zero bias transform $X\mapsto X^\ast$; correspondingly, certain metric differences between $X$ and $X^\ast$ yield quantitative bounds on distances between $\mathcal{L}(X)$ and $\mathcal{N}(0,\sigma^2)$.

Stein's method is not solely restricted to the realm of normal approximation, however.  Along similar lines, the (shifted) size bias plays the analogous role in the world of {\em Poisson approximation}: $\mathrm{Poiss}(\lambda)$ is the unique fixed point of the transform $X\mapsto X^s-1$ on the domain $\mathcal{D}^+_\lambda$ (positively supported probability distributions with mean $\lambda>0$).
Accordingly, functionals of the difference $X-X^s+1$ can be used to construct bounds on the total variation distance between $\mathcal{L}(X)$ and $\mathrm{Poiss}(\lambda)$; see
\cite{chen1975poisson}, \cite{barbour1992poisson} and, for example, \cite{goldstein2006total}, where in the first two references the use of size bias is implicit.

The main result of this section is the following identification of a transform (involving both the size bias and El Gordo transforms) for which the free Poisson law $\mathrm{FP}(\lambda,\alpha)$ of Section \ref{sect:FP} is the unique fixed point in $\mathcal{D}^+_{\alpha\lambda}$.

\begin{theorem} \label{thm.FP.fixed}
On the domain $\mathcal{D}_\beta^+$, 
the transform
\[ \mathcal{L}(X) \mapsto \mathcal{L}(X^{\bullet,\lambda}):=  \left(\lambda \mathcal{L}(X^s)+(1-\lambda)\mathcal{L}(X) \right)^\flat, \quad \lambda \in (0,1], \]
has the free Poisson distribution $\mathrm{FP}(\lambda,\beta/\lambda)$ as its unique fixed point.
\end{theorem}

\begin{proof} If $\mathcal{L}(X)$ is a fixed point, then $\mathcal{L}(X) = \left(\lambda \mathcal{L}(X^s)+(1-\lambda)\mathcal{L}(X) \right)^\flat$, which means the two sides have the same (squared) Cauchy transform:
\[ G_X^2(z) = G_{X^{\bullet,\lambda}}^2(z). \]
From the definition of $\flat$, and the fact that Cauchy transforms of mixtures are given by the corresponding convex combinations of Cauchy transforms, this yields
\[ G_X(z)^2= \frac{1}{z}\left[\lambda G_{X^s}(z) + (1-\lambda)G_X(z)\right]. \]
For notational convenience, set $\alpha= \beta/\lambda$, so that $\beta = \alpha\lambda$.  On $\mathcal{D}^+_\beta = \mathcal{D}^+_{\alpha\lambda}$, the definition of the size bias then gives
\[ zG_X^2(z) = \lambda\cdot\frac{1}{\alpha\lambda}(zG_X(z)-1) + (1-\lambda)G_X(z) \]
which is the quadratic equation
\[ \alpha z G_X^2(z) = zG_X(z)-1+\alpha(1-\lambda)G_X(z) = (z+\alpha(1-\lambda))G_X(z)-1. \]
The quadratic formula then gives the solution
\[ G_X(z) = \frac{z+\alpha(1-\lambda)-\sqrt{(z+\alpha(1-\lambda))^2-4\alpha z}}{2\alpha z}. \]
Finally, a calculation shows that the quantity inside the square root satisfies
\[ (z+\alpha(1-\lambda))^2-4\alpha z = (z-\alpha(1+\lambda))^2-4\alpha^2 \lambda \]
hence showing that $G_X = G_{\mathrm{FP}(\lambda,\alpha)}$.  This shows that $\mathrm{FP}(\lambda,\alpha)$ is the only possible fixed point; reading the proof backwards shows that if $X\equaldist\mathrm{FP}(\lambda,\alpha)$ then $G_X^2 = G_{X^{\bullet,\lambda}}^2$.  Thus the two Cauchy transforms must be equal (they must have the same sign in the square root, owing to the asymptotic and mapping constraints), and it follows that $\mathcal{L}(X) = \mathcal{L}(X^{\bullet,\lambda})$ as desired.  \end{proof}

\begin{remark} The mixture defining $X^{\bullet,\lambda}$ only makes sense if $0\le\lambda\le 1$.  For $\lambda>1$, the free Poisson is a rescaling of the $1/\lambda$ case with a point mass removed (see the discussion following \eqref{e.FP.Cauchy}), and so one could in principle write down a modified version of $X\mapsto X^{\bullet,\lambda}$ that applies in this setting (involving adding and removed a point mass).

It is also worth noting that Theorem \ref{thm.FP.fixed} shows that the $\flat$ of a distribution need not be absolutely continuous: $\mathrm{FP}(\lambda,\alpha)$ has a point mass for $0<\lambda<1$, and it is the $\flat$ of another distribution (a mixture of itself and its size bias, as the theorem shows).  In Section \ref{sec:abscont} we will show that this cannot happen if we also square bias: that is, the free zero bias $X^\circ  = (X^\Box)^\flat$ is always absolutely continuous.
\end{remark}

\ignore{
\subsection{Transformation fixing the Free Poisson}
\lcolor{
In the classical world, the zero bias transformation $X \mapsto X^*$ on $\mathcal{D}_{0,\sigma^2}$ and the transformation $X \mapsto X^s-1$ on $\mathcal{D}_\lambda^+$ have, respectively, the normal $\mathcal{N}(0,\sigma^2)$, and the Poisson $\mathcal{P}_\lambda$ as their unique fixed points. The free zero bias is the free analog of the first, and the following result shows that for $\lambda \in (0,1]$ and $\alpha>0$ the transformation $X \mapsto X^{\bullet,\lambda}$ with domain $\mathcal{D}_\alpha^+$ specified by
\begin{align}
\mathcal{L}(X^{\bullet,\lambda}):=
\left(\lambda \mathcal{L}(X^s)+(1-\lambda)\mathcal{L}(X) \right)^\flat
\end{align}
\label{eq:def.G.bull.lamb}
is the free analog of the second; the image of $X$ under this transform is clearly a probability distribution, as it is the $\flat$ of a mixture of probability distributions.}

\begin{proof}

\lcolor{One can verify without difficulty that the fixed point identity $G_{X^{\bullet,\lambda}}(z)=G_X(z)$ is equivalent to the quadratic \eqref{eq:quad.for.free.Pois.via.zfb} in Example \ref{ex:free.Pois}, and thus corresponds to the Free Poisson distribution with the given parameters. }
\end{proof}
}

\subsection{Free Zero Bias Transform Examples}\label{subsec:prop.examples}

\begin{example}\label{ex:rad} 

\begin{enumerate}[wide = 0pt,label=\alph*)]

\item We compute $\mathcal{L}((X^\Box)^\flat)$ for two distributions that do not have mean zero, and compare it to \eqref{eq:Xcirc.def.mean.m} that defines the free zero bias in the general mean case in order to demonstrate that the results of these two transforms, equal when the mean is zero, are not necessarily equal in general. 

First consider the semi-circle distribution $S$ with mean zero and variance one, and let $X=S+m$ for $m \in \mathbb{R}$. Using  \eqref{eq:C.trans.S.sigma^2} with $S\equaldist S_{0,1}$ to obtain
\begin{align*}
G_X(z)=G_{S+m}(z)=G_S(z-m)=\frac{z-m - \sqrt{(z-m)^2-4}}{2},
\end{align*}
identity \eqref{eq:Gcirc:-mean/z} yields
\begin{multline*}
G_{(X^\Box)^\bflat}^2(z)=\frac{1}{m^2+1}\left(zG_{S}(z-m)-\frac{m}{z}-1\right)\\
= \frac{1}{m^2+1} \left(\frac{1}{2}z^2-m\left(\frac{z}{2}+\frac{1}{z}\right)-1- \frac{z}{2}\sqrt{(z-m)^2-4}\right).
\end{multline*}

Using \eqref{eq:Xcirc.def.mean.m} in the second equality, and that $S^\circ \equaldist S$ in the third, we obtain
$$
X^\circ = (S+m)^\circ = S^\circ +m = S+m =X,
$$
thus confirming that $X$ is a fixed point, and hence
\begin{align*}
G^2_{X^\circ}(z)=G^2_{S+m}(z)
= \left(\frac{z-m-\sqrt{(z-m)^2-4}}{2}\right)^2.
\end{align*}
We see that $G_{(X^\Box)^\bflat}$ and $G_{X^\circ}$ cannot be not equal, the former not possessing a limit as $y \downarrow 0$ when evaluated at $z=iy$, unlike the latter.

For our second such example consider mixtures of point masses at zero and at some non-zero value $a$,
$$
\mathcal{L}(X)= \alpha \delta_a + (1-\alpha)\delta_0 \quad \mbox{for $\alpha  \in (0,1]$.}
$$
Then $E[X]=\alpha a$, and shifting the distribution to have mean zero we have
$$
\mathcal{L}(X-\alpha a) = \alpha \delta_{(1-\alpha)a} + (1-\alpha)\delta_{-\alpha a}.
$$
We have $E(X-\alpha a)^2=\alpha (1-\alpha) a^2$,
and hence
\begin{align*}
P(X^\Box=(1-\alpha)a) = \frac{((1-\alpha)a)^2P(X=(1-\alpha)a)}{\alpha (1-\alpha) a^2}=(1-\alpha), 
\end{align*}
thus implying
\begin{align*}
\mathcal{L}((X-\alpha a)^\Box) 
= (1-\alpha) \delta_{(1-\alpha)a} + \alpha \delta_{-\alpha a}.
\end{align*}
Therefore
$$
G_{(X-\alpha a)^\Box}(z)=\frac{1-\alpha}{z-(1-\alpha)a}+\frac{\alpha}{z+\alpha a}=\frac{z}{(z+\alpha a)(z-(1-\alpha)a)}.
$$
Now taking the El Gordo transformation produces
$$
G_{((X-\alpha a)^\Box)^\flat}(z)=-\sqrt{\frac{1}{(z+\alpha a)(z-(1-\alpha)a)}}
$$
and adding the mean $\alpha a$ back in we obtain
\begin{align}\label{eq:wrong.way.right.answer}
 G_{((X-\alpha a)^\Box)^\flat+\alpha a}(z)
=G_{((X-\alpha a)^\Box)^\flat}(z-\alpha a)
=\frac{1}{\sqrt{z(z-a)}},   
\end{align}
corresponding to the arcsine distribution on $[0,a]$.

Alternatively, computing $(X^\Box)^\flat$ as in Definition \ref{def:fzb}, we first have 
$\mathcal{L}(X^\Box)=\delta_a$ from b) of Lemma \ref{lem:X.Xneg0.same}, and from \eqref{G.sharp} we find
\begin{align*}
G_{(X^\Box)^\flat}(z) = \frac{1}{\sqrt{z(z-a)}},
\end{align*} 
agreeing with \eqref{eq:wrong.way.right.answer}.

\item \label{ex:two-point-centered} For the mean zero, variance $ab$ mixture 
$$
\mathcal{L}(X)= \frac{b}{b+a} \delta_{-a} + \frac{a}{b+a}\delta_b 
$$
of point masses at $-a,b$ for positive $a,b$, by a) of Lemma \ref{lem:X.Xneg0.same} we have 
$$
\mathcal{L}(X^\Box)=  \frac{a^2}{ab}\frac{b}{b+a} \delta_{-a} + \frac{b^2}{ab}\frac{a}{b+a}\delta_b =  \frac{a}{b+a} \delta_{-a} + \frac{b}{b+a}\delta_b.
$$
Hence, using Definition \ref{def:fzb} for the second equality, 
$$
G_{X^\Box}(z)=\frac{z}{(z+a)(z-b)} \qmq{and} G_{X^\circ}(z)=\frac{1}{\sqrt{(z+a)(z-b)}}.
$$
In particular, when $X$ is Rademacher, 
$$
G_{X^\circ}(z)=\frac{1}{\sqrt{z^2-1}}.
$$
Recalling Remark \ref{rem:U=Gordo} that contrasts the constructions of the classical zero bias with the free one, note that $U$, used in the classical construction, has the $\mathrm{Beta}(1,1)$ distribution, while the standard arcsine, of which all examples here are scaled and shifted versions via Example \ref{ex:XbflatY.arcsine}, has the $\mathrm{Beta}$ distribution.

\item Taking one further iterate of the free zero bias distribution of the Rademacher in b), the free zero bias distribution of the arcsine law on $[-1,1]$, having mean zero and variance $1/2$, has Cauchy transform
\begin{align*}
	G_{(X^\circ)^\circ}(z) = -\sqrt{2}\sqrt{\frac{z}{\sqrt{z^2-1}}-1}.
\end{align*}
Using Stieltjes inversion (Proposition \ref{prop.Cauchy.robust}), we evaluate $z=x+i\epsilon$ and let $\epsilon\downarrow0$.  Note that $z^2-1 = x^2-1-\epsilon^2 +2i\epsilon x$ and this number is in the {\em lower} half plane when $x<0$; thus, its square root has {\em negative} real part, in particular $\sqrt{z^2-1} \to -\sqrt{x^2-1}$ for $x<0$ as $\epsilon \downarrow 0$.  Similarly $\sqrt{z^2-1} \to \sqrt{x^2-1}$ when $x>0$.  Now evaluating, we see that $G_{(X^\circ)^\circ}(x+i\epsilon)$ converges as $\epsilon\downarrow 0$ to real values when $|x|>1$, meaning that such points are outside the support of $(X^\circ)^\circ$. Meanwhile, in the interval $[-1,1]$, the formula for the imaginary part of the complex square root yields the density
\begin{align} \label{e.iterated.circ}
	\varrho_{(X^\circ)^\circ}(x)=\frac{1}{\pi}\sqrt{1+\frac{1}{\sqrt{1-x^2}}}{\bf 1}_{x \in [-1,1]}.
\end{align} 

\setcounter{figure}{0}

\begin{figure}
  \includegraphics[scale=0.45]{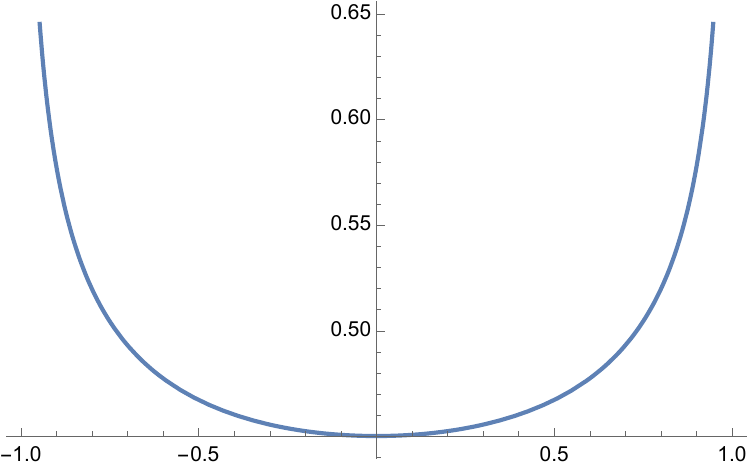}
  \caption{The density \eqref{e.iterated.circ} of the iterated free zero bias of the Rademacher distribution, i.e.\ the free zero bias of the arcsine law.}
  \label{fig.3.17}
\end{figure}
\end{enumerate}

\end{example}

\section{Regularity of the Free Zero Bias}\label{sec:abscont}

In this section we prove that the free zero bias $\mu^\circ$ of any distribution $\munu$ is absolutely continuous with respect to Lebesgue measure on $\mathbb{R}$, with support contained in the convex hull of the support of $\munu$. This behavior is similar to, but not exactly the same as, that of classical zero bias $\mu^*$, which is also absolutely continuous with respect to Lebesgue measure on $\mathbb{R}$, but whose support is always {\em equal} to the convex hull of the support of $\munu$, cf.\ \cite{goldstein1997stein}.  By comparison, $\mu^\circ$ may have support {\em strictly} contained in the support of $\mu$, cf.\ Example \ref{ex:three.pt.zb}.

Section \ref{subsec:abs.cont} proves the absolute continuity of the free zero bias, including quantitative bounds, and Section \ref{sec:support} studies the 
support of $\munu^\circ$ for finitely-supported $\munu$, showing in particular that it is always disconnected when the finite support has size three or more.

\subsection{Absolute Continuity of the Free Zero Bias}\label{subsec:abs.cont}
The following result provides quantitative bounds that were previously unknown even in the  classical case; the 
the remaining claims regarding $\munu^*$ are included for comparison and completeness, and can be found in \cite{goldstein1997stein} and \cite{chen2010normal}.

\begin{theorem}\label{thm:ac}
Let $\munu^*$ and $\munu^\circ$ respectively
be the classical and free zero bias 
of a mean zero probability measure $\munu$ with finite, non-zero variance $\sigma^2$. Then
\begin{align} \label{eq:Holder.1/2}
\munu^*([a,b]) \le \left(\frac{b-a}{\sigma^2}\right)E[|X|] \qmq{and} \munu^\circ([a,b])^2 \le \left(\frac{b-a}{\sigma^2}\right)E[|X|],
\end{align} 
and both $\munu^*$ and $\munu^\circ$ are absolutely continuous with respect to Lebesgue measure. It holds that $\mathrm{supp}(\munu^*)={\rm co}(\mathrm{supp}\munu)$, and $\mathrm{supp}(\munu^\circ)\subseteq {\rm co}(\mathrm{supp}\munu)$.
\end{theorem}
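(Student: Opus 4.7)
The plan is to prove all three claims using two ingredients: a ramp test function against the defining equations \eqref{char.zb} and \eqref{free.Stein.identity}, together with the Cauchy transform formula \eqref{eq:GX.circ.EX=0}. Let $f_{a,b}(x) = \max\{0, \min\{x-a, b-a\}\}$ be the $1$-Lipschitz ramp: $0 \le f_{a,b} \le b-a$ with $f_{a,b}' = \mathbf{1}_{[a,b]}$ a.e. Substituting $f_{a,b}$ into \eqref{char.zb} gives $\sigma^2\mu^*([a,b]) = E[Xf_{a,b}(X)]$, and $|f_{a,b}(X)| \le b-a$ yields the classical bound at once. For the free case the key observation is that the difference quotient of a non-decreasing, $1$-Lipschitz function lies in $[0,1]$ and equals $1$ whenever both arguments lie in $[a,b]$, so $\partial f_{a,b}(x,y) \ge \mathbf{1}_{[a,b]}(x)\mathbf{1}_{[a,b]}(y)$. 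Combining this with \eqref{free.Stein.identity} and the classical independence of two copies $X_1^\circ \equaldist X_2^\circ \equaldist \mu^\circ$,
$$\sigma^2\mu^\circ([a,b])^2 \le \sigma^2 E[\partial f_{a,b}(X_1^\circ, X_2^\circ)] = E[Xf_{a,b}(X)] \le (b-a)E[|X|],$$
which is the desired free inequality.

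Absolute continuity of $\mu^*$ is immediate from its Lipschitz distribution function. For $\mu^\circ$ the $1/2$-H\"older bound alone does not suffice, so I would turn to the formula $G_{\mu^\circ}(z) = -\sqrt{h(z)/\sigma^2}$ where $h(z) = zG_\mu(z) - 1 = \int t/(z-t)\,d\mu(t)$. Since atoms of $\mu$ yield at most simple poles of $h$, squaring shows that $G_{\mu^\circ}$ can have no simple real poles, so $\mu^\circ$ is atomless. To exclude a singular continuous part I would show that the Stieltjes inversion density of Proposition \ref{prop.Cauchy.robust} integrates to $1$: the Poisson-kernel identity $\int_{\mathbb{R}} \mathrm{Im}\,G_{\mu^\circ}(x+iy)\,dx = -\pi$ holds for every $y>0$, and I would justify the limit $y \downarrow 0$ inside the integral via the pointwise bound $|h(x+iy)| \le E[|X|]/y$ on bounded intervals, combined with the H\"older bound \eqref{eq:Holder.1/2} to control the integrand on the (Lebesgue-null) set where $G_\mu$ has unbounded boundary values.

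For the support claim, the classical $\mathrm{supp}(\mu^*) = \mathrm{co}(\mathrm{supp}\,\mu)$ comes from \cite{goldstein1997stein}. For the free case I would fix $x_0 \in \mathbb{R}\setminus\mathrm{co}(\mathrm{supp}\,\mu)$; then $G_\mu$, and hence $h$, extends real-analytically across $x_0$. On $\mathrm{supp}\,\mu$ the two functions $t \mapsto t$ and $t \mapsto 1/(x_0-t)$ are strictly monotone in the same direction (since $x_0-t$ has constant sign there), so Chebyshev's correlation inequality combined with $E[X] = 0$ gives $h(x_0) = \mathrm{Cov}_\mu(T, 1/(x_0-T)) > 0$, with strict positivity because $\mathrm{Var}(X) > 0$. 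Hence $\sqrt{h/\sigma^2}$ admits a real-valued analytic extension across $x_0$, producing a holomorphic extension of $G_{\mu^\circ}$ to a complex neighborhood of $x_0$ with real boundary values; by Stieltjes inversion $x_0 \notin \mathrm{supp}\,\mu^\circ$.

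The main obstacle I anticipate is excluding a singular continuous part of $\mu^\circ$. Fatou's lemma immediately gives $\int \varrho^\circ\,dx \le 1$, so the real content is the reverse inequality; this must exploit the square-root structure of $G_{\mu^\circ}$ on the Lebesgue-null set where $|G_\mu(x+i0)| = \infty$, precisely where the H\"older bound forbids $\mu^\circ$ from concentrating too much mass.
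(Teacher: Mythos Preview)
Your H\"older bounds via the ramp $f_{a,b}$ match the paper's argument exactly: same test function, same observation that $\partial f_{a,b}\ge \mathbf{1}_{[a,b]}\otimes\mathbf{1}_{[a,b]}$. The differences are in atomlessness and in the support claim. For atomlessness the paper does not analyze $G_{\mu^\circ}$ at all: it uses \eqref{eq:X*.unif.interpolation} to argue that an atom of mass $p$ in $\mu^\circ$ would force an atom of mass $\ge p^2$ in $\mu^*$, contradicting the known absolute continuity of the classical zero bias. For the support, the paper again stays on the test-function side, applying the one-sided ramp $g(t)=(t-a)_+$: when $P(X\ge a)=0$ one gets $E[Xg(X)]=0$, and since $E[\partial g(X^\circ)]\ge \mu^\circ((a,\infty))^2$ this forces $\mu^\circ((a,\infty))=0$; the left endpoint is handled by the reflection $X\mapsto -X$ together with $(-X)^\circ\equaldist -X^\circ$. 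Your Chebyshev-correlation argument that $h(x_0)>0$ off $\mathrm{co}(\mathrm{supp}\,\mu)$, so that $G_{\mu^\circ}=-\sqrt{h/\sigma^2}$ continues analytically with real boundary values there, is a genuinely different and clean route; the paper never unpacks the Cauchy transform for this part.

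On absolute continuity of $\mu^\circ$ you are right to be uneasy: the $1/2$-H\"older bound alone cannot exclude a singular continuous part (Cantor-type measures satisfy such bounds). The paper does not pursue your Stieltjes-inversion program; after establishing atomlessness and the H\"older bound it simply asserts that ``the absolute continuity claims now follow,'' which, read literally, leaves precisely the gap you identify. So your sketch is more candid about what remains, but neither your outline nor the paper's text actually supplies the missing step ruling out a singular continuous component.
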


\begin{proof} Let $X$ have law $\munu$, and consider the case where $\mathrm{Var}(X)=1$. 
We first show that $\munu^\circ$ has no point masses. For the sake of contradiction, assume that $X^\circ$ takes on the value $a \in \mathbb{R}$ with non-zero probability $p_a$. Then, in view of \eqref{Xstar.intermsof.Xcirc}, from  
$$
X^*=UX^\circ  + (1-U)Y^\circ
$$
where $Y^\circ$ is an independent copy of $X^\circ$, we see that $X^*=a$ with probability at least $p_a^2$, in contradiction to the absolute continuity of the $X^*$ distribution, see \cite{goldstein1997stein}. Recalling $g^{[1]}(x,y)=(g(y)-g(x))/(y - x)$, as $Y^\circ-X^\circ$ has a continuous distribution, we have that $X^\circ=Y^\circ$ with probability zero, hence the value along the diagonal $x=y$ of double integrals such as \eqref{eq:ignore.diagonal} below, with respect to the product measure $\munu^\circ \times \munu^\circ$, is zero. 
 
With $a<b$ arbitrary, take $g(t)=(t-a)_+ \wedge (b-a)
$. 
Then
\begin{multline} \label{eq:EXg(X).abs.cont}
E[Xg(X)]=E[X(X-a){\bf 1}(a < X \le b)]+ (b-a)E[X{\bf 1}(X>b)]\\
\le (b-a)E[|X|{\bf 1}(a < X \le b)]+ (b-a)E(|X|{\bf 1}(X>b)]\\
= (b-a)E[|X|{\bf 1}(X > a)] \le (b-a)E[|X|]. 
\end{multline}
Using that $g'(t)={\bf 1}_{[a,b]}$ and \eqref{char.zb}, which also holds for Lipshitz functions \cite{chen2010normal}, we find via \eqref{eq:EXg(X).abs.cont} that
$$
\munu^*([a,b])=E[g'(X^*)]=E[Xg(X)] \le (b-a)E[|X|],
$$
thus showing the first claim of \eqref{eq:Holder.1/2} when $\sigma^2=1$.

For the second claim, dividing up the plane according to Figure \ref{fig:abs.cont.}, we first consider the (upper $y=x$ diagonal) collection of  cases, the remaining cases across the diagonal given by interchanging $x$ and $y$.
\begin{align*}
g^{[1]}(x,y) = \left\{
\begin{array}{cl}
	1 & a \le y \le b,a \le x \le b\\
	\frac{y-a}{y-x} & a \le y \le b, x < a\\
	\frac{b-x}{y-x} & y > b, a \le x \le b\\
	\frac{b-a}{y-x} & y>b,x<a\\
	0               & y<a, x<a \qmq{or} y>b, x>b.
\end{array}
\right.
\end{align*}
\newcommand{\sn}{1}
\newcommand{\bn}{2}
\newcommand{\scale}{1.25}
\MULTIPLY{\sn}{-1}{\nsn}
\MULTIPLY{\bn}{-1}{\nbn}

\MULTIPLY{\scale}{\sn}{\ssn}
\MULTIPLY{\scale}{\bn}{\sbn}
\MULTIPLY{\scale}{\nsn}{\snsn}
\MULTIPLY{\scale}{\nbn}{\snbn}

\begin{figure}[ht]
  \centering
\scalebox{0.55}{
  \begin{tikzpicture}
    \coordinate (TL) at (\snsn, \sbn);
    \coordinate (TR) at (\ssn, \sbn);
    \coordinate (BL) at (\snsn, \snbn);
    \coordinate (BR) at (\ssn, \snbn);
    \coordinate (LT) at (-4, \ssn);
    \coordinate (LB) at (-4, \snsn);
    \coordinate (RT) at (4, \ssn);
    \coordinate (RB) at (4, \snsn);
    \draw[dashed] (TL) -- (BL); %
    \draw[dashed] (TR) -- (BR); %
    \draw[dashed] (LT) -- (RT); %
    \draw[dashed] (LB) -- (RB); 
     \newcommand{\scbx}{2}
    \node at (\snbn, \sbn) {\scalebox{\scbx}{$\frac{b-a}{y-x}$}};
    \node at (0,\sbn) {\scalebox{\scbx}{$\frac{b-x}{y-x}$}};
  \node at (\sbn, \sbn) {\scalebox{\scbx}
    {$0$}};
    \node at (\snbn, 0) {\scalebox{\scbx}{$\frac{y-a}{y-x}$}};
    \node at (0, 0) {\scalebox{\scbx}
    {$1$}};
    \node at (\sbn, 0) {\scalebox{\scbx}{$\frac{y-b}{y-x}$}};
    \node at (\snbn, \snbn) {\scalebox{\scbx}
    {$0$}};
    \node at (0, \snbn) {\scalebox{\scbx}{$\frac{a-x}{y-x}$}};
    \node at (\sbn, \snbn) {\scalebox{\scbx}{$\frac{a-b}{y-x}$}};
  \end{tikzpicture}
  }
  \caption{Evaluation of $\partial g(x,y)$ on the partition of the plane induced by the horizontal and vertical lines $y=a,y=b$ and $x=a,x=b$ for $a<b$.}
  \label{fig:abs.cont.}
\end{figure}
Using symmetry when interchanging $x$ and $y$, we obtain
\begin{multline}\label{eq:ignore.diagonal}
E[g^{[1]}(X^\circ,Y^\circ)]   = \munu^\circ([a,b])^2+ 2\int_{(-\infty,a)}\int_{[a,b]} \frac{y-a}{y-x} d\munu^\circ(y)d\munu^\circ(x)\\
+
2\int_{[a,b]} \int_{(b,\infty)} \frac{b-x}{y-x} d\munu^\circ(y)d\munu^\circ(x)
+
2\int_{(-\infty,a)} \int_{(b,\infty)} \frac{b-a}{y-x} d\munu^\circ(y)d\munu^\circ(x).
\end{multline}
As the integrands are all non-negative, using \eqref{eq:EXg(X).abs.cont} for the final inequality
we obtain
\begin{align*} 
\munu^\circ([a,b])^2 \le E[g^{[1]}(X^\circ,Y^\circ)] = E[Xg(X)] \le (b-a)E[|X|],
\end{align*}
showing \eqref{eq:Holder.1/2} when $\sigma^2=1$.

Now turning to the support, letting $g(t) =(t-a)_+$, for $x < y$ we have
\begin{align*}
g^{[1]}(x,y) = \left\{
\begin{array}{cc}
	\frac{y-a}{y-x} & y > a, x < a\\
	1 & y > a, x \ge  a\\
	0 & y < a, x <  a,
\end{array}
\right.
\end{align*}
and hence
\begin{multline*}
E[X(X-a){\bf 1}(X>a)]=E[Xg(X)]\\=E [g^{[1]}(X^\circ,Y^\circ)] 
= 2\int_{-\infty}^a \int_a^\infty \frac{y-a}{y-x}d\munu^\circ(y) d\munu^\circ(x) + \munu^\circ((a,\infty))^2.
\end{multline*}
As $y > a$ in the first integrand, the first integral is non-negative and hence the right hand side is non-negative. 

For any $a$ such that $P(X \ge a)=0$ the left hand side above is zero, and hence both terms on the right are zero; in particular $P(X^\circ>a)=\munu^0((a,\infty))=0$. 
Now let $a$ be such that $P(X \le a)=0$, or, equivalently, that $P(-X \ge -a)=0$. Using the scaling property b) of Lemma 
\ref{lem:Gcirc.props} with $\alpha=-1$ yields
$(-X)^\circ\equaldist - X^\circ$, hence $0=P((-X)^\circ \ge -a)=P(-X^\circ \ge -a)=P(X^\circ \le a)=\munu^\circ((-\infty,a]))$, showing that the support of $\munu^\circ$ is contained in $[a,b]$.

For $X$ having measure $\munu$ with arbitrary variance $\sigma^2>0$, applying the case shown for the unit variance variable $Y=X/\sigma$ and using that $X^* \equaldist \sigma Y^*$  yields
\begin{multline*}
\munu^*([a,b])=P(X^* \in [a,b])=P(\sigma Y^* \in[a,b])=
P(Y^* \in[a/\sigma,b/\sigma]) \\
\le \left(\frac{b-a}{\sigma}\right) E|Y| = \left(\frac{b-a}{\sigma^2}\right) E|X|.
\end{multline*}
The argument for general $\sigma$ in the free zero bias case \eqref{eq:Holder.1/2} is identical. 

The absolute continuity claims now follows, and the final one, regarding the support of $\munu^\circ$ follows from the unit variance case, and that for any $\sigma>0$ we have $\mathrm{supp}(X^\circ)\subseteq {\rm co}(\mathrm{supp}X)$ if and only if  $\mathrm{supp}((\sigma X)^\circ)\subseteq {\rm co}(\mathrm{supp}(\sigma X))$.
\end{proof}

\subsection{Support of $\munu^\circ$ for Finitely-Supported $\munu$}
\label{sec:support} 
The (classical) zero bias $\munu^\ast$ of a probability measure is always supported exactly on the convex hull of $\mathrm{supp}\,\munu$, cf.\ \cite{goldstein1997stein}. In this section we show that this fails to be true in the free case, where $\mathrm{supp}\,\munu^\circ$ can be a strict subset of the convex hull of $\mathrm{supp}\,\munu$.  In fact, we show that when $\munu$ is finitely supported the support of $\munu^\circ$ is a finite union of disjoint closed intervals. 

We begin with the following illustrative example.
\begin{example}\label{ex:three.pt.zb} Consider the three-point centered random variable
\begin{equation} \label{e.3point.meas} X\equaldist \frac{3}{5}\delta_{-1} + \frac{1}{5}\delta_1 + \frac{1}{5}\delta_2 \qmq{with} \mathrm{Var}(X) = \frac{8}{5} =: \sigma^2. \end{equation}
The free zero bias $X^\circ$ has Cauchy transform satisfying
\[  \sigma^2 G_{X^\circ}^2(z) = zG_X(z)-1 = E\left[\frac{z}{z-X}\right]-1 = E\left[\frac{X}{z-X}\right].
\]
Hence
\[ \frac{8}{5}G_{X^\circ}^2(z) = \frac{3}{5}\frac{-1}{z+1} + \frac{1}{5}\frac{1}{z-1} + \frac{1}{5}\frac{2}{z-2} = \frac{8z-10}{5(z^2-1)(z-2)} \]
and from \eqref{eq:GX.circ.EX=0} it follows that
\[ G_{X^\circ}(z) = -\sqrt{\frac{z-5/4}{(z^2-1)(z-2)}}. \]
The rational function inside the square root is continuous on $\mathbb{C}\setminus\{-1,1,2\}$, and hence at all other points $x\in\mathbb{R}$,
\[ -\frac{1}{\pi}\lim_{y\downarrow 0}\mathrm{Im}\,G_{X^\circ}(x+iy) = \frac{1}{\pi}\mathrm{Im}\,\sqrt{\frac{x-5/4}{(x^2-1)(x-2)}}. \]
The rational expression takes negative values on the disconnected set $(-1,1)\sqcup(5/4,2)$, and non-negative values elsewhere.  Thus, with the possible exception of point masses at $\pm 1$ and $2$, the density of $X^\circ$ is
\[ \varrho_{X^\circ}(x) = \frac{1}{\pi}\sqrt{\frac{x-5/4}{(x^2-1)(2-x)}}\mathbbm{1}_{\{x\in[-1,1]\sqcup[5/4,2]\}}. \]
With the aid of Mathematica (or a sufficiently intricate trigonometric substitution) one can check that $\varrho_{X^\circ}$ is genuinely a probability density (total mass $1$), and hence there are no point masses.  Thus, $X^\circ$ has a continuous density (with three poles) on a {\bf disconnected} support set.

\begin{figure}[htb!]
  \includegraphics[scale=0.6]{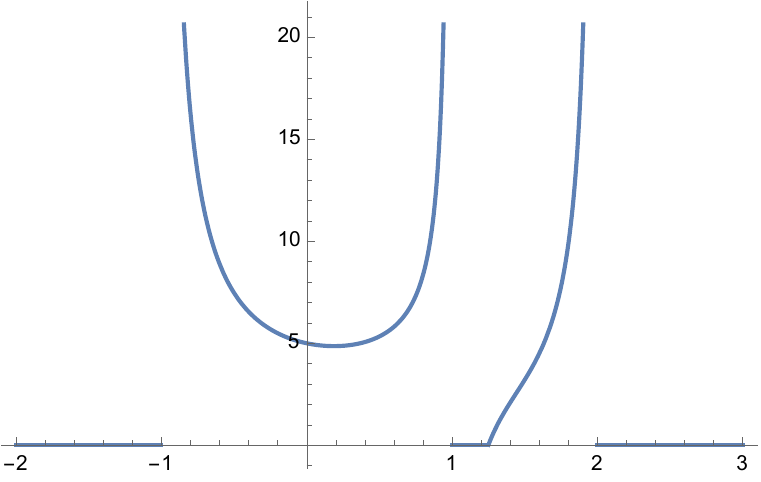}
  \caption{The density of the free zero bias of the three point distribution in \eqref{e.3point.meas}.}
  \label{fig.dens.fzb.3pt}
\end{figure}
\end{example}

\begin{example} The phenomenon demonstrated in Example \ref{ex:three.pt.zb} above and Theorem \ref{theorem disjoint support} below is not restricted to the finite-support case.  Indeed, one can mimic the calculations in Example \ref{ex:three.pt.zb} with ``bumps'' of support instead of point masses.  For example: let $X$ have the arcsine law on $[0,1]$ (with density $1/\pi\sqrt{x(1-x)}$), and let $\munu$ mixture the mixture of the laws of $X-1$, $X+1$, and $X+3$ with coefficients $\frac{5}{6}$, $\frac{1}{12}$, and $\frac{1}{12}$ (which is centered).  Then the Cauchy transform of $\munu$ is
\begin{align*} G_{\munu}(z) &= \frac{5}{6}G_X(z+1) + \frac{1}{12}G_X(z-1) + \frac{1}{12}G_X(z-3) \\
&= \frac{5}{6}\frac{1}{\sqrt{z(z+1)}} + \frac{1}{12}\frac{1}{\sqrt{(z-1)(z-2)}}  + \frac{1}{12}\frac{1}{\sqrt{(z-3)(z-4)}}
\end{align*}
A laborious but elementary calculation provides an explicit radical-rational formula for the square $G_{\munu^\circ}^2$ of the Cauchy transform of the free zero bias $\munu^\circ$, and a simple analysis like above then shows that the support of the law $\munu^\circ$ includes the intervals $[-1,0]$ and $[1,2]$, but does not include any points in $(0,1)$.
\end{example}

Theorem \ref{theorem disjoint support} below shows that this case is `canonical' for finitely supported $X$. Toward that goal, we begin with the following lemma which we will then apply to the square bias (en route to the free zero bias).

\begin{lemma}\label{lem:Gnu.discrete}
Let $X \equaldist \mu:= \sum_{i=1}^n p_i \delta_{s_i}$ with $s_1<\cdots<s_n$, $p_i>0$, and $\sum_{i=1}^n p_i=1$. The Cauchy transform
\begin{align}\label{def:Gnu.lemma.discrete}
G_\mu(z)=\sum_{i=1}^n \frac{p_i}{z-s_i}
\end{align} 
can be written as the rational function $G_\mu(z)=Q(z)/P(z)$, where
\begin{align}\label{eq:G.as.rational}
P(z)=\prod_{i=1}^n (z-s_i), \quad 
Q(z)=\sum_{i=1}^n p_i Q_i(z)
\qmq{with}
Q_i(z)=\prod_{1 \le j \le n, j \not =i} (z-s_j).
\end{align}
The polynomials $P$ and $Q$ are both monic, with respective degrees of $n$, and $n-1$. The polynomial $Q$ has exactly $n-1$ real roots $r_j \in (s_j,s_{j+1}),1,\ldots,n-1$.  

When $E[X^{-1}]=0$
there exists $i^*\in \{1,\ldots,n-1\}$ such that $0 \in (s_{i^*},s_{i^*+1})$, 
and
\begin{align*} 
\{x\in\mathbb{R}: G_{\mu^\flat}(x) \le 0\} = \bigcup_{1 \le j \le i^*-1}[s_j,r_j] \cup [s_{i^*},s_{i^*+1}] \cup \bigcup_{i^*+1 \le j \le n-1}[r_j,s_{j+1}].
\end{align*}
\end{lemma}

\begin{proof} The representation of $G_\mu$ as a rational function with denominator and numerator as in
\eqref{eq:G.as.rational} is immediate, as is the fact that for all $i=1,\ldots,n$, $P$ and $Q_i$ are monic polynomials of respective degrees $n$ and $n-1$.  In particular, the coefficient of $x^{n-1}$ of $Q$ is given by $\sum_{i=1}^n p_i=1$.

Now, for any $s \in \mathbb{R}$,
\begin{align}\label{eq:up.down.pole}
\lim_{x \uparrow s}\frac{1}{x-s}=-\infty \qmq{and} 
\lim_{x \downarrow s}\frac{1}{x-s}=\infty,
\end{align}
and so we see that
\begin{align*} 
\lim_{x \uparrow s_{j+1}}G_\mu(x) = -\infty 
\qmq{and} 
\lim_{x \downarrow s_j}G_\mu(x) = \infty \qmq{for $j=1,\ldots,n-1$.}
\end{align*}
As $Q(x)$ is continuous in $I_j =(s_j,s_{j+1}), j=1,\ldots,n-1$, it must have 
least one root in this interval. As $Q$ has degree $n-1$ and therefore at most this many distinct real roots, each of the $n-1$ intervals $I_j, j=1,\ldots, n-1$ must contain exactly one real root, $r_j$.

For the next claim, 
when $E[X^{-1}]=0$ the random variable $X$ must have both positive and negative support, and so $s_1<0<s_n$. Hence, as all support points are non-zero there exists exactly one index $i^* \in \{1,\ldots,n-1\}$ such that $0 \in (s_{i^*},s_{i^*+1})$. Since
\begin{align*} 
0=-E[X^{-1}]=G_\mu(0)\qmq{and} Q(0)=G_\mu(0)P(0) \qmq{as} P(0)=(-1)^n\prod_{i=1}^n s_i \not =0,
\end{align*}
we find that $Q$ has a root at zero, that is, that $r_{i^*}=0$, allowing us to write
$$
Q(x)=xT(x) \qmq{where} T(x)= \prod_{j: 1 \le j \le n-1, j \not =i^*}(x-r_j).
$$
Cancelling the factor of $z$ in \eqref{G.sharp} one obtains
\begin{align}\label{eq:G.nu.bflat.squared}
G_{\mu^\bflat}^2(z)=\frac{T(z)}{P(z)},
\end{align}
where $T$ is a monic polynomial in $z$ of degree $n-2$, with real coefficients and having no root in the interval $I_{i^*}$.

As $n$ and $n-2$ share the same parity and $P$ and $T$ are monic of those degrees respectively, the limits of $G_{\mu^\bflat}^2(x)$ at both plus and minus infinity are 1, and we find that the sign of \eqref{eq:G.nu.bflat.squared} must be positive on $(-\infty,s_1)$ and $(s_n,\infty)$.

Assume that for some $1 \le j \le i^*-1$ the sign of \eqref{eq:G.nu.bflat.squared} is positive in some non trivial open interval with right endpoint $s_j$. At  $s_j$, via \eqref{eq:up.down.pole}, the sign of a multiplicative factor of \eqref{eq:G.nu.bflat.squared} changes, thus flipping the sign of \eqref{eq:G.nu.bflat.squared} to negative, and remaining negative until encountering the root $r_j$. At this (single) root the sign flips to positive, and remains so until the end of the interval at the pole $s_{j+1}$.

As the premise is true for $j=1$, it remains true for all $j \le i^*-1$, and the sign of \eqref{eq:G.nu.bflat.squared} is negative in the intervals $(s_j,r_j)$ and positive in the interval $(r_j,s_{j+1})$. As the sign, via the same reasoning, changes to negative at $s_{i^*}$ and the root previously in $I_{i^*}$ has been cancelled, the sign of \eqref{eq:G.nu.bflat.squared} is negative throughout the interval $(s_{i^*},s_{i^*+1})$. 

Arguing under the premise that the sign of \eqref{eq:G.nu.bflat.squared} is negative in some non trivial open interval with right endpoint $s_j$, the same inductive argument shows that the sign of \eqref{eq:G.nu.bflat.squared} is negative in the intervals $(r_j,s_{j+1})$ for  $i^*+1 \le j \le n-1$, completing the proof of the lemma. 
\end{proof}

We now move to the consideration of the support of the free zero bias of a finitely supported measure. 
\begin{theorem} \label{theorem disjoint support}
Let $X$ have mean zero and variance $\sigma^2 \in (0,\infty)$, and be supported on a finite set, and 
let $S={\rm supp}(X) \setminus \{0\}=\{s_1,\ldots, s_n\}$ with $s_1<\ldots<s_n$ and $P(X=s_j)=p_j, 1 \le j \le n$. Then
$$
G_{X^\circ}^2(z)=\frac{1}{z\sigma^2}\sum_{j=1}^n \frac{s_i^2p_i}{z-s_j},
$$
and letting
$$
Q(z) = \sum_{i=1}^n \frac{s_i^2p_i}{\sigma^2} Q_i(z)\qmq{where} Q_i(z)=\prod_{1 \le j \le n, j \not =i}(z-s_j),
$$
then the polynomial $Q(z)$ has $n-1$ real roots $r_1<\cdots<r_{n-1}$, there exists $i^* \in \{1,\ldots,n-1\}$ such that $0 \in (s_{i^*}, s_{i^*+1})$,
\begin{align} \label{eq:sup.disc.zb}
{\rm supp}(X^\circ) = \bigcup_{1 \le j \le i^*-1}[s_j,r_j] \cup [s_{i^*},s_{i^*+1}] \cup \bigcup_{i^*+1 \le j \le n-1}[r_j,s_{j+1}],
\end{align}
and the density $\varrho_{X^\circ}$ of $X^\circ$ is given by 
$$
\varrho_{X^\circ}(x) = \frac{1}{\pi}\sqrt{\max\{0,-G_{X^\circ}^2(x)\}}.
$$
\end{theorem}

\begin{proof}
Identity \eqref{def:Xbox} yields
$$
P(X^\Box=s_i)=\frac{s_i^2p_i}{\sigma^2} \qmq{for} s_i \in {\rm Sup}(S),
$$
and as these probabilities sum to one we obtain ${\rm supp}(X^\Box)=S$. 
In addition, applying Definition \ref{def:X.Box.inverse},
$$
E[(X^\Box)^{-1}] = \frac{E[(X^{-1})X^2]}{E[X^2]}= \frac{E[X]}{E[X^2]}=0.
$$
The results on $Q$ and \eqref{eq:sup.disc.zb}
now follow by applying Definition \ref{def:fzb}, which yields $X^\circ \equaldist (X^\Box)^\bflat$, and the 
final part of Lemma \ref{lem:Gnu.discrete} with $\mathcal{L}(X)$ replaced by $\mathcal{L}(X^\Box)$, and $p_i$ in \eqref{def:Gnu.lemma.discrete} replaced by $s_i^2p_i/\sigma^2, s_i,i=1,\ldots,n$.

The density formula follows from the Stieltjes inversion formula (Proposition \ref{prop.Cauchy.robust}), noting that in the case at hand taking limits of $z=x+i\epsilon$ as $\epsilon \downarrow 0$ yields the real valued function $G_{X^\circ}^2(x)$ for all $x \not \in \{s_1,\ldots,s_n\}$, and hence the square root of $G_{X^\circ}(x)$ will be imaginary if and only if $G_{X^\circ}^2(x)<0$.
\end{proof}

\section{Infinite Divisibility}
\label{sec:infdiv}
This section is devoted to the proofs of Theorems \ref{main theorem 2} and \ref{main theorem 3}, which connect the free zero bias and the size bias transformations to free infinite divisibility; i.e.\ to random variables $X\equaldist X_{1,n}+\cdots+X_{n,n}$ for any $n\in\mathbb{N}$, where $\{X_{j,n}\}_{1\le j\le n}$ are freely independent and identically distributed.  The seminal papers \cite{bercovici1993free,bercovici1992levy} by Voiculescu and Bercovici explore and characterize the distributions of such random variables in terms of an analogue \eqref{LK.via.nica} to the classical L\'evy--Khintchine representation.  In their complex analytic approach, compactly-supported laws are treated in \cite{bercovici1992levy},  and extended to all freely infinitely divisible laws (with no moment assumptions) in \cite{bercovici1993free}.  Our contributions below begin by giving a new intuitive and probabilistic proof of the free L\'evy--Khintchine formula in the $L^2$ case, via a new Cauchy transform identity using the free zero bias.

In addition, our new approach shows that {\em any} finite measure $\rho$ corresponds, through \eqref{LK.via.nica}, to a unique (up to a choice of mean) $L^2$ freely infinitely divisible random variable $X$ whose variance is the mass of $\rho$, and that the probability measure $\rho/\rho(\mathbb{R})$ has a probabilistic interpretation connecting $X$ and its free zero bias $X^\circ$: this probability measure is the law of the random variable $V_0$ in \eqref{eq.lem:G.circ.hypothesis}-\eqref{eq:R.thm.conclusion.2}.  This free L\'evy law has an immediate probabilistic interpretation: it is the weak limit of the square biases $X_{1,n}^\Box$ of the $n$th root distributions of $X$ (ergo the $L^2$ requirement for this approach).

In parallel, we give a new characterization of all {\em positively} freely infinitely divisible $L^1$ random variables (meaning that all free convolution roots are positively-supported) and we prove a completely new positive L\'evy--Khintchine type formula \eqref{eq:LK+} for these laws.  The measure in this positive formulation is the law of a different random variable $V_+$ connected to $X$ via the size bias, and also having an appealing probabilistic construction: $V_+$ is the weak limit of the size biases $X_{1,n}^s$ of the $n$th root distributions of $X$ (ergo the $L^1$ assumption here). What's more, at the intersection of the two cases (positively freely infinitely divisible $L^2$ random variables), $V_+$ is the size bias of $V_0$, and this is characterized by $V_0$ having a negative moment, as shown in Theorem \ref{prop:L2.pos.fee.inf.div}.

We begin in Section \ref{subsec:relations}, which is encompassed largely by Theorems \ref{thm:G.R.corresp} and \ref{thm:G.R.corresp.sb}.  These theorems prove the ``symbolic equivalence'' expressed in Theorems \ref{main theorem 2} and \ref{main theorem 3} --- in the first case, the equivalence of \eqref{eq.lem:G.circ.hypothesis}--\eqref{eq:R.thm.conclusion.2} in the first case and of \eqref{eq.lem:G.circ.hypothesis.sb}--\eqref{eq:R.thm.conclusion.2.sb} in the second case,
{\em on some truncated domains} --- with the majority of the remainder of the section devoted to extending the relations to the full upper half plane.  To that end, Section \ref{subsec:cpd} focuses on compound free Poisson distributions (cf.\ Section \ref{sect:FP}) and shows that are rich enough to produce all the laws $\mathcal{L}(V_0)$ for which $E[V_0^{-2}]<\infty$ in the first case, and all the laws $V_+ \ge 0$ for which $E[V_+^{-1}]<\infty$ in the second.

Section \ref{sec:inf.div} begins, in parallel between the $V_0$ and $V_+$ cases, by removing the negative moment constraints with an approximations schemes (therefore escaping the compound free Poisson world) in Proposition \ref{prop:ForAnyV}, and then completes the pieces of the proof of Theorems \ref{main theorem 2} and \ref{main theorem 3} in Theorems \ref{thm:non.compact.case} and \ref{thm:non.compact.case.sb}.  It then combines these results in the overlapping case to Prove Theorem \ref{prop:L2.pos.fee.inf.div}.  Section \ref{sect:compact} gives an alternate approach 
under the stronger assumption of bounded random variables; this approach is more self-contained, and also introduces a result, Lemma \ref{lem:uniform.compact.support}, on the uniformity of the support of free convolution roots that is of independent interest.  Throughout the section, and at its conclusion, are several examples demonstrating the computational effectiveness of our new perspective on free infinite divisibility. 

\subsection{New Formulations of Free Infinite Divisibility}\label{subsec:relations}

This section is devoted to preliminary versions of our main Theorems \ref{main theorem 2} and \ref{main theorem 3}, in the forms of Theorem \ref{thm:G.R.corresp} and \ref{thm:G.R.corresp.sb} below.  Theorem \ref{thm:G.R.corresp} shows that Bercovici and Voiculescu's free L\'evy--Khintchine formula in this $L^2$ setting is equivalent to our new formulation \eqref{eq.lem:G.circ.hypothesis} relating the reciprocal Cauchy transform of the free zero bias $X^\circ$ of the freely infinitely divisible random variable $X$ to that of the El Gordo transform $V_0^\bflat$ of its free L\'evy--Khintchine measure $\mathcal{L}(V_0)$.  But the equivalence here is only proved for these analytic transforms on certain truncated open domains of the complex upper half-plane; further tools later in this section are needed to extend these results to their full strength.  Meanwhile, Theorem \ref{thm:G.R.corresp.sb} provides a completely new kind of free L\'evy--Khintchine formula \eqref{eq:R.thm.conclusion.sb} for $L^1$ {\em positively} freely infinitely divisible random variables, and proves an equivalent formulation \eqref{eq.lem:G.circ.hypothesis.sb} relating the reciprocal Cauchy transform of the size bias of $X$ to that of its (new) free L\'evy--Khintchine measure $\mathcal{L}(V_+)$ (cf.\ Theorem \ref{main theorem 3}(\ref{thm2.b.sb}), again with formulas holding for now only on certain truncated domains, with extension to the full upper half-plane proved in following sections.

\ignore{
In this section we \lcolor{first} develop relations between the four types of transforms associated with a distribution $\mathcal{L}(X)$ which are connected to the form of Cauchy transform of $\mathcal{L}(X^\circ)$. In particular, Theorem \ref{thm:G.R.corresp} affirms the equivalence of relation \eqref{eq.lem:G.circ.hypothesis}, involving a reciprocal Cauchy transform depending on $\mathcal{L}(X^\circ)$, to \eqref{eq:R.thm.conclusion} and \eqref{eq:R.thm.conclusion.2}, specifying the forms of the $R$ and Voiculescu transforms $\varphi$ of $\mathcal{L}(X)$ on certain truncated cone domains. The equivalences are those required to yield those between \ref{thm2.b} and \ref{thm2.c} in Theorem \ref{main theorem 2}, once extended to all of $\mathbb{C}_+$. \lcolor{Theorem \ref{thm:G.R.corresp.sb} provides parallel results obtained via the use of size bias.}
}

\begin{theorem}\label{thm:G.R.corresp}
Let $X$ be a mean $m$ random  variable with variance $\sigma^2 \in (0,\infty)$, and $V_0$ a random variable.  The following are equivalent.
\begin{align}\label{eq.lem:G.circ.hypothesis}
F_{X^\circ}(w)=F_{V_0^\bflat}(F_X(w))
\end{align}
for $w\in\mathbb{C}_+$.
\begin{align}\label{eq:R.thm.conclusion}
R_X(z)=m+\sigma^2 E\left[
\frac{z}{1-zV_0}\right]
\end{align}
for $z$ in a reciprocal truncated cone.
\begin{align}\label{eq:R.thm.conclusion.2}
G_{V_0}(z)=(\varphi_X(z)-m)/\sigma^2,
\end{align}
for $z$ in a truncated cone.

If these equivalent conditions hold for both the pairs $(X,V_0)$ and $(X,V_0')$ then $\mathcal{L}(V_0)=\mathcal{L}(V_0')$, and if for both pairs $(X,V_0)$ and $(X',V_0)$ then, assuming $\sigma_X^2 \ge \sigma_{X'}^2$ without loss of generality, 
\begin{align*} 
X \equaldist \left(X'-m_{X'}\right)^{\boxplus t} + m_X \qmq{where} t= \frac{\sigma_X^2}{\sigma_{X'}^2},
\end{align*}
where, for any $W$, $W^{\boxplus t}$ is the $t$-fold free convolution of $W$. 
\end{theorem}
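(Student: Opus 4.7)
The plan is to establish the three equivalences through direct transform computations, exploiting Lemmas \ref{lem:bflat} and \ref{lem:Gcirc.props} together with the standard substitutions between $G$, $F$, $R$, and $\varphi$. First I would dispatch the equivalence of \eqref{eq:R.thm.conclusion} and \eqref{eq:R.thm.conclusion.2}: the identity $z/(1-zY) = 1/(1/z - Y)$ yields $E[z/(1-zY)] = G_Y(1/z)$, and combined with $\varphi_X(z) = R_X(1/z)$ this converts \eqref{eq:R.thm.conclusion} directly into \eqref{eq:R.thm.conclusion.2}. Since $z \mapsto 1/z$ carries a Stolz angle $\Gamma_{\alpha,\beta}$ bijectively onto its reciprocal counterpart near the origin, the domains of validity correspond.

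The heart of the argument is the equivalence of \eqref{eq.lem:G.circ.hypothesis} and \eqref{eq:R.thm.conclusion.2}. Starting from \eqref{eq:def.mean.m.fzb}, a shift applied to \eqref{eq:GX.circ.EX=0} yields
\[ G_{X^\circ}(w)^2 = \frac{(w-m)G_X(w) - 1}{\sigma^2}, \]
so $F_{X^\circ}(w)^2 = \sigma^2/((w-m)G_X(w) - 1)$. Combining with Lemma \ref{lem:bflat}, which gives $F_{Y^\bflat}(z)^2 = z/G_Y(z)$, and using $F_X G_X = 1$, the squared identity $F_{X^\circ}^2 = (F_{Y^\bflat} \circ F_X)^2$ is algebraically equivalent to $\sigma^2 G_Y(F_X(w)) = (w-m) - F_X(w)$. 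Setting $z = F_X(w)$, so $w = F_X^{-1}(z) = \varphi_X(z) + z$ on an appropriate Stolz angle (cf.\ Proposition \ref{prop.Stolzify}), this reduces to $\sigma^2 G_Y(z) = \varphi_X(z) - m$, which is precisely \eqref{eq:R.thm.conclusion.2}. To upgrade from the squared identity to \eqref{eq.lem:G.circ.hypothesis} itself, I would track the branch choices using the facts that $F_{X^\circ}, F_{Y^\bflat}, F_X$ each map $\mathbb{C}_+$ into $\mathbb{C}_+$ (so the relevant square roots land in the upper half plane and the principal branches coincide); once equality holds on the truncated cone where $F_X^{-1}$ is defined, the identity theorem extends it to all of $\mathbb{C}_+$.

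For the uniqueness of $\mathcal{L}(Y)$ given $\mathcal{L}(X)$: if both $Y$ and $Y'$ satisfy \eqref{eq:R.thm.conclusion.2}, then $G_Y$ and $G_{Y'}$ coincide on a non-empty open set, hence on all of $\mathbb{C}_+$ by analytic continuation, and Proposition \ref{prop.Cauchy.robust} gives $\mathcal{L}(Y) = \mathcal{L}(Y')$. For the identification of $\mathcal{L}(X)$ from $\mathcal{L}(Y)$: when both $(X,Y)$ and $(X',Y)$ satisfy \eqref{eq:R.thm.conclusion}, dividing by the respective variances shows that $\sigma_X^{-2}(R_X - m_X)$ and $\sigma_{X'}^{-2}(R_{X'} - m_{X'})$ are equal to the same function $E[z/(1-zY)]$, so
\[ R_{X - m_X}(z) = t\, R_{X' - m_{X'}}(z), \qquad t = \sigma_X^2/\sigma_{X'}^2 \ge 1. \]
By Remark \ref{remark:free-conv-semigroup}, $t\,R_{X'-m_{X'}}$ is the $R$-transform of the probability measure $(X'-m_{X'})^{\boxplus t}$, and since the $R$-transform determines the distribution, $X - m_X \equaldist (X'-m_{X'})^{\boxplus t}$, yielding the claim.

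The main technical care will lie in the bookkeeping of square-root branches: the $\bflat$ transform and the free zero bias Cauchy transform are each defined with a specific minus sign in front of the principal square root (cf.\ Lemma \ref{lem:bflat} and \eqref{eq:GX.circ.EX=0}), and these sign conventions must be reconciled with the half-plane mapping properties of $F_X$, $F_{X^\circ}$, and $F_{Y^\bflat}$ to ensure the squared identity yields the signed identity without spurious branch flips. A secondary subtlety is the domain mismatch: \eqref{eq:R.thm.conclusion} and \eqref{eq:R.thm.conclusion.2} hold on truncated cones while \eqref{eq.lem:G.circ.hypothesis} is claimed on all of $\mathbb{C}_+$, which I address via analytic continuation after establishing equality on the Stolz angle on which $F_X^{-1}$ exists by Proposition \ref{prop.Stolzify}.
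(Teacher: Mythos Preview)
Your proposal is correct and follows essentially the same approach as the paper's proof: the same transform substitution $\varphi_X(z)=R_X(1/z)$ for \eqref{eq:R.thm.conclusion}$\Leftrightarrow$\eqref{eq:R.thm.conclusion.2}, the same algebraic reduction of the squared identity $G_{X^\circ}^2=G_{Y^\bflat}^2\circ F_X$ to $\sigma^2 G_Y(F_X(w))=w-m-F_X(w)$ followed by the substitution $z=F_X(w)$, and the same uniqueness arguments via the $R$-transform. The only cosmetic differences are that you carry the mean $m$ through the computation directly (the paper reduces to $m=0$ first) and resolve the square-root sign via the strict $\mathbb{C}_+$-mapping property of reciprocal Cauchy transforms rather than the paper's asymptotic $iyG(iy)\to 1$; both are valid.
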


\ignore{
See Remark \ref{remark:free-conv-semigroup} for a discussion of free convolution powers $\mu^{\boxplus t}$ for $t\ge 1$.
}

\begin{proof}
To begin, note that $\varphi_X(z) = R_X(1/z)$ from the definitions \eqref{Grr:relations} and \eqref{def:Voiculescu.trans} of the $R$-transform and the Voiculescu transform.  Hence, if \eqref{eq:R.thm.conclusion.2} holds we have
\begin{multline*} R_X(z)=\varphi_X(1/z)=m+\sigma^2G_{V_0}(1/z) \\= m+\sigma^2 E\left[\frac{1}{1/z-V_0}\right] = m+\sigma^2 E\left[\frac{z}{1-zV_0}\right],
\end{multline*}
yielding \eqref{eq:R.thm.conclusion}.  Similarly if \eqref{eq:R.thm.conclusion} holds then 
\begin{align*}
(\varphi_X(z)-m)/\sigma^2 = (R_X(1/z)-m)/\sigma^2 = E\left[ \frac{1/z}{1-V_0/z}\right]= G_{V_0}(z).
\end{align*}

To show the equivalence of  \eqref{eq.lem:G.circ.hypothesis} and \eqref{eq:R.thm.conclusion.2} we may reduce to the case $m=0$.  Indeed, if we know that 
\eqref{eq.lem:G.circ.hypothesis} holds for mean zero variables then
$$
F_{X^\circ}(z)=F_{(X-m)^\circ +m}(z)=
F_{(X-m)^\circ }(z-m)
=F_{V_0^\bflat}(F_{X-m}(z-m))=F_{V_0^\bflat}(F_X(z)),
$$
and if \eqref{eq:R.thm.conclusion.2} holds for mean zero variables, then since $\varphi_X(z) = m+\varphi_{X-m}(z)$, we find 
\[ (\varphi_X(z)-m)^2/\sigma^2 = \varphi_{X-m}(z)^2/\sigma^2 = G_{V_0}(z). \]
Hence we may assume $m=0$ without loss of generality.

Now, suppose that \eqref{eq:R.thm.conclusion.2} holds in some truncated cone $\Gamma_{\alpha,\beta}$ in the domain of $\varphi_X$ (ergo the reciprocal Cauchy transform $F_X$ has an analytic inverse $F_X^{-1}$ there, cf.\ Proposition \ref{prop.Stolzify}); under the assumption $m=0$ this is the statement $\varphi_X(z) = \sigma^2 G_{V_0}(z)$.  From the definition \eqref{def:Voiculescu.trans}, $\varphi_X(z) = F_X^{-1}(z)-z$ and hence $F_X^{-1}(z) - z = \sigma^2 G_{V_0}(z)$ on $\Gamma_{\alpha,\beta}$.  Letting $w=F_X^{-1}(z)$ for $z\in\Gamma_{\alpha,\beta}$, we have $F_X(w) = z$, and so
\begin{align}\label{eq:w-F} 
w - F_X(w) = \sigma^2 G_{V_0}(F_X(w)).
\end{align}
Using \eqref{eq:GX.circ.EX=0} for the first inequality and then the result of multiplying \eqref{eq:w-F} through by $G_X(w)/\sigma^2$ and noting $F_X = 1/G_X$, and finally applying definition \eqref{G.sharp} for the last, we have
\begin{multline} G^2_{X^\circ}(w) = \frac{wG_X(w) - 1}{\sigma^2} = G_X(w)G_{V_0}(F_X(w))
\\= \frac{1}{F_X(w)}G_{V_0}(F_X(w)) = G^2_{V_0^\bflat}(F_X(w)).\end{multline}
The equality $G_{X^\circ}^2(w) = G_{V_0^\bflat}^2(F_X(w))$ implies that $G_{X^\circ}(w) = G_{V_0^\bflat}(F_X(w))$ (since the negative of a Cauchy transform is not the Cauchy transform of a positive measure).  Taking reciprocals verifies \eqref{eq.lem:G.circ.hypothesis} for $w\in F_X^{-1}(\Gamma_{\alpha,\beta})$, which is open (since $\Gamma_{\alpha,\beta}$ is open and $F_X$ is continuous).  Thus, the two analytic functions $F_{X^\circ}$ and $F_{V_0^\bflat}\circ F_X$ are equal on an open set, and hence open on their full domain $\mathbb{C}_+$ as claimed. 
The reverse implication \eqref{eq.lem:G.circ.hypothesis}$\implies$\eqref{eq:R.thm.conclusion.2} is proved in exactly the same manner in reverse.

Turning to the second half of the theorem: When any of these equivalent identities is satisfied for $X$ then $V_0$ is uniquely determined by its Voiculescu transform in \eqref{eq:R.thm.conclusion.2}, which depends only on the distribution of $X$, which itself encodes the mean. 
When both $X$ and $X'$ satisfy any of the three, and therefore all of the given identities for the same $V_0$, by \eqref{eq:R.thm.conclusion} we must have 
$$
(\varphi_X(z)-m_X)/\sigma_X^2  = (\varphi_{X'}(z)-m_{X'})/\sigma_{X'}^2,
$$
and hence
$$
R_X(z)=t (R_{X'}(z)-m_{X'})+m_X,
$$
thus yielding the final claim. 
\end{proof}

\begin{example}\label{ex:semi.sats}
When $X$ is a distribution with zero mean and variance $\sigma^2 > 0$, we know by Lemma \ref{lem:Xcirc.fixed.pt} that the unique solution to $X^\circ \equaldist X$ is the semicircle distribution with variance $\sigma^2$.
In addition, this distribution must also satisfy 
\eqref{eq.lem:G.circ.hypothesis}
with $G_{V_0^\bflat}(z)=1/z$, hence, by Lemma \ref{lem:bflat}, with $\mathcal{L}(V_0)=\delta_0$. In this case the $\zcirc$ Lévy measure in \eqref{eq:R.thm.conclusion} is given by point mass at zero, implying the well-known
 fact that the $R$-transform of a semicircle is given by $R_{S_{\sigma^2}}(z)=\sigma^2 z$.
In particular $R_{S_{\sigma^2}}(z)$ is an $R$-transform for all $\sigma^2  \ge 0$, and thus, as for all $n \ge 1$ we have $R_{S_{\sigma^2}}(z)= R_{S_{\sigma^2/n}}(z)
+ \cdots + R_{S_{\sigma^2/n}}(z)$, the distribution of $S_{\sigma^2}$ is infinitely divisible. 

In the other direction, given the infinity divisibility of the semi-circle distributions and their corresponding $R$-transforms $R_S(z)=\sigma^2 z$, one can find their L\'evy--Khintchine measures by \eqref{eq:R.thm.conclusion}. Indeed, from there we obtain
$$
G_{V_0}(z)=\varphi_S(z)/\sigma^2=R_S(1/z)/\sigma^2=1/z,
$$
demonstrating that ${V_0} \equaldist \delta_0$.
\end{example}

We now present an analogous result for non-negative random variables, and size biasing, under the weaker condition of the existence of a first moment.
\begin{theorem}\label{thm:G.R.corresp.sb}
Let $X$ be a non-negative, mean $m \in (0,\infty)$ random  variable, and $V_+$ a random variable. The following are equivalent. 
\begin{align}\label{eq.lem:G.circ.hypothesis.sb}
F_{X^s}(w)=F_{V_+}(F_X(w))
\end{align}
For $w\in\mathbb{C}_+$,
\begin{align}\label{eq:R.thm.conclusion.sb}
R_X(z)=mE\left[
\frac{1}{1-z{V_+}}\right]
\end{align}
for $z$ in a reciprocal truncated cone, and 
\begin{align}\label{eq:R.thm.conclusion.2.sb}
G_{V_+}(z)=\varphi_X(z)/mz
\end{align}
 for $z$ in a truncated cone.

If these equivalent conditions hold for both the pairs $(X,{V_+})$ and $(X,V_+')$ then $\mathcal{L}({V_+})=\mathcal{L}(V_+')$, and if for both pairs $(X,{V_+})$ and $(X',V_+)$ then, assuming $m_X \ge m_{X'}$ without loss of generality, 
$$
X \equaldist \left(X'\right)^{\boxplus t} \qmq{where} t= \frac{m_X}{m_{X'}},
$$
where, for any $W$, $W^{\boxplus t}$ is the $t$-fold free convolution of $W$.
\end{theorem}

\begin{remark} As we will see below in Proposition \ref{prop:ForAnyV.sb}, when the equivalent conditions of this theorem hold (on larger domains), the random variable $V_+$ must be be $\ge 0$.
\end{remark}

\begin{proof}
The equivalence between \eqref{eq:R.thm.conclusion.sb} and \eqref{eq:R.thm.conclusion.2.sb} follows from a simple calculation noting that $\varphi_X(z) = R_X(1/z)$, just as in the proof of Theorem \ref{thm:G.R.corresp}.

To prove the equivalence of \eqref{eq:R.thm.conclusion.2.sb} and \eqref{eq.lem:G.circ.hypothesis.sb} on appropriate domains, we follow the same outline as in Theorem \ref{thm:G.R.corresp} (although with fewer calculations needed in this case).  On a truncated cone where $\varphi_X(z)$ is defined, it is equal to $F_X^{-1}(z)-z$.  Hence, on the image domain under $F_X$ (which, according to Proposition \ref{prop.Stolzify}, contains another truncated cone), setting $z=F_X(w)$, \eqref{eq.lem:G.circ.hypothesis.sb} is equivalent to
\[ mF_X(w)G_{V_+}(F_X(w)) = w-F_X(w). \]
Multiplying both sides by $G_X(w)/m = 1/mF_X(w)$ (which is never $0$) yields
\[ G_{V_+}(F_X(w)) = \frac{wG_X(w)-1}{m} \]
and the right-hand-side is equal to the Cauchy transform of the size bias $G_{X^s}(w)$, cf.\ Lemma \ref{lem:X.Xneg0.same.sb}(\ref{lem.Box.G.sb}.  Taking reciprocals yields \eqref{eq.lem:G.circ.hypothesis.sb}.  Every step was reversible, and so the two are in fact equivalent.  \end{proof}

\subsection{Free Compound Poisson Distributions Revisited}\label{subsec:cpd}

Free Poisson distributions $\mathrm{FP}(\lambda,\nu)$ for any $\lambda>0$ and any probability measure $\nu$ on $\mathbb{R}$ were introduced in Section \ref{sect:FP}.  They are preeminent examples of freely infinitely divisible distributions, as they are defined as weak limits of f.i.d.\ triangular arrays, cf.\ Definition \ref{Def.FP}.  Equation \ref{eq:R.FP} gives a general expression for the $R$-transform of any (compound) free Poisson; this formula is left as an exercise in \cite{Nica-Speicher-96}.  It will be derived below as a consequence of our new perspective, with more insight into the nature of the ``jump distribution''.  Indeed, let $U$ be a random variable with $U\equaldist \nu$; \ref{eq:R.FP} then reads
\begin{equation} \label{eq:R.FP.U}
R_{\mathrm{FP}(\lambda,\nu)}(z) = \lambda E\left[\frac{U}{1-zU}\right].
\end{equation}
Note: the free infinite divisibility of $\mathrm{FP}(\lambda,\nu)$ follows immediately from \eqref{eq:R.FP.U}: since the $n$th $\boxplus$-root of a distribution $\mu$ (should it exist) has $R$-transform $\frac{1}{n}R_\mu$, \eqref{eq:R.FP.U} shows that $\mathrm{FP}(\lambda/n,\nu)$ is the $n$th $\boxplus$-root of $\mathrm{FP}(\lambda,\nu)$ for each $n$.

Suppose that $U\in L^2$.  Noting that $\frac{U}{1-zU} = U+\frac{z U^2}{1-zU}$, it then follows that
\begin{equation} \label{eq:FP.L2.1} R_{\mathrm{FP}(\lambda,\nu)}(z) = \lambda E\left[U+\frac{z U^2}{1-zU}\right] = \lambda E[U] + \lambda E[U^2]E\left[\frac{z}{1-zU^\Box}\right] \end{equation}
where the last equality is an application of the characterizing equation \eqref{eq:def.secmom.sqb}for the square bias with test function $f(u) = \frac{z}{1-zu}$. Moreover, since the $R$-transform records the mean and variance as its orders $0$ and $1$ coefficients, \eqref{eq:R.FP} shows that the mean of $\mathrm{FP}(\lambda,\nu)$ is $\lambda E[U]$ and the variance of $\mathrm{FP}(\lambda,\nu)$ is $\lambda E[U^2]$.  Note that, for any random variable $X$ and any $m\in\mathbb{R}$, $R_{X-m}(z) = R_X(z)+m$.  As such, \eqref{eq:FP.L2.1} shows the following.

\begin{lemma} \label{lem:FP.L2} For every $L^2$ random variable $U$, and every $m\in\mathbb{R}$ and $\sigma^2\ge0$, there is a translated compound free Poisson random variable $X$ with mean $m$ and variance $\sigma^2$ satisfying
\begin{equation} \label{eq:FP.L2.2} R_X(z) = m+\sigma^2E\left[\frac{z}{1-zU^\Box}\right]. \end{equation}
\end{lemma}

Similarly, suppose instead that $U\in L^1$, $U\ge 0$, and $E[U]>0$.  Then the size bias $U^s$ is well-defined by \eqref{eq:def.mean.m.szb}, and applying it with the test function $f(u) = \frac{1}{1-zu}$ yields
\begin{equation} \label{eq:FP.L1.1} R_{\mathrm{FP}(\lambda,\nu)}(z) = \lambda E\left[\frac{U}{1-zU}\right] = \lambda E[U]E\left[\frac{1}{1-zU^s}\right]. \end{equation}

This yields the following result, complementing Lemma \ref{lem:FP.L2}.
\begin{lemma} \label{lem:FP.L1}
For every $L^1$ random variable $U\ge 0$ with $E[U]>0$, and every $m\in\mathbb{R}$, there is a compound Poisson random variable $X\ge 0$ with mean $m$ satisfying
\begin{equation} \label{eq:FP.L1.2} R_X(z) = mE\left[\frac{1}{1-zU^s}\right]. \end{equation}
\end{lemma}

\begin{remark} \label{rk:FP.positive} The fact that the random variable $X$ in Lemma \ref{lem:FP.L2} is non-negative follows by definition: here $X\equaldist \mathrm{FP}(\lambda,\nu)$ where $\lambda = E[U]/m$ and $\nu = \mathcal{L}(U)$. Since $\nu$ is positively supported, all the summand distributions $(1-\frac{\lambda}{n})\delta_0+\frac{\lambda}{n}\nu$ in Definition \ref{Def.FP} are positively supported, and hence so are their free convolution powers and the weak limit $X$.
\end{remark}

Now we turn Lemmas \ref{lem:FP.L2} and \ref{lem:FP.L1} around, thinking of $U^\Box$ or $U^s$ as the basic inputs to Equations \eqref{eq:FP.L2.2} and \eqref{eq:FP.L1.2} instead of $U$. Indeed, from Lemma \ref{lem:X.Xneg0.same}(\ref{Box.inverse.properties}, a random variable $V$ is the square bias of some other random variable $U$, $V = U^\Box$, if and only if $E[V^{-2}]<\infty$; similarly, from Lemma \ref{lem:X.Xneg0.same.sb} parts \ref{Box.inverse.properties.sb} and \ref{size.bias.inverse.pair}, a random variable $V\ge 0$ is the size bias of some other $L^1$ random variable $U\ge 0$, $V = U^s$, if and only if $E[V^{-1}]<\infty$.  This argument gives helpful intuition for the following theorem, which is the main result of this section.

\begin{theorem}\label{thm:Compound.Poisson} Let $V_0$ be any random variable satisfying $E[V_0^{-2}]<\infty$, and let $m\in\mathbb{R}$ and $\sigma^2\ge 0$.  There is a (unique in law) freely infinitely divisible random variable $X$ whose $R$-transform is
\begin{equation}
\label{eq:Rmusigma}
R_X(z)  = R_{(m,\sigma^2,V_0)}(z) := m+\sigma^2 E\left[\frac{z}{1-zV_0}\right].
\end{equation}
Similarly, let $V_+$ be any non-negative random variable satisfying $E[V_+^{-1}]<\infty$.  There is a (unique in law) positively freely infinitely divisible random variable $X$ whose $R$-transform is
\begin{equation}\label{eq:Rmusigma.sb}
R_X(z) = R_{(m,V_+)}(z) := mE\left[\frac{1}{1-zV_+}\right].
\end{equation}
\end{theorem}

(A reminder to the reader: {\em positively freely infinitely divisible} measures are characterized by having all convolution roots positively supported, cf.\ Definition \ref{def:pfid}.)

Although Theorem \ref{thm:Compound.Poisson} essentially follows from the preceding lemmas, we will provide a different proof below which derives explicitly from the triangular array construction of compound free Poisson distributions, and thus identifies the jump distribution $\mathrm{FP}(\lambda,\nu)$ probabilistically in terms of the free summands.  The reader could then view our approach below, together with the calculations in \eqref{eq:FP.L2.1} and \eqref{eq:FP.L1.1} (read backwards) as alternate proofs of the characterizing equation \eqref{eq:R.FP} of the $R$-transform of $\mathrm{FP}(\lambda,\nu)$, at least in the $L^2$ or $L^1$ non-negative cases.

To proceed, we need several building blocks which hold in the general context of a sequence $\{X_n\}_{n \ge n_0}$, for some $n_0 \ge 1$, of row sums of triangular arrays:
\begin{align}\label{def:Xn}
X_n \equaldist X_{1,n}+\cdots+X_{n,n} \quad \mbox{where $\{X_{j,n}\}_{1 \le j \le n}$ are f.i.d.}
\end{align}
(Here and in the sequel ``f.i.d.'' is short for {\em freely independent and identically distributed}.) The first lemma, though simple, will be used multiple times below.
\begin{lemma} \label{lem:sublemma} Let \eqref{def:Xn} hold.
\begin{enumerate}

\item If $\lim_{n \rightarrow \infty}\mathrm{Var}(X_n)/n = 0$ in \eqref{def:Xn} then $\mathrm{Var}(X_{n,n}) \rightarrow 0$, and if in addition $E[X_n]/n \rightarrow 0$ then $X_{n,n} \rightharpoonup 0$.

\item If the summands $X_{j,n}$ are non-negative for $n \ge n_0$ and $E[X_n]/n \rightarrow 0$ then $X_{n,n} \rightharpoonup 0$.
\end{enumerate}
\end{lemma}

\begin{proof}
For (1), by the free independence and identical distribution assumption we have
$$
\limsup_{n \rightarrow \infty}\mathrm{Var}(X_{n,n})=\limsup_{n \rightarrow \infty}\mathrm{Var}(X_n)/n=0,
$$
and hence $X_{n,n} = (X_{n,n}-E[X_{n,n}])+E[X_{n,n}] \rightarrow_p 0$ by Chebyshev's inequality and the fact that $E[X_{n,n}]=E[X_n]/n$, and the claim follows as convergence in probability and in distribution are equivalent when the limit is constant. 
For (2), by Markov's inequality, for $\epsilon>0$ we have 
$$
P(X_{n,n}>\epsilon) \le \frac{E[X_{n,n}]}{\epsilon} = E[X_n]/\epsilon n \rightarrow 0, 
$$
thus showing $X_{n,n}$ converges to zero in probability to zero, and hence also in distribution, as the limit is constant.
\end{proof}

Letting
$X_n^{(n)}=X_n-X_{n,n}$, the free ``replace one property" Theorem \ref{eq:Todd's.formula}, notably in the form \eqref{eq:free.replace.first.one}, says
\begin{align}\label{eq:pre.limit.fid.identity}
	G_{X_n^\circ}(z) = G_{X_{n,n}^\circ}(\omega_{X_{n,n},X_n^{(n)}}(z)).
\end{align}

\begin{lemma}\label{lem:Xn.sums.of.Vn.} Let $\{X_n\}_{n \ge n_0}$ be given as in \eqref{def:Xn}.
\begin{enumerate}

\item \label{lem:Xn.sums.of.Vn.1} If the summands $X_{j,n}$ have finite variance, and along some subsequence $\{n_k\}_{k \ge 1}$ we have a random variable $V_0$ such that $X_{n_k,n_k}^\Box \rightharpoonup V_0$, $X_{n_k} \rightharpoonup X$, and $\mathrm{Var}(X_{n_k}) \rightarrow \mathrm{Var}(X)$, then \eqref{eq.lem:G.circ.hypothesis}--\eqref{eq:R.thm.conclusion.2} hold for the pair $X,V_0$. 

\item \label{lem:Xn.sum.of.Vn.2} If the summands $X_{j,n}$ are non-negative with finite strictly positive mean, and along some subsequence $\{n_k\}_{k \ge 1}$ we have a random variable $V_+$ such that $X_{n_k,n_k}^s \rightharpoonup V_+$, $X_{n_k} \rightharpoonup X$, and $E[X_{n_k}]\rightarrow E[X]>0$, then \eqref{eq.lem:G.circ.hypothesis.sb}--\eqref{eq:R.thm.conclusion.2.sb} hold for the pair $X,V_+$.
\end{enumerate}
\end{lemma}

\begin{proof} For convenience, we drop the subscript $k$ from the subsequence. We prove (1) here; the proof of (2) is parallel, using (2) of Lemma \ref{lem:sublemma}(2).

As $\mathrm{Var}(X_n) \rightarrow \mathrm{Var}(X)$, the sequence $X_n, n \ge \lambda$ is uniformly integrable, and $E[X_n] \rightarrow E[X]$. Hence (1) of Lemma \ref{lem:sublemma} obtains on the subsequence, and by the definition $X_n^{(n)}=X_n-X_{n,n}$ we have $X_n-X_n^{(n)}=X_{n,n} \rightharpoonup 0$.

As $X_n\rightharpoonup X$, we conclude that $X_n^{(n)}\rightharpoonup X$.  Therefore, by Proposition \ref{prop:subordinator}, it follows that $\omega_{X_{n,n},X_n^{(n)}}(z) \to \omega_{0,X}(z)$ uniformly on compact subsets of $\mathbb{C}_+$ as $n\to\infty$. Since $0$ is freely independent from any random variable $X$, by definition $\omega_{0,X}$ satisfies $G_X(z) = G_{0+X}(z) = G_0(\omega_{0,X}(z))$, and $G_0(z) = 1/z$, so we conclude that
\begin{equation*}
\omega_{X_{n,n},X_n^{(n)}}(z)\to 1/G_X(z) = F_X(z)
\end{equation*}
uniformly on compact subsets of $\mathbb{C}_+$ as $n\to\infty$.

As $X_n^\circ \rightharpoonup X^\circ$ and $X_{n,n}^\Box \rightharpoonup V_0$ by Lemma \ref{lem:Gcirc.props} and the hypotheses, we have $(W_n^\Box)^\bflat \rightharpoonup V_0^\flat$
by Lemma \ref{lem:bflat}, and taking the limit in \eqref{eq:pre.limit.fid.identity} along the (uniformly converging) subsequence yields \eqref{eq.lem:G.circ.hypothesis}, and \eqref{eq:R.thm.conclusion} by the equivalence provided by Theorem \ref{thm:G.R.corresp}. 
\end{proof}

\ignore{
For the remainder of this section, we restrict \eqref{def:Xn} to the compound free Poisson case, cf.\ Section \ref{sect:FP}.  That is, fix $\lambda>0$, and consider f.i.d.\ triangular arrays whose summands have common distribution
\begin{align} \label{eq:framework1}
\mathcal{L}(X_{j,n}) =  \left( 1 - \frac{\lambda}{n}\right)\delta_0 + \frac{\lambda}{n} \nu \quad \mbox{for $n \ge \lambda$.}
\end{align}
Under the assumption that $\nu$ has finite second moment, we show that the row sums $X_n$ converges in distribution to a random variable $X$ that satisfies \eqref{eq.lem:G.circ.hypothesis}--\eqref{eq:R.thm.conclusion.2}.  Similarly, when $\nu$ is positively supported and has strictly positive finite mean, $X_n$ converges in distribution to a random variable $X$ that satisfies \eqref{eq.lem:G.circ.hypothesis.sb}--\eqref{eq:R.thm.conclusion.2.sb}.

\begin{theorem}
Let $\{X_{j,n}\}$ be as in \eqref{eq:framework1}, where the jump distribution $\nu$ has finite second moment.  Let $U$ be a random variable with $\mathcal{L}(U) =\nu$. Then the sequence $X_n$ in \eqref{def:Xn} converges weakly and in second moment to a random variable $X$ with mean $E[X]=\lambda m$ and variance $\sigma^2=\lambda E[U^2]$, and satisfies
\begin{multline}\label{eq:R.thm.conclusion.free.p}
G_{X^\circ}(z)=G_{(U^\Box)^\flat}(1/G_X(z)) 
\qmq{and} \quad R_X(z)=\lambda m + \sigma^2 E\left [ \frac{z}{1-zU^\Box} \right].
\end{multline}
Noting that a random variable is a square bias of another random variable if and only if it has an inverse second moment, this yields the following: for any random variable $V_0$ satisfying $E[V_0^{-2}]<\infty$, and any $m\in\mathbb{R}$ and $\sigma^2\ge 0$, the function
\begin{align}
R_{(m,\sigma^2,V_0)}(z)=m+\sigma^2 E\left[\frac{z}{1-zV_0}\right]
\end{align}
is the $R$-transform of a freely infinitely divisible random variable. 
\end{theorem}

Before proceeding to prove this theorem, we give the parallel result for non-negative random variables using the size bias.

\begin{theorem}
Let $\{X_{j,n}\}$ be as in \eqref{eq:framework1}, where the jump distribution $\nu$ is positively supported and has strictly positive finite mean.  Let $U$ be a random variable with $\mathcal{L}(U) =\nu$.  Then the sequence $X_n$ in \eqref{def:Xn} converges weakly and in first moment to a random variable $X$ with mean $E[X]=\lambda m$ and satisfies
\begin{align}\label{eq:R.thm.conclusion.free.p.sb}
G_{X^s}(z)=G_{U^s}(1/G_X(z)) 
\qmq{and} \quad R_X(z)=\lambda mE\left [ \frac{1}{1-zU^s} \right].
\end{align}
Noting that a positive random variable is the size bias of another random variable if and only if it has an inverse first moment, this yields the following: for any non-negative random variable $V_+$ satisfying $E[V_+^{-1}]<\infty$, and any $m>0$, the function
\begin{align}
R_{(m,V_+)}(z)=mE\left[\frac{1}{1-zV_+}\right]
\end{align}
is the $R$-transform of a freely infinitely divisible random variable.
\end{theorem}

\begin{remark}\label{rem:ns.simpl}
Writing out $R_X$ in \eqref{eq:R.thm.conclusion.free.p} and substituting $E[U]$ and $\lambda E[U^2]$ for $m$ and $\sigma^2$ respectively, then applying the definition of the distribution $U^\Box$, yields
\begin{align*}
R_X(z)=\lambda E[U] + \lambda E[U^2]E\left [ \frac{z}{1-zU^\Box} \right] = \lambda 
E\left[
U+\frac{zU^2}{1-zU}
\right] = \lambda 
E\left[
\frac{U}{1-zU}
\right]
\end{align*}
agreeing with \eqref{eq:R.FP} for the $R$-transform of a compound free Poisson random variable. In addition, from the first equality we see that the L\'evy--Khintchine measure is given by $U^\Box$.

Similarly, in the case that $U\ge 0$ with strictly positive finite mean $m$, \eqref{eq:R.thm.conclusion.free.p.sb} together with the size bias characterizing equation \eqref{eq:def.mean.m.szb} applied to the test function $f(u) = \frac{1}{1-zu}$ also yields
\[ R_X(z) = \lambda m E[f(U^s)] = \lambda m \cdot \frac{1}{m} E[Uf(U)] = \lambda E\left[\frac{U}{1-zU}\right] \]
again recovering \eqref{eq:R.FP}, this time with no extraneous second moment of $U$ appearing.
\end{remark}
}

We now prove Theorem \ref{thm:Compound.Poisson}.

\begin{myproof}{\it Theorem}{\ref{thm:Compound.Poisson}} We begin with the first conclusion, \eqref{eq:Rmusigma}.  By assumption $E[V_0^{-2}]<\infty$, and so $V_0$ is not identically $0$; therefore $E[V_0^2]>0$.  From Lemma \ref{lem:X.Xneg0.same}(\ref{Box.inverse.properties}, we may take $U:= V_0^{-\Box}$, in which case $U^\Box\equaldist V_0$ and $E[U^2] = 1/E[V_0^2]>0$.

Let $\lambda = \sigma^2/E[U^2]$, and for each $n$ let $\{X_{j,n}\}_{1\le j\le n}$ be freely independent random variables each with law $(1-\frac{\lambda}{n})\delta_0 + \frac{\lambda}{n}\mathcal{L}(U)$, and define $X_n = X_{1,n}+\cdots+X_{n,n}$.  Immediate calculation verifies that
\begin{equation} \label{eq:Xnn.moments} E[X_{n,n}^k] = \frac{\lambda}{n}E[U^k], \qquad k\ge 1 \end{equation}
and hence, from the free independence of the $X_{j,n}$, we have
\begin{equation} \label{eq:Var.Xn.1/n} \mathrm{Var}(X_n) = \lambda E[U^2] -\frac{\lambda^2}{n}E[U]^2 = \sigma^2 -\frac{\sigma^4 E[U]^2}{n\,E[U^2]^2}. \end{equation}

If these variables were {\em classically} independent, it is known (from antiquity) that $X_n$ converges in law (and all moments) to a compound Poisson with rate $\lambda$ and jump distribution $\mathcal{L}(U)$.  It therefore follows from \cite[Theorem 3.4]{BercoviciPata1999} that, in this free setting, $X_n\rightharpoonup M$ for some random variable $M$, and all moments of $X_n$ converge to the corresponding moments of $M$.  In particular, from \eqref{eq:Var.Xn.1/n} it follows that $\mathrm{Var}(M) = \sigma^2$.  

Finally, from Lemma \ref{lem:X.Xneg0.same}(\ref{lem:sqbiaa-part2} we have that $X_{n,n}^\Box\equaldist U^\Box = V_0$ for each $n \ge 1$, and hence this sequence certainly converges in distribution to $V_0$.  We have thus confirmed all of the hypotheses of Lemma \ref{lem:Xn.sums.of.Vn.}(\ref{lem:Xn.sums.of.Vn.1}), and we may conclude that 
\[ R_M(z) = m' + \sigma^2 E\left[\frac{z}{1-zV_0}\right] \]
where $m' = \lambda E[U] = \sigma^2 E[U]/E[U^2]$. Note that $R_{M-m'}(z) = R_M(z)+m'$.  Hence, with $M'=M-m'$ we have
\[ R_{M'}(z) = \sigma^2 E\left[\frac{z}{1-zV_0}\right]. \]
It follows that $M'$ is freely infinitely divisible: as $\sigma^2>0$ was chosen at the beginning, we may now construct $M_n'$ as above from $V_0$ and $\sigma^2/n$, and conclude that $R_{M'_n}(z) = \frac{1}{n}R_{M'}(z)$; this means that $\mathcal{L}(M'_n)^{\boxplus n} = \mathcal{L}(M')$.

Finally, take $X = M'+m$.  This random variable is also freely infinitely divisible, and satisfies \eqref{eq:Rmusigma}, as desired. 

The second claim, \eqref{eq:Rmusigma.sb}, can be shown in a parallel way, so we only provide the highlights. The random variable $V_+$ must be non-trivial, and by Lemma \ref{lem:X.Xneg0.same.sb}(\ref{Box.inverse.properties.sb}
$U:= V_+^{-s}$ is $L^1$ and $\ge 0$, in which case $U^s\equaldist V_+$ and $E[U] = 1/E[V_+]>0$.
Let $\lambda = m/E[U]$, and for each $n$ let $\{X_{j,n}\}_{1\le j\le n}$ and $X_n$ be defined as for the previous case. Identity \eqref{eq:Xnn.moments} continues to hold and we may again conclude that $X_n \rightharpoonup M$ for some $M$ with mean $m$. 
From Lemma \ref{lem:X.Xneg0.same.sb}(\ref{lem:sqbiaa-part2.sb} we have that $X_{n,n}^s\equaldist U^s = V_+$ for each $n \ge 1$, and have thus confirmed all of the hypotheses of Lemma \ref{lem:Xn.sums.of.Vn.}(\ref{lem:Xn.sum.of.Vn.2}), and we may conclude that 
\[ R_M(z) = mE\left[\frac{1}{1-zV_+}\right] \]
where $m = \lambda E[U]$. 
It follows that $M$ is freely infinitely divisible as we can follow this same construction with $m$ replaced by $m/n$, similar to the case above.

\ignore{
Given the assumption $E[V_0^{-2}]<\infty$, by Lemma \ref{lem:X.Xneg0.same}(\ref{Box.inverse.properties}, there is an $L^2$ random variable $U$ with $U^\Box\equaldist V_0$.  Let $\{X_{j,n}\}_{1\le j\le n}$ be f.i.d.\ with law $(1-\frac{\lambda}{n})\delta_0 + \frac{\lambda}{n}\mathcal{L}(U)$, where $\lambda = m/E[U]$.

The desired conclusion \eqref{eq:Rmusigma} of the first half of the theorem  is equality \eqref{eq:R.thm.conclusion}; hence, according to Lemma \ref{lem:Xn.sums.of.Vn.}(\ref{lem:Xn.sums.of.Vn.1}, it suffices to prove the conditions listed in the lemma.  To begin: the fact that $X_n\rightharpoonup X$ follows from \cite[Theorem 3.4]{BercoviciPata1999}: the classical version of this construction of compound Poisson random variables has been known since antiquity.  It is also straightforward to check that there exists $X$ such that
$X_n \rightharpoonup X$ with $\mathrm{Var}(X_n) \rightarrow \mathrm{Var}(X)$; this follows since the classical convolution version of this result was known since antiquity, and the variance is the same in both cases (variance is additive over classically independent or freely independent summands). The mean and variance claims for $X$ are easily verified. Now by Lemma \ref{lem:X.Xneg0.same}(\ref{lem:sqbiaa-part2} we have $X_{n,n}^\Box \equaldist U^\Box$ for all $n \ge \lambda$, and applying Lemma \ref{lem:Xn.sums.of.Vn.} part 1, with $V_0=U^\Box$ 
yields the first identity in \eqref{eq:R.thm.conclusion.free.p}, the second following by the equivalence granted in Theorem \ref{thm:G.R.corresp}.

Next, we show that every function in the collection \eqref{eq:Rmusigma} is an $R$-transform. The case $\sigma^2=0$ yields $R$-transforms of point masses on constants $m \in \mathbb{R}$, so we may restrict to $\sigma^2>0$. 
As when $R(z)$ is an $R$-transform of a random variable $W$ the function $a+R(z)$ is the $R$ transform of $W+a$, it suffices to prove that each function $R_{0,\sigma^2}$ for $\sigma^2>0$, as defined in \eqref{eq:Rmusigma}, is an $R$-transform.

For any $V_0$ such that $E[V_0^{-2}]< \infty$, as this expectation cannot be zero, we may consider the distribution ${\mathcal L}(U)={\mathcal L}(V_0^{-\Box})$ given by Definition \ref{def:X.Box}, for which  $U^\Box=V_0$ and $E[U^2]=1/E[V_0^{-2}]>0$ by part \ref{Box.inverse.properties} of Lemma \ref{lem:X.Xneg0.same}.
In particular, the functions $R_{0,\sigma^2}$ in \eqref{eq:Rmusigma} are $R$-transforms for all $\sigma^2=\lambda E[U^2], \lambda>0$. Varying $\lambda$ over $(0,\infty)$ we see that $R_{0,\sigma^2}$ is an $R$-transform for any $\sigma^2>0$. 

Consequently, any $X$ with $R$-transform as in \eqref{eq:Rmusigma} is infinitely divisible as for any $n \ge 1$, replacing $\lambda m$ and $\sigma^2$ by $\lambda m/n$ and $\sigma^2/n$ produces an $R$-transform, thus demonstrating that $X$ has a distributional $n^{th}$ root. 
}

\end{myproof}

\begin{remark} \label{remark.scaling.inf.div} The following observation was used in the above proof, but is worth commenting on separately. Given any $R(z)$ of the form in \eqref{eq:R.thm.conclusion.free.p+}, replacing $m$ and $\sigma^2$ by $m/t$ and $\sigma^2/t$ respectively for any $t>0$ yields an $R$ transform via Proposition \ref{prop:ForAnyV}. Hence any such $R$ transform corresponds to a variable for which a possibly non-integer $t^{th}$ free convolution root exists.  In particular, such random variables are always freely infinitely divisible.
\end{remark}

\subsection{Free Infinite Divisibility} \label{sec:inf.div}
We now move beyond the compound free Poisson setting, which (as shown in the last section) corresponds to the case that the freely infinitely divisible random variable $X$ has a negative second moment.  We begin with Proposition \ref{prop:ForAnyV} and \ref{prop:ForAnyV.sb} which show that the putative L\'evy--Khintchine $R$-transform formulas \eqref{eq:R.thm.conclusion.free.p+} and \eqref{eq:R.thm.conclusion.free.p+.sb} really do define $R$-transforms of freely infinitely divisible random variables in the $L^2$ or positive $L^1$ settings.  This first step is accomplished by removing the restriction  that $E[V_0^{-2}]<\infty$ in \eqref{eq:Rmusigma} in Theorem \ref{thm:Compound.Poisson} by adapting the argument showing Lemma 1.1.3 in \cite{uwe} that proves an analog of this result in the classical case by constructing a sequence of variables approximating $V_0$ whose inverse second moment exists.

Following that step, we prove Theorems \ref{thm:non.compact.case} and \ref{thm:non.compact.case.sb}, the converse direction, completing the proof of our main result: any freely infinitely divisible ($L^2$ or positive $L^1$) random variable $X$ gives rise to another random variable ($V_0$ or $V_+$) satisfying the conditions of Theorem \ref{thm:G.R.corresp} or \ref{thm:G.R.corresp.sb}, arising as the limit of the square bias transforms of the free convolution roots of $X$. 

\begin{proposition}
\label{prop:ForAnyV}
For any random variable $V_0$, $m \in \mathbb{R}$ and $\sigma^2 \in [0,\infty)$, the function
\begin{align} \label{eq:R.thm.conclusion.free.p+}
R(z)= m + \sigma^2 E\left[
\frac{z}{1-zV_0}
\right]
\end{align}
is an $R$-transform that corresponds to a freely infinitely divisible $X$ with mean $m$ and variance $\sigma^2$.
\end{proposition}

\begin{proof} As in the proof of Theorem \ref{thm:Compound.Poisson}, using the translation property of $R$-transforms when adding constants, it suffices to prove the result for $m=0$. 

If $\sigma^2=0$ then $R(z)=0$, which is the $R$ transform of $\delta_0$, and the result holds trivially.  Now take $\sigma^2>0$. When $\mathcal{L}(V_0)=\delta_0$, in view of Example \ref{ex:semi.sats}, $R(z)$ is the $R$ transform of the semi-circle distribution, which is shown there to be freely infinitely divisible. 

Hence, we restrict to the case where $\sigma^2>0$ and $\mathcal{L}(V_0)\not =\delta_0$. For $k \ge 1$ let $D_k=\mathbb{R} \setminus [-c/k,c/k]$ where we take $c>0$ small enough so that $P(V_0 \in D_1)>0$.  Set $V_{0,k}:= \mathcal{L}(V_{0}|V_0\in D_k)$; in other words, the law of $V_{0,k}$ is determined by
\[ E[f(V_{0,k})] = \frac{E[f(V_0){\bf{1}}_{D_k}]}{P(V_0\in D_k)}. \]
Similarly, define $\mathcal{L}(V_{0,\neg 0})=\mathcal{L}(V_0|V_0\ne 0)$; the restrictions above and choice of $c$ makes all of these well defined laws, and $V_{0,k}\rightharpoonup V_{0,\neg 0}$.

Note that $|V_{0,k}|\ge c/k$ with probability $1$, and hence $E[V_{0,k}^{-2}]<\infty$. Let $\mathcal{L}(U_k)=\mathcal{L}(V_{0,k}^{-\Box})$  as in Definition \ref{def:X.Box.inverse}, so that $\mathcal{L}(U_k^\Box) = \mathcal{L}(V_{0,k})$  by Lemma \ref{lem:X.Xneg0.same}(\ref{Box.inverse.properties}.

By Lemma \ref{lem:FP.L2}, for any $\sigma^2\ge 0$ there is a compound free Poisson random variable $X_k$ whose $R$-transform satisfies
\begin{equation} \label{eq.RXk.intermediate}
R_{X_k}(z)= \sigma^2 E\left[\frac{z}{1-z U_k^\Box}\right] =  \sigma^2 E\left[\frac{z}{1-z V_{0,k}}\right].
\end{equation}
Hence, for any $k\in\mathbb{N}$ and $\sigma^2\ge 0$,
$$
\varphi^{k,\sigma^2}(z):=\varphi_{X_k}(z)=R_{X_k}(1/z) = \sigma^2 E\left[\frac{1}{z-V_{0,k}} \right] = \sigma^2 G_{V_{0,k}}(z)
$$
is a Voiculescu transform, whose domain manifestly has an analytic continuation to the whole complex plane, as it coincides on a truncated cone (an open set) with a scaled Cauchy transform.  As $V_{0,k} \rightharpoonup V_{0,\neg 0}$ as $k \rightarrow \infty$, by Proposition \ref{prop.Cauchy.robust}, $G_{V_{0,k}}$ converges to $G_{V_{0,\neg 0}}$ uniformly on compact subsets of $\mathbb{C}_+$, and hence $\varphi^{k,\sigma^2}$ converges uniformly on compact subsets of $\mathbb{C}_+$ to the analytic function $\varphi = \sigma^2 G_{V_{0,\neg 0}}$.  What's more, $|\varphi^{k,\sigma^2}(z)| = \sigma^2|G_{V_{0,k}}(z)| \le \sigma^2/|\mathrm{Im}(z)|$ (the last inequality comes from the fact that $|G(z)|\le 1/|\mathrm{Im}(z)|$ for any Cauchy transform $G$, as is easily verified using the triangle inequality in the definition \eqref{eq:def.G.int}), and hence $\varphi^{k,\sigma^2}(z) = o(z)$ uniformly in $k$.  Therefore, it follows from Proposition \ref{prop.uniform.Stolz} that the limit $\varphi$ is the Voiculescu transform of some probability measure, and so $R(z) = \varphi(1/z)$ is the $R$-transform of that probability measure.

Letting $\gamma = P(V_0 \not = 0)$ we have 
$$
\mathcal{L}(V_0)=\gamma \mathcal{L}(V_{0,\neg 0}) + (1-\gamma) \delta_0,
$$
and we may thus express
\begin{align} \label{eq.R.sum.limit.fp.ss}
R(z):=\sigma^2 E\left[ \frac{z}{1-zV_0}\right]  =  \gamma \sigma^2 E\left[ \frac{z}{1-zV_{0,\neg 0}}\right]+(1-\gamma) \sigma^2 z 
\end{align}
as the sum of two $R$ transforms, and thus conclude that $R(z)$ is an $R$ transform for all $V_0$ and $\sigma^2 \ge 0$.  The mean and variance can be read off directly from \eqref{eq:R.thm.conclusion.free.p+}.

The fact that $R=R_X$ for a freely infinitely divisible random variable $X$ now follows as in Remark \ref{remark.scaling.inf.div}.
\end{proof}

\begin{remark} Note that $R(z) = (1-\gamma)\sigma^2 z$ is the $R$-transform of a centered semicircular random variable. From \eqref{eq.R.sum.limit.fp.ss} we see directly that the infinitely divisible distribution constructed from the arbitrary $V_0$ is the free convolution of a semicircle with a limit of compound free Poisson distributions.  With the converse, Theorem \ref{thm:non.compact.case}, to Proposition {prop:ForAnyV} in hand, we see that {\em every} $L^2$ freely infinitely divisible distribution has such a decomposition.
\end{remark}

We similarly have the parallel result for the case of non-negative random variables with finite first moments, handled using size biasing. 
\begin{proposition} \label{prop:ForAnyV.sb}
For any non-negative random variable $V_+$ and $m \ge 0$ the function
\begin{align} \label{eq:R.thm.conclusion.free.p+.sb}
R(z)= m E\left[
\frac{1}{1-zV_+}
\right]
\end{align}
is an $R$-transform that corresponds to a positively freely infinitely divisible $X$ with mean $m$.
\end{proposition}

\begin{proof}
We may assume that $V_+\not =\delta_0$ and $m \ge 0$, since in these cases we have the $R$-transform of a non-negative constant random variable which is evidently positively freely infinitely divisible. Take $D_k=[c/k,\infty)$ for $c>0$ small enough that $P(V_+ \in D_1)>0$. Let $\mathcal{L}(V_{+,k})=\mathcal{L}(V_+|V_+ \in D_k)$, $\mathcal{L}(V_{+,\neg 0})=\mathcal{L}(V_+|V_+\not =0)$ so that $V_{+,k} \rightharpoonup V_{+,\neg 0}$. Noting that $V_{+,k}$ satisfies the conditions of Definition \ref{def:X.s.inverse}, let $\mathcal{L}(U_k)=\mathcal{L}(V_{+,k}^{-s})$.

Applying Lemma \ref{lem:FP.L1} we obtain, for any $m>0$, a compound free Poisson random variable $Y_k$ whose $R$-transform satisfies
$$
R_{Y_k}(z)=mE\left[ \frac{1}{1-zV_{+,k}} \right]
$$
Since $V_{+,k}$ is supported in $[0,\infty)$ (in fact in $[c/k,\infty)$), it follows from Remark \ref{rk:FP.positive} that $X_k$ is positively freely infinitely divisible.  Now following exactly the same argument as the proof of Theorem \ref{prop:ForAnyV} following \eqref{eq.RXk.intermediate}, we conclude, since $V_{+,k}\rightharpoonup V_{+,\neg 0}$, that $R(z) = m E[\frac{1}{1-zV_{+,\neg 0}}]$ is the $R$-transform of a random variable $Y$ which, as the weak limit of positively freely infinitely divisible random variables $Y_k$ is itself positively freely infinitely divisible (as the reader may quickly verify).

Letting $\gamma = P(V_+\not = 0)$ we have 
$$
\mathcal{L}(V_+)=\gamma \mathcal{L}(V_{+,\neg 0}) + (1-\gamma) \delta_0,
$$
and therefore we can express
\begin{align*}
R(z):=m E\left[ \frac{1}{1-zV_+}\right]= \gamma m E\left[ \frac{1}{1-zV_{+,\neg 0}}\right]+(1-\gamma) m
\end{align*}
as the sum of $R$ transforms of two positively freely infinitely divisible random variables, which therefore yields the desires conclusion that $R = R_X$ for a positively freely infinitely divisible random variable $X$, whose mean is $R_X(0) = m$.
\end{proof}

\begin{theorem} \label{thm:non.compact.case}
Let $X$ be a square integrable freely infinitely divisible random variable with mean $0$ and variance $\sigma^2 \in (0,\infty)$. For $n \ge 1$, let $\mathcal{L}(X_{n,n})$ be the distribution of the $n$ freely independent, identically distributed summands satisfying $\mathcal{L}(X_{n,n})^{\boxplus n} = \mathcal{L}(X)$.  Then the weak limit $V_0$ of $X_{n,n}^\Box$ exists and uniquely satisfies \eqref{eq:R.thm.conclusion.2}.
\end{theorem}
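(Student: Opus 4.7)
The plan is to work directly with Cauchy transforms. Because $X$ is freely infinitely divisible with finite variance, the Voiculescu transform $\varphi_X$ has an analytic continuation to all of $\mathbb{C}_+$, and each free convolution $n$-th root $X_{n,n}$ satisfies $\varphi_{X_{n,n}}=\varphi_X/n$, $E[X_{n,n}]=0$, and $\mathrm{Var}(X_{n,n})=\sigma^2/n$. The goal is to show $G_{X_{n,n}^\Box}\to\varphi_X/\sigma^2$ uniformly on compact subsets of $\mathbb{C}_+$, then to verify that the limit is the Cauchy transform of a probability measure $\mu_Y$. Equation \eqref{eq:R.thm.conclusion.2} in the $m=0$ case reads $G_Y=\varphi_X/\sigma^2$, and uniqueness of such $Y$ is then immediate from Stieltjes inversion.

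The key identity is the formula from Lemma \ref{lem:X.Xneg0.same}(\ref{lem.Box.G}), which in the centered case reduces to
\[ G_{X_{n,n}^\Box}(w)=\frac{nw}{\sigma^2}\bigl(wG_{X_{n,n}}(w)-1\bigr). \]
Since $E[X_{n,n}]=0$ and $\mathrm{Var}(X_{n,n})\to 0$, Chebyshev gives $X_{n,n}\rightharpoonup\delta_0$, so Proposition \ref{prop.uniform.Stolz} provides a common truncated cone $\Gamma_{\alpha,\beta}$ on which every $\varphi_{X_{n,n}}$ is defined and $F_{X_{n,n}}^{-1}(z)=z+\varphi_X(z)/n$ via \eqref{def:Voiculescu.trans}. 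For $z\in\Gamma_{\alpha,\beta}$ the plan is to set $\tilde z_n:=z+\varphi_X(z)/n$, so that $G_{X_{n,n}}(\tilde z_n)=1/z$, and then compute
\[ G_{X_{n,n}^\Box}(\tilde z_n)=\frac{n\tilde z_n}{\sigma^2}\cdot\frac{\tilde z_n-z}{z}=\frac{\tilde z_n\,\varphi_X(z)}{\sigma^2 z}\;\xrightarrow[n\to\infty]{}\;\frac{\varphi_X(z)}{\sigma^2}. \]
Since each $G_{X_{n,n}^\Box}$ is a Cauchy transform, $|G_{X_{n,n}^\Box}(w)|\le 1/\mathrm{Im}\,w$, making the family normal on $\mathbb{C}_+$; any uniform-on-compacts subsequential limit $H$ must coincide with $\varphi_X/\sigma^2$ on $\Gamma_{\alpha,\beta}$ (because $\tilde z_n\to z$), and hence on all of $\mathbb{C}_+$ by the identity theorem. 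Therefore the full sequence $G_{X_{n,n}^\Box}$ converges to $\varphi_X/\sigma^2$ uniformly on compact subsets of $\mathbb{C}_+$.

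By Proposition \ref{prop.Cauchy.robust}(\ref{robust.2}), the limit $\varphi_X/\sigma^2$ is the Cauchy transform of some finite measure $\mu_Y$ with $\mu_Y(\mathbb{R})\le 1$, and $X_{n,n}^\Box\to\mu_Y$ vaguely. It remains to show $\mu_Y(\mathbb{R})=1$, which will upgrade vague to weak convergence. I plan to use the $L^2$ moment expansion $G_X(z)=1/z+\sigma^2/z^3+o(1/z^3)$ along Stolz angles. Inverting yields $F_X(z)=z-\sigma^2/z+o(1/z)$, whence $F_X^{-1}(z)=z+\sigma^2/z+o(1/z)$ and so $\varphi_X(z)=\sigma^2/z+o(1/z)$. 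Consequently $iy\,\varphi_X(iy)/\sigma^2\to 1$ as $y\uparrow\infty$, confirming $\mu_Y(\mathbb{R})=1$ and thus $\mu_Y=\mathcal{L}(Y)$ for a probability measure; uniqueness of $Y$ satisfying $G_Y=\varphi_X/\sigma^2$ is by Stieltjes inversion.

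The main obstacle will be translating the convergence at the moving points $\tilde z_n\to z$ into uniform-on-compacts convergence at fixed $z\in\mathbb{C}_+$; this is handled by the normal-family/identity-theorem argument, conditional on having a common truncated cone where $F_{X_{n,n}}^{-1}=\mathrm{id}+\varphi_{X_{n,n}}$ holds for every $n$, which Proposition \ref{prop.uniform.Stolz} supplies from $X_{n,n}\rightharpoonup\delta_0$. Tracing the free L\'evy measure through the $L^2$ asymptotics of $\varphi_X$ is then a routine expansion.
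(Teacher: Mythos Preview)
Your proposal is correct, and it takes a somewhat different route from the paper's own proof, though the two converge at the same key computation.

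The paper arrives at the limit $G_{X_{n,n}^\Box}\to\varphi_X/\sigma^2$ via the free ``replace one'' property (Theorem \ref{eq:Todd's.formula}): starting from $G_{X^\circ}(z)=G_{X_{n,n}^\circ}(\omega_n(z))$, squaring and unwinding $\circ$ and $\bflat$ to reach $G_{X_{n,n}^\Box}(\omega_n(z))=\frac{\omega_n(z)}{\sigma^2}(zG_X(z)-1)$, and then changing variables through the subordinator $\omega_n=F_{X_{n,n}}^{-1}\circ F_X$. You bypass the replace-one machinery entirely, working directly from the square-bias formula of Lemma \ref{lem:X.Xneg0.same}(\ref{lem.Box.G}) and the elementary fact $\varphi_{X_{n,n}}=\varphi_X/n$; your moving evaluation points $\tilde z_n=F_{X_{n,n}}^{-1}(z)$ are exactly the paper's $\omega_n$ values approached from the other side of the change of variables. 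Your argument is more self-contained for this particular theorem; the paper's route has the virtue of exercising the free zero bias apparatus that is the paper's main object.

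Both proofs then use Montel's theorem and the Bercovici--Voiculescu analytic continuation of $\varphi_X$ to $\mathbb{C}_+$ in the same way to upgrade convergence on a Stolz angle to convergence on all of $\mathbb{C}_+$. For the final step $\mu_Y(\mathbb{R})=1$, the paper invokes Maassen's representation $F_X(z)=z-\sigma^2 G_\rho(z)$ and the bound $|\varphi_X(w)|\le 2\sigma^2/|\mathrm{Im}\,w|$; your asymptotic inversion ``$F_X^{-1}(z)=z+\sigma^2/z+o(1/z)$'' gives the same conclusion, but deserves one line of justification: with $z_w=F_X^{-1}(w)$ and $z_w/w\to 1$ (from $\varphi_X(w)=o(w)$), the expansion $w=F_X(z_w)=z_w-\sigma^2/z_w+o(1/|z_w|)$ gives $\varphi_X(w)=z_w-w=\sigma^2/z_w+o(1/|z_w|)=\sigma^2/w+o(1/|w|)$, which is what you need.
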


\begin{proof} Recall that $X_n^{(n)} = X-X_{n,n}$.  The ``replace one'' property, Theorem \ref{eq:Todd's.formula}, in this case says (cf. \eqref{eq:pre.limit.fid.identity}) 
\begin{align*}
G_{X^\circ}(z)=G_{X_{n,n}^\circ}(\omega_{X_{n,n},X_n^{(n)}}(z)).
\end{align*}

\noindent For readability, set $\omega_n(z) = \omega_{X_{n,n},X_n^{(n)}}(z)$. Using \eqref{G.sharp} defining $\bflat$ and \eqref{eq:GX.circ.EX=0} defining $\circ$, and squaring both sides, this yields
\[ \frac{1}{\sigma^2}(zG_X(z)-1) = \frac{1}{\omega_n(z)} G_{X_{n,n}^\Box}(\omega_n(z)) \]
or equivalently
\begin{equation}
\label{eq:conv.Xnnbox}
G_{X_{n,n}^\Box}(\omega_n(z)) = \frac{\omega_n(z)}{\sigma^2}(zG_X(z)-1).
\end{equation}
From Proposition \ref{prop:subordinator.def}, $\omega_n$ is uniquely determined 
by the equation
\begin{equation*} 
F_{X_{n,n}}(\omega_n(z)) = F_{X_{n,n}+X_n^{(n)}}(z) = F_X(z). \end{equation*}
From Proposition \ref{prop.Stolzify} applied in sequence to $F_{X_{n,n}}$ and $F_X$, for $z$ in an appropriate truncated cone $\Gamma$ this is equivalent to $\omega_n(z) = F_{X_{n,n}}^{-1}\circ F_X(z)$, and the range of this function contains another truncated cone $\Gamma'$.  (These truncated cones a priori vary with $n$; since $X_{n,n}\rightharpoonup 0$ by Lemma \ref{lem:sublemma}, it follows from Proposition \ref{prop.uniform.Stolz} that uniform truncated cones exist here.)

Let $w=F_{X_{n,n}}^{-1}\circ F_X(z)$; then for $z\in\Gamma'$ we equivalently have $z = F_X^{-1}\circ F_{X_{n,n}}(w)$.  Thus, for $w\in\Gamma'$, \eqref{eq:conv.Xnnbox} says
\begin{align*} G_{X_{n,n}^\Box}(w) &= \frac{w}{\sigma^2}\left(F_X^{-1}\circ F_{X_{n,n}}(w)\cdot G_X(F_X^{-1}\circ F_{X_{n,n}}(w))-1\right) \\
&= \frac{w}{\sigma^2}\left(\frac{F_X^{-1}\circ F_{X_{n,n}}(w)}{F_{X_{n,n}}(w)}-1\right)
\end{align*}
where the second equality comes from the fact that $G_X\circ F_X^{-1}(\zeta)=1/\zeta$.  As $X_{n,n}\rightharpoonup 0$, $F_{X_{n,n}}(w)$ converges to $F_{\delta_0}(w) = w$ uniformly on compact subsets of $\mathbb{C}_+$ by Proposition \ref{prop.Cauchy.robust}(\ref{robust.1}).  Hence
\begin{equation} \label{eq:conv. GXnn} G_{X^\Box_{n,n}}(w)\to \frac{w}{\sigma^2}\left(\frac{F_X^{-1}(w)}{w}-1\right) = \frac{1}{\sigma^2}\left(F_X^{-1}(w)-w\right) = \frac{1}{\sigma^2}\varphi_X(w) \end{equation}
uniformly on compact subsets of $\Gamma'$.

Taking stock: we have now used the ``replace one'' property, and the definitions of the square bias and free zero bias transforms, to show that the Cauchy transform of $X_{n,n}^\Box$ converges to the analytic function $\varphi_X/\sigma^2$ on some domain $\Gamma'\subseteq\mathbb{C}_+$.  We would like to use Proposition \ref{prop.Cauchy.robust}(\ref{robust.2}) to complete the proof of the theorem that $X_{n,n}^\Box$ converges in distribution to a probability measure.  This requires two more steps: first we must show that the convergence in \eqref{eq:conv. GXnn} occurs on all of $\mathbb{C}_+$, and then we must show that no mass is lost at infinity, that is, the resulting limit is a Cauchy transform with total mass $1$.

For each $n$, $G_{X^\Box_{n,n}}$ is analytic on all of $\mathbb{C}_+$; similarly, since $X$ is freely infinitely divisible, $\varphi_X$ is also analytic on all of $\mathbb{C}_+$ (cf.\ \cite[Theorem 5.10]{bercovici1993free}). Since $\Gamma'$ is an open subset of $\mathbb{C}_+$, it now follows from Montel's theorem that $G_{X^\Box_{n,n}}$ converges to $\varphi_X/\sigma^2$ on all of $\mathbb{C}_+$ (in fact uniformly on compact subsets).  We therefore conclude by Proposition \ref{prop.Cauchy.robust}(\ref{robust.2}) that $\varphi_X$ is the Cauchy transform of a finite measure $\nu$ with $\nu(\mathbb{R})\le 1$, and that the law of $X_{n,n}^\Box$ converges vaguely to $\nu$.  It remains only to show that $\nu(\mathbb{R})=1$ (since a vaguely convergent sequence of probability measures whose limit is a probability measure converges in distribution).  In other words, we merely need to show that $wG_\nu(w) = \frac{w}{\sigma^2}\varphi_X(w)\to 1$ as $w\to\infty$ along some ray in the upper half plane.

To motivate this last step, consider what happens in the special case that $X$ is compactly supported.  In this case, Speicher's theory of free cumulants \cite{nica2006lectures} gives a Taylor series expansion $R_X(z) = \sum_{n\ge0} \kappa_{n+1}[X]z^n$ where $\kappa_1[X] = E[X]=0$ and $\kappa_2[X]=\mathrm{Var}[X]=\sigma^2$.  Hence $\varphi_X(w) = \sigma^2/w + O(1/w^2)$, and thence it follows immediately that $\frac{w}{\sigma^2}\varphi_X(w)\to 0$ as $w\to\infty$.  To extend this argument rigorously to the more generally setting where $X$ need not be compactly supported, but only assumed to have two finite moments, we make use of some results due to Maassen \cite{Maassen}.

$F_X$ is the reciprocal Cauchy transform of a probability measure on $\mathbb{R}$ with finite variance $\sigma^2$ and mean $0$. Therefore, by \cite[Proposition 2.2(b)]{Maassen} it follows that $F_X(z) = z-\sigma^2 G_{\rho}(z)$ for some probability measure $\rho$ on $\mathbb{R}$.  Thus, for $w\in\Gamma$,
\[ w = F_X(F_X^{-1}(w)) = F_X^{-1}(w)-\sigma^2 G_\rho(F_X^{-1}(w)) \]
and hence
\begin{equation} \label{e.w.phi.mean} \varphi_X(w) = F_X^{-1}(w)-w = \sigma^2G_\rho(F_X^{-1}(w)) = \sigma^2 G_\rho(w+\varphi_X(w)). \end{equation}
Multiplying both sides by $(w+\varphi_X(w))/\sigma^2$ yields
\begin{equation} \label{e.w.phi} \frac{w}{\sigma^2}\varphi_X(w) + \frac{\varphi_X^2(w)}{\sigma^2} = (w+\varphi_X(w))G_\rho(w+\varphi_X(w)). \end{equation}
Now, again using only the fact that $X$ has mean $0$ and variance $\sigma^2$, \cite[Lemma 2.4]{Maassen} yields that $|\varphi_X(w)|\le 2\sigma^2/|\mathrm{Im}\, w|$.  Thus, along any path in $\Gamma$ with imaginary part tending to $\infty$, the left-hand-side of \eqref{e.w.phi} has the same limit as $\frac{w}{\sigma^2}\varphi_X(w)$, while the right-hand-side tends to $1$ (since $G_\rho$ is the Cauchy transform of a probability measure).  This concludes the proof that $\nu$ is a probability measure.

Let $V_0$ be a random variable whose law is $\nu$.  Hence, we have shown that $G_{X_{n,n}^\Box} \to G_{V_0}$ on $\mathbb{C}_+$.  As established in \eqref{eq:conv. GXnn}, we also have $G_{X_{n,n}^\Box}\to \varphi_X/\sigma^2$ on $\mathbb{C}_+$, and hence, we have shown that $G_{V_0} = \varphi_X/\sigma^2$ on $\mathbb{C}_+$, establishing \eqref{eq:R.thm.conclusion.2} (in the mean $0$ case that is the context of this theorem).  This concludes the proof.
\end{proof}

The calculations following \eqref{e.w.phi.mean} above give a useful way to compute the mean of any $L^2$ random variable from its Voiculescu transform.

\begin{corollary} \label{cor.mean.phi} If $X\in L^2$, then $\lim_{y\to\infty} \varphi_X(iy) = E[X]$.
\end{corollary}

\begin{proof} Since $\varphi_{X-E[X]}(z) = \varphi_X(z)-E[X]$, it suffices to prove the lemma in the case $E[X]=0$.  As shown just below \eqref{e.w.phi}, $|\varphi_X(iy)|\le 2\mathrm{Var}[X]/y$ for $y>0$, and hence for any probability measure $\rho$, $G_\rho(iy+\varphi_X(iy))\to 0$ as $y\to\infty$.  The result now follows from \eqref{e.w.phi.mean}.   
\end{proof}

We may now conclude the proof of our main Theorem \ref{main theorem 2}.

\medskip

\begin{myproof}{\it Theorem}{\ref{main theorem 2}}
Theorem \ref{thm:non.compact.case} shows that \ref{thm2.a} implies \ref{thm2.b} (by applying to the centered random variable $X-m$ and noting that $\varphi_{X-m}(z) = \varphi_X(z)-m$), Theorem \ref{thm:G.R.corresp} shows that \ref{thm2.b} implies \ref{thm2.c}, and the equivalence of \eqref{eq.lem:G.circ.hypothesis} and \eqref{eq:R.thm.conclusion} provided by Theorem \ref{thm:G.R.corresp} and 
Proposition \ref{prop:ForAnyV} show that \ref{thm2.c} implies \ref{thm2.a}.
\end{myproof}

\ignore{

We now return to the size bias. The assumption in Theorem \ref{thm:non.compact.case.sb} that $X$ be positively freely infinitely divisible, as specified in Definition \ref{def:pfid}, is necessary. Indeed, the theorem statement requires that $X_{n,n}^s$ exists. The following remark gives a canonical example of a non-negative, freely infinitely divisible random variable that is not positively freely infinitely divisible, and therefore whose summands cannot be size biased.
\begin{remark}\label{ex:semi.circle.leakage}
Let $S$ have the semi-circle distribution with mean 0 and variance 1; the support of $S$ is $[-2,2]$. In particular, the variable $\sigma S+m$ with $\sigma>0$ has mean $m$, variance $\sigma^2$ and support $[-2\sigma+m,2\sigma+m]$.
Let $X=\sigma(S+a)$ for $a \ge 2$, corresponding to the choice where $m=a \sigma$, and so $X$ has support $[(a-2)\sigma,(a+2)\sigma)]$, and is non-negative. Clearly $X$ is freely infinitely divisible and satisfies \eqref{eq:X.is.sum.X_{i,n}} with $W_n \equaldist \sigma S/\sqrt{n}+ a \sigma/n$. Consequently, $W_n$ has support $[\sigma(-2/\sqrt{n}+a/n), \sigma(2/\sqrt{n}+a/n)]$, with a negative left endpoint for all $n > a^2/4$.
\end{remark}
}

We now present the `size bias' version of Theorem \ref{thm:non.compact.case}.
\begin{theorem} \label{thm:non.compact.case.sb}
Let $X$ be a non-negative, positively freely infinitely divisible random variable with finite mean $m>0$. For $n \ge 1$, let $\mathcal{L}(X_{n,n})$ be the distribution of the $n$ freely independent, identically distributed summands satisfying $\mathcal{L}(X_{n,n})^{\boxplus n} = \mathcal{L}(X)$.  Then the weak limit $V_+$ of $X_{n,n}^s$ exists and uniquely satisfies \eqref{eq:R.thm.conclusion.2.sb}.
\end{theorem}

\begin{proof}
The proof follows that of Theorem \ref{thm:non.compact.case} with minor changes. To begin, let $\omega_n = \omega_{X_{n,n},X_n^{(n)}}$ be the subordinator used before, which is characterized by $F_{X_{n,n}}(\omega_n(z)) = F_X(z)$.  The ``replace one'' property for size bias, Theorem \eqref{eq:size.bias.change.1.formula}, in the present case yields, by \eqref{eq:free.replace.first.one.sb},
\[ G_{X^s}(z) = G_{X_{n,n}^s}(\omega_n(z)). \]
Lemma \ref{lem:X.Xneg0.same.sb}(\ref{lem.Box.G.sb} for the Cauchy transform of a size bias thus yields
\begin{equation} \label{eq.subordinator.calc.5.16} G_{X_{n,n}^s}(\omega_n(z)) = \frac{zG_X(z)-1}{m}. \end{equation}
But $G_X(z) = 1/F_X(z) = 1/F_{X_{n,n}}(\omega_n(z))$.  Using the same (uniform over $n$) truncated cones $\Gamma$ and $\Gamma'$ from the proof of Theorem \ref{thm:non.compact.case}, we then express $\omega_n(z) = F_{X_{n,n}}^{-1}\circ F_X(z)$ for $z\in\Gamma$, and then express \eqref{eq.subordinator.calc.5.16} as
\[ G_{X_{n,n}^s}(w) = \frac{1}{m}\left(\frac{F_X^{-1}\circ F_{X_{n,n}}(w)}{F_{X_{n,n}}(w)}-1\right) \]
for $w=\omega_n(z)\in\Gamma'$.  As before, since $X_{n,n}\rightharpoonup 0$, $F_{X_{n,n}}(w)\to w$ uniformly on compact sets, and it follows just as before that
\begin{equation} \label{eq:conv. GXnn.sb} G_{X_{n,n}^s}(w)\to \frac{1}{m}\left(\frac{F_X^{-1}(w)}{w}-1\right) = \frac{1}{mw}(F_X^{-1}(w)-w) = \frac{1}{mw}\varphi_X(w) \end{equation}
uniformly on compact subsets of $\Gamma'$ as $n\to\infty$.  The remainder of the proof after \eqref{eq:conv. GXnn.sb} follows mutatis mutandis from the proof of Theorem \ref{thm:non.compact.case} following \eqref{eq:conv. GXnn}.
\end{proof}

\begin{myproof}{\it Theorem}{\ref{main theorem 3}}
Theorem \ref{thm:non.compact.case.sb} shows that \ref{thm2.a} implies \ref{thm2.b}, Theorem \ref{thm:G.R.corresp.sb} shows that \ref{thm2.b.sb} implies \ref{thm2.c.sb}, and the equivalence of 
\eqref{eq.lem:G.circ.hypothesis.sb} and \eqref{eq:R.thm.conclusion.sb} given by 
Theorem \ref{thm:G.R.corresp.sb}, and Proposition \ref{prop:ForAnyV.sb}, show that \ref{thm2.c.sb} implies \ref{thm2.a.sb}.
\end{myproof}
\vspace{0.3cm}

\begin{myproof}{\it Theorem}{\ref{prop:L2.pos.fee.inf.div}}
For the forward direction, suppose \eqref{L2.pos.(1)} holds, and let $X'=X+\alpha$ be a translation of $X$ which is positively freely infinitely divisible, and further large enough that $E[X']>0$.  We restrict to the case $\mathrm{Var}[X']>0$ (otherwise $X$ and $X'$ are constant and the result is straightforward).  For such $X'$, both Theorems \ref{main theorem 2} and \ref{main theorem 3} apply.  Note that $V_0=V_0(X)=V_0(X')$ by Remark \ref{remark.translations.1}.  Let $V_+ = V_+(X')$. Since  both \eqref{eq:LK0} and \eqref{eq:LK+} hold for $X'$, we have
\begin{equation*}
 E[X']+\mathrm{Var}[X']G_{V_0}(z)=E[X']zG_{V_+}(z) \end{equation*}
and thence
\begin{equation}\label{eq:equate0+trans}
z\left(zG_{V_+}(z)-1\right) = \frac{\mathrm{Var}[X']}{E[X']}zG_{V_0}(z).
\end{equation}
Applying Lemma \ref{lem.Cauchy.moment} to the limit as $z \rightarrow \infty$ in \eqref{eq:equate0+trans}, we find that $V_+ \in L^1$ with mean $E[V_+]=\mathrm{Var}[X']/E[X']$. We may now rewrite \eqref{eq:equate0+trans} to obtain
$$
G_{V_0}(z)=\frac{zG_{V_+}(z)-1}{E[V_+]},
$$
and via Lemma \ref{lem:X.Xneg0.same.sb}(\ref{lem.Box.G.sb}, recognize that $V_0=V_+^s$. In particular, $V_0 \ge 0$ and by Lemma
\ref{lem:X.Xneg0.same.sb}(\ref{size.bias.inverse.pair} it possesses a finite inverse first moment as claimed, thus verifying condition \eqref{L2.pos.(2)} and the remaining conclusions of the theorem.

For the reverse direction, suppose \eqref{L2.pos.(2)} holds.  Since $V_0$ is non-negative and has an inverse first moment, the distribution $V:=V_0^{-s}$ exists and satisfies $V^s \equaldist (V_0^{-s})^s \equaldist V_0$ by Lemma \ref{lem:X.Xneg0.same.sb}(\ref{Box.inverse.properties.sb}. Noting that $V=V_0^{-s}$ cannot be trivial and hence $E[V]$ must be positive, by Lemma \ref{lem:X.Xneg0.same.sb}(\ref{lem.Box.G.sb}
the size bias relation between $V$ and $V_0$ can be expressed in terms of Cauchy transforms as
\begin{align*} 
\frac{zG_{V}(z)-1}{E[V]}=G_{V_0}(z).
\end{align*}
Applying  Theorem \ref{main theorem 2}(\ref{thm2.b} to $X$ now yields
\begin{align*}
\varphi_X(z) = E[X] + \mathrm{Var}[X] G_{V_0}(z)
&= E[X] + \mathrm{Var}[X] \left( \frac{zG_{V}(z)-1}{E[V]}\right) \\
&= E[X]-\frac{\mathrm{Var}[X]}{E[V]}+ \frac{\mathrm{Var}[X]}{E[V]}zG_{V}(z).
\end{align*}
Now, set $X' = X-E[X]+\mathrm{Var}[X]/E[V]+\alpha$ for some $\alpha>0$ large enough that $X'\ge 0$ (which must exists since $X$ is assumed to be bounded below).   Then
\[ \varphi_{X'}(z) = \alpha + \beta zG_V(z) \]
where $\beta = \mathrm{Var}[X]/E[V]>0$.

Noting that the point mass $\delta_0$ has Cauchy transform $G_{\delta_0}(z) = 1/z$, we reformulate this as
\[ \varphi_{X'}(z) = \alpha zG_{\delta_0} + \beta z G_V(z)
= (\alpha+\beta)z\left[\frac{\alpha}{\alpha+\beta}G_{\delta_0}(z) + \frac{\beta}{\alpha+\beta}G_V(z)\right] \]
and this convex combination inside the brackets is the Cauchy transform $G_{V_+}(z)$ of a random variable $V_+$ whose law is a mixture of $\delta_0$ and $\mathcal{L}(V)$:
\begin{equation} \label{eq:V+.mixture}
\mathcal{L}(V_+) = \frac{\alpha}{\alpha+\beta}\delta_0 + \frac{\beta}{\alpha+\beta}\mathcal{L}(V).
\end{equation}
Hence, we conclude that
\begin{equation}\label{eq:L2.pos.final} \varphi_{X'}(z) = (\alpha+\beta)zG_{V_+}(z) \end{equation}
which, by Theorem \ref{main theorem 3} together with the positivity of $X'$, proves that $X'$ is positively freely infinitely divisible.  Since $X'$ is a translate of $X$, this proves part (\ref{L2.pos.(1)}) holds true.

Since $V_+$ is a mixture of $V$ with a point mass at $0$, by Lemma \ref{lem:X.Xneg0.same.sb}(\ref{lem:sqbiaa-part2.sb}, $(V_+)^s \equaldist V^s \equaldist V_0$ as claimed.  Finally, taking $z=iy$ and sending $y\to\infty$ in \eqref{eq:L2.pos.final} shows, by Corollary \ref{cor.mean.phi}, that $E[X'] = \alpha+\beta$. Hence \eqref{eq:V+.mixture} shows that $E[V_+] = \frac{\beta}{\alpha+\beta}E[V] = \beta E[V]/E[X']$.  Since $\beta = \mathrm{Var}[X]/E[V]$, we have
\[ E[V_+] = \frac{\beta}{\alpha+\beta}E[V] = \frac{\mathrm{Var}[X]}{\alpha+\beta} = \frac{\mathrm{Var}[X]}{E[X']} = \frac{\mathrm{Var}[X']}{E[X']}\]
yielding the final statement of the theorem, concluding the proof.
\end{myproof}

\begin{remark}
When $X$ is supported on a compact set, we have the following probabilistic way of obtaining the correspondence of Theorem \ref{prop:L2.pos.fee.inf.div}, that is, 
$$
V_0=\lim_{n \rightarrow \infty} (X_{n,n}-m/n)^\Box=\lim_{n \rightarrow \infty} X_{n,n}^\Box = \lim_{n \rightarrow \infty}(X_{n,n}^s)^s=V_+^s,
$$
where the first inequality holds due to Theorem \ref{thm:non.compact.case}, the second by of Lemma \ref{lem:X.Xneg0.same}(\ref{lem:sqbiaa-part3}, the third by the definition of $\Box$, and the last by Theorem 
\ref{thm:non.compact.case.sb} and Lemma \ref{lem:X.Xneg0.same.sb}(\ref{lem:sqbiaa-part3.sb}; compactness is only used to ensure the convergence in expectation when invoking Lemmas \ref{lem:X.Xneg0.same} and \ref{lem:X.Xneg0.same.sb} in the second and final equality.

In addition, as $X_{n,n} \rightharpoonup 0$ and $E[X_{n,n}] \to 0$, we have 
\begin{multline*}
E[V_+]=\lim_{n\rightarrow \infty}E[X_{n,n}^s]=\lim_{n\rightarrow \infty}E[X_{n,n}^s-X_{n,n}]
\\
=\lim_{n\rightarrow \infty}\left( \frac{E[X_{n,n}^2]}{E[X_{n,n}]}-E[X_{n,n}]\right)=\lim_{n\rightarrow \infty} \frac{{\rm Var}(X_{n,n})}{E[X_{n,n}]}=\frac{\mathrm{Var}[X]/n}{E[X]/n},
\end{multline*}
where we have used the size bias relation \eqref{eq:def.mean.m.szb} for the third equality.
\end{remark}

\subsection{The Compactly-Supported Case\label{sect:compact}}

The proofs of Theorems \ref{thm:non.compact.case} and \ref{thm:non.compact.case.sb} use \cite[Theorem 5.10]{bercovici1993free}: the fact that the Voiculescu transform $\varphi_X$ of a freely-infinitely divisible random variable $X$ has an analytic continuation to the whole upper half plane; the continuation is given by the L\'evy--Khintchine formula itself, as in the statement of Theorem \ref{main theorem 2}(\ref{thm2.b}.  We can avoid requiring a priori knowledge of this important fact, if we restrict to the case that $X$ is compactly-supported.  To do so, we need the following ancillary result, which is of some independent interest, and may well be known in the literature but not in literature known to the authors.

\begin{lemma}\label{lem:uniform.compact.support}
Let $\mu$ be a compactly-supported probability measure on $\mathbb{R}$.  There is a compact interval $J$ that contains
the supports of all $\boxplus$-convolution roots of $\mu$.  That is: for any $n\in\mathbb{N}$, if $\nu$ is a probability measure satisfying $\nu^{\boxplus n} = \mu$, then $\nu$ is compactly supported with $\mathrm{supp}\,\nu\subseteq J$.
\end{lemma}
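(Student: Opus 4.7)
The plan is to show that every $\boxplus$-convolution root $\nu$ of $\mu$ has an $R$-transform that is uniformly bounded on a fixed disk around the origin, and then to convert this uniform analytic control into a uniform support bound via Rouché's theorem. Since $\mathrm{supp}(\mu) \subseteq [-L,L]$ for some $L > 0$, the Cauchy transform $G_\mu$ is analytic on $\{|z| > L\}$ with $G_\mu(z) = 1/z + O(1/z^2)$, so $F_\mu = 1/G_\mu$ is analytic and univalent on a neighborhood of $\infty$ with $F_\mu(z) = z - m_\mu + O(1/z)$. By the analytic inverse function theorem, $F_\mu^{-1}$ is analytic near $\infty$, and hence $R_\mu(w) = F_\mu^{-1}(1/w) - 1/w$ extends analytically to some disk $D(0,\rho_\mu)$ around $0$. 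For any $\nu$ with $\nu^{\boxplus n} = \mu$, the additivity of the $R$-transform gives $R_\nu = R_\mu/n$, so $R_\nu$ is also analytic on $D(0,\rho_\mu)$, and
\[
\sup_{|w| \le \rho_\mu/2} |R_\nu(w)| \;\le\; \frac{1}{n}\,\sup_{|w| \le \rho_\mu/2}|R_\mu(w)| \;\le\; C_\mu,
\]
where $C_\mu$ depends only on $\mu$.

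Next I would translate this bound into a support bound. Define $T_n(z) := z + R_\nu(1/z)$, which is analytic on $\{|z| > 2/\rho_\mu\}$, differs from the identity by at most $C_\mu$ in modulus, and coincides with $F_\nu^{-1}$ on a truncated cone near $\infty$ (via the identity $F_\nu^{-1}(z) - z = \varphi_\nu(z) = R_\nu(1/z)$). Setting $R_0 := 2/\rho_\mu + 2C_\mu$ and using the strict inequality $|R_\nu(1/z)| \le C_\mu < |z - z_0|$ on the inner circle $|z| = 2/\rho_\mu$ (since $|z - z_0| \ge R_0 - 2/\rho_\mu = 2C_\mu$) and on any sufficiently large outer circle, a standard Rouché comparison of $T_n - z_0$ with $z - z_0$ shows that for every $z_0$ with $|z_0| > R_0$ the equation $T_n(z) = z_0$ has a unique solution in $\{|z| > 2/\rho_\mu\}$. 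Thus $T_n$ is univalent there, and its inverse extends $F_\nu$ analytically to $\{|z| > R_0\}$; since this extension takes values in $\{|z| > 2/\rho_\mu\}$ and is therefore nonvanishing, $G_\nu = 1/F_\nu$ is analytic on $\mathbb{C}\setminus[-R_0,R_0]$, which forces $\mathrm{supp}(\nu) \subseteq J := [-R_0,R_0]$. As $R_0$ depends only on $\mu$, this completes the proof.

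The main technical obstacle is the uniformity in $n$ of the bound on $R_\nu$; this uniformity follows because $R_\nu = R_\mu/n$ with $n \ge 1$, and it is what allows Rouché to produce a region of univalence for $T_n$ of size independent of $n$. Care is needed on the inner boundary of the annulus to ensure the \emph{strict} inequality required by Rouché, which is the reason for choosing $R_0$ slightly larger than the naive value $2/\rho_\mu + C_\mu$; the remaining steps are routine analytic inversion, with the support identified via the standard fact that $\mathrm{supp}(\nu)$ is the complement of the domain of analyticity of $G_\nu$.
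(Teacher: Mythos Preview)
Your proof is correct and takes a genuinely different route from the paper's. The paper works directly with subordinators: using the explicit formula $\omega_n(z) = \frac{z}{n} + (1-\frac{1}{n})F_\mu(z)$ from Belinschi--Bercovici together with Maassen's representation $F_\mu(z) = z - \sigma^2 G_\rho(z)$, it shows that $|\omega_n(x) - x| \le \sigma^2/(|x|-R)$ uniformly in $n$ for real $|x| > R$, and then reads off where $F_\nu$ is real-valued from the identity $F_\nu(\omega_n(x)) = F_\mu(x)$, obtaining $J = [-R-\sigma^2-1,\,R+\sigma^2+1]$. Your argument bypasses subordinators entirely, instead exploiting $R_\nu = R_\mu/n$ to get a uniform sup-bound on $R_\nu$ on a fixed disk and then inverting $T_n(z) = z + R_\nu(1/z)$ via Rouch\'e on a fixed exterior region. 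This is arguably more elementary (it uses only Rouch\'e and standard facts about $R$-transforms of compactly supported measures, not the Belinschi--Bercovici or Maassen machinery) and yields $J$ in terms of the radius of analyticity and sup-norm of $R_\mu$ rather than $R$ and $\sigma^2$.

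One small point of phrasing: the Rouch\'e count shows that for each $|z_0|>R_0$ there is a \emph{unique} preimage in $\{|z|>2/\rho_\mu\}$, which gives a well-defined analytic inverse on $\{|z_0|>R_0\}$; this is exactly what you need, though ``$T_n$ is univalent there'' slightly overstates it. The identification of this inverse with $F_\nu$ then follows by the identity theorem, since both agree on a truncated cone, and conjugation symmetry of $R_\nu$ ensures the extension is real on $\mathbb{R}\setminus[-R_0,R_0]$, yielding the support conclusion.
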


\begin{proof} To begin, note that we may safely assume $\mu$ is centered (which implies that $\nu$ is centered); otherwise we can center it by translating by the mean, which distributes to translating $\nu$ by its mean ($1/n$ times the mean of $\mu$), then apply the result in the centered case and translate the intervals back.

So, let $\mu$ be a centered probability distribution, with support contained in a compact interval $[-R,R]$,  and let $\sigma^2$ denote its variance.  For $n\ge 1$, since $\mu = \nu\boxplus\nu^{\boxplus (n-1)}$, we have the subordination relation $F_\mu(z) = F_\nu(\omega_n(z))$ where $\omega_n$ is the subordinator from $\nu$ to $\nu^{\boxplus (n-1)}$.  In \cite[Theorem 2.5(3)]{BelBer}, the authors show that this subordinator has an elegant form:
\begin{equation} \label{e.omegan} \omega_n(z) = \frac{z}{n}+\left(1-\frac1n\right)F_\mu(z). \end{equation}
(See also \cite[Section 2.3]{JCWang} for a concise presentation.)  For $n\ge 2$, $F_\mu$ =$F_{\nu^{\boxplus n}}$ has a continuous extension to $\mathbb{C}_+\cup\mathbb{R}$ \cite[Theorem 2.5(4)]{BelBer}, so we may discuss the behavior of $F_\mu$ and $F_\nu$ on $\mathbb{R}$.  From the Stieltjes inversion formula, $F_\mu$ is real-valued on $\mathbb{R}\setminus [-R,R]$.

By \cite[Proposition 2.2(b)]{Maassen}, since $F_\mu$ is the reciprocal Cauchy transform of a centered measure with variance $\sigma^2<\infty$, there is a probability measure $\rho$ on $\mathbb{R}$ such that
\[ F_\mu(z) = z-\sigma^2G_{\rho}(z), \qquad z\in\mathbb{C}_+ \]
and the continuity of $F_\mu$ on $\mathbb{C}_+\cup\mathbb{R}$ extends this relation similarly.  Note that $F_\mu$ is real-valued on $\mathbb{R}$ prescisely where $G_{\rho}$ is, and thus $\rho$ is also supported in $[-R,R]$.  Equation \ref{e.omegan} therefore becomes
\begin{equation*}
\omega_n(z) = z - \left(1-\frac1n\right)\sigma^2 G_\rho(z), \qquad z\in\mathbb{C}_+\cup\mathbb{R}. \end{equation*}
Outside $[-R,R]$, the function $G_{\rho}$ is analytic, and for $x>R$,
\[ |G_{\rho}'(x)| = \int_{-R}^R \frac{\rho(dt)}{(x-t)^2} \le \frac{1}{(x-R)^2}. \]  
In particular $G_\rho'(x)\to 0 $ as $x\to\infty$, and so for $x>R$,
\[ |\omega_n(x)-x| = \left(1-\frac{1}{n}\right)\sigma^2 \left|\int_x^\infty G'_\rho(y)\,dy\right| \le \sigma^2 \int_x^\infty \frac{1}{(y-R)^2}\,dy = \frac{\sigma^2}{x-R}. \]
A similar analysis for $x<-R$ shows that $|\omega_n(x)-x| \le \sigma^2/|x+R|$ there.

In particular, if $x>R+\sigma^2$, $|\omega_n(x)-x|<1$.  Since $\omega_n$ is continuous, it follows that its image on $[R+\sigma^2,\infty)$ contains $[R+\sigma^2+1,\infty)$ (and derivative calculations show that it is one-to-one there as well).

Hence every point $y\in[R+\sigma^2+1,\infty)$ has the form $y=\omega_n(x)$ for some $x\in[R+\sigma^2,\infty)$ with $|y-x|<1$, and so $F_\mu(x) = F_\nu(\omega_n(x)) = F_\nu(y)$ shows that $F_\nu(y)$ is real since $F_\mu$ is real valued off $[-R,R]$.  A similar argument shows that $F_\nu$ is real-valued on $[-\infty,-R-\sigma^2-1)$.  This shows that the support of $\nu$ is contained in $[-R-\sigma^2-1,R+\sigma^2+1]$, as desired.
\end{proof}

With this lemma in hand, we can provide a much simpler proof of Theorems \ref{thm:non.compact.case} and \ref{thm:non.compact.case.sb} in the case of compact support.  We focus on Theorem \ref{thm:non.compact.case}; the alternate treatment Theorem \ref{thm:non.compact.case.sb} is entirely analogous.

\medskip

\begin{myproof}{\it Theorem}{\ref{thm:non.compact.case} {\it in the case that $X$ has compact support}}

\ignore{
\begin{theorem}\label{thm:fid.compact.case}
Let $X$ be a square integrable, compactly supported freely infinitely divisible random variable with mean $m \in \mathbb{R}$ and variance $\sigma^2 \in (0,\infty)$.  Then for $n \ge 1$ letting $\mathcal{L}(X_{n,n})$ be the distribution of the $n$ freely independent, identically distributed summands satisfying $\mathcal{L}(X_{1,n})^{\boxplus n} = \mathcal{L}(X)$, the weak limit $V_0$ of $X_{n,n}^\Box, n\ge 1$ exists and uniquely satisfies \eqref{eq.lem:G.circ.hypothesis} and \eqref{eq:R.thm.conclusion}.
\end{theorem}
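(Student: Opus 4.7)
The plan is to follow the skeleton of the proof of Theorem \ref{thm:non.compact.case}, with two simplifications arising from the compact support assumption. First, Lemma \ref{lem:uniform.compact.support} immediately furnishes a fixed compact interval $J$ containing the supports of all $X_{n,n}$, and hence of all $X_{n,n}^\Box$ (since square-biasing does not enlarge the support), which bypasses the delicate Maassen-based argument used in the noncompact case to show that no mass escapes to infinity. Second, the Belinschi--Bercovici formula
\[
\omega_n(z) \,=\, \frac{z}{n} + \left(1-\frac{1}{n}\right) F_X(z),
\]
already invoked in the proof of Lemma \ref{lem:uniform.compact.support}, gives transparent asymptotics for the subordinator, replacing the inversion $F_{X_{n,n}}^{-1}\circ F_X$ and removing any need to appeal to \cite[Theorem 5.10]{bercovici1993free} for the analytic continuation of $\varphi_X$ used in the noncompact argument.

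After reducing to the centered case $m=0$ (since centering $X$ corresponds to centering each $X_{n,n}$ and leaves the square biases unchanged up to translation), combining the ``replace one'' identity \eqref{eq:pre.limit.fid.identity} with \eqref{G.sharp} and \eqref{eq:GX.circ.EX=0}, squared to eliminate the ambiguity of $\sqrt{\cdot}$, yields
\[
G_{X_{n,n}^\Box}(\omega_n(z)) \,=\, \frac{\omega_n(z)}{\sigma^2}\bigl(zG_X(z)-1\bigr) \,=\, \frac{\omega_n(z)}{\sigma^2}\cdot\frac{z-F_X(z)}{F_X(z)}.
\]
From the explicit formula, $\omega_n(z)\to F_X(z)\in\mathbb{C}_+$ uniformly on compact subsets of $\mathbb{C}_+$. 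Since all $X_{n,n}^\Box$ are supported in the fixed compact $J$, the sequence is tight, and any weakly convergent subsequence $X_{n_k,n_k}^\Box \rightharpoonup Y$ produces a probability measure $Y$ supported in $J$. By Proposition \ref{prop.Cauchy.robust}(\ref{robust.1}), the Cauchy transforms converge uniformly on compacta of $\mathbb{C}_+$, so taking the limit and using equicontinuity to replace $\omega_{n_k}(z)$ by its limit yields
\[
G_Y(F_X(z)) \,=\, \frac{z-F_X(z)}{\sigma^2} \,=\, \frac{\varphi_X(F_X(z))}{\sigma^2},
\]
by definition \eqref{def:Voiculescu.trans} of $\varphi_X$. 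Substituting $w=F_X(z)$, which by Proposition \ref{prop.Stolzify} ranges over a truncated cone as $z$ does, this is precisely the relation \eqref{eq:R.thm.conclusion.2}.

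Because the restriction of $G_Y$ to a truncated cone determines the probability measure $Y$ (by analytic continuation and Stieltjes inversion), every subsequential weak limit of $\{X_{n,n}^\Box\}$ coincides, and the full sequence converges weakly to this common limit $Y$. The uniqueness of $Y$ and the equivalent relations \eqref{eq.lem:G.circ.hypothesis} and \eqref{eq:R.thm.conclusion} then follow from Theorem \ref{thm:G.R.corresp}. The genuine obstruction that one would face \emph{without} Lemma \ref{lem:uniform.compact.support} is precisely the possibility of loss of mass at infinity along the sequence $\{X_{n,n}^\Box\}$; in the compactly supported setting, that lemma eliminates this obstruction entirely, reducing the argument to a clean passage to the limit in a single subordinated identity.
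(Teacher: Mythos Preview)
Your argument is correct and reaches the same conclusion as the paper's, but the route is somewhat different. The paper's proof is very short: it invokes Lemma~\ref{lem:uniform.compact.support} for tightness, then applies Lemma~\ref{lem:Xn.sums.of.Vn.} as a black box to each subsequential limit (that lemma internally passes to the limit in the $\circ$-level identity \eqref{eq:pre.limit.fid.identity}, using Proposition~\ref{prop:subordinator} for the subordinator convergence $\omega_n\to F_X$), and finishes with uniqueness from Theorem~\ref{thm:G.R.corresp}. You instead unpack that lemma and work directly at the $\Box$-level identity \eqref{eq:conv.Xnnbox} lifted from the proof of Theorem~\ref{thm:non.compact.case}, and you replace the general subordinator-continuity result by the explicit Belinschi--Bercovici formula $\omega_n(z)=\frac{z}{n}+(1-\frac1n)F_X(z)$, which makes the convergence $\omega_n\to F_X$ transparent. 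This buys you a more self-contained argument that avoids Proposition~\ref{prop:subordinator} entirely and lands directly on \eqref{eq:R.thm.conclusion.2}; the paper's approach is more modular and handles the general-mean case without any reduction, since Lemma~\ref{lem:Xn.sums.of.Vn.} and Theorem~\ref{eq:Todd's.formula} already accommodate nonzero mean.

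One small caveat: your reduction to $m=0$ is not quite as clean as stated. Centering $X$ centers each $X_{n,n}$ by $m/n$, but the square bias does not commute with translation, so $(X_{n,n}-m/n)^\Box$ is not a translate of $X_{n,n}^\Box$. The simplest fix is to run your argument directly for general mean (the ``replace one'' identity in Theorem~\ref{eq:Todd's.formula} already covers this), or alternatively to note that both $\{X_{n,n}^\Box\}$ and $\{(X_{n,n}-m/n)^\Box\}$ are tight with subsequential limits uniquely pinned down by Theorem~\ref{thm:G.R.corresp}, hence they must agree.
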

}

Since $X$ has compact support, by Lemma \ref{lem:uniform.compact.support}, the support of the summand distributions $X_{n,n}, n \ge 1$ in \eqref{def:Xn}, with $X$ replacing $X_n$, are contained in a compact interval $J$. As the distribution of $X_{n,n}^\Box$ is absolutely continuous with respect to that of $X_{n,n}$ by \eqref{def:Xbox}, the supports of $X_{n,n}^\Box, n \ge 1$ are also contained in $J$.  Any sequence of probability measures supported in the fixed compact interval $J$ is tight;  hence every subsequence of $\mathcal{L}(X_{n,n}^\Box)$ has a limit $\mathcal{L}(V_0)$ (which a priori depends on the subsequence). As $X_n$ in \eqref{def:Xn} equals $X$ for all $n \ge 1$, the remaining hypotheses of Lemma \ref{lem:Xn.sums.of.Vn.} hold trivially, and invoking the lemma shows that \eqref{eq:R.thm.conclusion.2} holds for the pair $X,V_0$, and Theorem \ref{thm:G.R.corresp} shows $V_0$ is unique. Therefore, each subsequence has a convergent subsequence and all subsequential limits are the same $V_0$; it follows the full sequence must also converge to $V_0$.
\end{myproof}

\section{New Examples of freely infinitely divisible Distributions\label{sect:examples}}

In this final section, we use Theorems \ref{main theorem 2} and \ref{main theorem 3} to (computationally effectively) produce previously unknown examples of freely infinitely divisible distributions.  We begin with our new formulation --- Theorem \ref{main theorem 2}(\ref{thm2.c} --- of Bercovici--Voiculescu's free L\'evy--Khintchine formula, which categorizes the laws of all $L^2$ freely infinitely divisible random variables in terms of their free L\'evy--Khintchine measure $V_0$ and their free zero bias $X^\circ$.

\subsection{Using the Free Zero Bias}

Proposition \ref{prop.any.V.will.do} shows that for every law $\mathcal{L}(V_0)$, every $m \in \mathbb{R}$ and every $\sigma^2 \in (0,\infty)$ there exists a freely infinitely divisible $X$ with whose Voiculescu transform is given by \eqref{eq:LK0} (i.e.\ the free L\'evy--Khintchine formula).  Our main Theorem \ref{main theorem 2} gives an equivalent characterization in terms of the free zero bias, \eqref{eq:free.Schmock}.  Indeed,
from the definition \eqref{eq:def.mean.m.fzb} of the free zero bias, noting that the contribution of the mean is simply a translation of the measure, \eqref{eq:free.Schmock} yields
\begin{align*} G_{(X-m)^\circ}(z+m) = G_{(X-m)^\circ+m}(z) &= G_{X^\circ}(z) \\
&= G_{V_0^\bflat}(1/G_X(z)) = G_{V_0^\bflat}(1/G_{X-m}(z+m)). \end{align*}
Squaring both sides and using the definition \eqref{G.sharp} of the El Gordo transform then gives the equality
\begin{equation*}
\frac{1}{\sigma^2}\left((z+m)G_{X-m}(z+m)-1\right) = G_{X-m}(z+m)G_{V_0}(1/G_{X-m}(z+m)).
\end{equation*}
and now noting that $G_{X-m}(z+m) = G_X(z)$, we have for each $V_0$ and $\sigma^2$ a one-parameter family of solutions for $G_X(z)$ that are translates by $m$:
\begin{equation} \label{eq:examples.end}
(z+m)G_X(z)-1 = \sigma^2\,G_X(z)G_{V_0}(1/G_X(z)).
\end{equation}
We will use \eqref{eq:examples.end} in the examples below, and restrict to the $m=0$ case, all other solutions being translations.

\begin{remark} \label{remark.symmetry} The free L\'evy--Khintchine formula \eqref{eq:LK0} shows that the measure $\mathcal{L}(V_0)$ is symmetric iff $\mathcal{L}(X-m)$ is symmetric. Indeed, $\mathcal{L}(X)$ is symmetric if and only if, for $z\in\mathbb{C}\setminus\mathbb{R}$
\[ G_X(-z) = E\left[\frac{1}{-z-X}\right] = -E\left[\frac{1}{z+X}\right] = -E\left[\frac{1}{z-X}\right] = -G_X(z) \]
where the penultimate equality follows from the assumption $\mathcal{L}(X)=\mathcal{L}(-X)$; the reverse implication follows from the Stieltjes inversion formula, cf.\ Proposition \ref{prop.Cauchy.robust}.  Reciprocating, the same holds for $F_X$, and using \eqref{def:Voiculescu.trans}, it follows that $\mathcal{L}(X)$ is symmetric iff $\varphi_X(-z) = -\varphi_X(z)$ on its domain.  Since $\varphi_X(z)-m = \varphi_{X-m}(z)$, the equivalence of symmetry for $X-m$ and $V_0$ follows from \eqref{eq:LK0}.  This equivalence will be useful in several examples below.
\end{remark}

\begin{example} Let $V_0$ have the centered semicircle distribution with variance $t$, $G_{V_0}(z) = \frac{1}{2t}(z-\sqrt{z^2-4t})$.  From \eqref{eq:examples.end}, we find that the associated freely infinitely divisible random variable $X$ with mean $0$ and variance $\sigma^2$ satisfies
\begin{align*} zG_X(z)-1 &=\sigma^2G_X(z)\frac{1}{2t}\left(\frac{1}{G_X(z)}-\sqrt{\frac{1}{G_X^2(z)}-4t}\right)\\
&=\frac{\sigma^2}{2t}\left(1-G_X(z)\sqrt{\frac{1}{G_X^2(z)}-4t}\right).
\end{align*}
Isolating the square root on the right hand side and then squaring both sides, we obtain the quadratic equation
\[ \left(\frac{2t}{\sigma^2}(zG_X(z)-1)-1\right)^2 = 1-4tG_X^2(z), \]
which simplifies to
\begin{equation} \label{e.Y.semicircle}
(tz^2+\sigma^4)G_X^2(z) - (2t+\sigma^2)zG_X(z) + \sigma^2+t=0.
\end{equation}
Solving, and choosing the negative square root to produce the required cancellation (to yield $\lim_{z\to\infty}zG_X(z) = 1$), we obtain
\begin{align*}
    G_X(z) &= \frac{(2t+\sigma^2)z - \sqrt{(2t+\sigma^2)^2z^2-4(\sigma^2+t)(tz^2+\sigma^4)}}{2(tz^2+\sigma^4)}\\
    &= \frac{(2t+\sigma^2)z- \sigma^2\sqrt{z^2-4(\sigma^2+t)}}{2(tz^2+\sigma^4)}.
\end{align*}
As te denominator is strictly positive for all $z=x\in\mathbb{R}$, this function has a continuous extension to $\mathbb{C}_+\cup\mathbb{R}$; taking $z=x\in\mathbb{R}$ we see that $G_X(x)$ is purely real if $x^2\ge 4(\sigma^2+t)$, and hence the density is given by
\begin{equation} \label{e.density.Levy.semicircle}
\varrho_X(x) = -\frac{1}{\pi}\lim_{y\downarrow 0}G_X(x+iy)=
\frac{\sigma^2 \sqrt{4(\sigma^2+t)-x^2}}{2\pi(tx^2+\sigma^4)} \mathbbm{1}_{|x|\le 2\sqrt{\sigma^2+t}}.
\end{equation}
Note that we recover the semicircle density as the law of $X$ when $t=0$, i.e.\ when the free L\'evy--Khintchine measure $\mathcal{L}(V_0) = \delta_0$.  Indeed, in this case Equation \eqref{e.Y.semicircle} reduces to $\sigma^2 G_X^2(z)- G_X(z) + 1 = 0$ which is the standard quadratic equation for the Cauchy transform of the semicircle law.  This aligns with Example \ref{ex:semi.sats}.

\begin{figure}
  \includegraphics[scale=0.45]{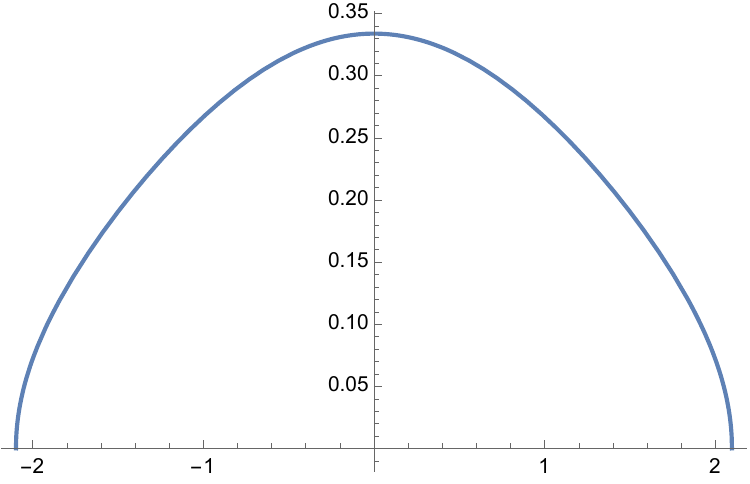}
  \includegraphics[scale=0.45]{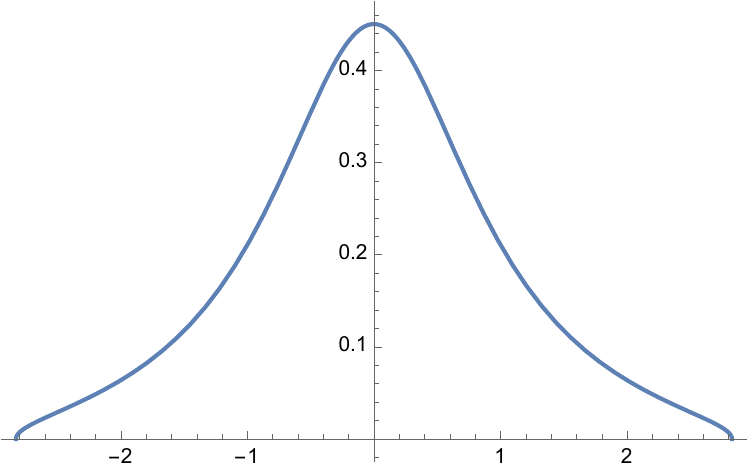}
  \caption{The density \eqref{e.density.Levy.semicircle} of the freely infinitely divisible law with mean $0$, variance $1$, and free L\'evy--Khintchine measure $\mathcal{L}(V_0)$
  a semicircle of variance $t=0.1$ on the left and $t=1$ on the right.}
  \label{fig.5.13}
\end{figure}

\end{example}

\ignore{
\begin{example} Let $Y\equaldist\delta_0$; then $G_Y(w) = \frac{1}{w}$.  Hence \eqref{eq:examples.end} becomes
\[ zG_X(z)-1 = \sigma^2G_X(z)G_X(z) = \sigma^2 G_X^2(z). \]
This well-known quadratic equation has, as solution, the Cauchy transform of the semicircle law of variance $\sigma^2$:
\[ G_X(z) = \frac{z-\sqrt{z^2-4\sigma^2}}{2\sigma^2}. \]
This aligns with Example \ref{ex:semi.sats}.
\end{example}
}

\begin{example} \label{ex:U.is.Rad} Let $V_0 \equaldist\frac12(\delta_1+\delta_{-1})$, a Rademacher distribution.  Here
\[ G_{V_0}(w) = \frac12\left(\frac{1}{w-1}+\frac{1}{w+1}\right) = \frac{w}{w^2-1}. \]
In this case \eqref{eq:examples.end} becomes
\[ zG_X(z)-1 = \sigma^2G_X(z)\cdot\frac{1/G_X(z)}{(1/G_X(z))^2-1} \]
which simplifies to the cubic equation
\begin{equation} \label{e.final.cubic}
zG_X^3(z)+(\sigma^2-1)G_X^2(z)-zG_X(z)+1=0.
\end{equation}
While analytic expressions for the general solution to this cubic are tractable, they become much simpler in the case $\sigma^2=1$, where \eqref{e.final.cubic} becomes the depressed cubic equation
\begin{equation} \label{e.depressed} G_X(z)^3-G_X(z)+\frac{1}{z}=0. \end{equation}

For the general depressed cubic equation $w^3+pw+q=0$ with $p,q\in\mathbb{C}$, the three solutions $w_0,w_{\pm 1}$ can be expresses as follows.  Let $\xi= e^{2\pi i/3} = \frac{-1+\sqrt{3}i}{2}$ be the cube root of unity in $\mathbb{C}_+$; then
\begin{equation} \label{e.w.roots} w_k = \xi^kr-\frac{p}{3\xi^k r}, \qmq{where}  r = \sqrt[3]{-\frac{q}{2}+\sqrt{\frac{q^2}{4}+\frac{p^3}{27}}}. \end{equation}
Here the square and cube roots have branch cuts along the positive real axis (i.e.\ $(re^{i\theta})^{1/n} = r^{1/n} e^{i\theta/n}$ where $\theta\in[0,2\pi)$).  For our case of interest \eqref{e.depressed}, $p=-1$ and $q=1/z$, so
\begin{equation} \label{e.def.r.cubic} r = r(z) = \sqrt[3]{-\frac{1}{2z}+\sqrt{\frac{1}{4z^2}-\frac{1}{27}}}=:\sqrt[3]{-a(z)+b(z)}. \end{equation}
To determine which of the three roots is the Cauchy transform we seek, we look at asymptotics: using Newton's Binomial theorem (twice), we compute that 

\[ r(z) = -\sqrt{3}i +\frac{3}{2}\cdot\frac{1}{z}+O\left(\frac{1}{z^2}\right) \qmq{and}
\frac{1}{r(z)} = \frac{i}{\sqrt{3}} + \frac{1}{2}\cdot\frac{1}{z} +O\left(\frac{1}{z^2}\right). \]
Hence, \eqref{e.w.roots} yields
\[ w_k = w_k(z) = (\xi^{k}-\xi^{-k})\frac{i}{\sqrt{3}} + (\xi^{k}+\xi^{-k})\frac{1}{2z} + O\left(\frac{1}{z^2}\right). \]
Note that if $k=\pm 1$ then $\xi^k\ne\xi^{-k}$, and so $\lim_{z\to\infty} w_k(z) = (\xi^{k}-\xi^{-k}) \ne 0$, hence $w_k$ is not a Cauchy transform.  On the other hand, we see that $w_0(z) = \frac{1}{z} + O(1/z^2)$, exhibiting the correct asymptotic behavior for a Cauchy transform of a probability measure.  Since we know that one of the roots {\em is} $G_X(z)$, we conclude that it is $w_0(z)$, i.e.
\begin{align}\label{eq:GAzadiTower}
G_X(z) = r(z) + \frac{1}{3r(z)}
\end{align}
with $r(z)$ as given in \eqref{e.def.r.cubic}.

Note that the quantity inside the cube root in $r(z)$ is never $0$; as such, the function $r$ has a continuous extension to $\mathbb{C}_+\cup(\mathbb{R}\setminus\{0\})$.  Thus $\mathcal{L}(X)$ is a combination of a density and potentially a point mass at $0$.

Referring to \eqref{e.def.r.cubic},
\[ r(x) = \sqrt[3]{-a(x)+b(x)} = \sqrt[3]{-(a(x)-b(x))}.
\]
Take $0 < x \le \frac32\sqrt{3}$, in which case $1/4x^2-1/27\ge 0$ and $a(x)-b(x)>0$.  Given the choice of branch cut for the cube root, for a positive number $\delta$, $\sqrt[3]{-\delta} = e^{i\pi/3}\sqrt[3]{\delta} = \frac{1+\sqrt{3}i}{2}\sqrt[3]{\delta}$ and hence
\begin{equation} \label{e.Im.r.1} -\frac{1}{\pi}\mathrm{Im}\,r(x) = -\frac{\sqrt{3}}{2\pi}
\sqrt[3]{a(x)-b(x)}.
\end{equation}
An analogous argument applied to $1/r(x)$, noting that $e^{-i\pi/3} = \frac{1-\sqrt{3}i}{2}$, yields
\begin{equation} \label{e.Im.r.2} -\frac{1}{\pi}\mathrm{Im}\,\frac{1}{3r(x)} = \frac{\sqrt{3}}{2\pi}\cdot\frac13
\sqrt[-3]{a(x)-b(x)},
\qquad 0<x \le  \frac{3}{2}\sqrt{3}. \end{equation}
Notice that, in this interval,
\begin{equation} \label{e.cubic.cancel.1} 
\sqrt[3]{a(x)-b(x)}\sqrt[3]{a(x)+b(x)}=\sqrt{a^2(x)-b^2(x)}
= \frac13. 
\end{equation}
Hence
\[ 
\frac13
\sqrt[-3]{a(x)+b(x)}=\sqrt[3]{a(x)-b(x)}.
\]
In light of \eqref{eq:GAzadiTower}, we can thence combine \eqref{e.Im.r.1} and \eqref{e.Im.r.2}, and apply the symmetry principle of Remark \ref{remark.symmetry}, to yield the density 
\begin{equation} \label{e.cubic.density}
\varrho_X(x) = \frac{\sqrt{3}}{2\pi}\left( \sqrt[3]{\frac{1}{2|x|}+\sqrt{\frac{1}{4x^2}-\frac{1}{27}}} - \sqrt[3]{\frac{1}{2|x|}-\sqrt{\frac{1}{4x^2}-\frac{1}{27}}}\right)
\end{equation}
on the punctured interval $[-\frac32\sqrt{3},\frac32\sqrt{3}]\setminus\{0\}$.
\begin{figure}[hbt!]
  \includegraphics[scale=0.55]{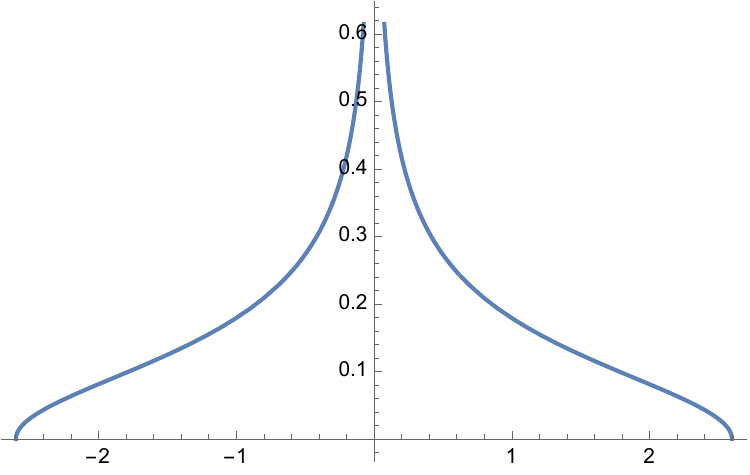}
  \caption{The density in \eqref{e.cubic.density} of the $\boxplus$-infinitely divisible `Azadi Tower' distribution with Rademacher $\zcirc$ Lévy measure.}
  \label{fig:5.14}
\end{figure}

For $|x|>\frac32\sqrt{3}$, $1/4x^2-1/27<0$ and so
\[ r(x) = \sqrt[3]{-\frac{1}{2x}+i\sqrt{\frac{1}{27}-\frac{1}{4x^2}}}. \]
An analogous calculation to \eqref{e.cubic.cancel.1}, relying on the fact that $\sqrt[3]{\alpha\beta} = \sqrt[3]{\alpha}\sqrt[3]{\beta}$ and $\sqrt[3]{\bar{\alpha}} = \overline{\sqrt[3]{\alpha}}$ for $\alpha,\beta\in\mathbb{C}_+$, shows that
\[ \frac{1}{3r(x)} = \sqrt[3]{-\frac{1}{2x}-i\sqrt{\frac{1}{27}-\frac{1}{4x^2}}} = \overline{r(x)}. \]
Hence, for $|x|>\frac32\sqrt{3}$, $G_X(x) = r(x) + 1/3r(x) = r(x) + \overline{r(x)}$ is real-valued, and therefore $\mathcal{L}(X)$ has no mass there.

Finally: A direct calculation (with the aid of Mathematica) shows that the density \eqref{e.cubic.density} integrates to $1$ on the interval $[-\frac32\sqrt{3},\frac32\sqrt{3}]$, which shows that there is no mass at $0$  and hence we conclude that \eqref{e.cubic.density} is the density of $X$, supported on the full interval $[-\frac32\sqrt{3},\frac32\sqrt{3}]$.
\end{example}

Before proceeding to the next example, it will be useful to record the formula for the complex square root, in the next (long-known) lemma.

\begin{lemma} Let $z\mapsto \sqrt{z}$ denote the square root with branch cut along the positive real line:
\[ \sqrt{re^{i\theta}} = r^{1/2} e^{i\theta/2}, \qquad r\ge 0,\;\; \theta\in[0,2\pi). \]
For any complex number $z=c+di$ with $c,d\in\mathbb{R}$,
\begin{equation} \label{eq:complex.root.formula}
\sqrt{c+di}=
{\rm sign}(d)\sqrt{\frac{\sqrt{c^2+d^2}+c}{2}}+i\,
\sqrt{\frac{\sqrt{c^2+d^2}-c}{2}}
\end{equation}
where $\mathrm{sign}(d) = d/|d|$ when $d\ne 0$, and $\mathrm{sign}(0)=1$.
\end{lemma}
Note: \eqref{eq:complex.root.formula} may disagree with formulas found in textbooks (and Wikipedia), where the $\mathrm{sign}(d)$ term appears with the imaginary part instead of the real part.  That alternate formula is correct for a different branch cut, along the {\em negative} real line, i.e.\ where the polar variable is taken with $\theta\in[-\pi,\pi)$.  Different sources call either that square root or this one the ``principal branch'' confusingly; we stick to this one, cut along the {\em positive} real line, throughout this paper.

\begin{example} Let $V_0$ have the Cauchy distribution, with density $\varrho_{V_0}(x) = \frac{1}{\pi(1+x^2)}$, cf.\ Example \ref{example.Cauchy.dist}.  Thus $G_{V_0}(z) = \frac{1}{z+i}$. The corresponding centered, variance $\sigma^2$ freely infinitely divisible distribution $\mathcal{L}(X)$ then satisfies
\[ zG_X(z) - 1=\sigma^2G_X(z)\frac{1}{1/G_X(z)+i} \]
cf.\ \eqref{eq:examples.end}.  The solution is

\ignore{
Simplifying, we see that $G_X(z)$ is a solution $w$ of the quadratic equation
\begin{equation} \label{e.Cauchy.quad}
(\sigma^2-iz)w^2+(i-z)w+1=0.
\end{equation}
The solutions $w_{\pm}(z)$ of \eqref{e.Cauchy.quad} are 
\begin{align*}
w_{\pm(z)}=\frac{z-i \pm \sqrt{(z-i)^2-4(\sigma^2-iz)}}{2(\sigma^2-iz)}
=\frac{z-i \pm \sqrt{(z+i)^2-4\sigma^2}}{2(\sigma^2-iz)}.
\end{align*}
Evaluating at $z=iy$ for $y>0$, we have
\begin{align*} iy\,w_\pm(iy) &= iy\frac{iy-i\pm \sqrt{(iy+i)^2-4\sigma^2}}{2(\sigma^2-i(iy))} \\
&= \frac{-y^2+y\pm iy\sqrt{-(y+1)^2-4\sigma^2}}{2(\sigma^2+y)} \\
&= \frac{-y^2+y\pm iy\cdot i(y+1)\sqrt{1+4\sigma^2/(y+1)^2}}{2(\sigma^2+y)} \\
&= \frac{y}{2(\sigma^2+y)}\left(-y+1\pm(-1)(y+1)\right)\sqrt{1+4\sigma^2/(y+1)^2} \\
&= \frac{y}{2(\sigma^2+y)}\left(-y+1\pm(-1)(y+1)\right)\left(1+O\left(\frac{1}{y^2}\right)\right).
\end{align*}
We see that $iy\,w_+(iy) = O(y)$, while $iy\,w_-(iy) = y/(\sigma^2+y)(1+O(1/y^2)) \to 1$ as $y\uparrow\infty$.  Hence, the desired Cauchy transform $G_X$ must coincide with $w_-$:
}

\begin{equation} G_X(z) = \frac{z-i - \sqrt{(z+i)^2-4\sigma^2}}{2(\sigma^2-iz)} \end{equation}
where the square root is the principal branch, and the negative sign is the correct root to yield $O(1/z)$ asymptotic behavior at infinity.  

To compute the density, we realize the denominator,
\begin{align} \nonumber
G_X(z) &=
\frac{z-i - \sqrt{(z+i)^2-4\sigma^2}}{2(\sigma^2-iz)} \frac{\sigma^2+i\overline{z}}{\sigma^2+i\overline{z}} \\
\label{eq:GX.rationalized}
&=\frac{(z-i - \sqrt{(z+i)^2-4\sigma^2})(\sigma^2+i\overline{z})}{2(\sigma^4+|z|^2)}.
\end{align}
Taking the imaginary part, expanding the numerator, expressing $z=x+iy$ and letting $a(z) = \mathrm{Re}\,\sqrt{(z+i)^2-4\sigma^2}$ and $b(z) = \mathrm{Im}\,\sqrt{(z+i)^2-4\sigma^2}$ we obtain
\begin{multline*}
\mathrm{Im} \left( i|z|^2-\bar{z}+\sigma^2 z-i\sigma^2 - (\sigma^2+i\bar{z})\sqrt{(z+i)^2-4\sigma^2} \right)\\
=x^2+y^2+(1+\sigma^2)y-\sigma^2 - \mathrm{Im}\left[(\sigma^2+y+ix)(a(z)+ib(z))\right]\\
=x^2+y^2+(1+\sigma^2)y-\sigma^2 - xa(z) - (\sigma^2+y)b(z)
\end{multline*}
Substituting into \eqref{eq:GX.rationalized} gives
\begin{equation} \label{e.Cauchy.G.2} 
\mathrm{Im}(G_X(z)) = \frac{x^2+y^2+(1+\sigma^2)y-\sigma^2 - xa(z) - (\sigma^2+y)b(z)}{2(\sigma^4+x^2+y^2)}. \end{equation}
To compute the functions $a(z)$ and $b(z)$, we use the formula \eqref{eq:complex.root.formula} for the real and imaginary part of the complex square root;
in this case $c+di = (z+i)^2-4\sigma^2 = x^2-(y+1)^2-4\sigma^2 + 2x(y+1)\,i$. As $y>0$, $\mathrm{sign}(d) = \mathrm{sign}(x)$.    Hence
\begin{align*}
\sqrt{2}\,a(z) &= \mathrm{sign}(x)\sqrt{\sqrt{(x^2-(y+1)^2-4\sigma^2)^2 + 4x^2(y+1)^2}+x^2-(y+1)^2-4\sigma^2} \\
\sqrt{2}\,b(z) &= \sqrt{\sqrt{(x^2-(y+1)^2-4\sigma^2)^2 + 4x^2(y+1)^2}-x^2+(y+1)^2+4\sigma^2}
\end{align*}
These expressions, and the other terms in \eqref{e.Cauchy.G.2}, all have pointwise limits as $y\downarrow 0$, and so combining and using the symmetry principle of Remark \ref{remark.symmetry} to restrict to $x>0$ and then replace $x$ by $|x|$,  we see that the density of $X$ is
\begin{equation} \label{e.Cauchy.density}
\varrho_X(x) = \frac{1}{2\pi}\frac{|x|a_0(x)+\sigma^2(b_0(x)+1)-x^2}{\sigma^4+x^2}
\end{equation}
where
\begin{align*}
\sqrt{2}\,a_0(x) &= \sqrt{\sqrt{(x^2-1-4\sigma^2)^2 + 4x^2}+x^2-1-4\sigma^2} \\
\sqrt{2}\,b_0(x) &=\sqrt{\sqrt{(x^2-1-4\sigma^2)^2 + 4x^2}-x^2+1+4\sigma^2}
\end{align*}

\begin{figure}[hbt!]
  \includegraphics[scale=0.55]{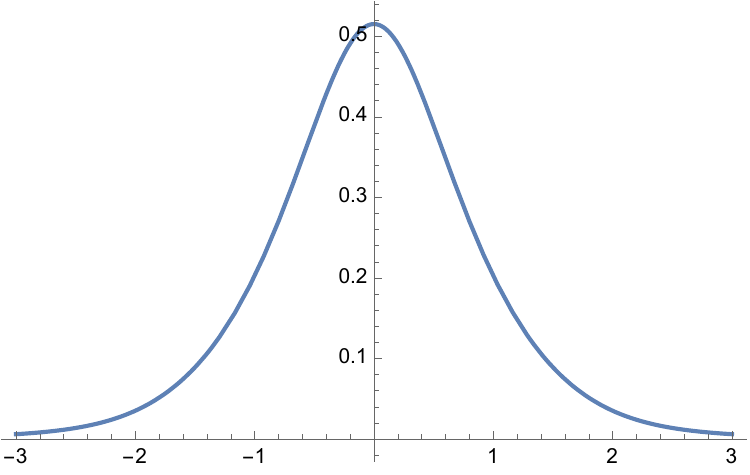}
  \caption{The density in \eqref{e.Cauchy.density} of the mean $0$, variance $\sigma^2=1$, freely infinitely divisible distribution with Cauchy $\zcirc$ Lévy measure.}
  \label{fig:5.15}
\end{figure}

Note that, like the Cauchy distribution itself, this freely infinitely divisible law has a strictly positive density on all of $\mathbb{R}$.  The law of $V_0$ (the Cauchy distribution) has no finite integer moments; nevertheless, it is the free L\'evy--Khintchine measure of the above freely infinitely divisible distribution, which has (exactly) two finite integer moments: $\varrho_X$ is continuous and bounded, and $\lim_{x\to\infty} x^4\cdot \varrho_X(x) = \sigma^2/\pi$.
\end{example}

\subsection{Using the Size Bias}

In this final subsection we produce examples of {\em positively} freely infinitely divisible distributions using Theorem \ref{main theorem 3}(\ref{thm2.c.sb}. Namely, by Proposition \ref{prop:ForAnyV.sb}, any non-negative random variable $V_+\ge 0$ is associated to a one-parameter family of positively freely infinitely divisible distributions $\mathcal{L}(X)$ as in \eqref{eq:free.Schmock.sb}, parametrized by the mean $m>0$ of $X$.  Unpacking \eqref{eq:free.Schmock.sb} using Lemma \ref{lem:X.Xneg0.same.sb}(\ref{lem.Box.G.sb} for the action of size bias on the Cauchy transform, we have
\begin{align}\label{eq:sb.identity.exYPM}
zG_{X}(z)-1 = m\,G_{V_+}(1/G_X(z)).
\end{align}
Hence, specifying $V_+$ we have an implicit equation for the Cauchy transform of $X$ which, in many cases, can be solved explicitly to determine the distribution of $X$.

\begin{example} In the simplest possible case, take $\mathcal{L}(V_+) = \delta_\alpha$ to be a point-mass at some point $\alpha\ge 0$.  Then $G_{V_+}(z) = \frac{1}{z-\alpha}$, and \eqref{eq:sb.identity.exYPM} becomes
\[ zG_X(z)-1= m\frac{1}{1/G_X(z)-\alpha} = \frac{m G_X(z)}{1-\alpha G_X(z)}. \]
Dividing through by $G_X(z)$ and rearranging, this yields
\begin{equation} \label{eq:FP.intermediate.again} z = \frac{m}{1-\alpha G_X(z)} + \frac{1}{G_X(z)}. \end{equation}
This yields a quadratic equation for $G_X$ except in the case $\alpha=0$ where it reduces to $G_X(z) = \frac{1}{z-m}$ meaning $G_X(z)=\frac{1}{z-m}$: i.e.\ if $V_+ = 0$ then $X$ is constant (equal to its mean).  Otherwise, comparing \eqref{eq:FP.intermediate.again} to \eqref{eq:FP.intermediate}, we see that $X \equaldist \mathrm{FP}(m/\alpha,\alpha)$ has a free Poisson distribution.  Indeed, this is consistent with Lemma \ref{lem:FP.L1}, taking the jump distribution $U\equaldist\delta_\alpha$, and noting that for a single positive point mass $U^s = U$.
\end{example}

For our final example, we consider ``doubling down'' on the free Poisson that appears as the law of $X$ when $V_+(X)$ is a constant, and instead consider the distribution $X$ when the law of $V_+(X)$ is a free Poisson $\mathrm{FP}(\lambda,\alpha)$.  This yields a new two-parameter family of positively freely infinitely divisible distributions with interesting behavior as the parameters vary.

\begin{example} Let $\lambda>0$ and $\alpha>0$, and take $V_+ \equaldist \mathrm{FP}(\lambda,\alpha)$ have the free Poisson distribution, whose Cauchy transform is given in \eqref{e.FP.Cauchy}:
\[  G_{V_+}(z) = \frac{z+(1-\lambda)\alpha - \sqrt{(z-(1+\lambda)\alpha)^2-4\alpha^2\lambda}}{2\alpha z}.
\]
In this case, \eqref{eq:sb.identity.exYPM} yields
\begin{align*}
\frac{zG_X(z)-1}{m} = \frac{1/G_X(z)+(1-\lambda)\alpha - \sqrt{(1/G_X(z)-(1+\lambda)\alpha)^2-4\alpha^2\lambda}}{2\alpha/G_X(z)}
\end{align*}
resulting in the quadratic equation 
$$
\alpha z\left[z-(1-\lambda)m \right]G_X^2(z)+\left[
-(2\alpha+m)z+m((1-\lambda)\alpha+m)
\right]G_X(z)+\left( \alpha + m \right)=0.
$$
Applying the quadratic formula yields
\begin{multline}\label{eq:GX.for.MPY}
G_X(z)= \\ \frac{(2\alpha+m)z- m(\alpha(1-\lambda) + m) -  m \sqrt{z^2 -2(\alpha(1+\lambda)+m)z+(\alpha(1-\lambda)+m)^2}}{2\alpha z\left(z-(1-\lambda)m \right)},
\end{multline}
where we have taken the negative root, based on the approximation to first order: for $z\to\infty$ in $\mathbb{C}_+$, 
$$
zG_X(z) = \frac{(2\alpha+m)z \pm m z }{2 \alpha z}+o(z).
$$
To recover the law of $X$ from \eqref{eq:GX.for.MPY}, we employ the Stieltjes inversion formula.  To that end (for readability) it is convenient to introduce the following notation for the constants:
\begin{equation*} 
\delta^2 = 4\alpha\lambda(\alpha+m), \qquad c = \alpha(1+\lambda)+m, \qquad r=(1-\lambda)m. 
\end{equation*}
Completing the square in the quadratic function inside the square root in \eqref{eq:GX.for.MPY}, we have
\begin{equation} \label{eq.GX.MP.2terms} G_X(z) = \tilde{G}_X(z) -\frac{m}{2\alpha}\frac{\sqrt{(z-c)^2-\delta^2}}{z(z-r)} \end{equation}
where
\[ \tilde{G}_X(z) = \frac{(2\alpha+m)z- m(\alpha(1-\lambda) + m)}{2\alpha z(z-r)} \]
is a rational expression that is continuous on $\mathbb{C}_+\sqcup(\mathbb{R}\setminus\{0,r\})$, and takes real values on $\mathbb{R}\setminus\{0,r\}$.  Ergo $\lim_{\epsilon\downarrow 0}\mathrm{Im}\tilde{G}_X(x+i\epsilon) = 0$ at all real $x\ne 0,r$, and so $\tilde{G}_X$ does not contribute to the law of $X$ at any points other than possible $\{0,r\}$.  Similarly, the second summand in \eqref{eq.GX.MP.2terms} is continuous for $z\in\mathbb{C}_+\setminus\{0,r\}$, and so
\begin{equation} \label{eq:initial.density.formula} \lim_{\epsilon\downarrow0}-\frac{1}{\pi}\mathrm{Im}G_X(x+i\epsilon) = \frac{m}{2\pi\alpha}\frac{\sqrt{\delta^2-(x-c)^2}}{x(x-r)}\mathbf{1}\{|x-c|\le\delta\}=: \rho_X(x). \end{equation}
The function $\rho_X$ is integrable and $\ge 0$.  Indeed: elementaty computations show that 
\begin{equation} \label{eq.c2-d2.etc} c^2-\delta^2 = (\alpha(1-\lambda)+m)^2, \qquad (c-r)^2-\delta^2 = (\alpha(1-\lambda)-\lambda m)^2 \end{equation}
from which it follows in particular that $c-\delta\ge \max\{0,r\}$.  Since the denominator of $\rho_X$ is positive except on the interval between $0$ and $r$, and since the numerator is non-negative everywhere, it follows that $\rho_X\ge 0$.  For integrability, we note that the function is uniformly bounded on the support interval $[c-\delta,c+\delta]$ provided that both $0$ and $r$ are {\em strictly} $< c-\delta$.  A simple calculation shows that this holds true except in two cases:
\[ \text{if }\; \lambda = 1+\frac{m}{\alpha}, \quad c-\delta = 0; \qquad \text{if }\; \frac{1}{\lambda} = 1+\frac{m}{\alpha}, \quad c-\delta = r. \]
Aside from these two values for $\lambda$, $\rho_X$ is bounded and compactly supported, hence integrable.  At these values, $\rho_X$ has $(\,\cdot\,)^{-1/2}$ behavior at the pole that has collided with the edge of its support, and thus still remains integrable.

A fairly involved (and entertaining) calculus exercise shows that
\begin{equation}\label{eq.mass.rho} \int_{\mathbb{R}} \rho_X(x)\,dx
= \begin{cases} \frac{m}{\alpha(1/\lambda-1)} & \qquad\; 0<\lambda < \frac{1}{1+m/\alpha} \\
\quad\, 1 & \frac{1}{1+m/\alpha}\le \lambda \le 1+\frac{m}{\alpha} \\
\frac{m}{\alpha(\lambda-1)} & \qquad\qquad\,  \lambda > 1+\frac{m}{\alpha}.
\end{cases}\end{equation}
 In other words: the total mass of the density $\rho_X$ is $1$ if $\lambda$ is between $1+\frac{m}{\alpha}$ and its reciprocal, and is otherwise equal to $\frac{m}{\alpha(\max\{\lambda,1/\lambda\}-1)}$:
\begin{align}\label{eq:rho.a.lam.mu}
\int_{\mathbb{R}} \rho_X(x)\,dx = 
\min\left\{\frac{m}{\alpha(\max(\lambda,1/\lambda)-1)},1 \right\}.
\end{align}

It remains to understand what happens at the poles $0$ and $r$.  To that end, we note that for any $x\in\mathbb{R}$,
\[ \lim_{z\downarrow x} \sqrt{(z-c)^2-\delta^2} = \mathrm{sign}(x-c)\sqrt{(x-c)^2-\delta^2}. \]
(This follows from \eqref{eq:complex.root.formula}.)  In particular, since $x=0<c$, using \eqref{eq.c2-d2.etc} we have
\[ \lim_{z\downarrow 0} \sqrt{(z-c)^2-\delta^2} = -\sqrt{c^2-\delta^2} = -|\alpha(1-\lambda)+m| \]
and hence, at least when $\lambda\ne 1$,
\begin{align*} \lim_{z\downarrow 0} zG_X(z) &= \frac{(2\alpha+m)\cdot 0 -m(\alpha(1-\lambda)+m)+m|\alpha(1-\lambda)+m|}{2\alpha(0-(1-\lambda)m)} \\
&= \frac{\alpha(1-\lambda)+m - |\alpha(1-\lambda)+m|}{2\alpha(1-\lambda)} \\
&= \begin{cases} 1-\frac{m}{\alpha(\lambda-1)} & \lambda > 1+\frac{m}{\alpha} \\
0 & \lambda\le 1+\frac{m}{\alpha}.
\end{cases}
\end{align*}
This shows that $X$ has a point mass at $0$ of weight $1-\frac{m}{\alpha(\lambda-1)}$ when $\lambda>1+\frac{m}{\alpha}$, but no mass at $0$ otherwise.

Similarly, since $r<c$ as well, using \eqref{eq.c2-d2.etc} we find
\[ \lim_{z\downarrow r} \sqrt{(z-c)^2-\delta^2} = \sqrt{(r-c)^2-\delta^2} = -|\alpha(1-\lambda)-\lambda m| \]
and hence, at least when $\lambda\ne 1$,
\begin{align*} \lim_{z\downarrow r} (z-r)G_X(z) &= \frac{(2\alpha+m)r-m(\alpha(1-\lambda)+m)+m|\alpha(1-\lambda)-\lambda m|}{2\alpha r} \\
&= \frac{(2\alpha+m)(1-\lambda)m - m(\alpha(1-\lambda)+m)+m|\alpha(1-\lambda)-\lambda m|}{2\alpha m(1-\lambda)} \\
&= \frac{m(\alpha(1-\lambda)-\lambda m)+m|\alpha(1-\lambda)-\lambda m|}{2\alpha m(1-\lambda)} \\
&= \begin{cases} 1-\frac{m}{\alpha(1/\lambda-1)} & \lambda < \frac{1}{1+m/\alpha} \\
0 & \lambda \ge \frac{1}{1+m/\alpha}.
\end{cases}
\end{align*}
This shows that $X$ has a point mass at $r$ of weight $1-\frac{m}{\alpha(1/\lambda-1)}$ when $\lambda<\frac{1}{1+m/\alpha}$, bot no mass at $r$ otherwise.

Combining with \eqref{eq.mass.rho}, and the formula \eqref{eq:initial.density.formula} for the density $\rho_X$, we thus have the complete description of the law of $X$:
\begin{equation} \label{eq:final.law.end}
\begin{aligned}
\rho_X(x) &= \frac{m}{2\pi \alpha}\frac{\sqrt{(4\alpha\lambda(\alpha+m)-(x-m-\alpha(1+\lambda))^2)_+}}{x(x-(1-\lambda)m)} \\
\mathcal{L}(X) &= \rho_X(x)\,dx + \left(\textstyle{1-\frac{m}{\alpha(\lambda-1)}}\right)_+ \delta_0 + \left(\textstyle{1-\frac{m}{\alpha(1/\lambda-1)}}\right)_+\delta_{(1-\lambda)m}
\end{aligned}
\end{equation}
where, as usual, $f_+ = f\mathbf{1}_{\{f\ge 0\}}$.

Notice that the two values of $\lambda$ where the nature of $\mathcal{L}(X)$ changes are precisely the values for which the density $\rho_X$ is unbounded.  These are also the precise values of $\lambda$ where a point mass collides with the support of $\rho_X$.

\end{example}

\ignore{
In the special case where $\alpha=1,\lambda=1,m=2$, we obtain
\begin{align}\label{e.MP.density}
\rho(x) = \frac{1}{\pi}\frac{\sqrt{12-(x-4)^2}}{x^2} \qmq{for $|x-4| \le \sqrt{12}$,}
\end{align}
}

\begin{figure}[hbt!]
  \includegraphics[scale=0.48]{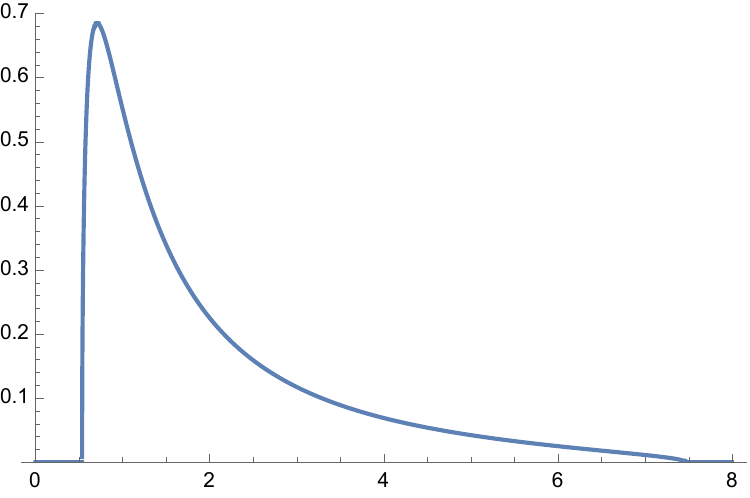}
  \caption{The law in \eqref{eq:final.law.end} with $\alpha=\lambda=1,m=2$ (in which case there are no point masses, only the density.}
  \label{fig:5.16}
\end{figure}

\begin{figure}[hbt!]
  \includegraphics[scale=0.48]{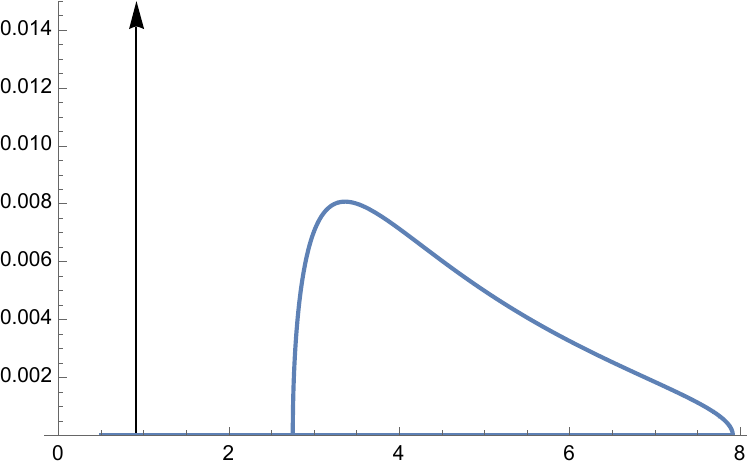}
  \includegraphics[scale=0.48]{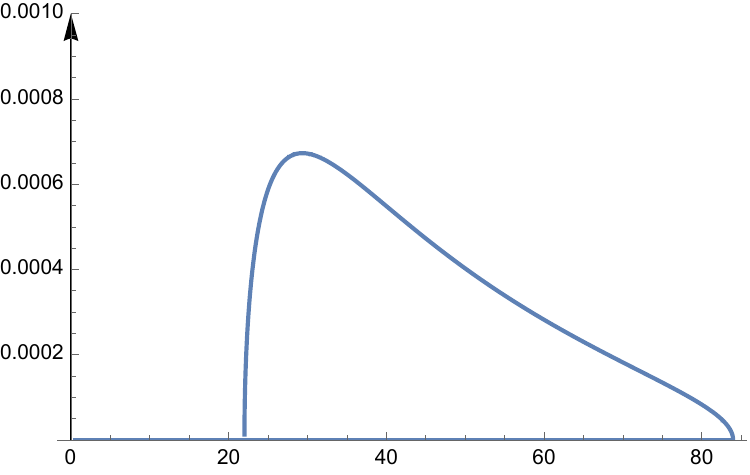}
  \caption{The law in \eqref{eq:final.law.end} with $\alpha=4$, $m=1$, and $\lambda=1/12$ on the left and $\lambda=12$ on the right.  In these complementary cases, most of the mass is concentrated at an atom with weight $\frac{43}{44}$, located at the point $\frac{11}{12}$ on the left, and at $0$ on the right.}
  \label{fig:5.17}
\end{figure}

\ignore{
\begin{lemma} Let $\alpha,\lambda,\mu>0$, and set
\[ \rho(x)=\frac{1}{\pi}\frac{\mu\sqrt{4\alpha\lambda(\alpha+\mu)-(x-(\alpha(1+\lambda)+\mu))^2}}{2\alpha x(x-(1-\lambda)\mu)}I_{\alpha,\lambda,\mu}(x) \]
where $I=I_{\alpha,\lambda,\mu}$ is the indicator function of the interval where the quantity under the square root is $\ge 0$.  Then $\rho\ge 0$, is integrable (and bounded unless $\lambda^{\pm 1} = 1+\frac{\mu}{\alpha}$), and its total mass is given by \eqref{eq:rho.a.lam.mu}.
\end{lemma}

\begin{proof}
For convenience, we identify the following constants:
\begin{equation} \delta^2 = 4\alpha\lambda(\alpha+\mu), \qquad c = \alpha(1+\lambda)+\mu, \qquad p=(1-\lambda)\mu. \end{equation}
Then $I_{\alpha,\lambda,\mu} = [c-\delta,c+\delta]$, and
\[ \rho(x) = \frac{\mu}{2\pi\alpha}\frac{\sqrt{\delta^2-(x-c)^2}}{x(x-p)}\mathbf{1}\{|x-c|\le\delta\}.\]
First, we note that this function is $\ge 0$.  Indeed, the numerator is $\ge 0$ on the given interval.  The denominator is $\ge0$ except on the interval $[0,p]$; in fact, $c-\delta \ge \max\{0,p\}$.  To see this, we compute
\begin{align} c^2-\delta^2 &= (\alpha(1+\lambda)+\mu)^2 - 4\alpha\lambda(\alpha+\mu) \nonumber \\
&= \alpha^2(1+\lambda)^2 + 2\alpha\mu(1+\lambda) + \mu^2 - 4\alpha^2\lambda -4\alpha\lambda\mu \nonumber \\
&= \alpha^2 + 2\alpha^2\lambda + \alpha^2\lambda^2 +2\alpha\mu + 2\alpha\lambda\mu +\mu^2 - 4\alpha^2\lambda - 4\alpha\lambda\mu \nonumber \\
&= \alpha^2 -2\alpha^2\lambda + \alpha^2\lambda^2 + 2\alpha\mu -2\alpha\lambda\mu +\mu^2 \nonumber \\
&= \alpha^2(1-\lambda)^2 + 2\alpha\mu(1-\lambda) + \mu^2 \nonumber \\
&= (\alpha(1-\lambda)+\mu)^2\ge 0. \label{eq.c2-d2}
\end{align}
We also note that 
\[ c-p= \alpha + \alpha\lambda + \mu -(1-\lambda)\mu = \alpha(1+\lambda)+\lambda\mu. \]
Hence
\begin{align} (c-p)^2-\delta^2 &= (\alpha(1+\lambda)+\lambda\mu)^2 -4\alpha\lambda(\alpha+\mu) \nonumber \\
&= \alpha^2(1+\lambda)^2 + 2\alpha\mu\lambda(1+\lambda) + \lambda^2\mu^2 - 4\alpha^2\lambda - 4\alpha\lambda\mu \nonumber \\
&= \alpha^2 + 2\alpha^2\lambda + \alpha^2\lambda^2 + 2\alpha\mu\lambda + 2\alpha\mu\lambda^2 + \lambda^2\mu^2-4\alpha^2\lambda-4\alpha\lambda\mu \nonumber \\
&= \alpha^2 -2\alpha^2\lambda +\alpha^2\lambda^2-2\alpha\lambda\mu+2\alpha\mu\lambda^2 + \lambda^2\mu^2 \nonumber \\
&= \alpha^2(1-\lambda)^2 -2\alpha\lambda\mu(1-\lambda)+\lambda^2\mu^2 \nonumber \\
&= (\alpha(1-\lambda)-\lambda\mu)^2\ge 0. \label{eq.(c-p)2-d2}
\end{align}
Since $c,\delta>0$, \eqref{eq.c2-d2} shows that $c^2\ge \delta^2$ so $c\ge \delta$ and thus $c-\delta\ge 0$.  Then similarly \eqref{eq.(c-p)2-d2} shows that $c-p\ge\delta$, which rearranged yields $c-\delta\ge p$.  This shows that the support of $\rho$ is fully to the right of both poles in its denominator.

We also claim that $\rho$ is integrable.  Indeed, this is clear from the formula so long as the support set is {\em strictly} to the right of the poles, in which case $\rho$ is bounded.  The only cases in which this does not happen is if one of the poles collides with the left edge of the support of $\mu$, which is precisely in the equality cases in \eqref{eq.c2-d2} or \eqref{eq.(c-p)2-d2}.  That is: such a collision occurs only when $\alpha(1-\lambda)+\mu=0$ or $\alpha(1-\lambda)-\lambda\mu=0$; these are exactly the points
\[ \lambda = 1+\frac{\mu}{\alpha} \qquad\text{or}\qquad \frac{1}{\lambda} = 1+\frac{\mu}{\alpha}. \]
At these points (the former for $\lambda>1$, the latter for $\lambda<1$), the density of $\rho$ is unbounded; nevertheless, as we see below, $\rho$ is still integrable.

It remains to compute its total mass of $\rho$ (which a priori could be $\infty$ in the cases $\lambda^{\pm 1} = 1+\frac{\mu}{\alpha}$).  By making the affine substitution $u=(x-c)/\delta$, the integral becomes
\[ \int_{I_{\alpha,\lambda,\mu}}\rho(x)\,dx = \frac{\mu}{2\pi\alpha}\int_{-1}^{1}\frac{\sqrt{1-v^2}}{(v+\frac{c}{\delta})(v+\frac{c-p}{\delta})}\,dv. \]
Let $a=\frac{c}{\delta}$ and $b=\frac{c-p}{\delta}$; both are $>1$ except in the case $c-\delta=p$ in which case $b=1$ (while $a>1$).  Let us deal with this case first: here, the integrand is
\[ \frac{\sqrt{1-v^2}}{(v+a)(v+1)} = \frac{1}{v+a}\sqrt{\frac{1-v}{1+v}}. \]
This is unbounded near $-1$, but because it behaves like $(1+v)^{-1/2}$ here {\em it is integrable}.

We now utilize the following integral computed by Maple: for $a>b\ge 1$,
\[ \frac{1}{\pi}\int_{-1}^1 \frac{\sqrt{1-v^2}}{(v+a)(v+b)}dv = \frac{\sqrt{a^2-1}-\sqrt{b^2-1}}{a-b}-1. \]
Thus
\[ \int_{I_{\alpha,\lambda,\mu}}\rho(x)\,dx = \frac{\mu}{2\alpha}\left[\frac{\sqrt{a^2-1}-\sqrt{b^2-1}}{a-b}-1\right]. \]
Since $a=\frac{c}{\delta}$ and $b=\frac{c-p}{\delta}$, $a-b=\frac{p}{\delta}$, and
\begin{align} \nonumber \int_{I_{\alpha,\lambda,\mu}}\rho(x)\,dx &= \frac{\mu}{2\alpha}\left[\frac{\sqrt{(\frac{c}{\delta})^2-1}-\sqrt{(\frac{c-p}{\delta})^2-1}}{\frac{p}{\delta}}-1\right] \\
&= \frac{\mu}{2\alpha p}\left[\sqrt{c^2-\delta^2}-\sqrt{(c-p)^2-\delta^2}-p\right]. \label{eq.int.diff.rads}
\end{align}
Now substituting \eqref{eq.c2-d2} and \eqref{eq.(c-p)2-d2} into \eqref{eq.int.diff.rads}, also using $p=\mu(1-\lambda)$ yields
\[ \int_{I_{\alpha,\lambda,\mu}}\rho(x)\,dx = \frac{1}{2\alpha(1-\lambda)}\Big[|\alpha(1-\lambda)+\mu| - |\alpha(1-\lambda)-\lambda\mu|-\mu(1-\lambda)\Big]. \]
Let us note a symmetry: the transformation $\lambda\mapsto 1/\lambda$ leaves this value unchanged, as can be easily checked.  (This corresponds to the fact that $\rho_{1/\lambda}(x) = \lambda\rho_\lambda(\lambda x)$, and so an integral substution shows that same symmetry must hold).  It therefore suffices only to consider the case $0<\lambda\le 1$.  Note that in the case $\lambda=1$ this expression is undefined; we will treat this case at the end.

When $\lambda<1$, we can absorb the $\frac{1}{1-\lambda} = \frac{1}{|1-\lambda|}$ into each term as follows:
\begin{align*} \int_{I_{\alpha,\lambda,\mu}}\rho(x)\,dx
=& \;\; \frac12\left[\left|1+\frac{\mu}{\alpha(1-\lambda)}\right|-\left|1-\frac{\lambda\mu}{\alpha(1-\lambda)}\right|\right]-\frac{\mu}{2\alpha} 
\end{align*}
The first term is always positive (in this $\lambda<1$ case).  We must therefore consider the two cases for the second term, which is convenient to rewrite as
    \[ \left|1-\frac{\mu}{\alpha(1/\lambda-1)}\right| \]
    \begin{itemize}
        \item If $1-\frac{\mu}{\alpha(1/\lambda-1)}\ge 0$ (i.e.\ $\lambda \le \frac{\alpha}{\mu+\alpha}$), then the terms combine to yield
        \[ \frac12\left[1+\frac{\mu}{\alpha(1-\lambda)} -\left(1-\frac{\lambda\mu}{\alpha(1-\lambda)}\right)\right]-\frac{\mu}{2\alpha} = \frac{\mu}{\alpha(1/\lambda-1)}. \]
        \item If $1-\frac{\mu}{\alpha(1/\lambda-1)}< 0$ (i.e.\ $\lambda > \frac{\alpha}{\mu+\alpha}$), then the terms combine to yield
        \[ \frac12\left[1+\frac{\mu}{\alpha(1-\lambda)} +\left(1-\frac{\lambda\mu}{\alpha(1-\lambda)}\right)\right]-\frac{\mu}{2\alpha} = 1. \]
    \end{itemize}
Finally, we treat the case $\lambda=1$.  This can be done directly following the same (but somewhat simpler) calculations as above; alternatively, we can note that $\rho$ is a continuous function of $\lambda$ away from $\lambda= \frac{\alpha}{\alpha+\mu}$ and is locally uniformly bounded, hence a dominated convergence argument will show that the integral in the case $\lambda=1$ is the limit of the integral in the case $\lambda\uparrow 1$, which (as we just computed) is constantly equal to $1$ for all $1-\lambda$ small.  Together with the $\lambda\mapsto 1/\lambda$ symmetry, this (finally!) concludes the proof.  (Which should never see the light of day.)
\end{proof}
}

\section*{Acknowledgments}

The authors would like to thank Serban Belinschi and Ching Wei Ho for several very useful conversations clarifying core ideas of this work.

A few comments on the name `El Gordo' (Lemma \ref{lem:bflat}), which is Spanish for `The Fat One'.  As shown in some examples, e.g.\ Example \ref{ex:XbflatY.arcsine}(\ref{ex:XbflatY.arcsine.b}, the El Gordo transform tends to spread the support out.  As we discussed at the beginning of Section \ref{sect:freezerobias.construction}, $X\mapsto X^\flat$ is a free probability analogue of $X\mapsto UX$ where $U$ is a $\mathrm{Unif}[0,1]$ random variable independent from $X$, and this operation also has the effect of spreading out $X$.  We could give other reasons why this name is sensible, but honestly we chose this name because the insight to introduce this and related transforms came to the authors over many excellent lunches at the incomparable {\em Tacos El Gordo} in Chula Vista, California.

\bibliographystyle{amsplain}
\bibliography{references}

\end{document}